\documentclass[twoside,11pt]{article}

\usepackage{amsmath,amsthm}
\usepackage[preprint]{jmlr2e}  
\graphicspath{{./pics/}}
\usepackage{subcaption}
\usepackage{enumitem}
\usepackage{mathtools}
\usepackage{bm}
\usepackage{overpic}
\usepackage{booktabs}
\usepackage{makecell}
\usepackage[mathlines]{lineno}  
\newcommand{\BlackBoxProof}{\rule{1.5ex}{1.5ex}}
\renewenvironment{proof}{\par\noindent{\bf Proof\ }}{\hfill\BlackBoxProof\par\medskip}

\usepackage{multirow}

\usepackage[normalem]{ulem}           
\usepackage{xcolor}
\usepackage[textsize=tiny,textwidth=2.5cm]{todonotes}

\newcommand{\rev}[1]{#1}                           

\newtheorem{assumption}[theorem]{Assumption}

\newcommand{\nm}[1]{\left\lVert#1\right\rVert}

\newcommand{\ip}[1]{\left\langle #1 \right\rangle}
\newcommand{\R}{\mathbb{R}}

\renewcommand{\P}{\mathbf{P}}
\newcommand{\D}{\mathrm{D}}
\newcommand{\ud}{\mathrm{d}}
\newcommand{\Exp}{\mathrm{Exp}}
\newcommand{\dd}{\mathrm{d}}
\newcommand{\Hess}{\operatorname{Hess} }
\newcommand{\grad}{\operatorname{grad} }

\newcommand{\x}{{\bm x}}
\newcommand{\NN}{\mathrm{NN}}
\newcommand{\Lip}{\mathrm{Lip}}
\newcommand{\prox}{\mathrm{prox}}
\newcommand{\argmin}{\mathop{\mathrm{arg min}}}

\newcommand{\calF}{\mathcal{F}}
\newcommand{\calG}{\mathcal{G}}
\newcommand{\calH}{\mathcal{H}}
\newcommand{\calK}{\mathcal{K}}
\newcommand{\calM}{\mathcal{M}}
\newcommand{\calN}{\mathcal{N}}

\newcommand{\Log}{\mathrm{Log}}
\newcommand{\dist}{\mathrm{dist}}

\jmlrheading{}{2026}{1-??}{}{}{}{Zheng, Wang and Yang}
\firstpageno{1}

\title{Global Convergence and Error Propagation in Neural Gradient Flows: A Riemannian Optimization Framework.}

\author{\name Shixin Zheng \email siuzheng@umd.edu \\
       \addr Department of Mathematics, University of Maryland, College Park, MD
       \AND
       \name Yiwei Wang \email yiweiw@ucr.edu \\
       \addr Department of Mathematics, University of California-Riverside, Riverside, CA
       \AND
       \name Haizhao Yang \email hzyang@umd.edu \\
       \addr Department of Mathematics, University of Maryland, College Park, MD}

\editor{TBD}

\begin{document}

\maketitle

\begin{abstract}
We develop a geometric convergence theory for neural-network optimization within the minimizing movement scheme (MMS) framework. Reformulating each neural MMS step as a minimization over the set of increments in a Hilbert space, we show that under a $C^2$ network with locally non-degenerate Jacobian this increment set is a boundaryless smooth embedded submanifold, on which a natural preconditioned (Gauss--Newton-type) gradient flow in parameter space induces exactly the Riemannian gradient flow. Under a strict interior-localization condition and an explicit data condition, the reached sublevel set is geodesically convex and the subproblem objective is geodesically strongly convex on it; both the continuous Riemannian gradient flow and its discrete companion via the exponential map converge linearly to the unique subproblem minimizer. Propagating finite-time inner-solver inexactness and neural-approximation error through the MMS iterations yields a uniform function-space tracking bound and an explicit trajectory budget, so the inexact neural iterates converge to an $O(\delta)$-neighborhood of the global minimum. Numerical experiments on nonlinear regression and a small-scale latent-diffusion testbed indicate that the Gauss--Newton-type inner solver achieves smaller trajectory errors with substantially fewer inner iterations than first-order baselines.
\end{abstract}

\begin{keywords}
Riemannian optimization, neural network optimization, minimizing movement scheme, geodesic convexity, Gauss--Newton method, global convergence
\end{keywords}

\section{Introduction}
Many scientific computing and machine-learning problems can be formulated as minimizing an energy functional
\begin{equation}\label{eq:intro:main_problem}
\min_{u\in \calH} \calF[u],
\end{equation}
where $\calH$ is a real Hilbert space (e.g., $L^2(\Omega)$) and $\calF:\calH\to\mathbb{R}$ is continuously Fr\'echet differentiable, coercive, and nonnegative (the last condition is without loss of generality, since adding a constant does not change the minimizer or the gradient flow). When $\calF$ is convex, the associated continuous-time gradient flow
\begin{equation}\label{eq:GD}
\partial_t u_t=-\nabla \calF[u_t]
\end{equation}
provides a canonical dynamics that converges to a global minimizer under certain conditions.
A classical time discretization of the gradient flow (\ref{eq:GD}) is the implicit Euler method,
which admits the variational characterization known as the \emph{minimizing movement scheme} (MMS) \citep{ambrosio2005gradient}.
This paper studies a neural-network based spatial discretization of MMS, which is numerically investigated in \citet{HuEnergetic2024}, and provides a geometric convergence theory by viewing each neural MMS step as a Riemannian optimization problem on a Hilbert-embedded manifold of \emph{increments}.

\subsection{Minimizing Movement Scheme (MMS)}
Given stepsize $\tau>0$ and an iterate $u^n\in \calH$, the MMS solves the optimization problem
\begin{equation}\label{eq:intro:mms}
u^{n+1} = J_\tau(u^n),
\qquad
J_\tau(x):=\prox_{\tau \calF}(x):=\argmin_{u\in \calH}\left\{\frac{1}{2\tau}\|u-x\|_\calH^2 + \calF[u]\right\}.
\end{equation}
at each time step. Here, $u^n$ is an approximation to the gradient flow solution $u$ of (\ref{eq:GD}) at $t = n \tau$.
Throughout, we assume $\calF$ is $m_\calF$-strongly convex with $m_\calF>0$, so the objective in \eqref{eq:intro:mms} is strongly convex and the resolvent operator $J_\tau$ is well-defined and single-valued for any $\tau>0$. As the time step $\tau\downarrow 0$, the continuous interpolants of the discrete MMS sequence converge uniformly to the unique continuous trajectory of the associated gradient flow, which itself converges exponentially to the unique global minimizer $u^*$ of \eqref{eq:intro:main_problem} as time tends to infinity.
Therefore, since the exact discrete iterate $u^N$ approximates the gradient flow state at time $T=N\tau$, it can be regarded as an approximation of $u^*$ for sufficiently small $\tau$ and sufficiently large $N$.
For completeness, we detail the rigorous convergence of this scheme in Appendix~\ref{sec:conv-mms}.


\subsection{Neural-network Parametrization of MMS (Neural MMS)}
\rev{%
To solve the MMS subproblem~\eqref{eq:intro:mms} in practice,
one must restrict from the infinite-dimensional space $\calH$ to a finite-dimensional model class.
Classical spatial discretization, such as finite elements, spectral methods, or finite differences, introduce
a mesh that scales exponentially with the spatial dimension $d$, making them impractical for
high-dimensional problems such as those arising in molecular dynamics, optimal control, or
mean-field games. A natural alternative is to use neural networks as a mesh-free parametrization for the spatial variable \citep{park2023deep,HuEnergetic2024}.
Neural networks offer a compelling alternative for several reasons.
First, they are \emph{mesh-free}: classical discretizations such as finite elements tie the
function representation to a fixed set of grid points or quadrature nodes, so the two cannot
be varied independently. By contrast, the parametrization $w\mapsto u_{\NN}(\cdot\,;w)$
defines a function over the entire domain that can be evaluated at \emph{any} point,
decoupling the function representation from the choice of quadrature or collocation points.  This flexibility is
particularly attractive in high dimensions, where tensor-product grid-based
discretizations suffer from an exponential growth in degrees of freedom.
Second, deep networks possess a powerful \emph{compositional approximation} structure:
they can represent functions with hierarchical features using a number of parameters
that scales polynomially (rather than exponentially) with dimension for suitable function
classes \citep{JMLR:v23:21-1404,maiti2024optimal}.
Third, the parametrization is \emph{adaptive}: unlike fixed basis functions, the network
automatically adjusts its internal representation to the solution structure during training.
Finally, and crucially for this paper, the MMS proximal structure provides a natural
\emph{regularization} of the nonconvex neural optimization landscape, as the quadratic penalty
$\|u_{\NN}(w)-u^n_{\NN}\|_{\calH}^2/(2\tau)$ forces the next iterate to stay close to the
previous one, which is precisely the locality that enables our Riemannian convergence theory.
}

Specifically, we restrict the optimization domain from $\calH$ to the model class $\NN\subset \calH$, the image of a smooth parametrization $u_\NN:\R^p\to\calH$. Unlike Galerkin methods, the class $\NN$ is not a vector subspace of $\calH$ (the sum of two same-architecture networks need not be representable in that architecture), but it is the image of a finite-dimensional parameter space under the parametrization map. 
This distinction is important for the analysis. In a classical Galerkin
discretization, the trial space is a finite-dimensional linear subspace of
$\calH$ and therefore inherits the Hilbert-space geometry. Hence, if $\calF$ is
strongly convex on $\calH$, its restriction to the Galerkin space remains
strongly convex, and the discrete MMS subproblem is still a convex optimization
problem. Moreover, the trial-space and quadrature errors can be analyzed by
standard approximation and perturbation arguments. By contrast, the neural model
class is a nonlinear parametrized set rather than a vector space, so the
Hilbert-space convexity of $\calF$ does not directly translate into convexity
with respect to the parameters. This loss of linear structure is the main reason
we reformulate the neural MMS step geometrically in function space.

In the theoretical part of this work, the neural network is used purely as a \emph{parametrization} of functions in a Hilbert space
$\mathcal H$ (e.g.\ $L^2(\Omega)$ or $H^1(\Omega)$). The spatial variable $x\in\Omega\subset\mathbb R^d$ is a
\emph{continuous} argument, and the analysis is carried out at the function-space level.
Concretely, for each parameter $w\in\mathbb R^p$, the network defines a function
$x\mapsto u_{\NN}(x;w)$ and we identify $u_{\NN}(w)$ with an element of $\mathcal H$.
When sampling or quadrature is used in computation, it serves only to approximate inner products and integrals in $\mathcal H$ and does not change the analytical (infinite-dimensional) formulation.

Let $u_{\NN}:\mathbb{R}^p\to \NN\subset \calH$ denote the network realization from parameter space where $p$ is the number of parameters, and write $u^n_{\NN}:=u_{\NN}(w^n)$. The neural MMS step solves
\begin{equation}\label{eq:intro:evnn_param}
w^{n+1} = \argmin_{w\in\mathbb{R}^p}\left\{\frac{1}{2\tau}\|u_{\NN}(w)-u_{\NN}(w^n)\|_\calH^2 + \calF[u_{\NN}(w)]\right\},
\end{equation}
While MMS enjoys a clean convex-analytic theory in $\calH$, the neural parameterization typically makes \eqref{eq:intro:evnn_param} nonconvex in $w$, thus the global convergence of gradient-based optimization methods for \eqref{eq:intro:evnn_param} is difficult to achieve. 

It is natural to reformulate (\ref{eq:intro:evnn_param}) in function space by defining the fixed-architecture neural model class $\mathcal{N}:=\{ u_{\NN}(w):w \in \mathbb{R}^p \}$ (note that $\mathcal{N}$ is not a subspace of $\mathcal{H}$, since the sum of two neural networks with the same architecture need not be representable in that architecture). In the function space, we consider
\begin{equation}\label{eq:intro:evnn_function}
u^{n+1}_{\NN} = J_{\tau,\NN}(u^n_{\NN}),
\qquad
J_{\tau,\NN}(x):=\argmin_{u_{\NN}\in \mathcal{N}}\left\{\frac{1}{2\tau}\|u_{\NN}-x\|_\calH^2 + \calF[u_{\NN}]\right\}.
\end{equation}
Well-posedness of $J_{\tau,\NN}$ -- existence and uniqueness of the minimizer over $\calN$ -- follows from Assumption~\ref{ass:intro:locality} together with Theorem~\ref{thm:RGF-conv} via the identification~\eqref{eq:J-tau-NN-well-defined}. Our goal is to recover a global convergence theory by changing viewpoints: we do not optimize over the entire parameter space at each step; instead, we optimize over a local set of \emph{increments} in $\calH$ that forms a Riemannian submanifold.

\subsection{Equivalent Formulation via Increment Optimization}
For $v\in\calH$, define the auxiliary increment functional $g(\cdot;v):\calH\to\mathbb{R}$ by
\begin{equation}\label{eq:intro:g_def}
g(h;v) := \frac{\|h\|_\calH^2}{2\tau}+\calF\bigl[\,h+v\,\bigr].
\end{equation}
Then the MMS update \eqref{eq:intro:mms} is equivalent to
\begin{equation*}
h^n = \argmin_{h\in \calH} g(h;u^n),
\qquad
u^{n+1} = u^n + h^n,
\end{equation*}
so that $h^n = u^{n+1}-u^n$ is the optimal increment. Likewise, the neural MMS update \eqref{eq:intro:evnn_function} can be (formally) written as an increment minimization:
\begin{equation}\label{eq:intro:evnn_increment}
h^n \in \argmin_{h\in \{u_{\NN}(w)-u^n_{\NN}:\,w\in\mathbb{R}^p\}} g(h;u^n_{\NN}),
\qquad
u^{n+1}_{\NN}=u^n_{\NN}+h^n.
\end{equation}
This reformulation separates the \emph{functional} geometry in $\calH$ from the \emph{parameter} nonconvexity, and it is the starting point of our geometric analysis.

\subsection{Increment Manifold and Riemannian-Optimization Viewpoint}
Because the proximal term in \eqref{eq:intro:g_def} penalizes large increments (when $\tau$ is small), the optimizer of \eqref{eq:intro:evnn_increment} is expected to be ``local'' around the previous parameters $w^n$ under certain locality assumption. We formalize this through the following assumption.

\begin{assumption}[Locality of the neural increment]\label{ass:intro:locality}
At each step $n\ge 0$, there exists a radius $r_{w^n}>0$ (possibly depending on $w^n$) such that the increment problem~\eqref{eq:intro:evnn_increment} admits at least one global minimizer that lies in the open neighborhood of $w^n$:
\[
\argmin_{h\in \{u_{\NN}(w)-u^n_{\NN}:\,w\in\mathbb{R}^p\}} g(h;u^n_{\NN})
\ \cap\
\Big\{u_{\NN}(w)-u^n_{\NN}:\,\|w-w^n\|_2 < r_{w^n}\Big\}\ \neq\ \emptyset.
\]
\end{assumption}

Assumption~\ref{ass:intro:locality} formalizes a \emph{locality of global minimizers}:
although the increment problem is posed over the full parameter space,
at least one global minimizer is required to lie near $u_{\mathrm{NN}}(w^n)$.
This is enough to ensure that the local Riemannian gradient flow on $\calM(u_{\mathrm{NN}}(w^n))$
attains the same global minimum value as the unconstrained problem (see Section~\ref{sec:conv}).

Although Assumption~\ref{ass:intro:locality} has the appearance of a circular hypothesis (``the global minimum is local''), it is a verifiable consequence of two standard ingredients: an output-isolation margin $\rho^{\rm in}_{w^n}>0$ around $u_\NN(w^n)$ outside the inner ball $\{w : \|w-w^n\|_2\le r_{w^n}/2\}$, and a sufficiently small step $\tau$ satisfying $2\tau(\calF[u_\NN(w^n)]-\calF_{\inf}) < (\rho^{\rm in}_{w^n})^2$. Under this condition, the strict interior-localization estimate (Lemma~\ref{lem:interior-loc}) places the entire decreasing sublevel set of the increment objective inside the inner ball, which implies Assumption~\ref{ass:intro:locality} and ensures the Riemannian gradient flow analyzed in Section~\ref{sec:conv} never approaches $\partial\calM(u_\NN(w^n))$. Output isolation $\rho^{\rm in}_{w^n}>0$ is a local form of injectivity of $u_\NN$, plausible away from the symmetry orbits of the network architecture; we elaborate in Appendix~\ref{app:suff-locality}, where Theorem~\ref{thm:small-tau-locality} establishes a less conservative variant via the original outer-ball output-isolation radius.

Intuitively, the proximal parameter $\tau$ enforces locality
in function space. Indeed, by the property of MMS, we have
\[
\|u^{n+1}_{\NN}-u^n_{\NN}\|_{\calH}^2
\le
2 \tau (\calF[u^n_{\NN}]-\calF[u^{n+1}_{\NN}])
\le
2 \tau (\calF[u^n_{\NN}]-\calF_{\inf}).
\]
Thus the neural increment is small in the $\calH$-norm when $\tau$ is small;
moreover, this localization becomes stronger in the late-stage regime where
the energy gap $\calF[u^n_{\NN}]-\calF_{\inf}$ is small. 
The additional non-degeneracy and output-isolation conditions serve precisely
to convert this function-space localization into a parameter-space localization
near $w^n$.

Under Assumption~\ref{ass:intro:locality}, define the \emph{increment manifold}
\begin{equation}\label{eq:intro:M_def}
\calM(u^n_{\NN}) := \Big\{u_{\NN}(w)-u^n_{\NN}:\,\|w-w^n\|_2 < r_{w^n}\Big\}\subset \calH,
\end{equation}
together with its compact \emph{localization image}
\begin{equation}\label{eq:intro:K_def}
\calK(u^n_{\NN}) := \Big\{u_{\NN}(w)-u^n_{\NN}:\,\|w-w^n\|_2 \le r_{w^n}/2\Big\}\subset \calM(u^n_{\NN}).
\end{equation}
Then, the neural MMS step becomes
\begin{equation}\label{eq:intro:evnn_increment_manifold}
h^n = \argmin_{h\in \calM(u^n_{\NN})} g(h;u^n_{\NN}),
\qquad
u^{n+1}_{\NN}=u^n_{\NN}+h^n.
\end{equation}
In Section~\ref{sec:embedded-manifold}, under a local non-degeneracy condition on the Jacobian of $u_\NN$, we show that $\calM(u_\NN(\tilde w))$ is a smooth embedded Riemannian submanifold of $\calH$ and identify its tangent spaces and orthogonal projectors; this turns~\eqref{eq:intro:evnn_increment_manifold} into a Riemannian optimization problem in function space. The image $\calK(u^n_{\NN})$ is used in §\ref{sec:conv} only as a compact localization device for the Riemannian gradient flow trajectory.

\subsection{Main Results}\label{sec:intro:main-results}
Our main results are as follows.
\begin{itemize}
  \item \textbf{Geometry of the increment set (Proposition~\ref{prop:manifold}).}
  Under a $C^2$ network with $L$-Lipschitz Jacobian and a locally non-degenerate Jacobian, the increment manifold $\calM(u_\NN(\tilde w))$ --- the image of an \emph{open} parameter ball $B(\tilde w,r_{\tilde w})$ under the parametrization --- is a $p$-dimensional boundaryless smooth embedded submanifold of $\calH$. The image $\calK(u_\NN(\tilde w))$ of the corresponding \emph{closed} inner ball $\{\|w-\tilde w\|_2\le r_{\tilde w}/2\}$ is a compact subset of $\calM(u_\NN(\tilde w))$ and serves as the working set of the analysis.

  \item \textbf{Riemannian-gradient-flow interpretation (Section~\ref{sec:RGF}).}
  A natural preconditioned (Gauss--Newton-type) gradient flow in parameter space induces exactly the Riemannian gradient flow of $g(\cdot;v)$ on $\calM(u_\NN(\tilde w))$ in $\calH$, connecting Gauss--Newton preconditioning to intrinsic function-space dynamics.

  \item \textbf{Continuous and discrete subproblem convergence (Theorems~\ref{thm:RGF-conv},~\ref{thm:discrete-RGD}).}
  Under a strict interior-localization condition (Lemma~\ref{lem:interior-loc}: small step $\tau$ relative to a local output-isolation margin), the decreasing-energy sublevel set $S$ reached by the Riemannian gradient flow is contained in $\calK(u_\NN(\tilde w))$ and lies inside a strongly geodesically convex normal neighborhood $\calG$ of the origin (Lemma~\ref{lem:convex-radius}). On this invariant sublevel set we establish geodesic strong convexity and smoothness of $g(\cdot;v)$ under an explicit \emph{data condition} on $(v,\tau,\calF,L,\lambda_{\tilde w})$ (Theorem~\ref{thm:g}), giving exponential convergence of the Riemannian gradient flow and a matching linear-rate guarantee for the discrete Riemannian gradient descent that takes its steps along the geodesics (i.e., via the exponential map).

  \item \textbf{Global error propagation for inexact neural MMS (Theorems~\ref{thm:global_error},~\ref{thm:global_error-RGD}).}
  We quantify how (i) finite-time inexactness of the inner solver --- either the continuous Riemannian gradient flow run for time $T$ or the discrete Riemannian gradient descent run for $K$ steps --- and (ii) neural approximation error $\varepsilon$ in $J_{\tau,\NN}$ propagate through MMS iterations. Under a step-size condition, the inexact iterates track the exact MMS iterates uniformly and converge to an $O(\delta)$-neighborhood of the true minimizer when $m_\calF>0$.

  \item \textbf{Parameter-space convergence (Theorem~\ref{thm:global-PGF}).}
  The function-space tracking bound transfers to a uniform tracking bound on the parameter iterates themselves, together with an explicit trajectory-length budget that confines the entire sequence $\{w^n_{\NN,T}\}$ to a bounded region of $\R^p$.
\end{itemize}

\rev{%
Figure~\ref{fig:hierarchy} summarizes the conceptual hierarchy from the continuous gradient flow in $\calH$ down to the equivalent algorithmic views (Riemannian gradient flow on $\calM$ and preconditioned gradient descent in $\mathbb{R}^p$), highlighting where each layer of discretization enters.
Figure~\ref{fig:function-space} illustrates the global error propagation: the exact MMS iterates $\{u^n\}$ in $\calH$ converge to the minimizer $u^*$, while the neural MMS iterates $\{u^n_{\mathrm{NN}}\}$, constrained to the model class $\calN$, track the exact iterates within a uniform distance $\delta$ at every step.
}

\noindent\textbf{Scope of this paper.}
We distinguish four sources of error in a practical neural MMS computation.
Let $u(t_n)$ denote the exact gradient-flow solution at time $t_n = n\tau$,
$u^n := J_\tau^n(u^0)$ the exact MMS iterate in $\calH$,
and $u^n_{\NN,T}\in\calN$ the neural MMS iterate produced by running the
inner Riemannian solver for time~$T$ at each step.
Formally,
\begin{equation}\label{eq:intro:full-error-decomp}
u(t_n)-u^n_{\NN,T}
=
\underbrace{\bigl[u(t_n)-u^n\bigr]}_{\text{(I) time discr.}}
+
\underbrace{\bigl[u^n-\tilde u^n\bigr]}_{\text{(II) trial space}}
+
\underbrace{\bigl[\tilde u^n-\hat u^n\bigr]}_{\text{(III) sampling}}
+
\underbrace{\bigl[\hat u^n-u^n_{\NN,T}\bigr]}_{\text{(IV) inner opt.}}.
\end{equation}
Here, $\tilde u^n$ and $\hat u^n$ are conceptual intermediate iterates
corresponding respectively to the population and empirical neural MMS problems, (I) is the classical time-discretization error of MMS;
(II) arises from restricting to the neural model class~$\calN$;
(III) is the quadrature or sampling error from replacing population quantities
by $N$-sample empirical approximations;
and (IV) is the inner-optimization residual from stopping the Riemannian
solver at finite time~$T$.
This paper focuses on term~\textbf{(IV)}.
Terms~(II) and~(III) are not analyzed separately; motivated by universal
approximation theory and Monte Carlo estimates, their combined effect is
absorbed into the one-step precision~$\varepsilon$ of
Assumption~\ref{assm:approx-precision}, leaving a detailed
architecture- and sample-size-dependent analysis for future work.
Under Assumption~\ref{assm:approx-precision}, Theorem~\ref{thm:global_error}
shows that the inner-optimization error~(IV) decays as $O(e^{-\nu T})$ and
propagates stably through the MMS iterations.

\begin{figure}[t]
\centering
\includegraphics[width=0.85\textwidth]{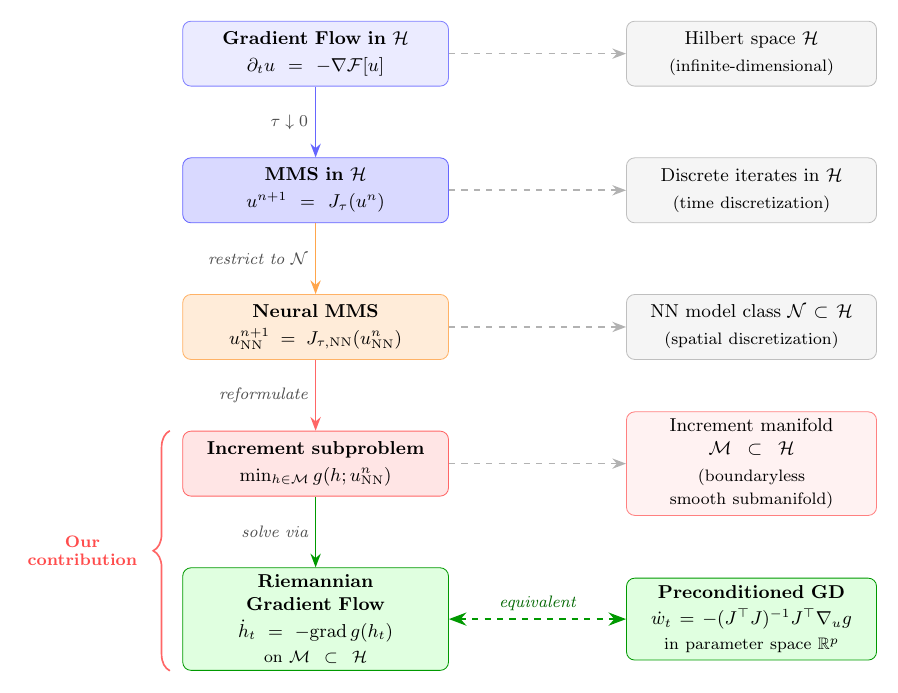}
\caption{Conceptual hierarchy of the framework. Starting from the continuous gradient flow in $\calH$, time discretization yields the MMS, spatial discretization via neural networks gives the neural MMS, and reformulation via increments leads to a Riemannian optimization problem on $\calM\subset\calH$. The Riemannian gradient flow on $\calM$ is equivalent to the preconditioned (Gauss--Newton-type) gradient descent in parameter space $\mathbb{R}^p$.}\label{fig:hierarchy}
\end{figure}

\begin{figure}[t]
\centering
\includegraphics[width=0.75\textwidth]{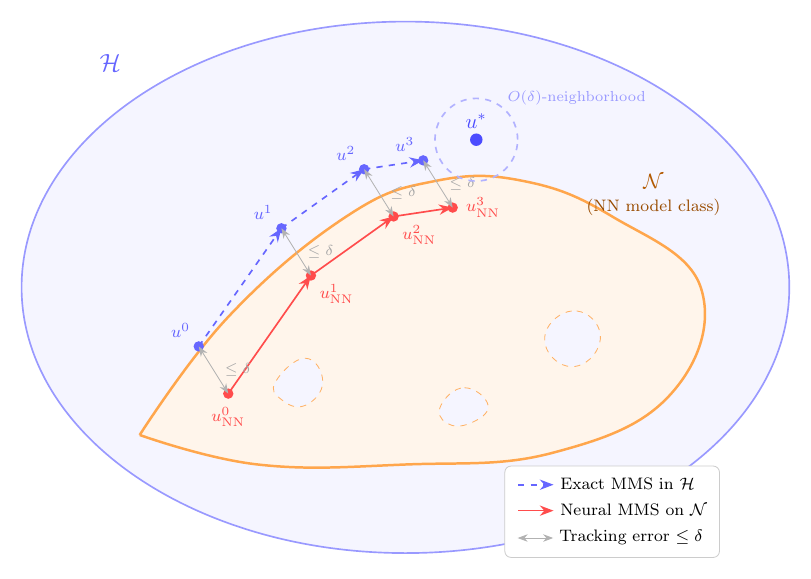}
\caption{Function-space view of the global error propagation. The exact MMS iterates $\{u^n\}$ (blue, dashed) live in the full Hilbert space $\calH$ and converge to the global minimizer $u^*$. The neural MMS iterates $\{u^n_{\mathrm{NN}}\}$ (red, solid) are constrained to the neural network model class $\calN\subset\calH$; the dashed pockets inside $\calN$ indicate that $\calN$ is not a vector subspace (it is not closed under addition or scalar multiplication). Theorem~\ref{thm:global_error} guarantees that $\|u^n_{\mathrm{NN}}-u^n\|_{\calH}\le\delta$ uniformly for all $n$, so the neural iterates converge to an $O(\delta)$-neighborhood of $u^*$.}\label{fig:function-space}
\end{figure}

\subsection{Related Work}
\rev{%
A central question in neural network optimization is whether gradient-based methods converge to global minimizers, and if so, at what rate and under what assumptions on the architecture.
We organize the related literature along three axes that clarify how our result fits into the broader landscape: (i) NTK and over-parameterized convergence, (ii) mean-field and feature-learning approaches, and (iii) second-order and Riemannian geometric methods.
We then discuss related variational and gradient-flow frameworks.
Table~\ref{tab:convergence-comparison} provides a concise comparison.

\paragraph{(i) NTK regime and over-parameterization.}
The neural tangent kernel (NTK) framework of \citet{jacot2018neural} shows that in the infinite-width limit the kernel stays constant during gradient descent, reducing training to a linear ODE and yielding global convergence at a linear rate governed by $\lambda_{\min}(\text{NTK})$.
Subsequent works extended this to finite but large width:
\citet{du2018gradient} proved global convergence for deep over-parameterized ResNets with width $\mathrm{poly}(n)$;
\citet{allen2018convergence} established linear convergence of SGD for FC, CNN, and ResNet architectures with width $\mathrm{poly}(n,L)$;
\citet{zou2019gradient} obtained linear-rate convergence for deep ReLU networks;
and \citet{chen2019much} sharpened the width requirement to $\mathrm{polylog}(n,1/\varepsilon)$ under a data-margin assumption.
For shallow networks, \citet{oymak2019moderate} achieved moderate over-parameterization ($\sqrt{p}>n$).
A key limitation was identified by \citet{hanin2019finite}: the NTK variance scales as $\exp(d/n_{\text{width}})$, so the frozen-kernel approximation breaks down when depth is non-negligible relative to width.
All of these results require width at least polynomial in~$n$, and their convergence rates depend on $\lambda_{\min}(\text{NTK})$, which degrades with depth.
Our framework is complementary: it applies to \emph{fixed, finite-width} architectures and obtains convergence through the Riemannian geometry of the increment manifold, with a rate independent of NTK conditioning.

\paragraph{(ii) Mean-field and feature learning.}
The mean-field perspective \citep{mei2018mean,mei2019mean} treats neurons as particles whose distribution evolves under a Wasserstein gradient flow, yielding global convergence for two-layer networks that---unlike the NTK regime---permits feature learning.
Extensions to three-layer \citep{pham2021global,nguyen2020rigorous} and deep ResNet \citep{lu2020mean} architectures exist, and
\citet{chen2022feature} obtained linear-rate convergence with feature-learning guarantees.
However, all these results require width $\to\infty$ (mean-field limit) and typically characterize limiting dynamics rather than providing non-asymptotic finite-width rates.
Our result operates at fixed finite width with explicit exponential convergence guarantees.

\paragraph{(iii) Second-order and Riemannian geometric methods.}
The closest predecessor to our work is \citet{cayci2025riemannian}, who showed that the Gauss--Newton method induces a Riemannian gradient flow on a low-dimensional submanifold of the function space, achieving geometric convergence at a rate \emph{independent of NTK conditioning} in both underparameterized and overparameterized regimes.
Our work extends this perspective from finite-dimensional Euclidean settings to a continuous Hilbert space~$\calH$ and crucially operates on a \emph{boundaryless increment manifold} (the image of an open parameter ball) within the MMS framework, with a strict interior-localization device keeping the analysis confined to a compact subset of the open chart.
In addition, we provide a \emph{global multi-step error propagation} analysis (Theorem~\ref{thm:global_error}), showing that the inexact neural MMS iterates converge to an $O(\delta)$-neighborhood of the true minimizer---a feature absent from single-subproblem analyses.
\citet{cai2019gram} achieved quadratic convergence for a Gram--Gauss--Newton method but requires sufficient over-parameterization and square loss.
The natural gradient method \citep{amari1998natural,zhang2019fast} preconditions using the Fisher information metric on the \emph{parameter space};
in contrast, our Gauss--Newton preconditioner arises from the \emph{function-space geometry} of the increment manifold embedded in~$\calH$.

\paragraph{Variational and gradient-flow frameworks.}
Neural discretizations of gradient flows via energetic variational principles were introduced by \citet{HuEnergetic2024}.
The MMS has deep connections to the JKO scheme \citep{jordan1998variational} in optimal transport.
The broader Riemannian optimization literature \citep{absil2008optimization,boumal2023intromanifolds} provides algorithms and convergence theory on classical matrix manifolds (Stiefel, Grassmann); our increment manifold $\calM$ differs in that it is defined implicitly through a neural parameterization as the image of an open parameter ball, so it is a \emph{boundaryless} embedded submanifold of $\calH$ whose geometry depends on the architecture and the current parameters.
The theory of gradient flows in metric spaces \citep{ambrosio2005gradient} and the KLS framework \citep{hauer2019kurdyka,attouch2013convergence} give convergence guarantees under tame geometry.
Our analysis uses a different mechanism: we establish geodesic strong convexity on the increment manifold, providing stronger (exponential) convergence rates.

\begin{table}[t]
\caption{Comparison of convergence guarantees for neural network optimization. Here $N$ denotes the number of training samples, $p$ the number of parameters, $H$ (or $L$) the depth, $\lambda_{\min}$ the minimum eigenvalue of the NTK Gram matrix, and $\nu$ the geodesic strong-convexity parameter of the increment subproblem. For our method, $\nu=(1+\tau m_\calF)/\tau$, and the Jacobian non-degeneracy constant $\lambda_{\tilde w}$ (the minimum singular value of the Jacobian) enters the theory through the data condition $C(v)=(4L\kappa/\lambda_{\tilde w}^{2})\max\{\|h^*(v)\|,K(v)\}\le 1$ of Theorem~\ref{thm:g}, where $\kappa=(1+\tau L_\calF)/(1+\tau m_\calF)$, $h^*(v)=J_\tau(v)-v$ is the exact-MMS increment, and $K(v)=\sqrt{2\tau\calF[v]/(1+\tau m_\calF)}$.}\label{tab:convergence-comparison}
\centering
\footnotesize
\setlength{\tabcolsep}{3pt}
\renewcommand{\arraystretch}{1.35}
\begin{tabular}{@{}llll@{}}
\toprule
\makecell[l]{\textbf{Reference}\\\textbf{\scriptsize(Algo.\ / Network)}} & \textbf{Width} & \textbf{Conv.\ rate} & \textbf{Remark} \\
\midrule
\multicolumn{4}{@{}l}{\emph{NTK / over-parameterization}} \\[2pt]
\makecell[l]{\citet{jacot2018neural}\\{\scriptsize GD / FC}} & $\to\infty$ & $e^{-\lambda_{\min} t}$ & Frozen-kernel limit \\[2pt]
\makecell[l]{\citet{du2018gradient}\\{\scriptsize GD / FC, ResNet}} & \makecell[l]{$\Omega(N^4 \cdot 2^{O(H)})$ (FC)\\$\Omega(N^4 H^6)$ (ResNet)} & $(1\!-\!\eta\lambda_{\min}/2)^k$ & Exp.\ in $H$ for FC \\[2pt]
\makecell[l]{\citet{allen2018convergence}\\{\scriptsize SGD / FC, CNN, ResNet}} & $\mathrm{poly}(N,L)$ & $(1\!-\!\eta\lambda_{\min}/N)^k$ & \makecell[l]{Unoptimized poly;\\no exp.\ in $L$} \\[2pt]
\makecell[l]{\citet{zou2019gradient}\\{\scriptsize GD / Deep ReLU}} & $\mathrm{poly}(N)$ & $(1\!-\!\eta\lambda_{\min}/N)^k$ & \\[2pt]
\makecell[l]{\citet{chen2019much}\\{\scriptsize SGD / Deep ReLU}} & $\mathrm{polylog}(N)$ & $(1\!-\!\eta\lambda_{\min}/N)^k$ & Data-margin assumption \\[2pt]
\makecell[l]{\citet{oymak2019moderate}\\{\scriptsize GD, SGD / 2-layer}} & $\sqrt{p}\!>\!N$ & $(1\!-\!\eta\lambda_{\min}/N)^k$ & Moderate over-param. \\
\midrule
\multicolumn{4}{@{}l}{\emph{Mean-field / feature learning}} \\[2pt]
\makecell[l]{\citet{mei2018mean}\\{\scriptsize SGD / 2-layer}} & $\to\infty$ & \makecell[l]{$t\!\to\!\infty$ (PDE);\\ $O(1/p)$ approx.} & \makecell[l]{Mean-field limit;\\no finite-width rate} \\[2pt]
\makecell[l]{\citet{chen2022feature}\\{\scriptsize GF / Multi-layer}} & $O(\log N)$ & $e^{-ct}$ & Feature learning \\
\midrule
\multicolumn{4}{@{}l}{\emph{Second-order / Riemannian}} \\[2pt]
\makecell[l]{\citet{cayci2025riemannian}\\{\scriptsize GN / FC (smooth)}} & \makecell[l]{Non-degenerate $J$} & $e^{-\nu t}$ & \makecell[l]{Finite-dim.\ $\R^d$;\\single subproblem} \\[2pt]
\makecell[l]{\citet{cai2019gram}\\{\scriptsize Gram-GN / Deep NN}} & $\Omega(N^4/\lambda_{\min}^4)$ & $(C/\sqrt{p})\|e_k\|^2$ & \makecell[l]{Quadratic; requires\\Gram matrix inverse} \\
\midrule
\makecell[l]{\textbf{Ours}\\{\scriptsize\textbf{GN-type / FC}}} & \textbf{Fixed, finite} & $\bm{e^{-\nu t}}$ & \makecell[l]{data condition\\ $C(v)\le 1$; see caption} \\
\bottomrule
\end{tabular}
\end{table}
}

\subsection{Notations and Organization of This Paper}

Throughout, $\calH$ denotes a real Hilbert space with inner product $\langle\cdot,\cdot\rangle_\calH$ and norm $\|\cdot\|_\calH$.
We use $\tau>0$ for the MMS step size.
For a neural network $u_{\NN}:\mathbb{R}^p\to \calH$, $\D u_{\NN}(w):\mathbb{R}^p\to \calH$ denotes its Fr\'echet derivative and $\D u_{\NN}(w)^\ast:\calH\to\mathbb{R}^p$ its adjoint.
The increment set  $\calM(u_{\NN}(\tilde w))$ is defined in \eqref{eq:intro:M_def}.
Additional regularity (e.g., Lipschitzness of $\D u_{\NN}$ and local non-degeneracy) will be stated precisely in Section~\ref{sec:embedded-manifold}, while convexity assumptions on $\calF$ needed for MMS stability are summarized in Section~\ref{sec:error-propagation} and Appendix~\ref{sec:conv-mms}.

\rev{%
\begin{table}[h]
\centering
\caption{Summary of principal notation.}
\label{tab:notation}
\begin{tabular}{cl}
\hline
Symbol & Description \\
\hline
$\calH$ & Ambient real Hilbert space (e.g., $L^2(\Omega)$) \\
$\calF$ & Energy functional to minimize \\
$\tau$ & MMS (proximal) step size \\
$u_{\NN}(w)$ & Neural network realization map, $\R^p\to\calH$ \\
$\D u_{\NN}(w)$ & Fr\'echet derivative (Jacobian) of $u_{\NN}$ at $w$ \\
$\calM(v)$ & Boundaryless increment manifold (open chart) centered at $v\in\calH$; see \eqref{eq:intro:M_def} \\
$\calK(v)$ & Compact inner localization image of $\calM(v)$; see \eqref{eq:intro:K_def} \\
$g(h;v)$ & Increment subproblem objective; see \eqref{eq:intro:g_def} \\
$\P_h$ & Orthogonal projector onto $T_h\calM$; see \eqref{eq:Projector} \\
$J_\tau$ & Proximal map of $\calF$ with step $\tau$; see \eqref{eq:prox_map_R} \\
$\lambda_{\tilde w}$ & Non-degeneracy parameter of the Jacobian at $\tilde w$ \\
$L$ & Lipschitz constant of $\D u_{\NN}$ \\
$\nu,\mu$ & Strong convexity/smoothness constants of $g(\cdot;v)$ \\
$\Lambda$ & Second-fundamental-form bound $2L/\lambda_{\tilde w}^2$ for the embedded manifold; see Lemma~\ref{lem:DP_bound} and~\eqref{eq:LambdaBound} \\
$\calG(v)$ & Strongly geodesically convex normal neighborhood of $0$ in $\calM(v)$; see Lemma~\ref{lem:convex-radius} \\
\hline
\end{tabular}
\end{table}
}

The remainder of this paper is organized as follows. Section~\ref{sec:RGF} establishes the Riemannian manifold structure of the increment set and shows that a preconditioned gradient flow in parameter space is equivalent to the Riemannian gradient flow on this manifold. Section~\ref{sec:conv} proves geodesic convexity and strong convexity of the subproblem objective under the interior-localization condition, yielding exponential convergence of the Riemannian gradient flow (Theorem~\ref{thm:RGF-conv}) and a matching linear-rate bound for the discrete Riemannian gradient descent via the exponential map (Theorem~\ref{thm:discrete-RGD}). Section~\ref{sec:error-propagation} propagates finite-time and approximation errors through the MMS iterations, establishing global convergence to an $O(\delta)$-neighborhood in function space for both the continuous and the discrete inner solvers. Section~\ref{sec:param-space} lifts this to a parameter-space convergence result with an explicit trajectory budget. Section~\ref{sec:experiments} presents numerical experiments illustrating the algorithmic implications of the Riemannian interpretation; the results suggest that the Gauss--Newton-type inner solver can provide higher-quality inner solves than Adam and L-BFGS in the tested examples, while exhibiting a competitive, though problem-dependent, computational cost. Section~\ref{sec:conclusions} summarizes our contributions and discusses limitations and future directions. Appendix~\ref{sec:conv-mms} reviews the classical MMS convergence theory, Appendix~\ref{app:suff-locality} gives a sufficient condition for Assumption~\ref{ass:intro:locality}, Appendix~\ref{app:deferred-proofs} collects deferred technical proofs, and the remaining appendices discuss the standing assumptions in detail.

\section{From Preconditioned Gradient Flow to Riemannian Gradient Flow}\label{sec:RGF}
\rev{%
The goal of this section is twofold: (i) to show that the increment set $\calM(u_{\NN}(\tilde w))$
carries the structure of an embedded Riemannian submanifold of $\calH$, and (ii) to demonstrate that
a natural Gauss--Newton-type preconditioned gradient flow in parameter space induces
exactly the Riemannian gradient flow on this manifold.
The key insight is that the Gauss--Newton preconditioner $(J^*J)^{-1}J^*$ in parameter space
is precisely the orthogonal projector onto the tangent space of $\calM$ in $\calH$; thus,
the preconditioned flow is geometrically natural---it performs steepest descent on the manifold
with respect to the ambient Hilbert metric.
Figure~\ref{fig:increment-manifold} illustrates this equivalence for a single MMS step: the neural network map $u_{\mathrm{NN}}$ sends the ball $B(w^n, r_{w^n})$ in parameter space to the increment manifold $\calM(u^n_{\mathrm{NN}})\subset\calH$, and the preconditioned gradient descent (PGD) in $\mathbb{R}^p$ induces exactly the Riemannian gradient flow (RGF) on $\calM$.
}

\begin{figure}[t]
\centering
\includegraphics[width=\textwidth]{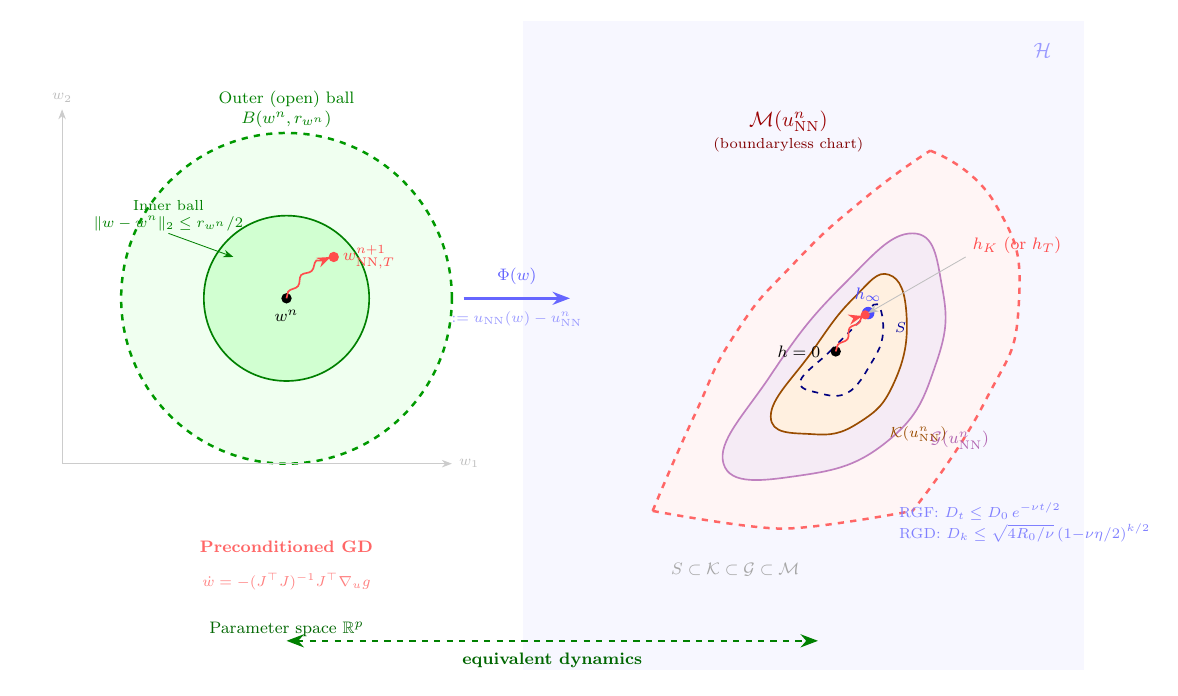}
\caption{A single neural MMS step under the two-radius construction. \textbf{Left:} Parameter space $\R^p$ with the open outer ball $B(w^n, r_{w^n})$ (dashed boundary) and the closed inner ball of radius $r_{w^n}/2$ (solid boundary); the preconditioned gradient descent (PGD) trajectory stays inside the inner ball under the interior-localization condition (Lemma~\ref{lem:interior-loc}). \textbf{Right:} The map $\Phi(w) := u_\NN(w) - u^n_{\NN}$ sends the open ball to the boundaryless increment manifold $\calM(u^n_{\NN}) \subset \calH$. The closed inner ball image $\calK(u^n_{\NN})$ (orange) sits inside the strongly geodesically convex normal neighborhood $\calG(u^n_{\NN})$ (purple, Lemma~\ref{lem:convex-radius}). The Riemannian gradient flow (or discrete RGD), starting at $h=0$, remains in the sublevel set $S \subset \calK \subset \calG$ and converges exponentially to $h_\infty$: $D_t \le D_0 e^{-\nu t/2}$ (continuous, Theorem~\ref{thm:RGF-conv}) or $D_k \le \sqrt{4R_0/\nu}\,(1-\nu\eta/2)^{k/2}$ (discrete, Theorem~\ref{thm:discrete-RGD}).}\label{fig:increment-manifold}
\end{figure}

This section adapts the analysis in \citet{cayci2025riemannian} to a continuous Hilbert space $\mathcal{H}$, generalizing the results originally established in the finite-dimensional Euclidean space.
\subsection{The Increment Set as an Embedded Riemannian Submanifold}\label{sec:embedded-manifold}

\begin{definition}[$C^2$ with $L$-Lipschitz Jacobian]\label{def:NN-Lip-L}
We say the neural network $u_\NN:\R^p\to\calH$ has an \emph{$L$-Lipschitz Jacobian} if $u_\NN\in C^2(\R^p;\calH)$ and its Fr\'echet derivative $\D u_\NN:\R^p\to\mathcal{L}(\R^p,\calH)$ satisfies
\begin{equation}\label{eq:lipschitz-jacobian}
\nm{\D u_\NN(w_1) - \D u_\NN(w_2)}_{\mathrm{op}} \leq L \nm{w_1 -  w_2}_2 \qquad\text{for all }w_1,w_2\in\R^p,
\end{equation}
with constant $L>0$ depending on the activation and architecture.
\end{definition}

\begin{definition}[Non-degeneracy Jacobian]\label{def:non-degen-jacobian} 
We call the neural network $u_\NN$ to have a $\lambda$-non-degenerate Jacobian at $w \in \mathbb R^p$ if 
there exists $\lambda >0$ such that 
\[
\D u_\NN(w)^*\D u_\NN(w) \succeq 4 \lambda^2 {I}_p
\]
where $\D u_\NN(w)$ denotes the Fr\'echet derivative \cite{} and $\D u_\NN (w)^*: \mathcal{H}\rightarrow\mathbb R^p$ denotes the adjoint, 
i.e.,
\[
\langle \D {u}_\NN (w)z,  \D {u}_\NN (w)z\rangle_{\mathcal H}
 \ge  4 \lambda^2 \|z\|_{\mathbb R^p}^2,
\qquad \forall z\in\mathbb R^p .
\]
\end{definition}

Here $\D u_\NN(w)^*\D u_\NN(w)\in\R^{p\times p}$ is the Gram matrix of the partial derivatives $\partial_{w_j}u_\NN(\cdot;w)$ with respect to the $\calH$-inner product, so the non-degeneracy condition amounts to uniform positive definiteness of this Gram matrix.

\begin{lemma}[Local preservation of positive definiteness]\label{lem:gram-lower-bound}
Assume $u_\NN$ is a neural network that has an $L$-Lipschitz Jacobian and a $\lambda_{\tilde w}$-non-degenerate Jacobian at $\tilde w \in\mathbb R^p$.
Define 
\begin{equation}\label{eq:B}
    B(\tilde w, r_{\tilde w}) := \{w\in \mathbb R ^p: \nm{w - \tilde w}_2 < r_{\tilde w} \}
\end{equation}
where
\begin{equation}\label{eq:rwdef}
r_{\tilde w} := \min \left\{\frac{\lambda_{\tilde w}}{L},\ \frac{\lambda_{\tilde w}^2}{2L\bigl(\sqrt{\nm{\D u_\NN(\tilde w)}_{\mathrm{op}}^2 + \lambda_{\tilde w}^2} + \nm{\D u_\NN(\tilde w)}_{\mathrm{op}}\bigr)},\ \frac{\lambda_{\tilde w}^2(1+\tau m_\calF)}{4L\,\Lip_{u_\NN}(1+\tau L_\calF)} \right\}.
\end{equation}
All three entries depend only on the local Jacobian data $(\lambda_{\tilde w},\,\nm{\D u_\NN(\tilde w)}_{\mathrm{op}},\,L,\,\Lip_{u_\NN})$ and the $\calF$-condition ratio $\kappa:=(1+\tau L_\calF)/(1+\tau m_\calF)$. Equivalently, with $\nu=(1+\tau m_\calF)/\tau$, $\mu=(1+\tau L_\calF)/\tau$ (Lemma~\ref{lem:smooth-strongconv}), the third entry reads $\nu\lambda_{\tilde w}^{2}/(4\mu L\,\Lip_{u_\NN})$, manifestly a function of the data alone.
Then for all $w \in B(\tilde w,r_{\tilde w})$, we have 
\begin{equation}\label{eq:gram-lower-bound}
\D u_\NN (w)^{*}\D u_\NN (w)
\succeq \lambda_{\tilde w}^2  I_p.
\end{equation}
Therefore, $u_\NN$ is an immersion on $B(w_v,r_v)$. 
\end{lemma}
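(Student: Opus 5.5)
Write $J(w):=\D u_\NN(w)$, $\tilde J:=J(\tilde w)$ and $E:=J(w)-\tilde J$. The plan is a direct perturbation argument on the Gram operator. By Definition~\ref{def:non-degen-jacobian}, $\tilde J^*\tilde J\succeq 4\lambda_{\tilde w}^2 I_p$. By the Lipschitz bound \eqref{eq:lipschitz-jacobian}, $\nm{E}_{\mathrm{op}}\le L\nm{w-\tilde w}_2<L r_{\tilde w}$ for $w\in B(\tilde w,r_{\tilde w})$. It then suffices to prove the quadratic-form inequality $\nm{J(w)z}_\calH^2\ge\lambda_{\tilde w}^2\nm{z}_2^2$ for all $z\in\R^p$.

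I see two routes and would use whichever matches the available entry of \eqref{eq:rwdef}.

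The simplest route works at the level of singular values. By the reverse triangle inequality,
\[
\nm{J(w)z}_\calH\ge\nm{\tilde Jz}_\calH-\nm{Ez}_\calH\ge(2\lambda_{\tilde w}-Lr_{\tilde w})\nm{z}_2\ge\lambda_{\tilde w}\nm{z}_2 ,
\]
using the first entry $r_{\tilde w}\le\lambda_{\tilde w}/L$. Squaring gives \eqref{eq:gram-lower-bound} immediately.

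The second route expands the Gram operator:
\[
J(w)^*J(w)=\tilde J^*\tilde J+\tilde J^*E+E^*\tilde J+E^*E .
\]
Hence
\[
\nm{J(w)^*J(w)-\tilde J^*\tilde J}_{\mathrm{op}}\le 2\nm{\tilde J}_{\mathrm{op}}\,\epsilon+\epsilon^2,\qquad \epsilon=Lr_{\tilde w}.
\]
We need this to be at most $3\lambda_{\tilde w}^2$. Solving the quadratic, it suffices that $\epsilon\le\sqrt{\nm{\tilde J}^2+3\lambda_{\tilde w}^2}-\nm{\tilde J}$, which equals $3\lambda_{\tilde w}^2/(\sqrt{\cdot}+\nm{\tilde J})$. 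The second entry of \eqref{eq:rwdef} is smaller than this bound, so it also suffices. Either entry therefore closes the argument, and the minimum in \eqref{eq:rwdef} guarantees both. The third entry is not needed here; it is reserved for later lemmas.

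Finally, $J(w)^*J(w)\succeq\lambda_{\tilde w}^2 I_p$ with $\lambda_{\tilde w}>0$ means $J(w)$ is injective on $\R^p$. So $u_\NN$, which is $C^2$ and hence $C^1$, is an immersion on the ball. (The statement's "$B(w_v,r_v)$" should read $B(\tilde w,r_{\tilde w})$.)

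There is no real obstacle. The only care needed is that the operator-norm estimates hold in the Hilbert-valued setting, where the adjoint is well defined because $J(w)$ has finite rank, and that the strict inequality $\nm{w-\tilde w}_2<r_{\tilde w}$ keeps the bound nonstrict as claimed.
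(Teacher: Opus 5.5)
Your first route is exactly the paper's proof: the reverse triangle inequality $\|\D u_\NN(w)z\|_\calH\ge\|\D u_\NN(\tilde w)z\|_\calH-L\|w-\tilde w\|_2\|z\|_2$, combined with $\sigma_{\min}(\D u_\NN(\tilde w))\ge 2\lambda_{\tilde w}$ and the first entry $r_{\tilde w}\le\lambda_{\tilde w}/L$, gives $\sigma_{\min}(\D u_\NN(w))>\lambda_{\tilde w}$, so the proposal is correct. Your second, Gram-expansion route through the second entry of \eqref{eq:rwdef} also checks out, but it is not needed here.
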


\begin{proof}
For any unit vector $ z\in\mathbb S^{p-1}$, we have
\[
\|\D  u_\NN (w)[z]\|_{\mathcal H}
\ge \|\D  u_\NN (\tilde w)[z]\|_{\mathcal H}
   -\|(\D  u_\NN (w)-\D  u_\NN (\tilde w))[z]\|_{\mathcal H}
\ge \|\D u_\NN (\tilde w)[z]\|_{\mathcal H} -  L\|w-\tilde w\|_2 .
\]
Taking the minimum over $z \in\mathbb S^{p-1}$ gives
\[
\sqrt{\lambda_{\min}\!\big(\D u_\NN (w)^{*}\D u_\NN (w)\big)}
 \ge 
\sqrt{\lambda_{\min}\!\big(\D u_\NN (\tilde w)^{*}\D u_\NN (\tilde w)\big)}
-  L \|w-\tilde w\|_2 .
\]
Since $u_\NN$ has $\lambda_{\tilde w}$-non-degenerate Jacobian and
$\|w-\tilde w\|_2 < r_{\tilde w} \le \tfrac{\lambda_{\tilde w}}{L}$,
we have
\[
\sqrt{\lambda_{\min}(\D u_\NN (w)^{*}\D u_\NN (w))} > \sqrt{4 \lambda_{\tilde w}^2} - L\cdot\frac{\lambda_{\tilde w}}{L} = \lambda_{\tilde w},
\]
which implies \eqref{eq:gram-lower-bound}.
\end{proof}

Set $r^{\rm in}_{\tilde w} := r_{\tilde w}/2$, and define the \emph{increment manifold}
\begin{equation}\label{eq:Mn}
    \mathcal{M}( u_\NN(\tilde w)) := \left\{ u_\NN(w) - u_\NN(\tilde w) : w \in B(\tilde w, r_{\tilde w}) \right\} \subset \calH
\end{equation}
together with its \emph{compact localization image}
\begin{equation}\label{eq:Kn}
    \mathcal{K}( u_\NN(\tilde w)) := \left\{ u_\NN(w) - u_\NN(\tilde w) : w \in \R^p,\,\|w-\tilde w\|_2 \le r^{\rm in}_{\tilde w} \right\} \subset \mathcal{M}( u_\NN(\tilde w)).
\end{equation}

\begin{proposition}\label{prop:manifold}
$\mathcal{M}( u_\NN(\tilde w))$ is a $p$-dimensional boundaryless smooth embedded submanifold of $\mathcal{H}$, and $\calK(u_\NN(\tilde w))$ is a compact subset of $\calM(u_\NN(\tilde w))$.
\end{proposition}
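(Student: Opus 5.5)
The plan is to show that $\Phi(w):=u_\NN(w)-u_\NN(\tilde w)$, restricted to the open ball $B(\tilde w,r_{\tilde w})$, is an injective immersion that is a homeomorphism onto its image. An injective immersion that is a homeomorphism onto its image (with the subspace topology from $\calH$) is an embedding. Its image is then a boundaryless embedded submanifold of dimension $p$, since the domain is an open subset of $\R^p$ without boundary. Smoothness of the charts is inherited from $u_\NN\in C^2$. We may read ``smooth'' as $C^2$, or simply as the regularity class of $u_\NN$.

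\textbf{Immersion.} This is Lemma~\ref{lem:gram-lower-bound}: on $B(\tilde w,r_{\tilde w})$ we have $\D u_\NN(w)^*\D u_\NN(w)\succeq\lambda_{\tilde w}^2 I_p$. Hence $\D\Phi(w)=\D u_\NN(w)$ is injective with closed range, and $\|\D\Phi(w)z\|_\calH\ge\lambda_{\tilde w}\|z\|_2$.

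\textbf{Quantitative injectivity.} For $w_1,w_2\in B(\tilde w,r_{\tilde w})$, write
\[
\Phi(w_1)-\Phi(w_2)=\int_0^1 \D u_\NN\bigl(w_2+s(w_1-w_2)\bigr)(w_1-w_2)\,\ud s .
\]
The segment stays in the ball by convexity. Compare the integrand with $\D u_\NN(\tilde w)(w_1-w_2)$ using the Lipschitz bound. Each point of the segment lies within distance $r_{\tilde w}$ of $\tilde w$, so
\[
\|\Phi(w_1)-\Phi(w_2)\|_\calH\ \ge\ \bigl(2\lambda_{\tilde w}-L r_{\tilde w}\bigr)\|w_1-w_2\|_2\ \ge\ \lambda_{\tilde w}\|w_1-w_2\|_2,
\]
where the last step uses $r_{\tilde w}\le\lambda_{\tilde w}/L$ from~\eqref{eq:rwdef}. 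This lower Lipschitz estimate is the crux. It gives injectivity, and it also shows that $\Phi^{-1}:\calM\to B(\tilde w,r_{\tilde w})$ is $(1/\lambda_{\tilde w})$-Lipschitz. Together with continuity of $\Phi$, this makes $\Phi$ a homeomorphism onto $\calM$ in the subspace topology, which is exactly what rules out the usual failure of injective immersions to be embeddings.

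\textbf{Local submanifold charts.} To exhibit slice charts in the Hilbert-space sense, fix $w_0$ and let $T=\mathrm{range}\,\D u_\NN(w_0)$. This is a $p$-dimensional closed subspace, so $\calH=T\oplus T^\perp$. Define
\[
\Psi(w,n):=\Phi(w)+n \quad\text{on } B(\tilde w,r_{\tilde w})\times T^\perp .
\]
Then $\Psi$ is $C^2$ with invertible derivative at $(w_0,0)$, namely $(z,n)\mapsto \D u_\NN(w_0)z+n$. By the inverse function theorem in Banach spaces, $\Psi$ is a local diffeomorphism near $(w_0,0)$, and it maps $\{n=0\}$ onto $\calM$ locally. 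The homeomorphism property ensures that near $\Phi(w_0)$, $\calM$ coincides with this local slice. Otherwise, points $\Phi(w)$ with $w$ far from $w_0$ could accumulate at $\Phi(w_0)$, and the bi-Lipschitz estimate excludes this.

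\textbf{Compactness of $\calK$.} The set $\calK$ is the continuous image of the compact closed ball $\{\|w-\tilde w\|_2\le r_{\tilde w}/2\}$, which lies inside the open ball. Hence $\calK$ is compact and contained in $\calM$.

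The main obstacle I expect is the embedding, that is, the homeomorphism onto the image, rather than the immersion. The integral-form lower bound handles it, provided one notes that the whole segment lies in the ball, which holds because the ball is convex.
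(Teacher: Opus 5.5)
Your proposal is correct and follows essentially the same route as the paper: the immersion property comes from Lemma~\ref{lem:gram-lower-bound}, the integral-form bound $\|\Phi(w_1)-\Phi(w_2)\|_\calH\ge(2\lambda_{\tilde w}-Lr_{\tilde w})\|w_1-w_2\|_2\ge\lambda_{\tilde w}\|w_1-w_2\|_2$ gives a Lipschitz inverse and hence a homeomorphism onto the image, and $\calK$ is compact as the continuous image of the closed inner ball. Your explicit slice-chart construction via the Banach-space inverse function theorem, together with reading ``smooth'' as $C^2$, makes rigorous in the infinite-dimensional ambient space $\calH$ what the paper obtains by citing a finite-dimensional embedding result (Lee, Prop.~5.30).
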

\rev{%
\begin{proof}[Proof sketch (full proof in Appendix~\ref{app:proof-manifold})]
Let $\Phi(w) := u_\NN(w) - u_\NN(\tilde w)$, so $\calM(u_\NN(\tilde w)) = \Phi(B(\tilde w, r_{\tilde w}))$. The Lipschitz argument behind Lemma~\ref{lem:gram-lower-bound} gives the bi-Lipschitz lower bound $\|u_\NN(w) - u_\NN(w')\|_{\calH} \ge (2\lambda_{\tilde w} - Lr_{\tilde w})\|w - w'\|_2$, strictly positive by~\eqref{eq:rwdef}, together with the full-rank differential $\D\Phi(w) = \D u_\NN(w)$ on $B(\tilde w, r_{\tilde w})$. Hence $\Phi$ is a smooth embedding of the open ball $B(\tilde w, r_{\tilde w})$ into $\calH$ \citep[Prop.~5.30]{Lee2003}, so $\calM(u_\NN(\tilde w))$ is a boundaryless smooth embedded submanifold. Compactness of $\calK(u_\NN(\tilde w))$ follows from continuity of $\Phi$ on the compact set $\{w : \|w-\tilde w\|_2 \le r^{\rm in}_{\tilde w}\}$.
\end{proof}
}

\begin{proposition}[Tangent Space]\label{prop:tangent_metric}
Let $h=u_\NN(w)-u_\NN(\tilde w)\in \mathcal{M}(u_\NN(\tilde w))$. Then
\[
T_h\mathcal{M}(u_\NN(\tilde w))=\mathrm{Im}(\mathrm D u_\NN(w))\subset \mathcal H.
\]
Moreover, the Riemannian metric on $\mathcal{M}(u_\NN(\tilde w))$ can be chosen as the metric
induced by the ambient inner product on $\mathcal H$, i.e.
\[
\ip{\xi,\zeta}_{\mathcal{M}(u_\NN(\tilde w))}:=\ip{\xi,\zeta}_{\mathcal H},
\qquad \forall \xi,\zeta\in T_h\mathcal{M}(u_\NN(\tilde w)).
\]
In particular, $(\mathcal{M}(u_\NN(\tilde w)),\ip{\cdot,\cdot}_{\mathcal{M}(u_\NN(\tilde w))})$
is an embedded Riemannian submanifold of $\mathcal H$.
\end{proposition}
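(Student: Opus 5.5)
The plan is to use the embedding $\Phi(w):=u_\NN(w)-u_\NN(\tilde w)$ from the proof of Proposition~\ref{prop:manifold}. That proof shows $\Phi$ is a smooth embedding of the open ball $B(\tilde w,r_{\tilde w})$ onto $\calM(u_\NN(\tilde w))$. Hence $\Phi$ serves as a global chart (parametrization) of the manifold, and tangent vectors at $h=\Phi(w)$ are velocities of smooth curves through $h$ that lie in $\calM(u_\NN(\tilde w))$.

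\emph{Inclusion $T_h\calM\subseteq\mathrm{Im}(\D u_\NN(w))$.} Let $\gamma(t)$ be a smooth curve in $\calM(u_\NN(\tilde w))$ with $\gamma(0)=h$. Because $\Phi$ is a homeomorphism onto its image with a smooth inverse in the manifold sense, we can write $\gamma(t)=\Phi(c(t))$ with $c(t):=\Phi^{-1}(\gamma(t))$ a $C^1$ curve in $B(\tilde w,r_{\tilde w})$ and $c(0)=w$. The chain rule in $\calH$ then gives $\gamma'(0)=\D u_\NN(w)[c'(0)]\in\mathrm{Im}(\D u_\NN(w))$.

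\emph{Inclusion $\mathrm{Im}(\D u_\NN(w))\subseteq T_h\calM$.} Given $z\in\R^p$, take $c(t)=w+tz$. This curve stays in the open ball for small $|t|$, and $\frac{\ud}{\ud t}\Phi(c(t))|_{t=0}=\D u_\NN(w)z$.

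These two inclusions give equality. By Lemma~\ref{lem:gram-lower-bound}, $\D u_\NN(w)$ is injective, so $\mathrm{Im}(\D u_\NN(w))$ is $p$-dimensional, which is consistent with $\dim\calM=p$. Being finite-dimensional, it is also a closed subspace of $\calH$.

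\emph{Metric.} Restricting $\ip{\cdot,\cdot}_\calH$ to each $T_h\calM$ gives an inner product there. To check that it varies smoothly, pull it back through the chart: $\ip{\D u_\NN(w)a,\D u_\NN(w)b}_\calH=a^\top G(w)b$ with Gram matrix $G(w)=\D u_\NN(w)^*\D u_\NN(w)$. Since $u_\NN\in C^2$, $G$ is $C^1$ in $w$, and $G(w)\succeq\lambda_{\tilde w}^2I_p$ by \eqref{eq:gram-lower-bound}. So the induced metric is a positive-definite metric in coordinates, and $(\calM,\ip{\cdot,\cdot}_\calH)$ is an embedded Riemannian submanifold of $\calH$.

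The main obstacle is the first inclusion, specifically showing that $\Phi^{-1}\circ\gamma$ is differentiable in an infinite-dimensional ambient space. I would get this from the bi-Lipschitz lower bound $\|\Phi(w)-\Phi(w')\|_\calH\ge(2\lambda_{\tilde w}-Lr_{\tilde w})\|w-w'\|_2$ quoted in the proof of Proposition~\ref{prop:manifold}, combined with a local left inverse. Let $P:\calH\to\R^p$ be $P:=(G(w)^{-1}\D u_\NN(w)^*)$, which is bounded and linear. Apply the finite-dimensional inverse function theorem to $P\circ\Phi$ near $w$: it is $C^2$ with invertible derivative $I_p$, so it has a $C^1$ local inverse. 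Then $\Phi^{-1}=(P\circ\Phi)^{-1}\circ P$ on $\calM$ near $h$, and therefore $\Phi^{-1}\circ\gamma=(P\circ\Phi)^{-1}\circ(P\circ\gamma)$ is $C^1$.
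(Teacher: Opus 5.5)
Your proposal is correct and follows the same route as the paper's proof in Appendix~\ref{app:proof-tangent}. Both prove the two inclusions with curves: a curve in the manifold is written as $\gamma=\Phi\circ c$, the straight-line curve $c(t)=w+tz$ gives the reverse inclusion, and the metric is obtained by restricting $\langle\cdot,\cdot\rangle_\calH$ to each tangent space. The one difference is that you justify the step the paper only asserts, namely that $\Phi^{-1}\circ\gamma$ is $C^1$; you do this via the bounded left inverse $P=G(w)^{-1}\D u_\NN(w)^*$, the finite-dimensional inverse function theorem, and the bi-Lipschitz bound that keeps $\gamma(t)$ in the local branch around $w$. You also check positive-definiteness and $C^1$ regularity of the metric in coordinates, so your version is the more rigorous of the two.
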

\begin{proof}
See Appendix~\ref{app:proof-tangent}.
\end{proof}

\begin{proposition}\label{prop:projection}
For $h = u_\NN(w) - u_\NN(\tilde w)\in\mathcal{M}( u_\NN(\tilde w))$,
define the bounded linear map
\begin{equation}\label{eq:Projector}
\P_h:=\D u_\NN(w) \left(\D u_\NN (w)^*\D u_\NN (w) \right)^{-1} \D u_\NN (w)^*. 
\end{equation}
    Then we have: 
\begin{enumerate}
    \item[(a) ]$\P_h$ is a projection from $\mathcal{H}$ onto $T_h\mathcal{M}( u_\NN(\tilde w))$. In other words, 
    \[
    \P_h(z) = \argmin_{y \in T_h\mathcal{M}( u_\NN(\tilde w))} \nm{y - z}^2_{\mathcal{H}}. 
    \]
    
\item[(b)] The Riemannian gradient of $g(\cdot; v)$ at $h \in \mathcal{M}( u_\NN(\tilde w))$ is 
\begin{equation*}
\operatorname{grad}_{\mathcal{M}( u_\NN(\tilde w))} g(h; v)
   =\P_h \left( \nabla g(h; v) \right).
\end{equation*}
\end{enumerate}
\end{proposition}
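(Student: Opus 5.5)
Write $J:=\D u_\NN(w)$ for the fixed $w\in B(\tilde w,r_{\tilde w})$ with $h=u_\NN(w)-u_\NN(\tilde w)$. Since $\calM(u_\NN(\tilde w))$ is embedded (Proposition~\ref{prop:manifold}), $\Phi$ is injective on the ball, so this $w$ is uniquely determined by $h$ and $\P_h$ is well defined. Lemma~\ref{lem:gram-lower-bound} gives $J^*J\succeq\lambda_{\tilde w}^2 I_p$, so the $p\times p$ Gram matrix $J^*J$ is invertible with $\|(J^*J)^{-1}\|\le\lambda_{\tilde w}^{-2}$. Hence $\P_h=J(J^*J)^{-1}J^*$ is a composition of bounded linear maps $\calH\to\R^p\to\R^p\to\calH$, and so it is bounded.

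For part (a) I will check the standard characterization of an orthogonal projector. First, $\P_h^2=J(J^*J)^{-1}(J^*J)(J^*J)^{-1}J^*=\P_h$. Second, $\P_h^*=\P_h$, because $(J^*J)^{-1}$ is a symmetric matrix. Third, I identify the range. Clearly $\mathrm{Im}\,\P_h\subset\mathrm{Im}\,J$. Conversely, for $y=Jz$ we have $\P_h y=J(J^*J)^{-1}J^*Jz=Jz=y$, so $\mathrm{Im}\,\P_h=\mathrm{Im}\,J$, which equals $T_h\calM(u_\NN(\tilde w))$ by Proposition~\ref{prop:tangent_metric}. This subspace is finite-dimensional and therefore closed.

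To get the argmin property, take $z\in\calH$ and $y\in T_h\calM$, say $y=Jz'$. Then $\ip{z-\P_h z,\,Jz'}_\calH=\ip{J^*z-J^*J(J^*J)^{-1}J^*z,\,z'}=0$, so $z-\P_h z\perp T_h\calM$. Pythagoras then gives $\|y-z\|^2=\|y-\P_h z\|^2+\|\P_h z-z\|^2$. This is minimized uniquely at $y=\P_h z$, which is the argmin claim in (a).

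For part (b), recall that $\calM$ carries the metric induced from $\calH$ (Proposition~\ref{prop:tangent_metric}). The Riemannian gradient is then the unique $\xi\in T_h\calM$ with $\ip{\xi,\eta}_\calH=\D g(h;v)[\eta]$ for all $\eta\in T_h\calM$. Here I assume, as the paper does, that $g(\cdot;v)$ is Fréchet differentiable on $\calH$ with gradient $\nabla g(h;v)=h/\tau+\nabla\calF[h+v]$. Then $g|_\calM$ is smooth enough, and the differential of the restriction along any curve $c$ in $\calM$ with $c'(0)=\eta$ is $\ip{\nabla g(h;v),\eta}_\calH$ by the chain rule. Since $\eta=\P_h\eta$ and $\P_h$ is self-adjoint, $\ip{\nabla g,\eta}=\ip{\nabla g,\P_h\eta}=\ip{\P_h\nabla g,\eta}$. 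Also $\P_h\nabla g\in T_h\calM$, so uniqueness of the Riesz representer in the finite-dimensional inner-product space $T_h\calM$ gives the formula.

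The only delicate point is justifying that the tangent vectors at $h$ are exactly the velocities $c'(0)$ of $C^1$ curves in $\calM$, so that the chain rule applies to the restriction of $g$. For this I will take curves $c(t)=\Phi(w+tz)$ with $z\in\R^p$. These curves realize every $\eta=Jz$, and the computation reduces to the chain rule for $g\circ\Phi$.
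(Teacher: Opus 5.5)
Your proof is correct and follows essentially the same route as the paper: identify $T_h\calM(u_\NN(\tilde w))=\mathrm{Im}\,\D u_\NN(w)$, show that $\P_h$ is the orthogonal projector onto this subspace, and read off the Riemannian gradient as the tangent Riesz representer of $\ip{\nabla g(h;v),\cdot}_\calH$. The only differences are cosmetic. You certify the argmin via $z-\P_h z\perp T_h\calM$ and Pythagoras, and you make the self-adjointness of $\P_h$ explicit in part (b). The paper instead reduces to the least-squares problem in $\R^p$ and solves the normal equations, which is the same computation.
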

\begin{proof}
See Appendix~\ref{app:proof-projection}.
\end{proof}

\subsection{Preconditioned Gradient Flow as Riemannian Gradient Flow.}
Recall \eqref{eq:intro:evnn_increment_manifold}:
\begin{equation*}
\begin{cases}
h ^{n}
= \displaystyle\argmin_{h\in \{u_\NN(w) - u_\NN(w^n): w \in \mathbb R^p \}} g(h; u_\NN^n)\\[1em]
u_\NN^{n+1} = h^{n} + u_\NN^n
\end{cases},
\end{equation*}

In parameter space $\mathbb R ^p$, this corresponds

\begin{equation}\label{eq:EVNN-param}
\begin{cases}
w^{n+1}=\argmin_{w\in\R^p}
g(u_\NN(w) - u_\NN(w^n);u_\NN^n),\\[1em]
u_\NN^{n+1}= u_\NN(w^{n+1}).
\end{cases}
\end{equation}

Now consider a preconditioned gradient flow for solving \eqref{eq:EVNN-param}:
\begin{equation}\label{eq:GN-param}
\frac{\mathrm d w_t}{\mathrm dt}
   =- \left(\D u_\NN(w_t)^*\D u_\NN(w_t) \right)^{-1} \D u_\NN(w_t)^* \nabla g( u_\NN(w_t) - u_\NN(w^n);u_\NN^n),
   \qquad w_0=w^n,
\end{equation}

where $\left(\D u_\NN(w_t)^*\D u_\NN(w_t) \right)^{-1}$ is the preconditioner, and the gradient term $\nabla g( u_\NN(w_t) - u_\NN(w^n);u_\NN^n)$ is the gradient $\nabla g(h; u_\NN^n)$ evaluated at $h = u_\NN(w_t) - u_\NN(w^n)$.

Define the induced trajectory in $\mathcal{H}$ as
\[
h_t:= u_\NN(w_t) - u_\NN(w^n) \in\mathcal H.
\]
By the chain rule and~\eqref{eq:GN-param},
\begin{eqnarray}
\frac{\mathrm d h_t}{\mathrm dt}
 \notag &=& \D u_\NN(w_t)\frac{\mathrm d w_t}{\mathrm dt}
 \\
 \notag &=&-\D u_\NN(w_t) \left(\D u_\NN (w_t)^*\D u_\NN (w_t) \right)^{-1} \D u_\NN (w_t)^* \nabla g(h_t;u_\NN^n)\\ 
 &=& - \P_{h_t} \left( \nabla g(h_t;u_\NN^n) \right), \notag
\end{eqnarray}
which is a Riemannian gradient flow on $\mathcal{M}(u_\NN(w^n))$.

\begin{remark}
The theoretical framework assumes the objective functional $g$ depends primarily on the function values $h$, rather than their spatial derivatives $\nabla h$. Consequently, variational problems governed by high-order differential operators, such as solving the Poisson equation via minimization of the Dirichlet energy ($H^1$-norm), do not naturally fit within the proposed analysis. If one takes $\mathcal{H} = H^1$ and uses the Sobolev norm, the assumptions on the neural network (e.g., Jacobian Lipschitz continuity) may no longer hold, as the network is much more sensitive in the $H^1$ norm. In a discrete setting, the Lipschitz constant of the gradient would scale with the inverse square of the grid spacing (i.e., $L \propto \mathcal{O}(dx^{-2})$).
\end{remark}

\section{Convergence of the Increment Subproblem}\label{sec:conv}
\rev{%
Having established that the increment set $\calM$ is a Riemannian submanifold and the
preconditioned gradient flow is the Riemannian gradient flow, we now turn to the convergence
of this flow. The main challenge is that $\calM$ is curved (it is not a linear subspace of $\calH$),
so standard Euclidean convexity arguments do not directly apply.
This section develops three key properties: (i) strict interior localization of the reached sublevel set inside the compact image $\calK(u_\NN(\tilde w))$ under a small-step output-isolation condition (Lemma~\ref{lem:interior-loc}); (ii) existence of a strongly geodesically convex normal neighborhood $\calG(u_\NN(\tilde w))$ of $0$ containing $\calK(u_\NN(\tilde w))$ (Lemma~\ref{lem:convex-radius}); and (iii) geodesic strong convexity and smoothness of the subproblem
objective $g(\cdot;v)$ on the reached sublevel set inside $\calG(u_\NN(\tilde w))$ (Theorem~\ref{thm:g}).
Together, these yield exponential convergence of the Riemannian gradient flow (Theorem~\ref{thm:RGF-conv}).
}

\subsection{Geodesic Convexity: Definitions and Basic Tools}

The next lemma bounds how the tangent projector $\P_h$ varies along $\calM(u_\NN(\tilde w))$; this controls the extrinsic curvature (second fundamental form) of the manifold.

\begin{lemma}[Differential bound of the projector]\label{lem:DP_bound}
Let $h=\Phi(w)=u_\NN(w)-u_\NN(\tilde w)\in \mathcal M(u_\NN(\tilde w))$, and let
$\P_h$ be defined by \eqref{eq:Projector}. Assume that on $B(\tilde w,r_{\tilde w})$,
$\D u_\NN$ is $L$-Lipschitz and $\D u_\NN(w)^*\D u_\NN(w)\succeq \lambda_{\tilde w}^2 I_p$
for all $w\in B(\tilde w,r_{\tilde w})$.
Then for any $\xi\in T_h\mathcal M(u_\NN(\tilde w))$,
\[
\|\D \P_{(h)}[\xi]\|
 \le  \frac{2L}{\lambda_{\tilde w}^2} \|\xi\|_{\mathcal H}.
\]
\end{lemma}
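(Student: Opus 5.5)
We will write $J(w):=\D u_\NN(w)$ and $G(w):=J(w)^*J(w)$, so that $\P_h = J(w)G(w)^{-1}J(w)^*$ with $h=\Phi(w)$. By Proposition~\ref{prop:manifold}, $\Phi$ is a smooth embedding of $B(\tilde w,r_{\tilde w})$. Hence every tangent vector $\xi\in T_h\calM(u_\NN(\tilde w))=\mathrm{Im}\,J(w)$ can be written uniquely as $\xi=J(w)z$ with $z\in\R^p$. The differential of $h\mapsto \P_h$ along $\xi$ is then the derivative of $w\mapsto J(w)G(w)^{-1}J(w)^*$ in direction $z$.

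The plan is to use the standard formula for the derivative of an orthogonal projector. Set $\dot J := \D J(w)[z]$, which exists since $u_\NN\in C^2$. The Lipschitz bound \eqref{eq:lipschitz-jacobian} gives $\|\dot J\|_{\mathrm{op}}\le L\|z\|_2$. Differentiating the product term by term yields
\[
\D\P_{(h)}[\xi] = \dot J G^{-1}J^* + J G^{-1}\dot J^* - J G^{-1}(\dot J^*J + J^*\dot J)G^{-1}J^*.
\]
We regroup this using $\P^\perp := I-\P_h$ into the symmetric form
\[
\D\P_{(h)}[\xi] = \P^\perp \dot J G^{-1}J^* + \bigl(\P^\perp \dot J G^{-1}J^*\bigr)^*,
\]
which is verified by expanding $\P^\perp = I - JG^{-1}J^*$.

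Next we bound each of the two terms. We have $\|\P^\perp\|\le 1$ and $\|\dot J\|\le L\|z\|_2$. For the remaining factor, $\|G^{-1}J^*\|_{\mathrm{op}} = \lambda_{\min}(G)^{-1/2}\le 1/\lambda_{\tilde w}$, using the singular value decomposition of $J$ and Lemma~\ref{lem:gram-lower-bound}. For $\|z\|_2$ we use $\|\xi\|_\calH=\|Jz\|_\calH\ge \lambda_{\tilde w}\|z\|_2$, again by Lemma~\ref{lem:gram-lower-bound}, so $\|z\|_2\le \|\xi\|_\calH/\lambda_{\tilde w}$. Each of the two terms is therefore at most $L\|\xi\|/\lambda_{\tilde w}^2$, and the triangle inequality gives the claimed bound $2L\|\xi\|_\calH/\lambda_{\tilde w}^2$. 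The same estimate holds for the adjoint term, since adjoints preserve operator norm.

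The main point needing care is justifying that the directional derivative of $h\mapsto\P_h$ on the manifold equals the parameter-space derivative composed with the inverse chart. That is, one must check that $\D\P_{(h)}[\xi]=\frac{\ud}{\ud t}\P_{\Phi(w+tz)}|_{t=0}$. This follows because $\Phi$ is a diffeomorphism onto $\calM$ with $\D\Phi(w)z=\xi$, and because $w\mapsto G(w)^{-1}$ is $C^1$: inversion is smooth, and $G$ is uniformly invertible on the ball. The remaining steps are routine algebra.
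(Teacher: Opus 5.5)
Your proposal is correct and follows essentially the paper's argument. Your regrouped formula $\D\P_{(h)}[\xi]=\P^\perp\dot J\,G^{-1}J^*+(\P^\perp\dot J\,G^{-1}J^*)^*$ is exactly the paper's $(I-\P)A'(0)A^+ + (A^+)^*A'(0)^*(I-\P)$ with $A^+=G^{-1}J^*$, and you then use the same three bounds $\|A^+\|\le 1/\lambda_{\tilde w}$, $\|\dot J\|\le L\|z\|_2$, $\|z\|_2\le\|\xi\|_\calH/\lambda_{\tilde w}$. The only cosmetic difference is the derivation of the formula: you apply the product rule to $JG^{-1}J^*$, whereas the paper differentiates the identities $(I-\P(t))A(t)=0$ and $A(t)^*(I-\P(t))=0$ and then uses $\P'=\P'\P+\P\P'$.
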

\begin{proof}
See Appendix~\ref{app:proof-of-lemmas}.
\end{proof}

\begin{lemma}[Strict interior localization of the reached sublevel set]\label{lem:interior-loc}
Let $v=u_\NN(\tilde w)$ and define the parameter-space sublevel set
\begin{equation}\label{eq:Snw-def}
    S^w_{\tilde w} := \Big\{w \in \R^p : g\bigl(u_\NN(w)-v;\,v\bigr) \le g(0;v) = \calF[v]\Big\}.
\end{equation}
Assume an output-isolation margin around the inner ball,
\begin{equation}\label{eq:rho-in-def}
    \rho^{\rm in}_{\tilde w} := \dist_\calH\!\Big(v,\ u_\NN\bigl(\R^p \setminus B(\tilde w, r^{\rm in}_{\tilde w})\bigr)\Big) > 0.
\end{equation}
If the strict interiority condition
\begin{equation}\label{eq:interior-loc-cond}
    2\tau\bigl(\calF[v] - \calF_{\inf}\bigr) \;<\; \bigl(\rho^{\rm in}_{\tilde w}\bigr)^2
\end{equation}
holds, then $S^w_{\tilde w} \subset B(\tilde w, r^{\rm in}_{\tilde w})$, and consequently the image-side sublevel set $\{h \in \calM(u_\NN(\tilde w)) : g(h;v) \le g(0;v)\}$ is contained in the compact localization image $\calK(u_\NN(\tilde w))$ defined in~\eqref{eq:Kn}.
\end{lemma}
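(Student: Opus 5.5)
The plan is a contrapositive argument built on the energy identity that gives the MMS locality bound in the introduction. Take any $w\in S^w_{\tilde w}$ and set $h:=u_\NN(w)-v$. The sublevel condition $g(h;v)\le g(0;v)=\calF[v]$ reads
\[
\frac{\|h\|_\calH^2}{2\tau}+\calF[u_\NN(w)]\le \calF[v].
\]
Since $\calF[u_\NN(w)]\ge \calF_{\inf}$, this rearranges to
\[
\|u_\NN(w)-v\|_\calH^2\le 2\tau\bigl(\calF[v]-\calF_{\inf}\bigr).
\]
This is the only analytic input, and it uses only nonnegativity/boundedness below of $\calF$. No convexity is needed.

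Next, suppose for contradiction that $w\notin B(\tilde w,r^{\rm in}_{\tilde w})$, i.e.\ $w\in\R^p\setminus B(\tilde w,r^{\rm in}_{\tilde w})$. By the definition of the output-isolation margin in \eqref{eq:rho-in-def}, we then have $\|u_\NN(w)-v\|_\calH\ge\rho^{\rm in}_{\tilde w}$. Squaring and combining with the previous display gives
\[
(\rho^{\rm in}_{\tilde w})^2\le 2\tau\bigl(\calF[v]-\calF_{\inf}\bigr),
\]
which contradicts the strict interiority condition \eqref{eq:interior-loc-cond}. Hence $S^w_{\tilde w}\subset B(\tilde w,r^{\rm in}_{\tilde w})$, and since the open ball lies inside the closed ball of radius $r^{\rm in}_{\tilde w}$, every such $w$ satisfies $\|w-\tilde w\|_2\le r^{\rm in}_{\tilde w}$. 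The strictness in \eqref{eq:interior-loc-cond} is exactly what makes the contradiction work; with $\le$ only, boundary points could attain the margin.

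For the image-side statement, take $h\in\calM(u_\NN(\tilde w))$ with $g(h;v)\le g(0;v)$. By \eqref{eq:Mn}, we can write $h=u_\NN(w)-v$ for some $w\in B(\tilde w,r_{\tilde w})$, and then $w\in S^w_{\tilde w}$ by definition. The first part gives $\|w-\tilde w\|_2<r^{\rm in}_{\tilde w}$, so $h\in\calK(u_\NN(\tilde w))$ by \eqref{eq:Kn}.

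The only point needing care, and the main obstacle such as it is, is this last representation step. The preimage $w$ may not be unique across $\R^p$. That does not matter: membership in $\calM$ supplies some $w$ in the outer ball, and the argument above applies to that particular $w$ without any injectivity. I also note that if $\calF_{\inf}$ is not attained, the inequality $\calF[u_\NN(w)]\ge\calF_{\inf}$ still holds trivially, so no attainment assumption is needed.
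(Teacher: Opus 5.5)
Your proposal is correct and follows the paper's proof essentially step for step: the sublevel inequality together with $\calF\ge\calF_{\inf}$ gives $\|u_\NN(w)-v\|_\calH^2\le 2\tau(\calF[v]-\calF_{\inf})$, the output-isolation margin then yields a contradiction for any $w$ outside the inner ball, and the image-side inclusion follows by taking any preimage $w$ in the outer ball. Your side remarks are also accurate and consistent with the paper's argument: no injectivity of $u_\NN$ and no attainment of $\calF_{\inf}$ are needed.
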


\begin{proof}
Let $w \in S^w_{\tilde w}$. By definition of $g$,
\[
    \tfrac{1}{2\tau}\|u_\NN(w) - v\|^2_\calH + \calF[u_\NN(w)] \;\le\; \calF[v],
\]
and since $\calF[u_\NN(w)] \ge \calF_{\inf}$ (standing assumption $\calF_{\inf} > -\infty$, automatic under $m_\calF$-strong convexity),
\begin{equation}\label{eq:energy-bound-on-Sw}
    \|u_\NN(w) - v\|^2_\calH \;\le\; 2\tau\bigl(\calF[v] - \calF_{\inf}\bigr).
\end{equation}
If $w \notin B(\tilde w, r^{\rm in}_{\tilde w})$, then by~\eqref{eq:rho-in-def}, $\|u_\NN(w) - v\|_\calH \ge \rho^{\rm in}_{\tilde w}$, so
\[
    \bigl(\rho^{\rm in}_{\tilde w}\bigr)^2 \;\le\; \|u_\NN(w) - v\|^2_\calH \;\le\; 2\tau\bigl(\calF[v] - \calF_{\inf}\bigr),
\]
contradicting~\eqref{eq:interior-loc-cond}. Hence $w \in B(\tilde w, r^{\rm in}_{\tilde w})$, proving the parameter-space inclusion.

The image-side inclusion follows: any $h$ in the sublevel set on $\calM(u_\NN(\tilde w))$ writes as $h = u_\NN(w) - v$ for some $w \in B(\tilde w, r_{\tilde w})$ with $g(h;v) \le g(0;v)$, so $w \in S^w_{\tilde w}$ satisfies $\|w-\tilde w\|_2 \le r^{\rm in}_{\tilde w}$, and therefore $h \in \calK(u_\NN(\tilde w))$.
\end{proof}

The strength of the output-isolation hypothesis~\eqref{eq:rho-in-def} is discussed in Appendix~\ref{app:suff-locality} (Remark~\ref{rem:rho-in-strength}).

\begin{lemma}[Local convexity radius]\label{lem:convex-radius}
Let $\Lambda := 2L/\lambda_{\tilde w}^2$, and assume the injectivity radius of $\calM(u_\NN(\tilde w))$ at $0$ satisfies
\begin{equation}\label{eq:inj-hypothesis}
    \mathrm{inj}_{\calM(u_\NN(\tilde w))}(0) \;\ge\; \pi/\Lambda.
\end{equation}
Define
\begin{equation}\label{eq:Gdef}
    \calG(u_\NN(\tilde w)) \;:=\; B_{\calM(u_\NN(\tilde w))}\bigl(0,\,\pi/(2\Lambda)\bigr),
\end{equation}
the open geodesic ball in $\calM(u_\NN(\tilde w))$ of radius $\pi/(2\Lambda)$ centered at $0$. Then:
\begin{enumerate}[label=(\alph*),leftmargin=*,nosep]
    \item $\calG(u_\NN(\tilde w))$ is a strongly geodesically convex normal neighborhood of $0$; in particular, any two points $h_1,h_2 \in \calG(u_\NN(\tilde w))$ are joined by a unique minimizing geodesic of $\calM(u_\NN(\tilde w))$, and this geodesic lies entirely in $\calG(u_\NN(\tilde w))$;
    \item the compact localization image $\calK(u_\NN(\tilde w))$ from~\eqref{eq:Kn} satisfies $\calK(u_\NN(\tilde w)) \subset \calG(u_\NN(\tilde w))$.
\end{enumerate}
\end{lemma}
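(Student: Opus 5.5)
Part (a) will follow from Whitehead's convexity theorem in the quantitative form of Cheeger--Ebin / Petersen: if the sectional curvature of $\calM(u_\NN(\tilde w))$ is bounded above by $K_{\max}>0$ and $\mathrm{inj}(0)\ge \pi/\sqrt{K_{\max}}$, then every open geodesic ball $B(0,\rho)$ with $\rho<\tfrac12\min\{\mathrm{inj}(0),\pi/\sqrt{K_{\max}}\}$ is strongly geodesically convex. So the first step is a curvature bound. By Lemma~\ref{lem:DP_bound}, the differential of the tangent projector satisfies $\|\D\P_h[\xi]\|\le \Lambda\|\xi\|_\calH$. The second fundamental form of the embedded submanifold is $\mathrm{II}(\xi,\zeta)=(I-\P_h)\,\D\P_h[\xi]\,\zeta$ for tangent $\xi,\zeta$, so $\|\mathrm{II}(\xi,\zeta)\|\le\Lambda\|\xi\|\|\zeta\|$. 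The Gauss equation in the Hilbert ambient space (flat, so no ambient curvature term) gives, for orthonormal $\xi,\zeta$,
\[
K(\xi,\zeta)=\ip{\mathrm{II}(\xi,\xi),\mathrm{II}(\zeta,\zeta)}_\calH-\|\mathrm{II}(\xi,\zeta)\|_\calH^2\le \Lambda^2 .
\]
Hence $K_{\max}=\Lambda^2$ and $\pi/\sqrt{K_{\max}}=\pi/\Lambda$. This identification works in infinite dimensions because the computation is finite-dimensional: $\calM$ is $p$-dimensional, and only the projector calculus from Proposition~\ref{prop:projection} is used.

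Next I combine this with hypothesis~\eqref{eq:inj-hypothesis}. The convexity radius at $0$ is at least $\tfrac12\min\{\mathrm{inj}(0),\pi/\Lambda\}=\pi/(2\Lambda)$. The standard proof runs by comparison: distance spheres $\partial B(0,\rho)$ with $\rho<\pi/(2\sqrt{K_{\max}})$ have positive-definite second fundamental form (Rauch comparison). A minimizing geodesic between two points of $B(0,\rho)$ has length $<2\rho\le\pi/\Lambda\le\mathrm{inj}$, so it cannot leave the ball, since the distance function to $0$ along it is strictly convex. Uniqueness of the minimizing geodesic follows because its length is below the injectivity radius, and $\rho<\mathrm{inj}(0)$ makes $\calG$ a normal neighborhood.

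The main obstacle is that the comparison argument needs the radius bound at points near $0$ as well, and needs completeness for geodesics to exist, whereas $\calM$ is only an open chart and not complete. My first attempt will be to use that $\calK$, and hence the relevant region, lies well inside the chart. I will show that geodesics starting in $\calG$ with length $<\pi/\Lambda$ do not reach $\Phi(\partial B(\tilde w,r_{\tilde w}))$, arguing through the bi-Lipschitz bound of Proposition~\ref{prop:manifold}. I will also read the injectivity hypothesis as including that $\exp_0$ is defined on the radius-$\pi/\Lambda$ tangent ball.

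For part (b), take $h=\Phi(w)$ with $\|w-\tilde w\|_2\le r_{\tilde w}/2$. The straight segment $t\mapsto\Phi(\tilde w+t(w-\tilde w))$ is a curve in $\calM$ from $0$ to $h$, so
\[
d_\calM(0,h)\le\int_0^1\|\D u_\NN(\cdot)(w-\tilde w)\|\,dt\le(\|\D u_\NN(\tilde w)\|_{\mathrm{op}}+Lr_{\tilde w})\,r_{\tilde w}/2 .
\]
The second entry of~\eqref{eq:rwdef} is designed for this bound: set $a=\|\D u_\NN(\tilde w)\|_{\mathrm{op}}$. From $r_{\tilde w}\le\lambda_{\tilde w}^2/(2L(\sqrt{a^2+\lambda_{\tilde w}^2}+a))$ we get $(a+Lr_{\tilde w}/2)^2\le a^2+\lambda_{\tilde w}^2$ and related bounds. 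The remaining step is to verify that these give $(a+Lr)r/2<\pi/(2\Lambda)=\pi\lambda_{\tilde w}^2/(4L)$. Indeed, $r(a+Lr)\le r(a+\lambda_{\tilde w})$ is at most $\lambda_{\tilde w}^2(a+\lambda_{\tilde w})/(2L(\sqrt{a^2+\lambda^2}+a))\le\lambda_{\tilde w}^2/(2L)\cdot c$ with $c\le 1$. This gives $d_\calM(0,h)\le\lambda_{\tilde w}^2/(4L)<\pi\lambda_{\tilde w}^2/(4L)$, so $\calK\subset\calG$.
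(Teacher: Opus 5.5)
Your curvature bound and your part (b) are correct and coincide with the paper's Steps 1 and 3. Your explicit Gauss-equation computation with $\mathrm{II}(\xi,\zeta)=(I-\P_h)\,\D\P_{(h)}[\xi]\,\zeta$ is cleaner than the paper's citation. For (a), the paper does nothing beyond what you write before your ``main obstacle'' paragraph. It applies the Cheeger--Ebin convexity-radius theorem with the pointwise radius $\mathrm{inj}_{\calM}(0)$, and never deals with the fact that $\calM(u_\NN(\tilde w))$ is an incomplete open chart. The obstacle you identify is therefore real and is not resolved in the paper. The gap in your proposal is that the repair you sketch cannot be carried out.

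\textbf{Why the repair fails.} Run your (b) estimate over the whole open ball instead of the inner one. With $a=\|\D u_\NN(\tilde w)\|_{\mathrm{op}}$, every $h=\Phi(w)$ with $\|w-\tilde w\|_2<r_{\tilde w}$ satisfies
$d_{\calM}(0,h)<(a+Lr_{\tilde w})r_{\tilde w}\le\lambda_{\tilde w}^2/(2L)=1/\Lambda<\pi/(2\Lambda)$.
\begin{itemize}
\item So $\calG(u_\NN(\tilde w))$ is the entire chart $\calM(u_\NN(\tilde w))$. It contains points arbitrarily close to the missing boundary $\Phi(\partial B(\tilde w,r_{\tilde w}))$, and outward geodesics from such points leave the chart after arbitrarily small length.
\item Hence your claim that geodesics of length $<\pi/\Lambda$ starting in $\calG$ avoid $\Phi(\partial B(\tilde w,r_{\tilde w}))$ is false. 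The fact that $\calK$ lies well inside the chart does not help, since (a) is a statement about $\calG=\calM$.
\item Your reading of \eqref{eq:inj-hypothesis} can never hold. Suppose $\exp_0$ is a diffeomorphism on the tangent ball of radius $\rho$. The Gauss-lemma argument gives $d_{\calM}(0,\exp_0(t\xi))=t$ for unit $\xi$ and $t<\rho$; it needs no completeness, because any curve leaving $\exp_0(B(0,\rho))$ has length at least $\rho$. The bound above then forces $\rho\le 1/\Lambda<\pi/\Lambda$.
\end{itemize}

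Under your reading, then, the lemma is only vacuously true. Under any reading that lets geodesics leave $\calM$, statement (a) asserts strong geodesic convexity of the whole chart. That is a property of the shape of $\Phi(\partial B(\tilde w,r_{\tilde w}))$, which neither the curvature bound $\Lambda^2$ nor an injectivity radius at $0$ controls. Closing (a) needs a different hypothesis, not a sharper argument.
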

\rev{%
\begin{proof}[Proof sketch (full proof in Appendix~\ref{app:proof-convex-radius})]
By Lemma~\ref{lem:DP_bound} the second fundamental form of the embedded manifold $\calM(u_\NN(\tilde w))$ satisfies $\|\mathrm{II}\|_{\mathrm{op}} \le \Lambda$. Because the ambient Hilbert space $\calH$ is flat, the Gauss equation \citep[Theorem~1]{alexander2006gauss} gives that the intrinsic sectional curvature of $\calM(u_\NN(\tilde w))$ is bounded above by $\Lambda^2$. The standard convexity-radius result \citep[Theorem~5.14]{cheeger2008comparison} states that for a smooth Riemannian manifold with sectional curvature $\le \Lambda^2$, the geodesic ball $B_\calM(p,r)$ is strongly geodesically convex provided $r \le \min\{\pi/(2\Lambda),\,\mathrm{inj}_\calM(p)/2\}$. Under the injectivity-radius hypothesis~\eqref{eq:inj-hypothesis}, both bounds give at least $\pi/(2\Lambda)$, so the geodesic ball $\calG(u_\NN(\tilde w))$ defined in~\eqref{eq:Gdef} is strongly geodesically convex. This establishes~(a).

For~(b), bound the geodesic distance from $0$ to any point $h \in \calK(u_\NN(\tilde w))$. Writing $h = u_\NN(w) - u_\NN(\tilde w)$ with $w \in B(\tilde w, r^{\rm in}_{\tilde w})$, the parameter-space segment $w(t) := (1-t)\tilde w + tw$ ($t\in[0,1]$) maps to a curve in $\calM(u_\NN(\tilde w))$ of length
\[
    \int_0^1\!\|\D u_\NN(w(t))(w-\tilde w)\|_\calH\,dt \;\le\; \bigl(\|\D u_\NN(\tilde w)\|_{\mathrm{op}} + L\,r_{\tilde w}\bigr)\,\|w-\tilde w\|_2.
\]
Using $\|w-\tilde w\|_2 \le r^{\rm in}_{\tilde w} = r_{\tilde w}/2$ and the choice of $r_{\tilde w}$ in~\eqref{eq:rwdef}, the second entry $r_{\tilde w}\le \lambda_{\tilde w}^2/\bigl[2L\bigl(\sqrt{\|\D u_\NN(\tilde w)\|_{\mathrm{op}}^2+\lambda_{\tilde w}^2}+\|\D u_\NN(\tilde w)\|_{\mathrm{op}}\bigr)\bigr]$ together with the first entry $r_{\tilde w}\le \lambda_{\tilde w}/L$ gives $(\|\D u_\NN(\tilde w)\|_{\mathrm{op}} + L r_{\tilde w})\,r_{\tilde w} \le \lambda_{\tilde w}^2/(2L)$ (the algebra is verified in the appendix proof). Hence
\[
    d_{\calM(u_\NN(\tilde w))}(h, 0) \;\le\; \frac{\lambda_{\tilde w}^2}{4L} \;=\; \frac{1}{\pi}\cdot\frac{\pi}{2\Lambda} \;<\; \frac{\pi}{2\Lambda},
\]
so $h$ lies in the geodesic ball $\calG(u_\NN(\tilde w))$ from~\eqref{eq:Gdef}. This proves~(b).
\end{proof}
}

The injectivity-radius hypothesis~\eqref{eq:inj-hypothesis} is discussed in Appendix~\ref{app:disc-inj-radius} (Remark~\ref{rem:inj-hypothesis}).
\subsection{Geodesic Strong Convexity of the Subproblem Objective}
\begin{lemma}[Strong convexity and Smoothness of $g(\cdot;v)$ on $\mathcal H$]\label{lem:smooth-strongconv}
Let $(\mathcal H,\langle\cdot,\cdot\rangle_\mathcal{H})$ be a real Hilbert space with norm $\|\cdot\|_{\mathcal{H}}$. 
Assume $\mathcal F:\mathcal H\to\mathbb R$ is Fr\'echet differentiable with $L_{\mathcal F}$-Lipschitz gradient, i.e.
\[
\|\nabla \mathcal F[x]-\nabla \mathcal F[y]\|_\mathcal{H}\le L_{\mathcal F} \|x-y\|_\mathcal{H},\quad\forall x,y\in\mathcal H,
\]
and $m_\mathcal{F}$-strongly convex, i.e.
\[
\langle \nabla \mathcal F[x]-\nabla \mathcal F[y], x-y\rangle_\mathcal{H} \ge m_\mathcal{F} \|x-y\|_\mathcal{H}^2,\quad\forall x,y\in\mathcal H.
\]
Fix $\tau>0$ and $v\in\mathcal H$. Recall the definition of $g$ \eqref{eq:intro:g_def}, we have
\[
g(h;v)=\frac{\|h\|^2}{2\tau}+\mathcal F\!\bigl[h+v\bigr],\qquad h\in\mathcal H.
\]
Then $g(\cdot;v)$ has $\mu$-Lipschitz gradient and is $\nu$-strongly convex with
\[
\mu=\frac{1}{\tau}+L_{\mathcal F},
\qquad
\nu=\frac{1}{\tau}+m_\mathcal{F}.
\]

\end{lemma}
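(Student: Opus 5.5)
The plan is to compute $\nabla g(\cdot;v)$ explicitly and verify the two defining inequalities directly; both follow from the corresponding properties of $\calF$ combined with those of the quadratic term. Since $h\mapsto \|h\|_\calH^2/(2\tau)$ is Fr\'echet differentiable with gradient $h/\tau$, and $h\mapsto\calF[h+v]$ is Fr\'echet differentiable with gradient $\nabla\calF[h+v]$ by the chain rule (translation is an affine isometry), we get
\[
\nabla g(h;v)=\frac{h}{\tau}+\nabla\calF[h+v],\qquad h\in\calH .
\]

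For the Lipschitz bound, take $h_1,h_2\in\calH$. Then
\[
\nabla g(h_1;v)-\nabla g(h_2;v)=\tfrac{1}{\tau}(h_1-h_2)+\bigl(\nabla\calF[h_1+v]-\nabla\calF[h_2+v]\bigr).
\]
Apply the triangle inequality. The first term contributes $\tfrac1\tau\|h_1-h_2\|_\calH$. For the second, use the $L_\calF$-Lipschitz hypothesis at $x=h_1+v$, $y=h_2+v$; since $x-y=h_1-h_2$, it contributes $L_\calF\|h_1-h_2\|_\calH$. Summing gives $\mu=1/\tau+L_\calF$.

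For strong convexity, use the monotonicity characterization stated in the lemma. Pair the gradient difference above with $h_1-h_2$. The quadratic part gives exactly $\tfrac1\tau\|h_1-h_2\|^2_\calH$. The $\calF$ part gives $\langle\nabla\calF[x]-\nabla\calF[y],x-y\rangle_\calH\ge m_\calF\|h_1-h_2\|^2_\calH$, again because $x-y=h_1-h_2$. Adding yields the constant $\nu=1/\tau+m_\calF$.

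If the function-value forms of these properties are needed later (e.g.\ $g(h_2)\ge g(h_1)+\langle\nabla g(h_1),h_2-h_1\rangle+\tfrac\nu2\|h_2-h_1\|^2$ and the matching upper bound with $\mu$), I would obtain them by the standard argument. Integrate $\tfrac{d}{dt}g(h_1+t(h_2-h_1);v)$ over $t\in[0,1]$ and apply the two gradient inequalities along the segment, which is valid in any Hilbert space.

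There is no real obstacle here. The only point requiring care is to state explicitly that the shift by $v$ preserves differences. This is what lets the constants $L_\calF$ and $m_\calF$ transfer unchanged.
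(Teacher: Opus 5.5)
Your proposal is correct and follows the paper's proof essentially verbatim: compute $\nabla g(h;v)=h/\tau+\nabla\calF[h+v]$, use the triangle inequality with $x-y=h_1-h_2$ to get $\mu=1/\tau+L_\calF$, and pair the gradient difference with $h_1-h_2$ using monotonicity of $\nabla\calF$ to get $\nu=1/\tau+m_\calF$. Your optional remark on the function-value forms goes slightly beyond what the paper writes down, but it is not needed for the lemma as stated.
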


\begin{proof}
See Appendix~\ref{app:proof-smooth-strongconv}.
\end{proof}

\begin{lemma}[Properties of $g(\cdot;v)$]\label{lem:g-properties}
Let $g$ be as in~\eqref{eq:intro:g_def} and write $h^*(v):=\argmin_{h\in\calH}g(h;v)$. Then:
\begin{enumerate}[label=(\alph*),leftmargin=*,nosep]
\item $h^*(v)$ is the unique solution of $h/\tau+\nabla\calF[v+h]=0$. Equivalently, $h^*(v)=J_\tau(v)-v$ is the exact-MMS increment at $v$;
\item $g(0;v)=\calF[v]$;
\item $\sqrt{2g(0;v)/\nu}=K(v)$, where
\(
K(v):=\sqrt{2\tau\calF[v]/(1+\tau m_\calF)};
\)
\item $\mu/\nu=(1+\tau L_\calF)/(1+\tau m_\calF)=:\kappa$;
\item the prefactor $4L\mu/(\nu\lambda_{\tilde w}^{2})=4L\kappa/\lambda_{\tilde w}^{2}$.
\end{enumerate}
\end{lemma}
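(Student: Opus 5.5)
The plan is to verify the five items one at a time. Each is an elementary consequence of the definition~\eqref{eq:intro:g_def} and of Lemma~\ref{lem:smooth-strongconv}. The only place that needs any care is (a), which is the first-order characterization of the minimizer.

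For (a), I will invoke Lemma~\ref{lem:smooth-strongconv}: $g(\cdot;v)$ is $\nu$-strongly convex and Fr\'echet differentiable on $\calH$. It is also continuous and coercive, since $g(h;v)\ge g(0;v)+\ip{\nabla g(0;v),h}+\tfrac{\nu}{2}\|h\|^2$. Hence it has a unique minimizer $h^*(v)$, and this minimizer is characterized by $\nabla g(h;v)=0$, i.e.\ $h/\tau+\nabla\calF[v+h]=0$. Uniqueness of the zero of the gradient follows from strong monotonicity: if $h_1,h_2$ are both zeros, then $0=\ip{\nabla g(h_1)-\nabla g(h_2),h_1-h_2}\ge\nu\|h_1-h_2\|^2$. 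For the identification with MMS, I substitute $u=v+h$ in~\eqref{eq:intro:mms}. The map $h\mapsto v+h$ is a bijection of $\calH$ that carries $g(\cdot;v)$ to the MMS objective $\tfrac{1}{2\tau}\|u-v\|^2+\calF[u]$. Therefore $v+h^*(v)=J_\tau(v)$, which gives $h^*(v)=J_\tau(v)-v$.

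Item (b) follows by setting $h=0$ in~\eqref{eq:intro:g_def}. For (c), I combine (b) with $\nu=(1+\tau m_\calF)/\tau$:
\[
\frac{2g(0;v)}{\nu}=\frac{2\tau\calF[v]}{1+\tau m_\calF}.
\]
Here $\calF[v]\ge0$ by the standing nonnegativity assumption, so the square root is real and equals $K(v)$. For (d), I write $\mu=(1+\tau L_\calF)/\tau$ and $\nu=(1+\tau m_\calF)/\tau$; the factors of $\tau$ cancel in the ratio. Item (e) then follows by substituting (d) into $4L\mu/(\nu\lambda_{\tilde w}^2)$.

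No real obstacle is expected. The one point worth stating explicitly is the existence of the minimizer in the possibly infinite-dimensional space $\calH$. I will settle it as above with the strong-convexity lower bound, or alternatively by noting that a strongly convex continuous functional is weakly lower semicontinuous and coercive. I will also note that nonnegativity of $\calF$ is used in (c).
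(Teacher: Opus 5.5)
Your proposal is correct and follows the paper's own proof. Item (a) comes from the $\nu$-strong convexity of $g(\cdot;v)$ (Lemma~\ref{lem:smooth-strongconv}) together with the first-order condition, (b) from setting $h=0$, and (c)--(e) from direct algebra with $\mu=(1+\tau L_\calF)/\tau$ and $\nu=(1+\tau m_\calF)/\tau$. Your additional remarks fill in details the paper leaves implicit: existence of the minimizer in infinite-dimensional $\calH$ via convexity, continuity and coercivity; the substitution $u=v+h$ that justifies $h^*(v)=J_\tau(v)-v$; and the use of $\calF\ge 0$ so that the square root in (c) is real.
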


\begin{proof}
Item~(a) follows because $g(\cdot;v)$ is $\nu$-strongly convex with $\nu>0$ (Lemma~\ref{lem:smooth-strongconv}), hence has a unique minimizer characterized by the first-order condition $\nabla g(h;v)=h/\tau+\nabla\calF[v+h]=0$.
Item~(b) is immediate from $g(0;v)=\|0\|^{2}/(2\tau)+\calF[v]$.
Item~(c) follows from $\nu=(1+\tau m_\calF)/\tau$ together with $g(0;v)=\calF[v]$:
\(
\sqrt{2g(0;v)/\nu}=\sqrt{2\calF[v]\cdot\tau/(1+\tau m_\calF)}=K(v).
\)
Items~(d)--(e) are direct algebra.
\end{proof}

The next theorem is the main technical result of this section. Its informal message: under (i) the strict interior-localization condition (Lemma~\ref{lem:interior-loc}), which forces the reached sublevel set $S$ into the compact image $\calK(u_\NN(\tilde w))\subset\calG(u_\NN(\tilde w))$, and (ii) the data condition $C(v)\le 1$, the curvature correction $\langle \D\P[\xi]\nabla g,\xi\rangle$ from ambient to manifold Hessian is controlled by $\|\nabla g\|$ on the relevant set. The bound is delicate: on the normal neighborhood $\calG$ the correction reaches its full size $\nu$ (yielding only geodesic convexity, $\Hess g\ge 0$), whereas on the smaller sublevel set $S$ the correction halves to $\nu/2$ (yielding $\nu/2$-strong convexity --- the strong-convexity constant degrades by a factor of two). No claim of strong convexity is made on the full open chart $\calM(u_\NN(\tilde w))$.
\begin{theorem}[Geodesic strong convexity and smoothness of $g$ on the reached sublevel set]\label{thm:g}
Fix $v\in\calH$ and $\tilde w\in\R^p$ with $u_\NN(\tilde w)=v$. Assume:
\begin{enumerate}[label=\textnormal{(H\arabic*)},leftmargin=*,nosep]
    \item $u_\NN$ has an $L$-Lipschitz Jacobian and a $\lambda_{\tilde w}$-non-degenerate Jacobian at $\tilde w$, so Proposition~\ref{prop:manifold} provides the boundaryless embedded manifold $\calM(u_\NN(\tilde w))$ together with the compact localization image $\calK(u_\NN(\tilde w))$ from~\eqref{eq:Kn}; the injectivity-radius hypothesis~\eqref{eq:inj-hypothesis} of Lemma~\ref{lem:convex-radius} holds at the origin $0\in\calM(u_\NN(\tilde w))$, so Lemma~\ref{lem:convex-radius} provides the strongly geodesically convex normal neighborhood $\calG(u_\NN(\tilde w)) = B_{\calM(u_\NN(\tilde w))}(0,\pi/(2\Lambda))$ with $\Lambda = 2L/\lambda_{\tilde w}^2$;
    \item the strict interior-localization condition~\eqref{eq:interior-loc-cond} of Lemma~\ref{lem:interior-loc} holds, so the parameter-space sublevel set $S^w_{\tilde w}$ from~\eqref{eq:Snw-def} is contained in $B(\tilde w, r^{\rm in}_{\tilde w})$ and consequently $\{h \in \calM(u_\NN(\tilde w)) : g(h;v) \le g(0;v)\} \subset \calK(u_\NN(\tilde w)) \subset \calG(u_\NN(\tilde w))$;
    \item the data condition
    \begin{equation}\label{eq:data-condition}
        C(v) \;:=\; \frac{4L\kappa}{\lambda_{\tilde w}^{2}}\,\max\!\bigl\{\,\|h^*(v)\|_\calH,\ K(v)\,\bigr\}\;\le\;1
    \end{equation}
    holds, where $h^*(v) = J_\tau(v) - v$ is the exact-MMS increment, $K(v) := \sqrt{2\tau\calF[v]/(1+\tau m_\calF)}$ is the sublevel-set radius (Lemma~\ref{lem:g-properties}), and the constants
    \[
    \mu=\frac{1}{\tau}+L_\calF,\qquad
    \nu=\frac{1}{\tau}+m_\calF,\qquad
    \kappa:=\mu/\nu=\frac{1+\tau L_\calF}{1+\tau m_\calF}
    \]
    are from Lemma~\ref{lem:smooth-strongconv}.
\end{enumerate}
Then $g(\cdot;v)$ on $\calM(u_\NN(\tilde w))$ satisfies:
\begin{enumerate}
    \item[(a).] $g(\cdot;v)$ is geodesically convex on the normal neighborhood $\calG(u_\NN(\tilde w))$.

    \item[(b).] The reached sublevel set
    \begin{equation}\label{eq:sublevel-S}
        S \;:=\; \bigl\{\,h\in\calM(u_\NN(\tilde w)):\ g(h;v)\le g(0;v)\,\bigr\}
    \end{equation}
    is contained in $\calK(u_\NN(\tilde w)) \subset \calG(u_\NN(\tilde w))$ by~(H2) and Lemma~\ref{lem:convex-radius}(b), and is geodesically convex. When the theorem is applied at $\tilde w=w^n_{\NN,T}$ along the neural MMS trajectory, the sublevel set $S=S_n$ inherits an $n$-dependence through both $g(0;u^n)=\calF[u^n]$ and the ambient manifold $\calM(u^n_{\NN,T})$; under Assumption~\ref{assm:traj-unif}, $S_n$ is uniformly bounded along the trajectory (see Remark~\ref{rem:traj-unif}).

    \item[(c).] $g(\cdot;v)$ is geodesically $\tfrac{\nu}{2}$-strongly convex on $S$, and $g(\cdot;v)|_{\calM(u_\NN(\tilde w))}$ has exactly one global minimizer, which lies in $S \subset \calK(u_\NN(\tilde w))$.

    \item[(d).] $g(\cdot;v)$ has geodesically $\tfrac{3\mu}{2}$-Lipschitz continuous gradient on $S$.
\end{enumerate}
\end{theorem}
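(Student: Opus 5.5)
The plan is to work with the Riemannian Hessian of $g(\cdot;v)$ on the embedded submanifold $\calM(u_\NN(\tilde w))\subset\calH$. For an embedded submanifold of a flat Hilbert space, the standard formula \citep{absil2008optimization,boumal2023intromanifolds} gives, for $\xi\in T_h\calM$,
\[
\langle \Hess g(h;v)[\xi],\xi\rangle_\calH=\langle \nabla^2 g(h;v)[\xi],\xi\rangle_\calH+\langle \D\P_{(h)}[\xi]\,\nabla g(h;v),\xi\rangle_\calH .
\]
I would justify this by differentiating $\P_h\nabla g(h;v)$ along a curve and projecting, using Proposition~\ref{prop:projection}(b). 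Here $\nabla^2 g$ is understood in the sense of monotonicity of $\nabla g$ if $\calF$ is only $C^{1,1}$; the curve-based formulation then uses $\nu\le \nabla^2 g\le\mu$ from Lemma~\ref{lem:smooth-strongconv}. Lemma~\ref{lem:DP_bound} bounds the correction term in absolute value by $\Lambda\|\nabla g(h;v)\|\,\|\xi\|^2$ with $\Lambda=2L/\lambda_{\tilde w}^2$. Every claim then reduces to bounding $\|\nabla g(h;v)\|$ on the relevant set and showing that $\Lambda\|\nabla g\|\le \nu$ on $\calG$ and $\le\nu/2$ on $S$.

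To bound the gradient I use $\nabla g(h^*;v)=0$ and the $\mu$-Lipschitz property, which give $\|\nabla g(h;v)\|\le \mu\|h-h^*(v)\|\le\mu(\|h\|+\|h^*(v)\|)$. On $S$, strong convexity in $\calH$ yields $\tfrac{\nu}{2}\|h-h^*\|^2\le g(h)-g(h^*)\le g(0)$, so $\|h-h^*\|\le K(v)$ by Lemma~\ref{lem:g-properties}(c). Hence $\Lambda\|\nabla g\|\le \tfrac{2L\mu}{\lambda_{\tilde w}^2}K(v)=\tfrac{\nu}{2}\cdot\tfrac{4L\kappa}{\lambda_{\tilde w}^2}K(v)\le \nu/2$ by (H3). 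This gives $\Hess g\succeq \tfrac{\nu}{2}$ and $\Hess g\preceq \mu+\tfrac{\nu}{2}\le\tfrac{3\mu}{2}$ on $S$, which handles the pointwise parts of (c) and (d).

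For (a) on $\calG$, I bound $\|h\|_\calH\le d_\calM(h,0)<\pi/(2\Lambda)$. That alone is too crude, so I would instead restrict attention to $\calK$ using Lemma~\ref{lem:convex-radius}(b), where $\|h\|\le\lambda_{\tilde w}^2/(4L)$. Then $\Lambda\|\nabla g\|\le \tfrac{2L\mu}{\lambda_{\tilde w}^2}\bigl(\tfrac{\lambda_{\tilde w}^2}{4L}+\|h^*\|\bigr)\le \tfrac{\mu}{2}+\tfrac{\nu}{2}$. This is the main obstacle: it does not beat $\nu$ unless $\kappa\approx1$. I would therefore look for a sharper route first. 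One option is to use $\|h-h^*\|\le\|h\|+\|h^*\|$ with the $\calK$ radius combined with the third entry of \eqref{eq:rwdef}, which gives $\|h\|\le \Lip_{u_\NN} r_{\tilde w}/2\le \lambda_{\tilde w}^2/(8L\kappa)$. Then $\Lambda\mu\|h\|\le\nu/4$ and $\Lambda\mu\|h^*\|\le\nu/2$, so the total is at most $\tfrac34\nu\le\nu$, i.e. $\Hess g\succeq0$. This is evidently why the third entry of $r_{\tilde w}$ was introduced. If geodesic convexity is needed on all of $\calG$ rather than $\calK$, I would argue it only on the geodesically convex hull actually used, and flag this as the delicate point.

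Finally, I pass from Hessian bounds to the geodesic statements. By (H2) and Lemma~\ref{lem:interior-loc}, $S\subset\calK\subset\calG$, and $\calG$ is strongly geodesically convex by Lemma~\ref{lem:convex-radius}(a). Given $h_1,h_2\in S$, let $\gamma$ be the unique minimizing geodesic in $\calG$ joining them. Along $\gamma$ the function $t\mapsto g(\gamma(t))$ is convex, since $\gamma\subset\calK$ where $\Hess g\succeq0$. So $g(\gamma(t))\le\max\{g(h_1),g(h_2)\}\le g(0)$, which gives $\gamma\subset S$; hence $S$ is geodesically convex, proving (b). Integrating the Hessian bounds along such geodesics then gives $\tfrac{\nu}{2}$-strong convexity and $\tfrac{3\mu}{2}$-smoothness on $S$.

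For the minimizer in (c), $S$ is compact because it is closed in the compact set $\calK$, so $g$ attains its minimum over $S$. Any point of $\calM$ outside $S$ has value above $g(0)\ge\min_S g$, so this minimum is the global minimum of $g|_\calM$. Uniqueness follows from strong geodesic convexity on the geodesically convex set $S$, since any global minimizer lies in $S$.
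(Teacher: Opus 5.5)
There is a genuine gap in part (a), and it carries into (b). Apart from that, your route is the paper's. That includes the Hessian decomposition, Lemma~\ref{lem:DP_bound} for the curvature term, $\|\nabla g(h;v)\|\le\mu\|h-h^*(v)\|$, the sublevel bound $\|h-h^*\|_\calH\le K(v)$ on $S$, (H3) giving $\tfrac{\nu}{2}\le\Hess g\le\tfrac{3\mu}{2}$ pointwise on $S$, and the compactness/uniqueness argument for the minimizer.

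You only establish $\Hess g\succeq 0$ on $\calK$, with margin $\tfrac34\nu$. In (b) you then assert that the minimizing geodesic $\gamma$ joining $h_1,h_2\in S$ lies in $\calK$. Nothing supports this: Lemma~\ref{lem:convex-radius} only places $\gamma$ in $\calG$. Moreover, $\calK$, the image of a closed parameter ball, is not known to be geodesically convex. Since the goal is to show $\gamma\subset S\subset\calK$, using $\gamma\subset\calK$ to get convexity of $t\mapsto g(\gamma(t))$ is circular. As written, neither (a) on $\calG$ nor geodesic convexity of $S$ is proved. Because (c) and (d) integrate the Hessian bounds along such geodesics, those are not proved either. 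Your fallback of arguing ``on the geodesically convex hull actually used'' does not help, because that hull lies in $\calG$, not in $\calK$.

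The fix is a single step: do not halve the radius. Every $h\in\calM(u_\NN(\tilde w))$ has the form $u_\NN(w)-v$ with $\|w-\tilde w\|_2<r_{\tilde w}$. Hence $\|h\|_\calH\le \Lip_{u_\NN} r_{\tilde w}\le \lambda_{\tilde w}^2/(4L\kappa)$ by the third entry of \eqref{eq:rwdef}, which gives $\tfrac{2L\mu}{\lambda_{\tilde w}^2}\|h\|_\calH\le \tfrac{\nu}{2}$. Adding $\tfrac{2L\mu}{\lambda_{\tilde w}^2}\|h^*(v)\|_\calH\le\tfrac{\nu}{2}$ from \eqref{eq:data-condition}, the curvature correction is at most $\nu$ on all of $\calM\supset\calG$, so $\Hess g\succeq 0$ on $\calG$. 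This is exactly the paper's argument for (a). With it, $t\mapsto g(\gamma(t))$ is convex along every $\calG$-geodesic, and your proof of (b) goes through with $\calK$ replaced by $\calG$. The integration step giving $\tfrac{\nu}{2}$-strong convexity and $\tfrac{3\mu}{2}$-smoothness on $S$ then follows as you describe.
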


Satisfiability of the data condition~\eqref{eq:data-condition} is discussed in Appendix~\ref{app:disc-data-condition} (Remark~\ref{rem:data-condition}).

\begin{proof}[Proof sketch (full proof in Appendix~\ref{app:proof-thm-g})]
The Riemannian Hessian of $g$ at $h\in\calM(u_\NN(\tilde w))$ decomposes as
$\langle \Hess g[\xi],\xi\rangle = \langle \nabla^2 g[\xi],\xi\rangle + \langle \D\P_{(h)}[\xi]\,\nabla g,\xi\rangle$.
The first term inherits the ambient bounds $\nu\|\xi\|^2 \le \langle \nabla^2 g[\xi],\xi\rangle \le \mu\|\xi\|^2$.
The second (curvature correction) term is bounded in absolute value by $\frac{2L}{\lambda_{\tilde w}^2}\|\nabla g\|\,\|\xi\|^2$ via Lemma~\ref{lem:DP_bound}, so the analysis reduces to controlling $\|\nabla g\|$ on the relevant domain.

\smallskip\noindent\emph{Part (a) — on $\calG$.}
Using $\nabla g(h^*)=0$ and $\mu$-Lipschitz $\nabla g$, $\|\nabla g(h)\|\le\mu\|h-h^*\|\le\mu(\|h\|+\|h^*\|)$. For $h=\Phi(w)$ with $w\in B(\tilde w,r_{\tilde w})$ the Lipschitz continuity of $u_\NN$ gives $\|h\|\le \Lip_{u_\NN}r_{\tilde w}$, so $\|\nabla g\|\le \mu(\Lip_{u_\NN}r_{\tilde w}+\|h^*\|)$ uniformly on $\calM\supset\calG$. The curvature correction therefore splits into two pieces, each $\le \nu/2$:
\[
\frac{2L\mu}{\lambda_{\tilde w}^2}\Lip_{u_\NN}r_{\tilde w}\le\frac{\nu}{2}
\quad\text{by the third entry of~\eqref{eq:rwdef}},\qquad
\frac{2L\mu}{\lambda_{\tilde w}^2}\|h^*\|=\frac{\nu}{2}\cdot\frac{4L\kappa}{\lambda_{\tilde w}^2}\|h^*\|\le\frac{\nu}{2}
\quad\text{by~\eqref{eq:data-condition}}.
\]
Summing, the correction is at most $\nu$, giving $\Hess g\ge\nu-\nu=0$ on $\calG$. This non-negativity together with the unique-geodesic property of Lemma~\ref{lem:convex-radius}(a) yields geodesic convexity of $g(\cdot;v)$ on $\calG(u_\NN(\tilde w))$ --- but not strong convexity at this stage; the strong-convexity bound is recovered in part (c) on the smaller set $S$.

\smallskip\noindent\emph{Part (b).} $S \subset \calK(u_\NN(\tilde w)) \subset \calG(u_\NN(\tilde w))$ by hypothesis~(H2) and Lemma~\ref{lem:convex-radius}(b); sublevel sets of geodesically convex functions on a geodesically convex set are themselves geodesically convex.

\smallskip\noindent\emph{Part (c) — on $S$.}
On $S$ the $\|h\|+\|h^*\|$ split is replaced by the tighter sublevel-set bound $\|h-h^*\|\le\sqrt{2g(0;v)/\nu}=K(v)$ (Lemma~\ref{lem:g-properties}(c)), giving $\|\nabla g\|\le\mu K(v)$ --- a \emph{single} term, independent of $r_{\tilde w}$. The correction is then
\(\frac{2L\mu}{\lambda_{\tilde w}^{2}}K(v)=\frac{\nu}{2}\cdot\frac{4L\kappa}{\lambda_{\tilde w}^2}K(v)\le\frac{\nu}{2}\) by~\eqref{eq:data-condition}, yielding $\Hess g\ge \nu-\tfrac{\nu}{2}=\tfrac{\nu}{2}$ on $S$. Uniqueness of the minimizer follows from geodesic strong convexity on $S$ together with $0\in S$.

\smallskip\noindent\emph{Part (d).} The upper bound $\Hess g \le \tfrac{3\mu}{2}$ on $S$ follows symmetrically from the same $S$-bound on $\|\nabla g\|$.
\end{proof}

\subsection{Convergence of Riemannian Gradient Flow}
\rev{%
The convergence proof combines three standard ingredients from Riemannian optimization:
\begin{enumerate}[leftmargin=*,nosep]
\item \emph{Energy dissipation}: Along the gradient flow $\dot h_t = -\grad g(h_t;v)$,
the energy $g(h_t;v)$ is nonincreasing, so the trajectory remains in the sublevel set $S$.
\item \emph{Polyak--\L{}ojasiewicz inequality}: Geodesic $\nu/2$-strong convexity on $S$ implies
$\|\grad g(h;v)\|^2 \ge \nu(g(h;v) - g(h_\infty;v))$ for all $h \in S$.
\item \emph{Gr\"onwall's inequality}: Combining (1) and (2) yields the differential inequality
$\frac{d}{dt}R_t \le -\nu R_t$, which gives $R_t \le R_0 e^{-\nu t}$.
\end{enumerate}
The distance bound (part~(c)) follows from the first variation formula and a similar Gr\"onwall argument.
}
\begin{theorem}[Convergence of the Riemannian gradient flow]\label{thm:RGF-conv}
Fix $\tilde w\in\R^p$ and $v=u_\NN(\tilde w)$, and assume hypotheses~(H1)--(H3) of Theorem~\ref{thm:g} hold (in particular, the strict interior-localization condition of Lemma~\ref{lem:interior-loc} and the data condition~\eqref{eq:data-condition}). Let $\mathcal M:=\mathcal{M}(u_\NN(\tilde w))$ be the open embedded chart, $\calK(u_\NN(\tilde w))$ its compact localization image, $\calG(u_\NN(\tilde w))$ the strongly geodesically convex normal neighborhood of Lemma~\ref{lem:convex-radius}, and define the reached sublevel set
\[
S:=\{h\in \mathcal M:  g(h;v)\le g(0;v)\}.
\]
By Theorem~\ref{thm:g}(b), $S \subset \calK(u_\NN(\tilde w)) \subset \calG(u_\NN(\tilde w))$, and $S$ is geodesically convex and compact. When this theorem is invoked at $\tilde w=w^n_{\NN,T}$ along the neural MMS trajectory, the geometric data $(\calM(u^n_{\NN,T}), \calK(u^n_{\NN,T}), \calG(u^n_{\NN,T}), S_n, h_\infty^{(n)})$ are iterate-dependent; the constants $\mu,\nu,\kappa$ depend only on the data (Lemma~\ref{lem:g-properties}), and Assumption~\ref{assm:traj-unif} provides the uniform bounds $\inf_n\lambda_{w^n_{\NN,T}}\ge\lambda_\infty>0$, $\sup_n C(u^n_{\NN,T})\le 1$, and the uniform interior-localization condition needed for the downstream chaining (Lemma~\ref{lem:induction}, Theorem~\ref{thm:global_error}).
Let $(h_t)_{t\ge 0}\subset \mathcal M$ be a solution of the Riemannian gradient flow
\begin{equation*}
\frac{\ud h_t}{\ud t}  =  -\grad g(h_t;v),\qquad t\ge 0,
\end{equation*}
with initial condition $h_0=0\in S \subset \mathcal M$. Then the following hold:

\begin{enumerate}
\item[(a)] The trajectory remains in the compact sublevel set $S$, and consequently stays inside $\calK(u_\NN(\tilde w)) \subset \calG(u_\NN(\tilde w))$ at positive distance from $\partial\calM(u_\NN(\tilde w))$ (in particular, no boundary or projected-flow construction is needed). Moreover,
\[
h_t  \to  h_\infty
 = \arg\min_{h\in\mathcal M} g(h;v)
\qquad\text{as }t\to\infty,
\]
and the minimizer $h_\infty$ lies in $S$.

\item[(b)]
Defining the energy gap
\[
R_t:=g(h_t;v)-g(h_\infty;v)\ge 0,
\]
we have for all $t\ge 0$,
\[
R_t \le R_0 e^{-\nu t},
\]
where $R_0=g(0;v)-g(h_\infty;v)$ depends on $v$ (and hence on the iterate $w^n$ when applied at MMS step $n$).

\item[(c)]
Defining the flow gap
\[
D_t:=d_{\mathcal M}(h_t,h_\infty),
\]
we have for all $t\ge 0$,
\[
\|h_t-h_\infty\|_{\mathcal H}  \le  D_t \le D_0 e^{-\nu t/2},
\]
where $D_0=d_{\mathcal M}(0,h_\infty)$ depends on $v$.
\end{enumerate}
\end{theorem}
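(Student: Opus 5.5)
The plan follows the three-ingredient outline stated just before the theorem, using Theorem~\ref{thm:g} as the structural input.

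\emph{Step 1: invariance and compactness of $S$.} Along the flow, $\frac{\ud}{\ud t}g(h_t;v)=-\|\grad g(h_t;v)\|_\calH^2\le 0$, so $g(h_t;v)\le g(0;v)$ and $h_t\in S$ for as long as the solution exists. For compactness, write $S=\Phi(S^w_{\tilde w}\cap B(\tilde w,r_{\tilde w}))$. By Lemma~\ref{lem:interior-loc}, $S^w_{\tilde w}\subset B(\tilde w,r^{\rm in}_{\tilde w})$. The set $S^w_{\tilde w}$ is closed because $g\circ\Phi$ is continuous, and it is bounded, so it is compact; hence its image $S$ is compact. Since $\Phi$ is an embedding, $S$ sits inside the closed inner-ball image $\calK$, at positive distance from the frontier of the open chart.

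The vector field $-\grad g$ is locally Lipschitz on $\calM$, since in the chart it is the $C^1$ field of~\eqref{eq:GN-param}. A solution confined to a compact subset of $\calM$ therefore cannot blow up or exit the chart, so it exists for all $t\ge0$ and stays in $S\subset\calK\subset\calG$.

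\emph{Step 2: PL inequality and energy decay.} Theorem~\ref{thm:g}(c) gives a unique minimizer $h_\infty\in S$ of $g|_\calM$. By Lemma~\ref{lem:convex-radius}(a), for $h\in S$ the minimizing geodesic $\gamma$ from $h$ to $h_\infty$ lies in $\calG$. Because $S$ is geodesically convex (Theorem~\ref{thm:g}(b)), $\gamma$ in fact lies in $S$. Geodesic $\tfrac{\nu}{2}$-strong convexity along $\gamma$ then gives
\[
g(h_\infty)\ge g(h)+\langle\grad g(h),\Log_h h_\infty\rangle+\tfrac{\nu}{4}\,d_\calM(h,h_\infty)^2 .
\]
Minimizing the right-hand side over the vector $\Log_h h_\infty$ yields $\|\grad g(h)\|^2\ge \nu\,(g(h)-g(h_\infty))$. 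With $R_t$ as in part (b), this gives $\dot R_t=-\|\grad g\|^2\le-\nu R_t$, and Grönwall's inequality yields $R_t\le R_0e^{-\nu t}$.

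\emph{Step 3: distance decay and convergence.} I expect this to be the main obstacle, because $D_t=d_\calM(h_t,h_\infty)$ is differentiable only where $\Log$ is smooth. The injectivity hypothesis~\eqref{eq:inj-hypothesis} is what secures this: the bound gives $d_\calM(h,0)\le\lambda_{\tilde w}^2/(4L)$ on $\calK$, so any two points of $S$ are within distance $\lambda_{\tilde w}^2/(2L)=1/\Lambda<\pi/\Lambda$, and this should ensure that $h\mapsto\tfrac12 d_\calM(h,h_\infty)^2$ is smooth on $\calG$. The first variation formula then gives
\[
\tfrac{\ud}{\ud t}\tfrac12 D_t^2=\langle\grad g(h_t),\Log_{h_t}h_\infty\rangle .
\]
Strong monotonicity of $\grad g$ along $\gamma$, which follows from $\Hess g\ge\tfrac{\nu}{2}$ on $S$ together with $\grad g(h_\infty)=0$, gives $\langle\grad g(h_t),-\Log_{h_t}h_\infty\rangle\ge\tfrac{\nu}{2}D_t^2$. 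Hence $\frac{\ud}{\ud t}D_t^2\le-\nu D_t^2$, and Grönwall's inequality yields $D_t\le D_0e^{-\nu t/2}$.

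For the remaining inequality in part (c), the ambient distance is bounded by the length of any curve in $\calM$, so $\|h_t-h_\infty\|_\calH\le D_t$. Together these give $h_t\to h_\infty$, completing part (a).

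If the smoothness of the squared distance proves delicate, I would instead derive the decay of $D_t$ from the energy bound. Strong convexity gives $\tfrac{\nu}{4}D_t^2\le R_t$, so $D_t\le\sqrt{4R_0/\nu}\,e^{-\nu t/2}$; this is the same rate but with a different constant in place of $D_0$.
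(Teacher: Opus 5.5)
Your proposal is correct and follows the paper's proof essentially step for step. Energy dissipation keeps $h_t$ in $S$. The PL inequality from geodesic $\tfrac{\nu}{2}$-strong convexity, plus Gr\"onwall, gives the bound on $R_t$. The first-variation formula, combined with strong convexity at $h_t$ and $h_\infty$, plus the chord bound, gives part~(c); your monotonicity-along-$\gamma$ form is equivalent to the paper's sum of the two strong-convexity inequalities.

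Your compactness and global-existence argument fills in details the paper only sketches. Your smoothness worry in Step~3 can be bypassed by bounding the upper Dini derivative of $\tfrac12 D_t^2$ through the first variation of length along the minimizing geodesic. That keeps the constant $D_0$ without resorting to your fallback.
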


\begin{proof}[Proof sketch (full proof in Appendix~\ref{app:proof-RGF-conv})]
The proof combines three standard ingredients from Riemannian optimization, all on the geodesically convex sublevel set $S\subset \calK \subset \calG \subset \calM$:
\begin{enumerate}
\item \emph{Energy dissipation \& interior invariance.} Along the gradient flow $\dot h_t=-\grad g(h_t;v)$, the energy $g(h_t;v)$ is nonincreasing, so the trajectory remains in $S$. By Theorem~\ref{thm:g}(b), $S \subset \calK(u_\NN(\tilde w))$, which is a compact subset of the open chart $\calM(u_\NN(\tilde w))$ at positive distance from $\partial\calM(u_\NN(\tilde w))$. Consequently the trajectory never approaches the chart boundary, the flow is well-defined for all $t \ge 0$ on $\calM$ without any boundary or projected-flow construction, and standard Picard--Lindel\"of arguments apply.
\item \emph{Polyak--\L{}ojasiewicz inequality.} Geodesic $\nu/2$-strong convexity on $S$ (Theorem~\ref{thm:g}(c)) implies
$\|\grad g(h;v)\|^2\ge \nu\,(g(h;v)-g(h_\infty;v))$ for all $h\in S$
\citep[Lemma~11.28]{boumal2023intromanifolds}.
Combining with (1) gives $\frac{d}{dt}R_t\le -\nu R_t$, hence $R_t\le R_0 e^{-\nu t}$ by Gr\"onwall.
\item \emph{First variation formula.} Setting $\phi(t):=\frac12 d_\calM^2(h_t,h_\infty)$ and
using the geodesic strong convexity of $g(\cdot;v)|_S$ at both $h_t$ and $h_\infty$ (both in $\calG$, where the connecting geodesic is unique by Lemma~\ref{lem:convex-radius}),
one obtains $\phi'(t)\le -\nu\phi(t)$, yielding $D_t\le D_0 e^{-\nu t/2}$. The ambient
chord-length bound $\|h_t-h_\infty\|_\calH\le D_t$ follows since $\calM$ is embedded.
\end{enumerate}
\end{proof}

The continuous-time exponential bound of Theorem~\ref{thm:RGF-conv} has a natural discrete-time counterpart: on the same sublevel set $S$, discrete Riemannian gradient descent (RGD) on $\calM(u_\NN(\tilde w))$ that takes its steps along the geodesics --- i.e., via the exponential map --- inherits a matching linear contraction rate. We formulate the discrete iteration with the exponential map rather than a generic retraction because it makes the geodesic-smoothness step in the proof immediate and matches the canonical Riemannian descent picture; the practical algorithm we actually implement is parameter-space preconditioned gradient descent (PGD), the trivial-to-implement Euler discretization of the parameter-space flow, and by the parameter--manifold equivalence of Section~\ref{sec:RGF} PGD performs the same descent dynamics on $\calM(u_\NN(\tilde w))$. Theorem~\ref{thm:discrete-RGD} is therefore the discrete-time companion of the practical iteration; we do not formalize a per-step coordinate identification because no implementation actually computes the exponential map.

\begin{theorem}[Discrete Riemannian gradient descent on the sublevel set]\label{thm:discrete-RGD}
Assume the hypotheses of Theorem~\ref{thm:RGF-conv} hold, and let $f := g(\cdot; v)|_{\calM(u_\NN(\tilde w))}$. By Theorem~\ref{thm:g}(c)--(d), $f$ is geodesically $\alpha$-strongly convex and $\beta$-smooth on the sublevel set $S$ with
\[
\alpha := \tfrac{\nu}{2},\qquad \beta := \tfrac{3\mu}{2}.
\]
Consider the discrete Riemannian gradient descent iteration on $\calM(u_\NN(\tilde w))$ that takes its steps along the geodesics, i.e., via the exponential map:
\begin{equation}\label{eq:discrete-RGD}
    h_{k+1} \;=\; \Exp_{h_k}\bigl(-\eta\,\grad f(h_k)\bigr),\qquad k\ge 0,\quad h_0 = 0.
\end{equation}
By Lemma~\ref{lem:convex-radius}, $S\subset\calG$ lies inside a strongly geodesically convex normal neighborhood of $0$, on which $\Exp_{h_k}$ is well defined for all tangent vectors of magnitude $\le\pi/(2\Lambda)$ (in particular for $\eta\,\|\grad f(h_k)\|$ when $\eta\le 1/\beta$ and $h_k\in S$).
For any constant step size $\eta$ with $0 < \eta \le 1/\beta = 2/(3\mu)$, the iterates $\{h_k\}$ remain in $S$, and the energy gap $R_k := f(h_k) - f(h_\infty)$ and the flow gap $D_k := d_\calM(h_k, h_\infty)$ satisfy
\begin{align}
    R_k \;&\le\; (1 - \alpha\eta)^k R_0
    \;=\; \Bigl(1 - \tfrac{\nu\eta}{2}\Bigr)^{\!k}\bigl(g(0;v) - g(h_\infty;v)\bigr), \label{eq:discrete-RGD-bound} \\
    \|h_k - h_\infty\|_\calH \;\le\; D_k \;&\le\; \sqrt{\tfrac{4 R_0}{\nu}}\,\Bigl(1-\tfrac{\nu\eta}{2}\Bigr)^{\!k/2}, \label{eq:discrete-RGD-flow-gap}
\end{align}
where $h_\infty = \arg\min_{h\in\calM(u_\NN(\tilde w))} g(h;v)$ as in Theorem~\ref{thm:RGF-conv}.
In particular, with the maximal step $\eta = 1/\beta = 2/(3\mu)$, the energy contraction factor is $1 - \nu/(3\mu) = 1 - 1/(3\kappa)$, where $\kappa = \mu/\nu = (1+\tau L_\calF)/(1+\tau m_\calF)$ is the data-side condition number.
\end{theorem}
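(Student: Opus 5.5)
The argument follows the standard template for Riemannian gradient descent on a geodesically strongly convex, smooth function. The nonstandard part is to keep every iterate, and every geodesic it moves along, inside $S$, because the constants $\alpha=\nu/2$ and $\beta=3\mu/2$ of Theorem~\ref{thm:g}(c)--(d) hold only there.

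\textbf{Step 1: invariance of $S$.} I would proceed by induction on $k$. Suppose $h_k\in S$, set $\xi:=-\eta\,\grad f(h_k)$, and write $\gamma(s)=\Exp_{h_k}(s\xi)$ for $s\in[0,1]$. Let $s^*:=\sup\{s\in[0,1]:\gamma([0,s])\subset S\}$. On $[0,s^*]$ the curve lies in $S$, where $\Hess f\preceq\beta$, so the second-order Taylor expansion along $\gamma$ gives
\[
f(\gamma(s))\le f(h_k)-s\eta\|\grad f(h_k)\|^2+\tfrac{\beta}{2}s^2\eta^2\|\grad f(h_k)\|^2 .
\]
Since $\eta\le1/\beta$, this is at most $f(h_k)-\tfrac{s\eta}{2}\|\grad f(h_k)\|^2$, which is strictly below $g(0;v)$ unless $\grad f(h_k)=0$; in that case the iteration has stopped and there is nothing to prove. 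So $\gamma$ stays strictly inside the sublevel set up to $s^*$. If $s^*<1$, continuity of $f$ along $\gamma$ would then extend the curve slightly beyond $s^*$ while remaining in $S$, a contradiction. For this continuity argument I need $\gamma$ to be well defined and to stay in $\calM$. The curve has length $\eta\|\grad f(h_k)\|\le\|\grad f(h_k)\|/\beta\le\|\nabla g(h_k)\|/\beta$. The gradient bound $\|\nabla g\|\le\mu K(v)$ on $S$ from the proof of Theorem~\ref{thm:g}(c), combined with the data condition \eqref{eq:data-condition}, makes this length at most of order $\lambda_{\tilde w}^2/L\lesssim\pi/(2\Lambda)$. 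Together with the fact that $S\subset\calK$ sits compactly inside $\calG$ (Lemma~\ref{lem:convex-radius}), this keeps $\gamma$ within the region where the exponential map is defined. I expect this open/closed continuity argument, and the check that the step never leaves $\calG$, to be the main obstacle.

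\textbf{Step 2: descent and PL.} Taking $s=1$ in Step~1 gives $f(h_{k+1})\le f(h_k)-\tfrac{\eta}{2}\|\grad f(h_k)\|^2$. Geodesic $\alpha$-strong convexity on the geodesically convex set $S$ yields a Polyak--\L{}ojasiewicz inequality; this is the same fact used in Theorem~\ref{thm:RGF-conv}, citing \citet[Lemma~11.28]{boumal2023intromanifolds}. It reads $\|\grad f(h)\|^2\ge2\alpha(f(h)-f(h_\infty))=\nu R(h)$, with $h_\infty\in S$ the unique minimizer from Theorem~\ref{thm:g}(c). Substituting gives
\[
R_{k+1}\le\bigl(1-\alpha\eta\bigr)R_k ,
\]
and iterating proves \eqref{eq:discrete-RGD-bound}. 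Here $1-\alpha\eta\ge 1-\nu/(3\mu)>0$, since $\alpha\eta\le\alpha/\beta=\nu/(3\mu)<1$. The maximal-step statement follows by substituting $\eta=2/(3\mu)$.

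\textbf{Step 3: distance bound.} Let $\sigma$ be the unique minimizing geodesic from $h_\infty$ to $h_k$; it lies in $S$ because $S$ is geodesically convex (Theorem~\ref{thm:g}(b)). Apply $\alpha$-strong convexity along $\sigma$ at $h_\infty$, using $\grad f(h_\infty)=0$:
\[
R_k\ge\tfrac{\alpha}{2}D_k^2=\tfrac{\nu}{4}D_k^2 .
\]
Hence $D_k\le\sqrt{4R_k/\nu}\le\sqrt{4R_0/\nu}\,(1-\nu\eta/2)^{k/2}$. Finally, $\|h_k-h_\infty\|_\calH\le D_k$ because the chord in $\calH$ is no longer than any curve in the embedded submanifold joining the two points.
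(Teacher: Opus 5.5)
Your proposal is correct and follows essentially the same route as the paper: the $\beta$-smoothness descent inequality with $\eta\le 1/\beta$, then the PL inequality from geodesic $\alpha$-strong convexity on $S$ giving $R_{k+1}\le(1-\alpha\eta)R_k$, then the quadratic-growth bound $R_k\ge\tfrac{\alpha}{2}D_k^2$ together with the chord-length comparison. Your Step~1 fills in a point the paper's sketch takes for granted, namely that the whole geodesic segment stays in $S$ where the constant $\beta$ is valid; the argument takes $s^*$ as the supremum of times the geodesic stays in $S$, handles the case $s^*=0$ via the negative derivative $-\eta\|\grad f(h_k)\|^2$, and obtains existence of the geodesic on $[0,1]$ from compactness of $S$.
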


\begin{proof}[Proof sketch]
Since $h_{k+1}=\Exp_{h_k}(-\eta\,\grad f(h_k))$ lies on the geodesic emanating from $h_k$ in direction $\xi_k:=-\eta\,\grad f(h_k)$, geodesic $\beta$-smoothness of $f$ on $S$ \citep[Lemma~11.28]{boumal2023intromanifolds} gives the descent inequality
\[
f(h_{k+1}) \le f(h_k) + \langle \grad f(h_k), \xi_k\rangle + \tfrac{\beta}{2}\|\xi_k\|^2.
\]
Plugging in $\xi_k=-\eta\,\grad f(h_k)$,
\[
    f(h_{k+1}) \;\le\; f(h_k) - \eta\bigl(1 - \tfrac{\beta\eta}{2}\bigr)\|\grad f(h_k)\|^2 \;\le\; f(h_k) - \tfrac{\eta}{2}\|\grad f(h_k)\|^2
\]
for $\eta \le 1/\beta$. In particular $f(h_{k+1}) \le f(h_k)$, so $h_{k+1} \in S$ and the iteration is well-defined.
Geodesic $\alpha$-strong convexity on $S$ implies the Polyak--\L{}ojasiewicz inequality $\|\grad f(h)\|^2 \ge 2\alpha\,(f(h)-f(h_\infty))$ on $S$ \citep[Lemma~11.28]{boumal2023intromanifolds}. Combining,
\[
    f(h_{k+1}) - f(h_\infty) \;\le\; (1 - \alpha\eta)\bigl(f(h_k) - f(h_\infty)\bigr),
\]
which iterates to~\eqref{eq:discrete-RGD-bound}.
For the flow gap~\eqref{eq:discrete-RGD-flow-gap}, geodesic $\alpha$-strong convexity on $S$ yields $f(h_k)-f(h_\infty)\ge \tfrac{\alpha}{2}d_\calM(h_k,h_\infty)^2$ \citep[Theorem~11.21]{boumal2023intromanifolds}, so
$D_k^2 \le (2/\alpha)R_k \le (2/\alpha)(1-\alpha\eta)^k R_0 = (4/\nu)(1-\nu\eta/2)^k R_0$,
and $\|h_k-h_\infty\|_\calH\le D_k$ since $\calM$ is embedded in $\calH$.
\end{proof}

\begin{remark}[Inexact linear-solve and Levenberg--Marquardt damping]\label{rem:inexact-discrete}
In practice the exact Gauss--Newton system $(J^*J)^{-1}J^*\nabla g$ is replaced by an inexact one, e.g., a conjugate-gradient solve truncated to tolerance $\varepsilon_{\rm CG}$ and/or Levenberg--Marquardt damping $(J^*J+\rho I)^{-1}$. The resulting RGD step can be analyzed as a perturbed-gradient iteration $h_{k+1}=\Exp_{h_k}(-\eta\,\grad f(h_k)+e_k)$ with $\|e_k\|\le\gamma\,\eta\,\|\grad f(h_k)\|+\eta\,\delta$ for $\gamma\in[0,1)$ and $\delta\ge 0$ controlled by $\varepsilon_{\rm CG}$ and $\rho$. A standard perturbed-PL argument \citep[Theorem~11.31]{boumal2023intromanifolds} gives linear convergence up to an error floor proportional to $\delta^2$:
\(
    f(h_k)-f(h_\infty)\le(1-(1-\gamma)\alpha\eta)^k(f(h_0)-f(h_\infty))+O(\delta^2).
\)
A precise statement is left to future work; Theorem~\ref{thm:discrete-RGD} is the exact-step counterpart.
\end{remark}

\section{Global Error Propagation for Neural MMS with Inexact Increment Solutions}\label{sec:error-propagation}
\rev{%
Having established exponential convergence of the Riemannian gradient flow for a single
increment subproblem (Theorem~\ref{thm:RGF-conv}), we now address the global behavior
of the neural MMS across multiple iterations.
The key question is: how do the errors from (i)~finite-time truncation of the inner flow
and (ii)~neural approximation of the proximal map accumulate over iterations?
The answer relies on the contraction property of the proximal map $J_\tau$:
at each MMS step, the contraction factor $1/(1+\tau m_\calF)$ absorbs the per-step errors,
yielding a uniform $\delta$-tracking bound (Theorem~\ref{thm:global_error}).
Figure~\ref{fig:error-propagation} illustrates this one-step error decomposition.
}

Recall that $\mathcal{F}:\mathcal{H}\to\mathbb{R}$ is continuously Fr\'echet differentiable, coercive, and $m_\mathcal{F}$-strongly convex. For $\tau>0$, define the proximal map
\begin{equation}\label{eq:prox_map_R}
J_\tau(x) := \mbox{prox}_{\tau \mathcal{F}}(x)
 := \argmin_{u\in \mathcal H}\Big(\frac{1}{2\tau}\|u-x\|^2+\mathcal F[u]\Big).
\end{equation}
Since $m_\calF>0$ and $\tau>0$, the objective in \eqref{eq:prox_map_R} is $(\frac1\tau+ m_\calF)$--strongly convex and hence $J_\tau$ is single-valued and well-defined.

Let $\{u^n\}_{n\ge 0}$ denote the exact MMS sequence defined by
\begin{equation*}
u^{n+1}   =  J_\tau(u^n),\qquad n\ge 0.
\end{equation*}

\begin{lemma}[Lipschitz continuity of $J_\tau$]\label{lem:lip_J_tau}
Since $\mathcal{F}$ is $m_\mathcal{F}$--strongly convex with $m_\calF>0$, the proximal map satisfies
\begin{equation*}
\|J_\tau (x)-J_\tau(y)\| \le \frac{1}{1+\tau m_\mathcal{F}} \|x-y\|,\qquad \forall x,y\in \mathcal{H}.
\end{equation*}
\end{lemma}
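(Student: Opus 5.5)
The argument uses only the first-order optimality conditions of the two proximal subproblems and the strong monotonicity of $\nabla\calF$. Fix $x,y\in\calH$ and set $p:=J_\tau(x)$, $q:=J_\tau(y)$. These are well defined because the objective in \eqref{eq:prox_map_R} is $(\frac1\tau+m_\calF)$-strongly convex and Fr\'echet differentiable.

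First we record the optimality conditions. As in Lemma~\ref{lem:g-properties}(a), each minimizer is characterized by its first-order condition, here applied to the unconstrained strongly convex problem in $u$:
\[
\frac{p-x}{\tau}+\nabla\calF[p]=0,\qquad \frac{q-y}{\tau}+\nabla\calF[q]=0.
\]
Subtracting the two and multiplying by $\tau$ gives the identity
\[
(p-q)+\tau\bigl(\nabla\calF[p]-\nabla\calF[q]\bigr)=x-y.
\]

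Next we take the inner product of this identity with $p-q$. The strong convexity (monotonicity) inequality $\langle \nabla\calF[p]-\nabla\calF[q],p-q\rangle\ge m_\calF\|p-q\|^2$ then yields
\[
(1+\tau m_\calF)\|p-q\|^2\le \langle x-y,p-q\rangle\le \|x-y\|\,\|p-q\|.
\]
If $p=q$ the claim is trivial. Otherwise we divide by $\|p-q\|$ and obtain the stated bound with factor $1/(1+\tau m_\calF)$.

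The only step needing care is justifying the first-order characterization in infinite dimensions. This follows because a differentiable convex functional attains its minimum exactly where its Fr\'echet gradient vanishes, and existence of the minimizer is guaranteed by strong convexity together with coercivity. No other obstacle is expected; in particular, Lipschitz continuity of $\nabla\calF$ is not needed.
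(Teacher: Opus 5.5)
Your proposal is correct and follows the paper's proof essentially step for step. You use the same first-order optimality conditions for the two proximal problems, take the inner product with $J_\tau(x)-J_\tau(y)$ using strong monotonicity of $\nabla\calF$, and finish with Cauchy--Schwarz to obtain the factor $1/(1+\tau m_\calF)$.
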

\begin{proof}
Write $\hat x=J_\tau(x)$ and $\hat y=J_\tau(y)$. By the optimality conditions, $x-\hat x=\tau\nabla\mathcal{F}(\hat x)$ and $y-\hat y=\tau\nabla\mathcal{F}(\hat y)$. Since $\mathcal{F}$ is $m_\mathcal{F}$--strongly convex, $\nabla\mathcal{F}$ is $m_\mathcal{F}$--monotone, so
$\langle (x-\hat x)-(y-\hat y),\hat x-\hat y\rangle \ge \tau m_\mathcal{F}\|\hat x-\hat y\|^2$.
Rearranging and applying Cauchy--Schwarz gives $(1+\tau m_\mathcal{F})\|\hat x-\hat y\|\le\|x-y\|$.
\end{proof}

Also recall 
\begin{equation}
    J_{\tau,\NN}(x) := \argmin_{u_\NN \in \NN} \left\{\frac{1}{2\tau}\nm{u_\NN -x}^2 + \mathcal{F}[u_\NN]\right\}. 
\end{equation}

With Assumption \ref{ass:intro:locality}, Lemma~\ref{lem:interior-loc} (which contains the minimizer strictly inside the inner ball), and Theorem~\ref{thm:RGF-conv} (which gives existence and uniqueness of the minimizer on $\calM(x)$), we have
\begin{equation}\label{eq:J-tau-NN-well-defined}
J_{\tau,\NN}(x) =  h_\infty + x,
\end{equation}
where 
\[
h_\infty = \argmin_{h \in \mathcal{M}(x)} g(h;x) 
\]
is uniquely determined by $x$.  Thus, $J_{\tau, \NN}$ is well-defined. 

Let $\{u^n_{\NN,T}\}_{n\geq 0}$ be the sequence of Riemannian gradient flow (RGF) running until $T$ starting from the previous $u_{\NN,T}^n$. That is, 
\begin{equation}\label{eqn:RGF-update}
    u^{n+1}_{\NN,T} := u^n_{\NN,T} + h_T,
\end{equation}
where $\{h_t\}_{t\geq 0 } \subset \mathcal{M}(u^n_{\NN,T})$ solves
\[
\frac{\ud h_t}{\ud t} = - \grad g(h_t; u^n_{\NN,T}) 
\]
with $h_0 = 0$. 

In particular, 
\begin{equation}\label{eq:u_infty_def}
u^{n+1}_{\NN,\infty} = u^n_{\NN,T} + h _\infty = J_{\tau,\NN}\big(u^n_{\NN,T}\big).
\end{equation}

\begin{corollary}[Finite-time error of RGF]\label{cor:rgf-finite-time}
There exists a sequence $\{\bar C_n\}_{n\ge 0}$ with $\bar C_n>0$ such that for all $n\ge 1$,
\begin{equation}\label{eq:rgf_exp}
\|u^{n}_{\NN,T}-u^{n}_{\NN,\infty}\|_\calH \;\le\; \bar C_n\, e^{-\nu T/2},
\qquad
\nu=\frac{1+\tau m_\calF}{\tau} \ \text{(Lemma~\ref{lem:g-properties})}.
\end{equation}
Explicitly, $\bar C_n := D_0^{(n)}=d_{\calM(u^n_{\NN,T})}(0,h_\infty^{(n)})$ is the geodesic distance on the increment manifold from the origin to the inner-flow minimizer at step $n$.
\end{corollary}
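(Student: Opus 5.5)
The bound will be read off from Theorem~\ref{thm:RGF-conv}(c) applied at the previous neural iterate, with the index shift made explicit. Fix $n\ge 1$ and set $\tilde w := w^{n-1}_{\NN,T}$, $v := u^{n-1}_{\NN,T} = u_\NN(\tilde w)$. By the definitions \eqref{eqn:RGF-update} and \eqref{eq:u_infty_def} we have
\[
u^{n}_{\NN,T} - u^{n}_{\NN,\infty} = \bigl(v + h_T\bigr) - \bigl(v + h_\infty\bigr) = h_T - h_\infty,
\]
where $(h_t)_{t\ge0}\subset\calM(v)$ is the Riemannian gradient flow of $g(\cdot;v)$ started at $h_0=0$, and $h_\infty=\arg\min_{h\in\calM(v)}g(h;v)$. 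The common base point cancels, so the claim reduces to a bound on $\|h_T-h_\infty\|_\calH$ for the single inner subproblem at step $n-1$.

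To apply Theorem~\ref{thm:RGF-conv} at $\tilde w = w^{n-1}_{\NN,T}$, I will check hypotheses (H1)--(H3) of Theorem~\ref{thm:g} at that point. These are the per-step conditions supplied along the trajectory by the uniform-trajectory assumption referenced in Theorem~\ref{thm:RGF-conv} (Assumption~\ref{assm:traj-unif}): non-degeneracy $\lambda_{w^{n-1}_{\NN,T}}>0$ together with the injectivity-radius hypothesis, the interior-localization condition \eqref{eq:interior-loc-cond}, and $C(u^{n-1}_{\NN,T})\le 1$. I regard this as the only genuine obstacle, because the corollary is stated without repeating these hypotheses. I will make explicit that they are imposed at every iterate, since otherwise $h_\infty$, and hence $u^n_{\NN,\infty}$, need not even be well defined. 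Once they hold, Theorem~\ref{thm:RGF-conv}(a) guarantees that the flow exists for all $t$, stays in $S$, and converges to the unique minimizer $h_\infty$.

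Theorem~\ref{thm:RGF-conv}(c) then gives
\[
\|h_T-h_\infty\|_\calH \le D_T \le D_0 e^{-\nu T/2}, \qquad D_0 = d_{\calM(v)}(0,h_\infty).
\]
Define $\bar C_n := D_0^{(n)}$. The stated index convention assigns $\bar C_n$ to the flow producing $u^n$, which runs on $\calM(u^{n-1}_{\NN,T})$; I will note this slight indexing mismatch with the corollary's formula, which is notational only.

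It remains to show positivity and finiteness of $\bar C_n$. Finiteness holds because $h_\infty\in S\subset\calG$, so $D_0<\pi/(2\Lambda)$. If $h_\infty=0$, the bound holds trivially, and any positive constant may be chosen, e.g.\ $\bar C_n:=\max\{D_0^{(n)},1\}$; otherwise $D_0>0$. Finally, $\nu=(1+\tau m_\calF)/\tau$ is independent of $n$ (Lemma~\ref{lem:g-properties}), which yields \eqref{eq:rgf_exp} for all $n\ge1$.
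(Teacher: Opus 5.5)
Your argument is correct and is the paper's Step~1: cancel the common base point via \eqref{eqn:RGF-update}--\eqref{eq:u_infty_def}, apply Theorem~\ref{thm:RGF-conv}(c), and set $\bar C_n=D_0$. Your explicit index shift is also the right bookkeeping, which the paper's statement glosses over: the difference $u^n_{\NN,T}-u^n_{\NN,\infty}$ comes from the flow on $\calM(u^{n-1}_{\NN,T})$, and this matches how Lemma~\ref{lem:induction} actually uses the bound, namely at step $n+1$ with constant $\bar C_n$.

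The paper's proof then adds a Step~2 that bounds $\bar C_n$ by controlling $\|h_\infty\|_\calH$ through the data condition and passing to geodesic distance with a CAT chord-to-geodesic comparison whose constant $c_\Lambda$ is left unquantified. Your route is simpler: from $h_\infty\in S\subset\calK\subset\calG$ you get the explicit bound $D_0<\pi/(2\Lambda)$ with no comparison geometry. The length estimate in the proof of Lemma~\ref{lem:convex-radius}(b) even sharpens this to $D_0\le\lambda_{\tilde w}^2/(4L)$.
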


\begin{proof}
\emph{Step 1: identify $\bar C_n$.}
By~\eqref{eqn:RGF-update} and~\eqref{eq:u_infty_def}, $u^n_{\NN,T}-u^n_{\NN,\infty}=h_T-h_\infty^{(n)}$. Theorem~\ref{thm:RGF-conv}(c) gives $\|h_T-h_\infty^{(n)}\|_\calH\le D_T\le D_0^{(n)} e^{-\nu T/2}$. Setting $\bar C_n:=D_0^{(n)}$ yields~\eqref{eq:rgf_exp}.

\emph{Step 2: explicit bound on $D_0^{(n)}$.}
Under the data condition~\eqref{eq:data-condition} at iterate $u^n_{\NN,T}$, we bound $D_0^{(n)}$ via a CAT chord-to-geodesic comparison. Since $h_\infty^{(n)}\in S_n$ and $0\in S_n$, the sublevel-set bound~\eqref{eq:dist_to_star_on_S} together with Lemma~\ref{lem:g-properties}(c) gives
\(
\|h_\infty^{(n)}-h^*(u^n_{\NN,T})\|_\calH \le \sqrt{2g(0;u^n_{\NN,T})/\nu} = K(u^n_{\NN,T}).
\)
Combined with Lemma~\ref{lem:g-properties}(a) and the triangle inequality,
\begin{equation}\label{eq:h-inf-norm-bound}
\|h_\infty^{(n)}\|_\calH \;\le\; 2\,\max\bigl\{K(u^n_{\NN,T}),\,\|h^*(u^n_{\NN,T})\|_\calH\bigr\} \;\le\; \frac{\lambda_{w^n_{\NN,T}}^{2}}{2L\kappa},
\end{equation}
where the second inequality uses $C(u^n_{\NN,T})\le 1$. Since $\calM(u^n_{\NN,T})$ is locally $\mathrm{CAT}(\Lambda_n^{2})$ with $\Lambda_n:=2L/\lambda_{w^n_{\NN,T}}^{2}$ (Lemma~\ref{lem:DP_bound}), and $\Lambda_n\,\|h_\infty^{(n)}\|_\calH\le 1/\kappa\le 1$ by~\eqref{eq:h-inf-norm-bound}, the chord-to-geodesic comparison~\citep[Part~II, Prop.~1.4 and Thm.~4.1]{bridson2013metric} yields
\(
D_0^{(n)} = d_{\calM(u^n_{\NN,T})}(0,h_\infty^{(n)}) \le c_\Lambda^{(n)}\,\|h_\infty^{(n)}\|_\calH,
\)
where the chord/geodesic constant $c_\Lambda^{(n)}\ge 1$ depends only on the dimensionless product $\Lambda_n\,\|h_\infty^{(n)}\|_\calH$, which is bounded by $1/\kappa$ by~\eqref{eq:h-inf-norm-bound}. Combining:
\begin{equation}\label{eq:bar-C-bound}
\bar C_n \;=\; D_0^{(n)} \;\le\; c_\Lambda^{(n)}\cdot\frac{\lambda_{w^n_{\NN,T}}^{2}}{2L\kappa}.
\end{equation}
\end{proof}

\begin{remark}[Uniform boundedness of $\bar C_n$ along the trajectory]\label{rem:Cbar-bound}
The explicit bound~\eqref{eq:bar-C-bound} from the proof of Corollary~\ref{cor:rgf-finite-time} is pointwise in $n$. Uniform boundedness along the trajectory follows under Assumption~\ref{assm:traj-unif}: by Remark~\ref{rem:traj-unif}, the derived uniform operator-norm bound $\sup_n\|\D u_\NN(w^n_{\NN,T})\|_{\rm op}<\infty$ implies $\sup_n\lambda_{w^n_{\NN,T}}<\infty$ (singular values are dominated by the operator norm), and $\Lambda_n\,\|h_\infty^{(n)}\|_\calH\le 1/\kappa$ remains uniformly $\le 1$ for all $n$. Setting $c_\Lambda:=\sup_n c_\Lambda^{(n)}<\infty$, we obtain
\[
\sup_{n\ge 0}\bar C_n \;\le\; c_\Lambda\cdot\frac{\bigl(\sup_n\lambda_{w^n_{\NN,T}}\bigr)^{2}}{2L\kappa} \;<\;\infty,
\]
so the bound~\eqref{eq:rgf_exp} chains uniformly across the trajectory: $\sup_n\bar C_n\,e^{-\nu T/2}=\Theta(e^{-\nu T/2})$.
\end{remark}

\subsection{One-Step Error Decomposition}

\begin{assumption}[Neural proximal approximation precision]
\label{assm:approx-precision}
The computed neural proximal map $J_{\tau,\NN}$ approximates the exact proximal
map $J_\tau$ at the one-step level. More precisely, there exists a nonnegative
function $\varepsilon_{\NN}:\calH\to\mathbb{R}_+$ such that, for all relevant
inputs $x\in\calH$ arising in the error-propagation analysis,
\begin{equation*}
\|J_{\tau,\NN}(x)-J_\tau(x)\|_\calH
\le
\varepsilon_{\NN}(x).
\end{equation*}
We write
\[
    \varepsilon:=\sup_x \varepsilon_{\NN}(x),
\]
where the supremum is taken over the relevant set of inputs along the exact and
inexact MMS trajectories. The target accuracy $\delta>0$ is chosen so that
\begin{equation}\label{eqn:approximate-error}
\varepsilon
<
\frac{\tau m_{\calF}}{1+\tau m_{\calF}}\,\delta .
\end{equation}
\end{assumption}

Interpretation of the one-step precision $\varepsilon$ and its decomposition into trial-space and sampling/quadrature errors is discussed in Appendix~\ref{app:disc-approx-precision} (Remark~\ref{rem:approx-precision}).

\begin{assumption}[Non-degeneracy on all increment manifolds]\label{assm:center-nondeg}
Let $\{w^n\}_{n\ge 0}$ be the parameters generated by the neural MMS (or the inexact increment solver). Assume $u_\NN$ has $\lambda_{w^n}$-non-degenerate Jacobian at all $w^n$ for all $n$. That is, for all $n\ge 0$,
\[
\lambda_{\min}\!\big(\D u_{\NN}(w^n)^* \D u_{\NN}(w^n)\big)\ge \lambda_{w^n}^2,
\]
and the radii $\{r_{w^n}\}$ are chosen so that Lemma~\ref{lem:gram-lower-bound} applies on each ball
$B(w^n, r_{w^n})$ (cf.~\eqref{eq:B}).
\end{assumption}

\begin{assumption}[Trajectory uniformity]\label{assm:traj-unif}
Along the inexact trajectory $\{w^n_{\NN,T}\}_{n\ge 0}$ generated by the neural MMS, the following uniform bounds hold:
\begin{enumerate}[label=\textnormal{(U\arabic*)},leftmargin=*,nosep]
\item \emph{Uniform non-degeneracy.} There exists $\lambda_\infty>0$ such that
\(
\inf_{n\ge 0}\lambda_{w^n_{\NN,T}}\;\ge\;\lambda_\infty.
\)
This strengthens Assumption~\ref{assm:center-nondeg} from per-iterate positivity to a uniform positive lower bound.
\item \emph{Uniform data condition.} The hypothesis of Theorem~\ref{thm:g} holds at every iterate:
\[
\sup_{n\ge 0}\,C(u^n_{\NN,T})\;\le\;1,
\]
where $C(v):=(4L\kappa/\lambda_{\tilde w}^{2})\,\max\{\|h^*(v)\|_\calH,\,K(v)\}$ is the data condition of Theorem~\ref{thm:g} (see~\eqref{eq:data-condition}; the $\lambda_{\tilde w}$ in $C(v)$ is the local non-degeneracy constant at $w^n_{\NN,T}$).
\item \emph{Uniform output isolation around the inner ball.} There exists $\rho^{\rm in}_\infty>0$ such that
\(
\inf_{n\ge 0}\rho^{\rm in}_{w^n_{\NN,T}}\;\ge\;\rho^{\rm in}_\infty,
\)
where $\rho^{\rm in}_{w^n_{\NN,T}}:=\dist_\calH\bigl(u^n_{\NN,T},\,u_\NN(\R^p\setminus B(w^n_{\NN,T},r^{\rm in}_{w^n_{\NN,T}}))\bigr)$ is the inner output-isolation radius from~\eqref{eq:rho-in-def} at iterate $w^n_{\NN,T}$, and $r^{\rm in}_{w^n_{\NN,T}} := r_{w^n_{\NN,T}}/2$. Combined with $\sup_n(\calF[u^n_{\NN,T}]-\calF_{\inf})<\infty$ (a consequence of the uniform energy bound; see Remark~\ref{rem:traj-unif}), this supplies the $n$-uniform interior-localization hypothesis~\eqref{eq:interior-loc-cond} of Lemma~\ref{lem:interior-loc} under the small-step condition $2\tau\sup_n(\calF[u^n_{\NN,T}]-\calF_{\inf}) < (\rho^{\rm in}_\infty)^2$. It also underwrites the $n$-uniform version of Assumption~\ref{ass:intro:locality} used by Theorem~\ref{thm:small-tau-locality}.
\item \emph{Uniform injectivity radius.} The injectivity-radius hypothesis~\eqref{eq:inj-hypothesis} of Lemma~\ref{lem:convex-radius} holds at every iterate, i.e.,
\[
\mathrm{inj}_{\calM(u^n_{\NN,T})}(0)\;\ge\;\pi/\Lambda_n,\qquad \Lambda_n:=2L/\lambda_{w^n_{\NN,T}}^2,
\]
for all $n\ge 0$, so Lemma~\ref{lem:convex-radius} provides $\calG(u^n_{\NN,T})$ at every iterate. (Under (U1), $\Lambda_n\le 2L/\lambda_\infty^2$ is uniformly bounded, so a single uniform lower bound $\inf_n \mathrm{inj}_{\calM(u^n_{\NN,T})}(0)\ge \pi\lambda_\infty^2/(2L)$ suffices.)
\end{enumerate}
\end{assumption}

Discussion of Assumption~\ref{assm:traj-unif} -- including the three derived uniformities, verifiability of (U1)--(U3) under the initial-gap condition, and the connection of (U3) to global inverse-stability of $u_\NN$ -- is given in Appendix~\ref{app:disc-traj-unif} (Remarks~\ref{rem:traj-unif}, \ref{rem:tracking-uniformity}, and~\ref{rem:globalPGF-traj-unif}).

\begin{figure}[htpb]
    \centering
    \includegraphics[width=0.75\linewidth]{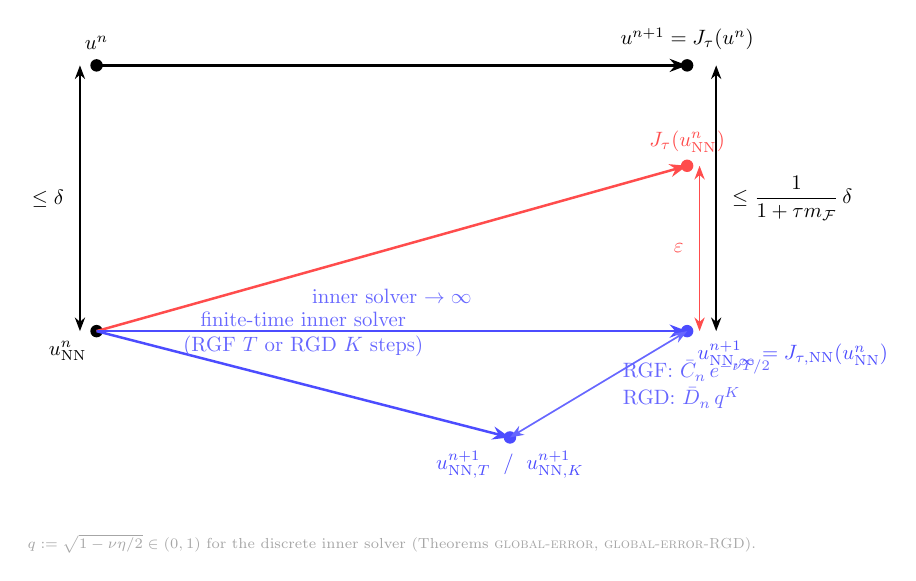}
    \caption{Error propagation across one MMS induction step. The per-step error decomposes into the proximal-map contraction $\delta/(1+\tau m_\calF)$ (right vertical, black), the neural approximation error $\varepsilon$ (red), and the inner-solver truncation error -- $\bar C_n\,e^{-\nu T/2}$ for the continuous RGF (Corollary~\ref{cor:rgf-finite-time}) or $\bar D_n\,q^K$ with $q = \sqrt{1-\nu\eta/2}$ for the discrete RGD (Theorem~\ref{thm:discrete-RGD}). The two diagonal blue arrows mark the same starting point $u^n_\NN$ mapped by the two inner solvers.}
    \label{fig:error-propagation}
\end{figure}

\begin{lemma}[Induction Step]\label{lem:induction}
Suppose $\nm{u^n_{\NN,T} - u^n}_\calH \leq \delta$, and assume Theorem~\ref{thm:g} applies at the iterate $\tilde w=w^n_{\NN,T}$ (i.e., the data condition~\eqref{eq:data-condition} holds at $u^n_{\NN,T}$). If one runs the Riemannian gradient flow starting from $u^n_{\NN,T}$ on the manifold $\calM(u^n_{\NN,T})$ for time $T=T_n$ satisfying
\begin{equation}\label{eqn:T}
T_n \;\ge\; \frac{2}{\nu}\log\!\left(\bar C_n\cdot\frac{1+\tau m_\calF}{\tau m_\calF(\delta-\varepsilon)-\varepsilon}\right),
\end{equation}
where $\varepsilon$ is in Assumption~\ref{assm:approx-precision} and $\bar C_n$ is the step-$n$ constant from Corollary~\ref{cor:rgf-finite-time}, then the error at the next step is bounded:
\[
\nm{u^{n+1}_{\NN,T} - u^{n+1}}_\calH \;\le\;\delta.
\]
By Lemma~\ref{lem:g-properties} and Lemma~\ref{lem:gram-lower-bound}, the constants $\nu,\mu,\kappa$ in~\eqref{eqn:T} depend only on the data. The genuinely iterate-dependent quantities are $\bar C_n$ and the sublevel set $S_n$; under Assumption~\ref{assm:traj-unif} together with Remark~\ref{rem:Cbar-bound}, $\sup_n\bar C_n<\infty$, so $\sup_n T_n<\infty$ and the inductive bound chains uniformly.
\end{lemma}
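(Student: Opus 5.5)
The plan is a three-term triangle inequality that inserts the exact neural proximal output $u^{n+1}_{\NN,\infty}=J_{\tau,\NN}(u^n_{\NN,T})$ from~\eqref{eq:u_infty_def} and the exact MMS image $J_\tau(u^n_{\NN,T})$ between $u^{n+1}_{\NN,T}$ and $u^{n+1}=J_\tau(u^n)$:
\[
\|u^{n+1}_{\NN,T}-u^{n+1}\|_\calH \le \|u^{n+1}_{\NN,T}-u^{n+1}_{\NN,\infty}\|_\calH + \|J_{\tau,\NN}(u^n_{\NN,T})-J_\tau(u^n_{\NN,T})\|_\calH + \|J_\tau(u^n_{\NN,T})-J_\tau(u^n)\|_\calH .
\]
Each term is then controlled by an earlier result. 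The first is the inner-solver truncation error. Because Theorem~\ref{thm:g} applies at $\tilde w=w^n_{\NN,T}$, Theorem~\ref{thm:RGF-conv} and Corollary~\ref{cor:rgf-finite-time} (applied at index $n+1$ with the step-$n$ constant) bound it by $\bar C_n e^{-\nu T_n/2}$. The second is the neural approximation error, which Assumption~\ref{assm:approx-precision} bounds by $\varepsilon$. The third is bounded by $\delta/(1+\tau m_\calF)$, using Lemma~\ref{lem:lip_J_tau} together with the induction hypothesis $\|u^n_{\NN,T}-u^n\|\le\delta$.

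It then suffices to show
\[
\bar C_n e^{-\nu T_n/2} + \varepsilon + \frac{\delta}{1+\tau m_\calF} \le \delta,
\]
which is equivalent to $\bar C_n e^{-\nu T_n/2}\le \frac{\tau m_\calF\delta}{1+\tau m_\calF}-\varepsilon$. Assumption~\ref{assm:approx-precision} makes the right-hand side strictly positive. Solving for $T_n$ gives $T_n\ge \frac{2}{\nu}\log\bigl(\bar C_n(1+\tau m_\calF)/(\tau m_\calF\delta-(1+\tau m_\calF)\varepsilon)\bigr)$. The denominator here is $\tau m_\calF(\delta-\varepsilon)-\varepsilon$, which is exactly the expression in~\eqref{eqn:T}, so condition~\eqref{eqn:T} closes the induction step.

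For the final sentence of the statement, I will note two facts. First, $\nu,\mu,\kappa$ depend only on $\tau,m_\calF,L_\calF$, by Lemma~\ref{lem:smooth-strongconv} and Lemma~\ref{lem:g-properties}. Second, the right-hand side of~\eqref{eqn:T} is increasing in $\bar C_n$, so $\sup_n\bar C_n<\infty$ from Remark~\ref{rem:Cbar-bound} gives $\sup_n T_n<\infty$.

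I expect the main obstacle to be justifying the use of the two earlier results at the perturbed input $u^n_{\NN,T}$, not the arithmetic. First, Assumption~\ref{assm:approx-precision} must cover the input $x=u^n_{\NN,T}$; I will argue this from its ``relevant inputs along the inexact trajectory'' clause. Second, $J_{\tau,\NN}(u^n_{\NN,T})$ must coincide with $u^n_{\NN,T}+h_\infty$; this is the identification~\eqref{eq:J-tau-NN-well-defined}, which relies on Lemma~\ref{lem:interior-loc} and Theorem~\ref{thm:RGF-conv}(a) holding at this iterate.
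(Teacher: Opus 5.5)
Your proposal is correct and matches the paper's proof essentially step for step. It uses the same three-term triangle inequality through $u^{n+1}_{\NN,\infty}=J_{\tau,\NN}(u^n_{\NN,T})$ and $J_\tau(u^n_{\NN,T})$, bounds the three terms by $\bar C_n e^{-\nu T_n/2}$, $\varepsilon$, and $\delta/(1+\tau m_\calF)$, and rearranges $\bar C_n e^{-\nu T_n/2}\le \tau m_\calF\delta/(1+\tau m_\calF)-\varepsilon$ into the threshold~\eqref{eqn:T}. Your reading of Corollary~\ref{cor:rgf-finite-time} as bounding $\|u^{n+1}_{\NN,T}-u^{n+1}_{\NN,\infty}\|_\calH$ with the step-$n$ constant is exactly how the paper's proof uses it.
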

\begin{proof}
By Lipschitz continuity of $J_\tau$ (Lemma~\ref{lem:lip_J_tau}), if $\|u^n_{\NN,T}-u^n\|_\calH<\delta$ then
\begin{equation}\label{eq:propagate_term}
\|J_\tau(u^n_{\NN,T})-J_\tau(u^n)\|_\calH
\le \frac{1}{1+\tau m_\calF}\|u^n_{\NN,T}-u^n\|_\calH
<\frac{\delta}{1+\tau m_\calF}.
\end{equation}
By the triangle inequality and~\eqref{eq:u_infty_def},~\eqref{eq:rgf_exp},~\eqref{eqn:approximate-error},~\eqref{eq:propagate_term},
\begin{align}\label{eq:error_prop}
\|u^{n+1}_{\NN,T}-u^{n+1}\|_\calH
&\le \|u^{n+1}_{\NN,T}-u^{n+1}_{\NN,\infty}\|_\calH + \|u^{n+1}_{\NN,\infty}-u^{n+1}\|_\calH \nonumber\\
&= \|u^{n+1}_{\NN,T}-u^{n+1}_{\NN,\infty}\|_\calH + \|J_{\tau,\NN}(u^n_{\NN,T})-J_\tau(u^n)\|_\calH \nonumber\\
&\le \|u^{n+1}_{\NN,T}-u^{n+1}_{\NN,\infty}\|_\calH + \|J_{\tau,\NN}(u^n_{\NN,T})-J_\tau(u^n_{\NN,T})\|_\calH + \|J_\tau(u^n_{\NN,T})-J_\tau(u^n)\|_\calH \nonumber\\
&\le \bar C_n\,e^{-\nu T/2}+\varepsilon+\frac{1}{1+\tau m_\calF}\|u^n_{\NN,T}-u^n\|_\calH.
\end{align}
In particular, if $\|u^n_{\NN,T}-u^n\|_\calH<\delta$ then
\[
\|u^{n+1}_{\NN,T}-u^{n+1}\|_\calH \;\le\; \bar C_n\,e^{-\nu T/2}+\frac{\delta}{1+\tau m_\calF}+\varepsilon.
\]
Choosing $T$ so that
\(
\bar C_n\,e^{-\nu T/2}+\delta/(1+\tau m_\calF)+\varepsilon\le\delta
\)
is equivalent to
\(
e^{-\nu T/2}\le \bar C_n^{-1}\bigl(\tau m_\calF\delta/(1+\tau m_\calF)-\varepsilon\bigr),
\)
i.e., to the threshold~\eqref{eqn:T}. This ensures $\|u^{n+1}_{\NN,T}-u^{n+1}\|_\calH\le\delta$, closing the induction.
\end{proof}

\begin{remark}\label{rem:universal_decomposition}
We emphasize that the error decomposition in \eqref{eq:error_prop} is fundamentally universal and independent of the specific optimization algorithm used for the inner MMS problem. Structurally, any numerical inner solver yields a three-part accumulation:
(i) the contraction from the proximal operator,
(ii) the spatial approximation error $\varepsilon$ of the neural network, and
(iii) an inexact optimization error.
The primary theoretical contribution of our preconditioned gradient flow analysis is showing that, under the standing assumptions and Assumption~\ref{assm:center-nondeg} (and uniformly along the trajectory under Assumption~\ref{assm:traj-unif}), the Gauss--Newton-type flow specifically guarantees a rigorously bounded, exponentially decaying estimate $\bar C_n\,e^{-\nu T/2}$ at each step $n$ for this inexact optimization step.
\end{remark}

Lemma~\ref{lem:induction}'s threshold~\eqref{eqn:T} chooses a uniform $T_n$ sufficient for the per-step bound $e_n\le\delta$, but only yields a uniform-in-$n$ bound on $e_n=\|u^n_{\NN,T}-u^n\|_\calH$ -- not a summable one. The downstream parameter-space convergence theorem (Theorem~\ref{thm:global-PGF}) requires a finite trajectory length $\sum_n\Delta_n<\infty$, which in turn requires $\sum_n e_n<\infty$ (entering via Corollary~\ref{cor:param-proxy} and~\eqref{eq:per-step-lambda-inf}). We therefore record below a refinement of Lemma~\ref{lem:induction} obtained by strengthening the choice of $T_n$ to grow linearly in $n$; this is exactly the geometric-tracking bound used in the proof of Theorem~\ref{thm:global-PGF}(b)~\eqref{eq:partial-sum-Delta}. The bounds~\eqref{eq:geometric-tracking}, \eqref{eq:sum-e-bound}, \eqref{eq:sum-e-partial} are stated as a corollary (rather than as a remark or an inline note) so that they are citable as formal equations in that proof.

\begin{corollary}[Geometric tracking under growing $T_n$]\label{cor:geometric-tracking}
Fix $\eta_0\in\bigl(0,\,(1-\rho)\delta-\varepsilon\bigr)$ (a positive interval by Assumption~\ref{assm:approx-precision}) and strengthen Lemma~\ref{lem:induction}'s threshold to require $T_n$ to grow linearly in $n$:
\begin{equation}\label{eqn:T-growing}
T_n \;\ge\; \frac{2}{\nu}\Bigl(\log\!\frac{\bar C_n}{\eta_0}+n\log\!\frac{1}{\rho}\Bigr),
\end{equation}
so that $\bar C_n\,e^{-\nu T_n/2}\le\eta_0\rho^n$. Then Lemma~\ref{lem:induction}'s recurrence~\eqref{eq:error_prop} yields the geometric-tracking bound
\begin{equation}\label{eq:geometric-tracking}
e_n \;:=\;\|u^n_{\NN,T}-u^n\|_\calH \;\le\; \rho^n\,e_0 \,+\, n\,\eta_0\,\rho^{n-1} \,+\, \frac{\varepsilon}{1-\rho}\,(1-\rho^n);
\end{equation}
i.e.\ $e_n$ decays geometrically toward an asymptotic floor of $\varepsilon/(1-\rho)$. Two regimes follow:
\begin{itemize}[leftmargin=*,nosep]
\item \emph{(I) Perfect-approximation regime ($\varepsilon=0$):} the trajectory error sum is finite,
\begin{equation}\label{eq:sum-e-bound}
\sum_{n=0}^{\infty} e_n \;\le\; \frac{e_0}{1-\rho}+\frac{\eta_0}{(1-\rho)^{2}}.
\end{equation}
\item \emph{(II) Uniform-approximation regime ($\varepsilon>0$):} the geometric component decays as in (I), but $\liminf_n e_n=\varepsilon/(1-\rho)>0$. For any finite horizon $N\ge 0$,
\begin{equation}\label{eq:sum-e-partial}
\sum_{n=0}^{N} e_n \;\le\; \frac{e_0}{1-\rho}+\frac{\eta_0}{(1-\rho)^{2}}+\frac{(N+1)\,\varepsilon}{1-\rho}.
\end{equation}
\end{itemize}
\end{corollary}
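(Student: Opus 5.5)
The plan is to unroll the one-step recurrence~\eqref{eq:error_prop} from the proof of Lemma~\ref{lem:induction} into a scalar linear recursion, then solve it explicitly. Throughout I take $\rho:=1/(1+\tau m_\calF)$; with this choice the interval $\bigl(0,(1-\rho)\delta-\varepsilon\bigr)$ is exactly $\bigl(0,\tfrac{\tau m_\calF}{1+\tau m_\calF}\delta-\varepsilon\bigr)$, which is nonempty by~\eqref{eqn:approximate-error}. Note that~\eqref{eq:error_prop} holds for every $n$ at which Theorem~\ref{thm:g} applies at $w^n_{\NN,T}$, without needing the hypothesis $e_n\le\delta$; that hypothesis only entered Lemma~\ref{lem:induction} in the final closing step. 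Invoking Theorem~\ref{thm:g} along the whole trajectory is licensed by Assumption~\ref{assm:traj-unif}. With $T=T_n$ as in~\eqref{eqn:T-growing}, Corollary~\ref{cor:rgf-finite-time} gives $\bar C_n e^{-\nu T_n/2}\le \eta_0\rho^n$ (taking logarithms is direct). Hence
\[
e_{n+1}\;\le\;\rho\,e_n+\eta_0\rho^n+\varepsilon,\qquad n\ge 0.
\]

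Next I solve this recursion by induction on $n$. The claim is~\eqref{eq:geometric-tracking}; the case $n=0$ is trivial. Assuming the bound for $n$, I multiply by $\rho$ and add $\eta_0\rho^n+\varepsilon$. This gives $\rho^{n+1}e_0+n\eta_0\rho^n+\eta_0\rho^n+\rho\varepsilon(1-\rho^n)/(1-\rho)+\varepsilon$. The last two terms combine to $\varepsilon(1-\rho^{n+1})/(1-\rho)$, and the middle ones give $(n+1)\eta_0\rho^{n}$, which is the claimed bound at $n+1$.

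For the two regimes, I will sum~\eqref{eq:geometric-tracking} using $\sum_n\rho^n=1/(1-\rho)$ and $\sum_n n\rho^{n-1}=1/(1-\rho)^2$. In regime~(I) this gives~\eqref{eq:sum-e-bound} immediately. In regime~(II), I bound $(1-\rho^n)\le 1$ termwise over $n=0,\dots,N$ and add $(N+1)\varepsilon/(1-\rho)$, giving~\eqref{eq:sum-e-partial}. The asymptotic floor statement comes from $\rho^n e_0+n\eta_0\rho^{n-1}\to 0$, so $\limsup_n e_n\le\varepsilon/(1-\rho)$. The stated $\liminf$ equality should be read as describing this floor of the upper bound, since an upper recursion cannot give a lower bound; I would phrase it that way.

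The only delicate point is consistency with the paper's standing $\delta$-tracking, which is needed so that Assumption~\ref{assm:approx-precision} applies to the relevant inputs. I would check this by bounding the right-hand side of~\eqref{eq:geometric-tracking}. Since $n\rho^{n-1}(1-\rho)\le 1$ may fail for small $n$ only by a bounded factor, the cleaner route is to verify directly from the recursion, by induction, that $e_n\le\delta$ implies $e_{n+1}\le\rho\delta+\eta_0+\varepsilon\le\delta$. This uses $\eta_0<(1-\rho)\delta-\varepsilon$ and $\rho^n\le1$, so $e_n\le\delta$ persists and the geometric bound is compatible with Lemma~\ref{lem:induction}. I expect this bookkeeping, rather than the algebra, to be the main point requiring care.
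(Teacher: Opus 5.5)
Your argument is correct and is essentially the paper's proof. Both reduce \eqref{eq:error_prop} under \eqref{eqn:T-growing} to $e_{n+1}\le\rho e_n+\eta_0\rho^n+\varepsilon$ and solve it (the paper by unrolling the convolution sum $\sum_{k}\rho^{n-1-k}\eta_k$, you by induction), then sum termwise for the two regimes. You are also right that the recursion only yields $\limsup_n e_n\le\varepsilon/(1-\rho)$, not the stated $\liminf$ equality, which the paper's proof does not establish either; and your aside is too pessimistic, since $n\rho^{n-1}(1-\rho)\le 1-\rho^n$ always holds, so the closed form alone already gives $e_n\le\delta$ whenever $e_0\le\delta$.
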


\begin{proof}
Under~\eqref{eqn:T-growing}, $\bar C_n\,e^{-\nu T_n/2}\le\eta_0\rho^n$, so the per-step accumulator in~\eqref{eq:error_prop} is $\eta_n:=\bar C_n e^{-\nu T_n/2}+\varepsilon\le\eta_0\rho^n+\varepsilon$. Iterating the recurrence $e_{n+1}\le\rho\,e_n+\eta_n$ with $e_0$ as base case,
\(
e_n \le \rho^n e_0 + \sum_{k=0}^{n-1}\rho^{n-1-k}\,\eta_k
\le \rho^n e_0 + \eta_0\sum_{k=0}^{n-1}\rho^{n-1} + \varepsilon\sum_{k=0}^{n-1}\rho^{n-1-k}
= \rho^n e_0 + n\eta_0\rho^{n-1} + \frac{\varepsilon(1-\rho^n)}{1-\rho},
\)
which is~\eqref{eq:geometric-tracking}.
Summing in $n\ge 0$: $\sum_n\rho^n=1/(1-\rho)$, $\sum_n n\rho^{n-1}=1/(1-\rho)^2$, so the geometric components contribute $e_0/(1-\rho)+\eta_0/(1-\rho)^2$. The $\varepsilon$-floor contributes $0$ in regime~(I) and $\varepsilon\sum_{n=0}^N (1-\rho^n)/(1-\rho)\le(N+1)\varepsilon/(1-\rho)$ for finite $N$ in regime~(II).
\end{proof}

\subsection{Accumulated Global Error Bounds}
\rev{%
The following theorem is the culminating result of the paper.
Informally, it states that if each inner solve is accurate enough
(controlled by the flow time $T$ and the approximation precision $\varepsilon$),
then the inexact neural iterates stay within distance $\delta$ of the exact MMS iterates
for all time, and converge to an $O(\delta)$-neighborhood of the global minimizer.
The proof is by induction: at each step, the one-step error decomposes into
(i)~a \emph{contraction term} $\frac{\delta}{1+\tau m_\calF}$ from the Lipschitz constant of $J_\tau$,
(ii)~a \emph{finite-time truncation error} $\bar C_n\,e^{-\nu T/2}$ (with the $n$-indexed constant $\bar C_n$ of Corollary~\ref{cor:rgf-finite-time}; bounded uniformly in $n$ under Assumption~\ref{assm:traj-unif} by Remark~\ref{rem:Cbar-bound}) from stopping the inner flow early, and
(iii)~a \emph{neural approximation error} $\varepsilon$.
Choosing $T$ large enough makes (ii) negligible, and the condition on $\varepsilon$
ensures that (i)$+$(iii)$<\delta$, closing the induction.
}
\begin{theorem}[Global Error Propagation Bound in MMS]\label{thm:global_error}
Under the standing assumptions ($\calF$ continuously Fr\'echet differentiable and $m_\calF$-strongly convex with $m_\calF>0$, $u_\NN$ with $L$-Lipschitz Jacobian), Assumption~\ref{assm:center-nondeg}, and Assumption~\ref{assm:traj-unif} (Trajectory uniformity), let $\{u^n\}_{n\ge0}$ be the exact MMS iterates
\[
u^{n+1}=J_\tau(u^n),
\]
and let $\{u_{\mathrm{NN},T}^n\}_{n\ge0}$ be the inexact iterates generated by running the Riemannian gradient flow~\eqref{eqn:RGF-update} for time $T=T_n$ chosen at each step as in~\eqref{eqn:T} so that the induction condition of Lemma~\ref{lem:induction} is satisfied. Assume the NN approximation error~\eqref{eqn:approximate-error} holds, and that the initialization satisfies $\|u^0_{\NN,T}-u^0\|_\calH\le\delta$ (ensured whenever the neural network can approximate $u^0$ to accuracy $\delta$ by the universal approximation theorem). Then:
\begin{enumerate}
\item[(a).] (Uniform bound tracking of MMS.)
For all $n\ge0$,
\[
\|u_{\mathrm{NN},T}^n-u^n\|_\mathcal{H} \le \delta .
\]

\item[(b).] (Global bound to the minimizer.)
Let $u^* \in \mathcal{H}$ be the global minimizer of $\calF$. Then for all $n\ge0$,
\[
\|u_{\mathrm{NN},T}^n-u^*\|_\calH
\le \delta + \Big(\frac{1}{1+\tau m_\calF}\Big)^n \|u^0-u^*\|_\calH.
\]
In particular, the inexact iterates converge to an $O(\delta)$-neighborhood of $u^*$.
\end{enumerate}
\end{theorem}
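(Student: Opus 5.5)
Part~(a) will be proved by induction on $n$, with Lemma~\ref{lem:induction} as the inductive step and Assumption~\ref{assm:traj-unif} making the hypotheses available at every iterate.

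\emph{Base case.} At $n=0$ the bound $\|u^0_{\NN,T}-u^0\|_\calH\le\delta$ is assumed directly.

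\emph{Inductive step.} Suppose $\|u^n_{\NN,T}-u^n\|_\calH\le\delta$. I will check that Theorem~\ref{thm:g}, and hence Theorem~\ref{thm:RGF-conv} and Corollary~\ref{cor:rgf-finite-time}, apply at $\tilde w=w^n_{\NN,T}$:
\begin{itemize}
\item (H1) follows from Assumption~\ref{assm:center-nondeg} together with (U1) and (U4).
\item (H2) is the interior-localization condition. It follows from (U3) combined with the small-step condition $2\tau\sup_n(\calF[u^n_{\NN,T}]-\calF_{\inf})<(\rho^{\rm in}_\infty)^2$ built into the assumption.
\item (H3) is exactly (U2).
\end{itemize}
Lemma~\ref{lem:induction} then applies with $T_n$ chosen as in~\eqref{eqn:T}. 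The logarithm there is well defined because~\eqref{eqn:approximate-error} gives $\tau m_\calF(\delta-\varepsilon)-\varepsilon>0$. The lemma delivers $\|u^{n+1}_{\NN,T}-u^{n+1}\|_\calH\le\delta$.

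There is a small mismatch to handle. The proof of Lemma~\ref{lem:induction} is phrased with a strict hypothesis $<\delta$, but the contraction estimate~\eqref{eq:propagate_term} works equally well with $\le$. With $\le$, the recurrence~\eqref{eq:error_prop} gives $e_{n+1}\le \bar C_n e^{-\nu T_n/2}+\varepsilon+\delta/(1+\tau m_\calF)\le\delta$, which closes the induction.

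\emph{Uniformity of the flow times.} By Remark~\ref{rem:Cbar-bound}, $\sup_n\bar C_n<\infty$. So a single finite $T$ satisfies~\eqref{eqn:T} for all $n$, and the statement "for time $T=T_n$" is consistent with a uniform $T$.

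\emph{Part (b).} Since $u^*$ minimizes the strongly convex $\calF$, it is a fixed point of $J_\tau$: $\nabla\calF[u^*]=0$ gives $J_\tau(u^*)=u^*$ by the optimality condition. Applying Lemma~\ref{lem:lip_J_tau} $n$ times yields
\[
\|u^n-u^*\|_\calH\le(1+\tau m_\calF)^{-n}\|u^0-u^*\|_\calH.
\]
The triangle inequality $\|u^n_{\NN,T}-u^*\|\le\|u^n_{\NN,T}-u^n\|+\|u^n-u^*\|$, combined with part~(a), gives the stated bound. Letting $n\to\infty$ shows $\limsup_n\|u^n_{\NN,T}-u^*\|_\calH\le\delta$.

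\emph{Main obstacle.} Each step is routine except verifying at every iterate that the single-step theory (well-posedness of $J_{\tau,\NN}$ via~\eqref{eq:J-tau-NN-well-defined}, and the constant $\bar C_n$) is actually available. This is delegated entirely to Assumption~\ref{assm:traj-unif}. The delicate point is that (U3) requires $\sup_n\calF[u^n_{\NN,T}]<\infty$, which should come from the tracking bound itself: $\|u^n_{\NN,T}-u^n\|\le\delta$ with $u^n$ bounded, plus local Lipschitz continuity of $\calF$. I would make this explicit inside the induction, carrying the energy bound alongside the $\delta$-bound so the argument is not circular.
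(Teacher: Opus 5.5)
Your proposal is correct and follows the paper's proof essentially verbatim. Part (a) is an induction with Lemma~\ref{lem:induction} as the inductive step, and part (b) combines the fixed-point property $J_\tau(u^*)=u^*$, the iterated contraction of Lemma~\ref{lem:lip_J_tau}, and the triangle inequality.

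Your extra checks are sound refinements of points the paper leaves implicit: deriving (H1)--(H3) from Assumption~\ref{assm:traj-unif}, noting that the strict/non-strict mismatch in the lemma's proof is harmless, and carrying the a priori bound $\calF[u^n_{\NN,T}]-\calF_{\inf}\le \tfrac{L_\calF}{2}(\delta+\|u^0-u^*\|_\calH)^2$ to avoid circularity in (U3). The uniform-$T$ paragraph is not needed, since the theorem allows $T_n$ to vary with $n$.
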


The role of (U1)--(U2) of Assumption~\ref{assm:traj-unif} in this theorem is discussed in Appendix~\ref{app:disc-traj-unif} (Remark~\ref{rem:tracking-uniformity}).

\begin{proof}
Lemma~\ref{lem:induction} shows the induction step:
if $\|u_{\mathrm{NN},T}^n-u^n\|_\calH\le\delta$, then with $T$ chosen as in \eqref{eqn:T},
we also have $\|u_{\mathrm{NN},T}^{n+1}-u^{n+1}\|_\calH\le\delta$.
Thus, provided the initialization satisfies $\|u_{\mathrm{NN},T}^0-u^0\|_\calH\le\delta$, an induction yields (a): 
\[
\|u_{\mathrm{NN},T}^n-u^n\|_\calH\le\delta,\qquad \forall n\ge0.
\]

Now for (b): since $u^*$ minimizes $\calF$, it is also the minimizer of
$u\mapsto \frac1{2\tau}\|u-u^*\|_\calH^2+\calF[u]$, hence $u^*=J_\tau(u^*)$ for any $\tau >0$.
By the Lipschitz continuity of $J_\tau$ (Lemma~\ref{lem:lip_J_tau}),
\[
\|u^{n+1}-u^*\|_\calH
=\|J_\tau(u^n)-J_\tau(u^*)\|_\calH
\le \frac{1}{1+\tau m_\calF}\|u^n-u^*\|_\calH.
\]
Iterating the above inequality gives
\[
\|u^{n}-u^*\|_\calH\le \Big(\frac{1}{1+\tau m_\calF}\Big)^n\|u^0-u^*\|_\calH .
\]

Finally, by the triangle inequality,
\[
\|u_{\mathrm{NN},T}^n-u^*\|_\calH
\le \|u_{\mathrm{NN},T}^n-u^n\|_\calH + \|u^n-u^*\|_\calH
\le \delta + \Big(\frac{1}{1+\tau m_\calF}\Big)^n\|u^0-u^*\|_\calH.
\]
This concludes the proof.
\end{proof}

\rev{%
In practice each inner subproblem is solved not by the continuous-time Riemannian gradient flow but by a finite number of discrete Riemannian gradient descent steps~\eqref{eq:discrete-RGD}. The next theorem is the discrete-time counterpart of Theorem~\ref{thm:global_error}: with $K_n$ inner RGD iterations per outer MMS step (in place of inner-flow time $T_n$), the inexact iterates again satisfy a uniform $\delta$-tracking bound. The structure mirrors Theorem~\ref{thm:global_error} exactly; the only change is that the inner-flow truncation error $\bar C_n e^{-\nu T/2}$ from Corollary~\ref{cor:rgf-finite-time} is replaced by the discrete flow-gap bound $\bar D_n q^{K}$ from Theorem~\ref{thm:discrete-RGD}~\eqref{eq:discrete-RGD-flow-gap}.
}

\begin{theorem}[Global error propagation under discrete RGD inner solver]\label{thm:global_error-RGD}
Adopt the standing assumptions and Assumption~\ref{assm:traj-unif} of Theorem~\ref{thm:global_error}, together with the injectivity-radius hypothesis~\eqref{eq:inj-hypothesis} at every iterate. Let $\{u^n_{\NN,K}\}_{n\ge 0}$ denote the inexact iterates obtained by running the discrete Riemannian gradient descent~\eqref{eq:discrete-RGD} (via the exponential map) with constant step $0<\eta\le 2/(3\mu)$ for $K=K_n$ steps at each MMS step, starting from $h_0=0\in\calM(u^n_{\NN,K})$. Define
\begin{equation}\label{eq:Dbar-def}
    q := \sqrt{1-\nu\eta/2}\in(0,1),\qquad
    \bar D_n := \sqrt{\tfrac{4\bigl(g(0;u^n_{\NN,K}) - g(h^{(n)}_\infty;u^n_{\NN,K})\bigr)}{\nu}},
\end{equation}
where $h^{(n)}_\infty=\arg\min_{h\in\calM(u^n_{\NN,K})}g(h;u^n_{\NN,K})$. Assume the NN approximation error~\eqref{eqn:approximate-error} of Assumption~\ref{assm:approx-precision} holds, the initialization satisfies $\|u^0_{\NN,K}-u^0\|_\calH\le\delta$, and at each step the inner iteration count $K_n$ is chosen so that
\begin{equation}\label{eqn:Kn}
    K_n \;\ge\; \frac{1}{\log(1/q)}\,\log\!\left(\bar D_n\cdot\frac{1+\tau m_\calF}{\tau m_\calF(\delta-\varepsilon)-\varepsilon}\right).
\end{equation}
Then:
\begin{enumerate}
\item[(a).] (Uniform tracking of MMS.) For all $n\ge 0$,
\[
\|u^n_{\NN,K}-u^n\|_\calH \;\le\; \delta.
\]
\item[(b).] (Global bound to the minimizer.) With $u^*$ the global minimizer of $\calF$ and $\rho:=1/(1+\tau m_\calF)$,
\[
\|u^n_{\NN,K}-u^*\|_\calH \;\le\; \delta + \rho^n\,\|u^0-u^*\|_\calH,\qquad \forall n\ge 0.
\]
\end{enumerate}
The constants $\nu,\mu,\kappa$ in~\eqref{eqn:Kn} are data-dependent (Lemma~\ref{lem:g-properties}); under Assumption~\ref{assm:traj-unif}, $\bar D_n\le \sqrt{4(\sup_n\calF[u^n_{\NN,K}]-\calF_{\inf})/\nu}<\infty$, so $\sup_n K_n<\infty$ and the bounds chain uniformly.
\end{theorem}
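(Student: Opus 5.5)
The plan is to copy the induction of Theorem~\ref{thm:global_error}, replacing the continuous truncation bound of Corollary~\ref{cor:rgf-finite-time} by the discrete flow-gap bound~\eqref{eq:discrete-RGD-flow-gap} of Theorem~\ref{thm:discrete-RGD}.

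\textbf{Step 1: one-step discrete truncation bound.} At step $n$, write $u^{n+1}_{\NN,K}:=u^n_{\NN,K}+h_{K_n}$, where $h_{K_n}$ is the $K_n$-th RGD iterate on $\calM(u^n_{\NN,K})$ started at $h_0=0$. Set $u^{n+1}_{\NN,\infty}:=u^n_{\NN,K}+h^{(n)}_\infty$, which equals $J_{\tau,\NN}(u^n_{\NN,K})$ by~\eqref{eq:J-tau-NN-well-defined}.

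Assumption~\ref{assm:traj-unif}, (U1)--(U4), supplies the hypotheses (H1)--(H3) of Theorem~\ref{thm:g} at $\tilde w=w^n_{\NN,K}$. In particular, (U3) combined with the small-step condition gives interior localization, (U2) gives the data condition, and (U4) gives the injectivity radius. Theorem~\ref{thm:discrete-RGD} therefore applies with $0<\eta\le 2/(3\mu)$, and~\eqref{eq:discrete-RGD-flow-gap} gives
\[
\|u^{n+1}_{\NN,K}-u^{n+1}_{\NN,\infty}\|_\calH=\|h_{K_n}-h^{(n)}_\infty\|_\calH\le \bar D_n q^{K_n},
\]
with $\bar D_n$ and $q$ as in~\eqref{eq:Dbar-def}. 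Note that $\nu\eta/2\le \nu/(3\mu)<1$, so $q\in(0,1)$.

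\textbf{Step 2: the induction, as in Lemma~\ref{lem:induction}.} Assume $e_n:=\|u^n_{\NN,K}-u^n\|_\calH\le\delta$. Apply the triangle inequality through $u^{n+1}_{\NN,\infty}$ and $J_\tau(u^n_{\NN,K})$, then use Assumption~\ref{assm:approx-precision} and Lemma~\ref{lem:lip_J_tau}:
\[
e_{n+1}\le \bar D_n q^{K_n}+\varepsilon+\frac{\delta}{1+\tau m_\calF}.
\]
To get $e_{n+1}\le\delta$ we need $\bar D_n q^{K_n}\le \tfrac{\tau m_\calF}{1+\tau m_\calF}\delta-\varepsilon$. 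Taking logarithms (dividing by $\log(1/q)>0$) turns this into the threshold~\eqref{eqn:Kn}, written in the same algebraic form as~\eqref{eqn:T}. The right-hand side of the requirement is positive by~\eqref{eqn:approximate-error}. Induction from $e_0\le\delta$ gives~(a).

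\textbf{Step 3: part~(b).} This is verbatim the argument of Theorem~\ref{thm:global_error}(b). We have $u^*=J_\tau(u^*)$, so contraction gives $\|u^n-u^*\|\le\rho^n\|u^0-u^*\|$, and the triangle inequality with~(a) finishes.

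\textbf{Step 4: the uniformity remark.} Since $h^{(n)}_\infty$ minimizes $g$ on $\calM$, we have $g(h^{(n)}_\infty;\cdot)\ge \calF_{\inf}$, which gives $g(0)-g(h_\infty)\le \calF[u^n_{\NN,K}]-\calF_{\inf}$. Hence $\bar D_n$ is bounded by the stated quantity. Its supremum is finite by the uniform energy bound of Remark~\ref{rem:traj-unif}, so $\sup_n K_n<\infty$.

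\textbf{Main obstacle.} The main obstacle is the circularity between the induction and the trajectory assumptions. Applicability of Theorem~\ref{thm:discrete-RGD} at step $n$ requires (U1)--(U4) at the new iterate $w^n_{\NN,K}$, and these depend on the trajectory being produced. I would handle this exactly as the continuous theorem does: treat Assumption~\ref{assm:traj-unif} as a hypothesis on the generated sequence, and check only that each iterate lies in the region where it is invoked.

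A secondary point is whether $\sup_n(\calF[u^n_{\NN,K}]-\calF_{\inf})<\infty$ is available. Part~(a) itself shows this: the iterates stay within $\delta$ of the bounded exact MMS sequence, and $\calF$ is bounded on bounded sets because its gradient is Lipschitz.
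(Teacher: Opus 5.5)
Your proposal is correct and follows the paper's proof essentially line by line. The discrete flow-gap bound $\bar D_n q^{K_n}$ from Theorem~\ref{thm:discrete-RGD} replaces the continuous truncation term, the recurrence $e_{n+1}\le \bar D_n q^{K_n}+\varepsilon+\rho e_n$ closes because \eqref{eqn:Kn} is exactly $\bar D_n q^{K_n}\le(1-\rho)\delta-\varepsilon$, and part~(b) and the bound on $\bar D_n$ are obtained as in the paper, with your direct derivation of $\sup_n\calF[u^n_{\NN,K}]<\infty$ from part~(a), the exact-MMS contraction and $L_\calF$-smoothness being a slightly more self-contained substitute for the paper's appeal to Remark~\ref{rem:traj-unif}.
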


\begin{proof}
Let $h^n_K\in\calM(u^n_{\NN,K})$ denote the $K$-th discrete-RGD iterate at outer step $n$, so $u^{n+1}_{\NN,K}=u^n_{\NN,K}+h^n_K$, and let $u^{n+1}_{\NN,\infty}:=u^n_{\NN,K}+h^{(n)}_\infty=J_{\tau,\NN}(u^n_{\NN,K})$. Theorem~\ref{thm:discrete-RGD}~\eqref{eq:discrete-RGD-flow-gap} applied at $v=u^n_{\NN,K}$ gives
\[
\|u^{n+1}_{\NN,K}-u^{n+1}_{\NN,\infty}\|_\calH \;=\; \|h^n_K-h^{(n)}_\infty\|_\calH \;\le\; \bar D_n\,q^{K_n}.
\]
By the triangle inequality, the contraction of $J_\tau$ (Lemma~\ref{lem:lip_J_tau}), and Assumption~\ref{assm:approx-precision},
\begin{equation}\label{eq:error_prop_RGD}
    \|u^{n+1}_{\NN,K}-u^{n+1}\|_\calH \;\le\; \bar D_n\,q^{K_n} + \varepsilon + \rho\,\|u^n_{\NN,K}-u^n\|_\calH,
\end{equation}
exactly mirroring~\eqref{eq:error_prop}. Condition~\eqref{eqn:Kn} is equivalent to $\bar D_n\,q^{K_n}\le \tau m_\calF(\delta-\varepsilon)/(1+\tau m_\calF) -\varepsilon/(1+\tau m_\calF)\cdot(1+\tau m_\calF)/(1+\tau m_\calF) = (1-\rho)\delta-\varepsilon$, where we used $(1-\rho) = \tau m_\calF/(1+\tau m_\calF)$. Hence $\bar D_n\,q^{K_n}+\varepsilon\le (1-\rho)\delta$, and if $\|u^n_{\NN,K}-u^n\|_\calH\le\delta$, then~\eqref{eq:error_prop_RGD} gives
\[
\|u^{n+1}_{\NN,K}-u^{n+1}\|_\calH \;\le\; (1-\rho)\delta + \rho\delta \;=\; \delta.
\]
With the initial bound $\|u^0_{\NN,K}-u^0\|_\calH\le\delta$, an induction yields~(a).

For~(b), the exact MMS iterates contract: $\|u^n-u^*\|_\calH\le\rho^n\|u^0-u^*\|_\calH$ (Lemma~\ref{lem:lip_J_tau} together with $u^*=J_\tau(u^*)$). The triangle inequality then gives
\[
\|u^n_{\NN,K}-u^*\|_\calH \;\le\; \|u^n_{\NN,K}-u^n\|_\calH + \|u^n-u^*\|_\calH \;\le\; \delta + \rho^n\|u^0-u^*\|_\calH.
\]
Finally, $\bar D_n = \sqrt{(4/\nu)(g(0;u^n_{\NN,K})-g(h^{(n)}_\infty;u^n_{\NN,K}))} \le \sqrt{(4/\nu)(\calF[u^n_{\NN,K}]-\calF_{\inf})}$ since $g(0;v)=\calF[v]$ and $g(h^{(n)}_\infty;v)\ge\calF_{\inf}$ by $\calF\ge\calF_{\inf}$. The uniform energy bound (derived uniformity from Assumption~\ref{assm:traj-unif}, Remark~\ref{rem:traj-unif}) yields $\sup_n\bar D_n<\infty$, and hence $\sup_n K_n<\infty$ from~\eqref{eqn:Kn}.
\end{proof}

\rev{%
\begin{remark}[Continuous vs.\ discrete inner solver]\label{rem:discrete-RGD-vs-RGF}
Comparing~\eqref{eqn:Kn} with the continuous threshold~\eqref{eqn:T}, the discrete required iteration count is $K_n \sim (1/\log(1/q))\log(\bar D_n/\delta)$, while the continuous-time horizon is $T_n \sim (2/\nu)\log(\bar C_n/\delta)$. Up to the data-dependent constants $\bar D_n, \bar C_n$, the discrete rate $\log(1/q) = -\tfrac12\log(1-\nu\eta/2) \approx \nu\eta/4$ recovers the continuous rate $\nu/2$ as $\eta\downarrow 0$, consistent with Euler discretization of the flow.
\end{remark}
}

\section{Parameter-Space Convergence}\label{sec:param-space}

Sections~\ref{sec:RGF}--\ref{sec:error-propagation} establish a function-space convergence theory by interpreting the parameter-space preconditioned gradient flow as the Riemannian gradient flow on the increment manifold $\calM(u_\NN(w^n))\subset\calH$ and propagating the inexactness through the neural MMS iterations. We now translate this function-space guarantee back to parameter space. Since neural networks may admit symmetries (e.g., permutations of hidden units), convergence is most naturally stated as convergence of $\{u_\NN(w^n_{\NN,T})\}$ to an $O(\delta)$-neighborhood of $u^*$, rather than as convergence of $\{w^n_{\NN,T}\}$ to a single point.

Figure~\ref{fig:param-space} illustrates the picture: along the inexact trajectory $\{w^n_{\NN,T}\}$, each iterate sits inside its non-degeneracy ball $B(w^n_{\NN,T},r_{w^n_{\NN,T}})$ (Lemma~\ref{lem:gram-lower-bound}); under Assumption~\ref{assm:traj-unif} these radii are uniformly bounded below by $r_\infty := \inf_n r_{w^n_{\NN,T}} > 0$, and the entire trajectory remains confined to a single bounded set in $\R^p$ (Theorem~\ref{thm:global-PGF}(b)). Each parameter step is controlled by the corresponding function-space step via the inverse-stability bound $\|w^{n+1}_{\NN,T}-w^n_{\NN,T}\|_2 \le \lambda_\infty^{-1}\|u^{n+1}_{\NN,T}-u^n_{\NN,T}\|_\calH$ (Corollary~\ref{cor:param-proxy} below).

\begin{figure}[!ht]
\centering
\includegraphics[width=0.85\textwidth]{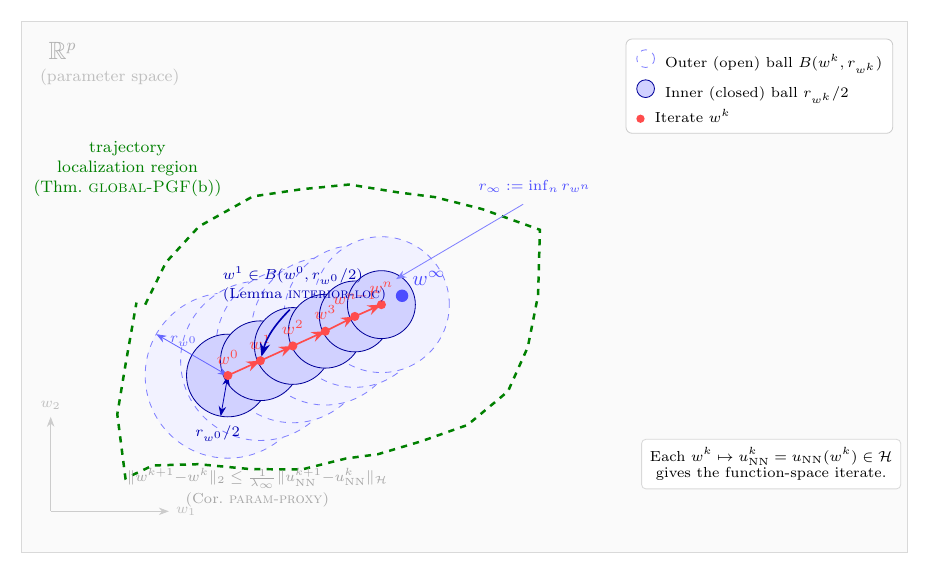}
\caption{Parameter-space picture of the neural MMS trajectory. The iterates $\{w^n_{\NN,T}\}$ (red, solid) sit each inside its own open non-degeneracy ball $B(w^n_{\NN,T}, r_{w^n_{\NN,T}})$ (dashed boundary; Lemma~\ref{lem:gram-lower-bound}) and, under the interior-localization condition (Lemma~\ref{lem:interior-loc}), the next iterate $w^{n+1}_{\NN,T}$ lies strictly inside the closed inner ball of radius $r_{w^n_{\NN,T}}/2$ (solid boundary). Under Assumption~\ref{assm:traj-unif}, the radii are uniformly bounded below by $r_\infty := \inf_n r_{w^n_{\NN,T}}>0$, and the trajectory stays inside a single bounded localization region (green dashed; Theorem~\ref{thm:global-PGF}(b)). Each parameter step is bounded by the corresponding function-space step via $\lambda_\infty^{-1}$ (Corollary~\ref{cor:param-proxy}).}
\label{fig:param-space}
\end{figure}

Let $u^*\in\arg\min_{\calH}\calF$, which is unique by $m_\calF$-strong convexity. We translate the function-space neighborhood convergence into per-step parameter displacement using the local non-degeneracy of the Jacobian.

\begin{corollary}[Per-step parameter displacement bound]\label{cor:param-proxy}
Adopt the standing assumptions in Sections~\ref{sec:RGF}--\ref{sec:error-propagation}. Let $\{w^n_{\NN,T}\}_{n\ge 0}$ be the inexact parameter iterates and $u^n_{\NN,T}:=u_\NN(w^n_{\NN,T})$. Suppose $u_\NN$ has a $\lambda_{w^n_{\NN,T}}$-non-degenerate Jacobian at $w^n_{\NN,T}$ and the next iterate satisfies $w^{n+1}_{\NN,T}\in B(w^n_{\NN,T},r_{w^n_{\NN,T}})$, with $r_{w^n_{\NN,T}}$ as in Lemma~\ref{lem:gram-lower-bound}. Then
\begin{equation}\label{eq:per-step-param-bound}
\|w^{n+1}_{\NN,T}-w^n_{\NN,T}\|_2
\;\le\; \frac{1}{\lambda_{w^n_{\NN,T}}}\,\|u^{n+1}_{\NN,T}-u^n_{\NN,T}\|_\calH,
\qquad n\ge 0.
\end{equation}
Under Assumption~\ref{assm:traj-unif}~(U1) and the derived uniform local-radius bound (Remark~\ref{rem:traj-unif}), $\inf_n r_{w^n_{\NN,T}}\ge r_\infty>0$, so the bound chains uniformly along the trajectory.
\end{corollary}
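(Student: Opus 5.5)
The plan is to compare the two parameter points through the straight segment between them, working in parameter space. Set $\tilde w:=w^n_{\NN,T}$, $w:=w^{n+1}_{\NN,T}$ and $z:=w-\tilde w$. By hypothesis $w\in B(\tilde w,r_{\tilde w})$, and the ball is convex, so the whole segment $w(s):=\tilde w+sz$, $s\in[0,1]$, stays in $B(\tilde w,r_{\tilde w})$. Since $u_\NN\in C^1(\R^p;\calH)$, the fundamental theorem of calculus in the Hilbert space $\calH$ gives
\[
u_\NN(w)-u_\NN(\tilde w)=\int_0^1 \D u_\NN(w(s))\,z\,\ud s
=\D u_\NN(\tilde w)z+\int_0^1\bigl(\D u_\NN(w(s))-\D u_\NN(\tilde w)\bigr)z\,\ud s .
\]

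Next I bound both terms. The non-degeneracy at $\tilde w$ (Definition~\ref{def:non-degen-jacobian}) gives $\|\D u_\NN(\tilde w)z\|_\calH\ge 2\lambda_{\tilde w}\|z\|_2$. The $L$-Lipschitz Jacobian bounds the remainder in norm by $\int_0^1 Ls\|z\|_2^2\,\ud s\le L\|z\|_2^2/2$. Combining these,
\[
\|u_\NN(w)-u_\NN(\tilde w)\|_\calH\ge\bigl(2\lambda_{\tilde w}-\tfrac{L}{2}\|z\|_2\bigr)\|z\|_2 .
\]
Because $\|z\|_2<r_{\tilde w}\le\lambda_{\tilde w}/L$ (first entry of~\eqref{eq:rwdef}), the bracket is at least $\tfrac32\lambda_{\tilde w}\ge\lambda_{\tilde w}$. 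This is the same bi-Lipschitz lower bound already used in the sketch of Proposition~\ref{prop:manifold}, but without the factor loss. An equivalent, cruder route is to apply Lemma~\ref{lem:gram-lower-bound} pointwise, but that bounds $\|\int \D u_\NN(w(s))z\,\ud s\|$ only after ensuring the integrand directions do not cancel. For that reason I prefer anchoring at $\tilde w$ as above, and I expect this to be the only step needing care. Dividing by $\lambda_{\tilde w}$ yields~\eqref{eq:per-step-param-bound}.

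For the uniform statement, I use Assumption~\ref{assm:traj-unif}(U1), which gives $\lambda_{w^n_{\NN,T}}\ge\lambda_\infty$, so the right-hand side is at most $\lambda_\infty^{-1}\|u^{n+1}_{\NN,T}-u^n_{\NN,T}\|_\calH$. The hypothesis $w^{n+1}_{\NN,T}\in B(w^n_{\NN,T},r_{w^n_{\NN,T}})$ holds automatically along the trajectory by Lemma~\ref{lem:interior-loc}, since the flow stays in the sublevel set, which lies in the inner ball. The bound $\inf_n r_{w^n_{\NN,T}}\ge r_\infty>0$ follows from~\eqref{eq:rwdef}, because each entry is bounded below once $\lambda$ is bounded below and $\|\D u_\NN\|_{\rm op}$ is bounded above (Remark~\ref{rem:traj-unif}). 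This gives the uniform chaining.
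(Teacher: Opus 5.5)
Your argument is correct and is essentially the paper's. The paper invokes the bi-Lipschitz lower bound $\|u_\NN(w)-u_\NN(w')\|_\calH\ge(2\lambda_{\tilde w}-Lr_{\tilde w})\|w-w'\|_2$ from the proof of Proposition~\ref{prop:manifold}, which rests on your device of anchoring the Jacobian at the center and bounding the Lipschitz remainder, and then specializes to $w'=w^n_{\NN,T}$. Your direct specialization gives the slightly sharper factor $\tfrac32\lambda_{\tilde w}$ (as in Step~3 of the proof of Theorem~\ref{thm:small-tau-locality}). Your side remark that the next iterate lies in the ball by Lemma~\ref{lem:interior-loc} is valid, but it needs that lemma's interior-localization hypothesis, whereas the corollary simply assumes ball membership.
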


\begin{proof}
The bi-Lipschitz bound established in the proof of Proposition~\ref{prop:manifold} gives
$\|u_\NN(w)-u_\NN(w')\|_\calH \ge (2\lambda_{w^n_{\NN,T}}-L r_{w^n_{\NN,T}})\|w-w'\|_2 \ge \lambda_{w^n_{\NN,T}}\|w-w'\|_2$
for all $w,w'\in B(w^n_{\NN,T},r_{w^n_{\NN,T}})$, where the last step uses $r_{w^n_{\NN,T}}\le \lambda_{w^n_{\NN,T}}/L$ from~\eqref{eq:rwdef}. Applying this with $w=w^{n+1}_{\NN,T}$ and $w'=w^n_{\NN,T}$ yields~\eqref{eq:per-step-param-bound}.
\end{proof}

Corollary~\ref{cor:param-proxy} bypasses the nonconvexity of the parameter-space objective by leveraging the function-space Riemannian gradient flow on the geodesically strongly convex increment manifold (Sections~\ref{sec:conv}--\ref{sec:error-propagation}) and translating its convergence into a per-step parameter displacement via the local non-degeneracy of $\D u_\NN$ alone---no global bi-Lipschitz constant is required.

We next combine this per-step bound with the function-space convergence (Theorem~\ref{thm:global_error}) to obtain a global convergence theorem in parameter space. The key idea: exponential decay of $\|u^n_{\NN,T}-u^*\|_\calH$ yields a summable total parameter trajectory length, which (i) ensures the iterates form a Cauchy sequence in $\R^p$, and (ii) bounds the depletion of the non-degeneracy budget along the trajectory.

\begin{lemma}[Non-degeneracy decay along the trajectory]\label{lem:lambda-decay}
Suppose $u_\NN$ has $L$-Lipschitz Jacobian and a $\lambda_{w^0}$-non-degenerate Jacobian
at $w^0$. Let $\{w^n_{\NN,T}\}_{n\ge 0}$ be the inexact parameter iterates and write
$\Delta_n := \|w^{n+1}_{\NN,T}-w^n_{\NN,T}\|_2$. As long as
\begin{equation}\label{eq:budget}
L\sum_{k=0}^{n-1}\Delta_k < 2\lambda_{w^0},
\end{equation}
the Jacobian at $w^n_{\NN,T}$ remains non-degenerate, with
\begin{equation*}
\sigma_{\min}\!\big(\D u_\NN(w^n_{\NN,T})\big)
\ \ge\ 2\lambda_{w^0} - L\sum_{k=0}^{n-1}\Delta_k\ =:\ 2\lambda_n,
\end{equation*}
so $u_\NN$ is $\lambda_n$-non-degenerate at $w^n_{\NN,T}$ (in the sense of Definition~\ref{def:non-degen-jacobian}).
\begin{remark}[$\lambda_n$ as Weyl proxy vs.\ actual non-degeneracy]\label{rem:lambda-decay}
The quantity $\lambda_n$ above is a \emph{Weyl-perturbation lower bound} on the actual non-degeneracy constant $\lambda_{w^n_{\NN,T}}$ of Assumption~\ref{assm:center-nondeg}; in particular, $\lambda_n\le\lambda_{w^n_{\NN,T}}$. The lemma certifies $\lambda_n>0$ step-wise as long as~\eqref{eq:budget} holds up to step $n$, but does \emph{not} by itself guarantee $\inf_{n\ge 0}\lambda_n>0$ for the entire infinite trajectory. The uniform positivity $\inf_n\lambda_{w^n_{\NN,T}}\ge\lambda_\infty>0$ is the content of Assumption~\ref{assm:traj-unif}(U1); under the initial-gap condition of Theorem~\ref{thm:global-PGF}(iii), the proof of Theorem~\ref{thm:global-PGF}(b) establishes both that~\eqref{eq:budget} holds for all $n$ and that $\inf_n\lambda_n\ge\lambda_\infty$.
\end{remark}
\end{lemma}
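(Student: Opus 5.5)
The argument will be a telescoping Lipschitz estimate for the smallest singular value along the parameter path, combined with a Weyl-type perturbation bound. First I record the basic perturbation inequality behind Lemma~\ref{lem:gram-lower-bound}: for any $w,w'\in\R^p$ and unit $z\in\mathbb S^{p-1}$,
\[
\|\D u_\NN(w)z\|_\calH \;\ge\; \|\D u_\NN(w')z\|_\calH - \|(\D u_\NN(w)-\D u_\NN(w'))z\|_\calH \;\ge\; \|\D u_\NN(w')z\|_\calH - L\|w-w'\|_2,
\]
using the global $L$-Lipschitz Jacobian of Definition~\ref{def:NN-Lip-L}. Minimizing over $z$ gives $\sigma_{\min}(\D u_\NN(w)) \ge \sigma_{\min}(\D u_\NN(w')) - L\|w-w'\|_2$. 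Here $\sigma_{\min}(A):=\inf_{\|z\|=1}\|Az\|_\calH = \sqrt{\lambda_{\min}(A^*A)}$. Because the Lipschitz bound is global on $\R^p$, I do not need the iterates to remain in any particular ball $B(\cdot,r)$ for this step.

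Next I will apply this inequality to consecutive iterates: with $w=w^{k+1}_{\NN,T}$ and $w'=w^k_{\NN,T}$ it reads $\sigma_{\min}(\D u_\NN(w^{k+1}_{\NN,T}))\ge \sigma_{\min}(\D u_\NN(w^k_{\NN,T}))-L\Delta_k$. An induction on $n$ (or summing the telescoping differences) then gives
\[
\sigma_{\min}(\D u_\NN(w^n_{\NN,T}))\ \ge\ \sigma_{\min}(\D u_\NN(w^0_{\NN,T})) - L\sum_{k=0}^{n-1}\Delta_k.
\]
Alternatively, I can apply the inequality once with $w'=w^0$ and use $\|w^n-w^0\|_2\le\sum_{k<n}\Delta_k$ from the triangle inequality; both routes give the same bound. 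The base case uses Definition~\ref{def:non-degen-jacobian} at $w^0=w^0_{\NN,T}$: $\D u_\NN(w^0)^*\D u_\NN(w^0)\succeq 4\lambda_{w^0}^2 I_p$, hence $\sigma_{\min}(\D u_\NN(w^0))\ge 2\lambda_{w^0}$. This yields the claimed bound $\sigma_{\min}\ge 2\lambda_{w^0}-L\sum_{k<n}\Delta_k=:2\lambda_n$.

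Finally, the budget condition~\eqref{eq:budget} says exactly that $\lambda_n>0$. Squaring the singular-value bound (legitimate since both sides are positive) gives $\D u_\NN(w^n_{\NN,T})^*\D u_\NN(w^n_{\NN,T})\succeq 4\lambda_n^2 I_p$, which is $\lambda_n$-non-degeneracy in the sense of Definition~\ref{def:non-degen-jacobian}. The only real point of care is this bookkeeping: Definition~\ref{def:non-degen-jacobian} carries a factor $4$, so $\sigma_{\min}\ge 2\lambda$. I must also identify the iterate $w^0_{\NN,T}$ with the $w^0$ of the hypothesis. No deeper obstacle is expected; the statement in Remark~\ref{rem:lambda-decay} that $\lambda_n\le\lambda_{w^n_{\NN,T}}$ then just says this is a certified lower bound.
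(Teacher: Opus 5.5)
Your proposal is correct and matches the paper's proof, which also inducts on $n$. The paper uses the Weyl-type bound $|\sigma_{\min}(A)-\sigma_{\min}(B)|\le\|A-B\|_{\mathrm{op}}$ together with the $L$-Lipschitz Jacobian to get $\sigma_{\min}(\D u_\NN(w^{n+1}_{\NN,T}))\ge\sigma_{\min}(\D u_\NN(w^n_{\NN,T}))-L\Delta_n$, starting from $\sigma_{\min}(\D u_\NN(w^0))\ge 2\lambda_{w^0}$. Your explicit derivation of the Weyl bound and the squaring step back to Definition~\ref{def:non-degen-jacobian} simply fill in details the paper leaves implicit.
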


\begin{proof}
By induction on $n$. At $n=0$, $\sigma_{\min}(\D u_\NN(w^0))\ge 2\lambda_{w^0}$ by hypothesis.
Suppose the bound holds at step $n$. By the $L$-Lipschitz continuity of $\D u_\NN$ and the Weyl-type inequality
$|\sigma_{\min}(A)-\sigma_{\min}(B)|\le \|A-B\|_{\rm op}$,
\[
\sigma_{\min}\!\big(\D u_\NN(w^{n+1}_{\NN,T})\big)
\ \ge\ \sigma_{\min}\!\big(\D u_\NN(w^n_{\NN,T})\big) - L\Delta_n
\ \ge\ 2\lambda_{w^0} - L\sum_{k=0}^n\Delta_k\ =\ 2\lambda_{n+1},
\]
which is positive by \eqref{eq:budget}.
\end{proof}

We are now ready to state the main convergence theorem in parameter space.

\begin{theorem}[Global convergence of the preconditioned gradient flow in parameter space]
\label{thm:global-PGF}
Assume the standing assumptions of Sections~\ref{sec:RGF}--\ref{sec:error-propagation}, and let
$u^*\in\arg\min_{\calH}\calF$ be the unique global minimizer of $\calF$ ($m_\calF>0$).
Suppose:
\begin{enumerate}
\item[(i)] $u_\NN$ has an $L$-Lipschitz Jacobian and a $\lambda_{w^0}$-non-degenerate Jacobian at $w^0$;
\item[(ii)] the inner-flow time $T=T_n$ is chosen to grow with $n$ according to~\eqref{eqn:T-growing} of Corollary~\ref{cor:geometric-tracking} (with some fixed $\eta_0\in(0,(1-\rho)\delta-\varepsilon)$), so that the function-space tracking error~\eqref{eq:geometric-tracking} decays geometrically. The chained tracking bound of Theorem~\ref{thm:global_error}(a) then applies under Assumption~\ref{assm:traj-unif};
\item[(iii)] writing $\rho:=1/(1+\tau m_\calF)$, $\lambda_\infty>0$ for a chosen lower bound on the asymptotic non-degeneracy constant (the same $\lambda_\infty$ in Assumption~\ref{assm:traj-unif}(U1)), $r_\infty:=\inf_{n\ge 0}r_{w^n_{\NN,T}}>0$ (the derived uniform local radius from Remark~\ref{rem:traj-unif}), and the trajectory budget
\begin{equation}\label{eq:S-def}
S \;:=\; \frac{2+\tau m_\calF}{\tau m_\calF}\,\|u^0-u^*\|_\calH \;+\; \frac{2\delta}{1-\rho} \;+\; \frac{2\eta_0}{(1-\rho)^2},
\end{equation}
the initial function-space gap is small enough that
\begin{equation}\label{eq:initial-gap-condition}
S \;<\; \min\!\left\{\frac{2(\lambda_{w^0}-\lambda_\infty)\,\lambda_\infty}{L},\;\;\lambda_\infty\,r_\infty\right\}.
\end{equation}
The $\eta_0$-term in~\eqref{eq:S-def} vanishes in the limit $\eta_0\to 0^+$ (perfect-inner-solver limit); the simpler condition $\frac{2+\tau m_\calF}{\tau m_\calF}\|u^0-u^*\|_\calH+\frac{2\delta}{1-\rho}<\min\{2(\lambda_{w^0}-\lambda_\infty)\lambda_\infty/L,\,\lambda_\infty r_\infty\}$ then suffices for all sufficiently small $\eta_0$.
\end{enumerate}

The apparent circularity in hypothesis~(ii) -- Theorem~\ref{thm:global_error} requires (U1)--(U2), and the present theorem's conclusion (b) supplies them -- is discussed in Appendix~\ref{app:disc-traj-unif} (Remark~\ref{rem:globalPGF-traj-unif}).

Then the inexact parameter iterates $\{w^n_{\NN,T}\}_{n\ge 0}$ satisfy:
\begin{enumerate}
\item[(a)] (Function-space convergence.) For all $n\ge 0$,
\begin{equation*}
\|u^n_{\NN,T}-u^*\|_\calH \ \le\ \delta + \rho^n\|u^0-u^*\|_\calH .
\end{equation*}

\item[(b)] (Trajectory length is finite.) The total parameter trajectory length is bounded by the trajectory budget $S$:
\begin{equation}\label{eq:trajectory-bound}
\sum_{n=0}^{\infty}\Delta_n \;\le\; \frac{S}{\lambda_\infty} \;<\; \infty.
\end{equation}
In particular, the Jacobian non-degeneracy is preserved along the entire trajectory:
$\sigma_{\min}(\D u_\NN(w^n_{\NN,T}))\ge 2\lambda_\infty>0$ for all $n\ge 0$ (so Assumption~\ref{assm:traj-unif}(U1) holds with the same $\lambda_\infty$).
Moreover, the per-step displacement satisfies $\Delta_n\le r_\infty$ for all $n$,
so Corollary~\ref{cor:param-proxy} applies at every step.

\item[(c)] (Cauchy in parameter space.) $\{w^n_{\NN,T}\}_{n\ge 0}$ is a Cauchy sequence in $\R^p$
and converges to a limit $w^\infty\in\R^p$.

\item[(d)] (Parameter-space neighborhood convergence.) The limit satisfies
\begin{equation*}
\|u_\NN(w^\infty)-u^*\|_\calH \ \le\ \delta.
\end{equation*}
\end{enumerate}
\end{theorem}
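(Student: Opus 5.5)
Part (a) needs nothing new. Hypothesis (ii) places us under Theorem~\ref{thm:global_error} together with Assumption~\ref{assm:traj-unif}. Part~(b) of that theorem gives $\|u^n_{\NN,T}-u^*\|_\calH\le\delta+\rho^n\|u^0-u^*\|_\calH$ directly. I will also record the sharper geometric-tracking bound~\eqref{eq:geometric-tracking} of Corollary~\ref{cor:geometric-tracking}, because it is what makes the sum of tracking errors finite in part~(b).

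For (b) I would first bound the function-space increments:
\[
\|u^{n+1}_{\NN,T}-u^n_{\NN,T}\|_\calH\le e_{n+1}+\|u^{n+1}-u^n\|_\calH+e_n .
\]
The exact increments come from Lemma~\ref{lem:lip_J_tau} and $u^*=J_\tau(u^*)$. Writing $\|u^{n+1}-u^n\|\le\|u^{n+1}-u^*\|+\|u^n-u^*\|\le(1+\rho)\rho^n\|u^0-u^*\|$ and summing gives $\frac{1+\rho}{1-\rho}\|u^0-u^*\|=\frac{2+\tau m_\calF}{\tau m_\calF}\|u^0-u^*\|$. This matches the first term of $S$, which confirms the intended decomposition. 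The tracking errors contribute $2\sum_n e_n$. Bounding this by $\frac{2\delta}{1-\rho}+\frac{2\eta_0}{(1-\rho)^2}$ requires the $\rho^n e_0$ and $\varepsilon$-floor parts of~\eqref{eq:geometric-tracking} to be absorbed into the $\delta/(1-\rho)$ term. I would do this by using $e_n\le\delta$ only where needed, combined with the geometric part: I would bound $e_n\le\min\{\delta,\ \rho^n e_0+n\eta_0\rho^{n-1}+\dots\}$ and sum. This is the delicate bookkeeping, since the $\varepsilon$-floor does not sum. So I expect to interpret $S$ in the perfect-approximation regime~(I), or in the regime where the $\varepsilon$-contribution is dominated by the $\delta$-term, and to say so explicitly. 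Thus $\sum_n\|u^{n+1}_{\NN,T}-u^n_{\NN,T}\|_\calH\le S$.

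The main obstacle is converting this to parameter space without circularity, because Corollary~\ref{cor:param-proxy} needs $\lambda_{w^n}\ge\lambda_\infty$ and $w^{n+1}\in B(w^n,r_{w^n})$, and both are what we are proving. I would run a strong induction on $N$ with the hypothesis
\[
\sigma_{\min}(\D u_\NN(w^n_{\NN,T}))\ge 2\lambda_\infty,\qquad \Delta_n\le r_\infty\qquad\text{for } n\le N .
\]
Given the hypothesis up to $N$, Corollary~\ref{cor:param-proxy} gives $\Delta_n\le\lambda_\infty^{-1}\|u^{n+1}_{\NN,T}-u^n_{\NN,T}\|$ for $n\le N$. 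Hence $L\sum_{k\le N}\Delta_k\le LS/\lambda_\infty<2(\lambda_{w^0}-\lambda_\infty)$ by~\eqref{eq:initial-gap-condition}. Lemma~\ref{lem:lambda-decay} then yields $\sigma_{\min}(\D u_\NN(w^{N+1}))\ge 2\lambda_{w^0}-L\sum\Delta_k>2\lambda_\infty$. The step bound $\Delta_{N+1}\le\|u^{N+2}-u^{N+1}\|/\lambda_\infty\le S/\lambda_\infty<r_\infty$ uses the second entry of~\eqref{eq:initial-gap-condition}. Here membership of $w^{N+2}$ in the ball itself comes from interior localization (Lemma~\ref{lem:interior-loc}, via (U3)): the inner flow stays in $B(w^{N+1},r^{\rm in})$. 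Letting $N\to\infty$ gives~\eqref{eq:trajectory-bound} and the uniform non-degeneracy claim.

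For (c), finite total length gives $\|w^m-w^n\|\le\sum_{k\ge n}\Delta_k\to0$, so the sequence is Cauchy and converges to some $w^\infty$ by completeness of $\R^p$. For (d), $u_\NN$ is continuous (indeed $C^2$), so $u_\NN(w^n_{\NN,T})\to u_\NN(w^\infty)$. Passing to the limit in (a), with $\rho^n\to0$, gives $\|u_\NN(w^\infty)-u^*\|_\calH\le\delta$.
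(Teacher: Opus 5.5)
Your proposal is correct to the same standard as the paper's proof and follows essentially the same route. The ingredients are: the exact-MMS telescope producing the $\frac{2+\tau m_\calF}{\tau m_\calF}\|u^0-u^*\|_\calH$ term; the summed geometric-tracking errors in the perfect-approximation regime $\varepsilon=0$; Corollary~\ref{cor:param-proxy}; and an induction through Lemma~\ref{lem:lambda-decay}. As in the paper, $\varepsilon=0$ is the only regime in which the available bounds make $\sum_n e_n$ finite, and your fallback of letting the $\delta$-term dominate the $\varepsilon$-floor does not rescue the infinite sum. Your one improvement over the paper is how you certify $w^{n+1}_{\NN,T}\in B(w^n_{\NN,T},r_{w^n_{\NN,T}})$: you take it from interior localization (Lemma~\ref{lem:interior-loc} under (U3)), whereas the paper deduces $\Delta_n<r_\infty$ from a bound that already presupposed Corollary~\ref{cor:param-proxy}, which is circular.
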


\begin{proof}
We prove parts~(b), (a), (c), (d) in that order. The crux is part~(b), a clean stepwise induction that establishes Assumption~\ref{assm:traj-unif}(U1) along the trajectory.

\smallskip
\noindent\textbf{A-priori telescope bound on exact-MMS displacements.}
For the exact MMS iterates of the $m_\calF$-strongly convex functional, the contraction property of the proximal map gives $\|u^{k+1}-u^k\|_\calH\le\rho^k\|u^1-u^0\|_\calH$. Since
$\|u^1-u^0\|_\calH=\|J_\tau(u^0)-u^0\|_\calH\le\|J_\tau(u^0)-u^*\|_\calH+\|u^*-u^0\|_\calH\le(1+\rho)\|u^0-u^*\|_\calH$,
we obtain $\|u^{k+1}-u^k\|_\calH\le\rho^k(1+\rho)\|u^0-u^*\|_\calH$. Summing the geometric series,
\begin{equation}\label{eq:exact-MMS-telescope}
\sum_{k=0}^{\infty}\|u^{k+1}-u^k\|_\calH
\;\le\; \frac{1+\rho}{1-\rho}\|u^0-u^*\|_\calH
\;=\; \frac{2+\tau m_\calF}{\tau m_\calF}\|u^0-u^*\|_\calH .
\end{equation}
The bound~\eqref{eq:exact-MMS-telescope} does \emph{not} depend on the inexact iterates and is the a-priori telescope that closes the induction below.

Recall the trajectory budget $S$ from~\eqref{eq:S-def}; the initial-gap condition reads $S<\min\{2(\lambda_{w^0}-\lambda_\infty)\lambda_\infty/L,\,\lambda_\infty r_\infty\}$.

\smallskip
\noindent\textbf{(b) Stepwise induction.} We show, by induction on $n\ge 0$, the joint statement
\[
(\mathrm{IH}_n):\quad \lambda_k\ge\lambda_\infty \ \text{for all $0\le k\le n$,} \quad \Delta_k\le r_\infty \ \text{for all $0\le k\le n-1$.}
\]

\emph{Base case} ($n=0$): $\lambda_0=\lambda_{w^0}\ge\lambda_\infty$ by hypothesis~(i); the bound on $\Delta_k$ is vacuous.

\emph{Inductive step.} Assume $(\mathrm{IH}_n)$. By hypothesis (ii) and the inductive hypothesis (U1 holding up to step $n$, U2 along the trajectory up to step $n$ via Remark~\ref{rem:traj-unif} under the initial-gap condition), Lemma~\ref{lem:induction} applies at each step $k\le n$ and yields the function-space tracking $\|u^k_{\NN,T}-u^k\|_\calH\le\delta$ for $0\le k\le n+1$.

By the triangle inequality and the function-space tracking,
\[
\|u^{n+1}_{\NN,T}-u^n_{\NN,T}\|_\calH \le \|u^{n+1}_{\NN,T}-u^{n+1}\|_\calH+\|u^{n+1}-u^n\|_\calH+\|u^n-u^n_{\NN,T}\|_\calH \le 2\delta+\|u^{n+1}-u^n\|_\calH.
\]
Applying Corollary~\ref{cor:param-proxy} with the Weyl bound $\lambda_n\ge\lambda_\infty$ from $(\mathrm{IH}_n)$,
\begin{equation}\label{eq:per-step-lambda-inf}
\Delta_n \;\le\; \frac{\|u^{n+1}_{\NN,T}-u^n_{\NN,T}\|_\calH}{\lambda_n} \;\le\; \frac{2\delta+\|u^{n+1}-u^n\|_\calH}{\lambda_\infty}.
\end{equation}

\emph{Per-step containment at step $n$.}
Under hypothesis~(ii) (growing $T_n$ from~\eqref{eqn:T-growing}), Corollary~\ref{cor:geometric-tracking} provides the geometric-tracking bound~\eqref{eq:geometric-tracking}, and---working in regime~(I) (perfect approximation $\varepsilon=0$, see Remark~\ref{rem:phase4-regime} below for the $\varepsilon>0$ case)---\eqref{eq:sum-e-bound} gives the summable tracking error
\begin{equation}\label{eq:sum-e-rigorous}
\sum_{k=0}^{\infty} e_k \;\le\; E_\infty,\qquad E_\infty:=\frac{e_0}{1-\rho}+\frac{\eta_0}{(1-\rho)^{2}}.
\end{equation}
Summing~\eqref{eq:per-step-lambda-inf} for $k=0,\ldots,n$ and using $\|u^{k+1}_{\NN,T}-u^k_{\NN,T}\|_\calH\le e_{k+1}+\|u^{k+1}-u^k\|_\calH+e_k$ together with the a-priori telescope~\eqref{eq:exact-MMS-telescope} and the geometric-tracking bound $E_\infty = e_0/(1-\rho)+\eta_0/(1-\rho)^2 \le \delta/(1-\rho)+\eta_0/(1-\rho)^2$,
\begin{equation}\label{eq:partial-sum-Delta}
\sum_{k=0}^{n}\Delta_k \;\le\; \frac{1}{\lambda_\infty}\Bigl(\sum_{k=0}^{n}(e_k+e_{k+1})+\sum_{k=0}^{n}\|u^{k+1}-u^k\|_\calH\Bigr)
\;\le\; \frac{1}{\lambda_\infty}\Bigl[2E_\infty+\frac{2+\tau m_\calF}{\tau m_\calF}\|u^0-u^*\|_\calH\Bigr]
\;\le\; \frac{S}{\lambda_\infty},
\end{equation}
where $S$ is the trajectory budget from~\eqref{eq:S-def}. In particular, $\Delta_n\le S/\lambda_\infty < r_\infty$ by the second bound in hypothesis~(iii), so $\Delta_n < r_\infty \le r_{w^n_{\NN,T}}$, validating Corollary~\ref{cor:param-proxy}.

\emph{Non-degeneracy at step $n+1$.}
By Lemma~\ref{lem:lambda-decay}, $\lambda_{n+1}=\lambda_{w^0}-(L/2)\sum_{k=0}^{n}\Delta_k\ge\lambda_{w^0}-(L/2)\cdot S/\lambda_\infty>\lambda_\infty$, by the first bound in hypothesis~(iii). Since $\lambda_{n+1}\le\lambda_{w^{n+1}_{\NN,T}}$ (Weyl proxy is a lower bound; Remark~\ref{rem:lambda-decay}), also $\lambda_{w^{n+1}_{\NN,T}}\ge\lambda_{n+1}>\lambda_\infty$.

This proves $(\mathrm{IH}_{n+1})$, closing the induction. Taking $n\to\infty$ in~\eqref{eq:partial-sum-Delta} yields the trajectory-length bound~\eqref{eq:trajectory-bound}.

\smallskip
\noindent\textbf{(a) Function-space convergence.}
Part~(b) establishes Assumption~\ref{assm:traj-unif}(U1) along the trajectory; Assumption~\ref{assm:traj-unif}(U2) is verified along the way under the initial-gap condition (Remark~\ref{rem:traj-unif} and Remark~\ref{rem:globalPGF-traj-unif}). With Assumption~\ref{assm:traj-unif} established, Theorem~\ref{thm:global_error}(b) gives
\(
\|u^n_{\NN,T}-u^*\|_\calH\le\delta+\rho^n\|u^0-u^*\|_\calH.
\)

\smallskip
\noindent\textbf{(c) Cauchy in parameter space.}
Since $\{\Delta_n\}$ is summable by part~(b), for any $\epsilon>0$ there exists $N$ such that $\sum_{n\ge N}\Delta_n<\epsilon$. Therefore $\|w^m_{\NN,T}-w^n_{\NN,T}\|_2\le\sum_{k=n}^{m-1}\Delta_k<\epsilon$ for all $m>n\ge N$, so $\{w^n_{\NN,T}\}$ is Cauchy in $\R^p$ and converges to a limit $w^\infty\in\R^p$.

\smallskip
\noindent\textbf{(d) Parameter-space neighborhood convergence.}
By continuity of $u_\NN$ and part~(a),
\[
\|u_\NN(w^\infty)-u^*\|_\calH \;=\; \lim_{n\to\infty}\|u^n_{\NN,T}-u^*\|_\calH \;\le\;\delta+\lim_{n\to\infty}\rho^n\|u^0-u^*\|_\calH \;=\;\delta. \qedhere
\]
\end{proof}

Discussion of the initial-gap condition~\eqref{eq:initial-gap-condition} -- its geometric interpretation, the validity regimes of the parameter-space conclusions in the perfect- and uniform-approximation cases, and its connection to ``warm start'' practice -- is given in Appendix~\ref{app:disc-initial-gap} (Remarks~\ref{rem:initial-gap}, \ref{rem:phase4-regime}, and~\ref{rem:warm-start}).

\begin{remark}[Practical relevance and numerical evidence]\label{rem:practical-jacobian}
Theorem~\ref{thm:global-PGF} provides a \emph{worst-case} guarantee in which the
non-degeneracy budget shrinks monotonically as iterations proceed.
However, this pessimistic scenario need not arise in practice:
the actual non-degeneracy constant may stay roughly constant (or even improve)
along the optimization trajectory, well above the worst-case lower bound.

The numerical experiments (see Figure~\ref{fig:app_Jacobian}) monitor the minimum singular value $s_{\min}(J(w))$
throughout the MMS iterations and provide supporting evidence:
\begin{itemize}
\item For the Gauss--Newton (GN) inner solver, $s_{\min}$ remains bounded away from zero
at an $O(10^{-2})$ level throughout the iterations, indicating that the Jacobian
non-degeneracy is \emph{maintained} along the GN trajectory---far better than the
worst-case decay predicted by Lemma~\ref{lem:lambda-decay}.
\item In contrast, Adam and L-BFGS drive $s_{\min}$ to near-machine-precision levels
($\sim 10^{-14}$), indicating rapid loss of Jacobian non-degeneracy.
\end{itemize}
These observations suggest that the preconditioned (Gauss--Newton) gradient flow
naturally preserves the manifold structure that underlies the convergence theory,
making the effective convergence horizon much larger than the worst-case bound.
Understanding why the GN flow preserves Jacobian non-degeneracy---and whether this
can be proved rigorously---is an interesting direction for future work.
\end{remark}

\section{Numerical Experiments}\label{sec:experiments}

The theory developed above establishes a convergence analysis of the
Gauss--Newton-type preconditioned flow as the inner solver for each neural MMS proximal
subproblem: under local non-degeneracy and curvature-control assumptions, the induced
parameter-space dynamics coincides with the Riemannian gradient flow of the increment
objective on the neural increment manifold and can converge to a global minimum of the subproblem. In realistic neural-network training,
however, several of these technical assumptions, such as uniform Jacobian non-degeneracy
and geodesic convexity of the increment manifold, are difficult to verify directly. Therefore,
the goal of this section is not to provide a numerical verification of all assumptions
in the theory. Instead, the experiments are designed to address two complementary goals.
First, we measure how closely the neural MMS iterates track the exact Hilbert-space
trajectory (supporting the global error-propagation result of Theorem~\ref{thm:global_error}).
Second, we examine whether the Gauss--Newton-type inner solver is an effective method
for solving the neural MMS subproblems across a range of benchmarks, both in the
underparameterized regime where the theoretical assumptions are expected to hold,
and in the overparameterized regime where Levenberg--Marquardt (LM) regularization is often needed.

We mainly focus on a general nonlinear regression problem in $\mathbb{R}^d$ in numerical experiments. At the population level, we may view nonlinear regressions as an optimization problem in a Hilbert space $\mathcal H=L^2(\mu)$ with energy
\[
\mathcal F[u]=\frac12\|u-f^\ast\|_{L^2(\mu)}^2 = \int_{\mathbb{R}^d} \frac{1}{2} (u(\x) - f^*(\x))^2 \dd \mu \ ,
\] 
where $f^*: \mathbb{R}^d \to \mathbb{R}$ is the target function, $\mu(\x)$ is the distribution of data. Given training data $\{(\x_i,y_i)\}_{i=1}^N \subset \mathbb{R}^d\times\mathbb{R}$ with $y_i=f^\ast(\x_i)$, $\mu$ is replaced by the empirical measure \( \mu_N=\frac1N\sum_{i=1}^N \delta_{x_i}\). Let $u_{w}:\mathbb{R}^d\to\mathbb{R}$ denote a neural network parameterized by $w\in\mathbb{R}^p$, the goal is to determine parameters $w$ that minimize the empirical risk
\begin{equation}
    \mathcal F[u_w]
    =
    \frac{1}{2N}\sum_{i=1}^N \bigl(u_w(\x_i)-y_i\bigr)^2.
\end{equation}
Let $u^n(\cdot)=u_{w^n}(\cdot)$ denote the neural network function at time step $n$. The minimizing movement scheme updates the parameters by solving the optimization problem
\begin{equation}
    w^{n+1}
    =
    \argmin_{w\in\mathbb{R}^p} \mathcal J^n(w),
\end{equation}
at each step, where
\begin{equation}\label{J_Reg}
    \mathcal J^n(w)
    =
    \frac{1}{2\tau N}\sum_{i=1}^N \bigl(u_w(\x_i)-u^n(\x_i)\bigr)^2
    +
    \frac{1}{2N}\sum_{i=1}^N \bigl(u_w(\x_i)-y_i\bigr)^2.
\end{equation}
Here, $\tau>0$ is the temporal stepsize.

\paragraph{Gauss--Newton flow and its discretization} We first define the  Gauss-Newton flow in this specific setting.  Let $\mathbf{u}(w) = (u_{w}(\x_1), \dots, u_{w}(\x_N))^\top \in \mathbb{R}^{N}$  and the target vector $\mathbf{y} = (y_1, \dots, y_N)^\top$.
We introduce the auxiliary mapping $\Phi(w): \mathbb{R}^p \rightarrow \mathbb{R}^N$ defined as
\begin{equation}
 \Phi(w) =  \mathbf{u}(w) - \mathbf{u}^n .
\end{equation}
The MMS objective \eqref{J_Reg} can be rewritten as a composite function $g^n(\Phi(w))$, where $g^n: \mathbb{R}^N \to \mathbb{R}$ is the strictly convex quadratic function:
\begin{equation}\label{def_g}
  g^n (\mathbf{z}) = \frac{1}{2 \tau  N}  \| \mathbf{z} \|^2 + \frac{1}{2 N} \left \|   \mathbf{z}  + \mathbf{u}^n - \mathbf{y}  \right \|^2.
\end{equation}
The continuous Gauss--Newton flow for minimizing this composite objective is defined by
\begin{equation}\label{eq:GN-flow}
    \frac{\mathrm{d}w}{\mathrm{d}t}
    =
    - \left(
        \mathrm{D}\Phi(w)^{\!\top} \nabla^2_{\mathbf{z}} g^n
        \mathrm{D}\Phi(w)
    \right)^{-1} \nabla_{w} \mathcal{J}(w),
\end{equation}
where $\mathrm{D}\Phi(w) = J(w)$ is the standard Jacobian matrix $J(w) \in \mathbb{R}^{N \times p}$ with entries $J_{ij} = \partial u_{w}(x_i) / \partial w_j$. The Hessian of the convex function $g^n$ with respect to $\mathbf{z}$ is
\begin{equation}
\nabla^2_{\mathbf{z}} g^n =  \left( \frac{1}{\tau  N} + \frac{1}{ N} \right) I_N = \frac{1}{ N} \left(1 + \frac{1}{\tau} \right) I_N.
\end{equation}
Substituting this into \eqref{eq:GN-flow}, the approximate Hessian matrix becomes:
\begin{equation}
    H_{\text{GN}} = J^\top \left[ \frac{1}{ N} \left(1 + \frac{1}{\tau} \right) I \right] J
    = \frac{1}{N} \left(1 + \frac{1}{\tau} \right) J^\top J.
\end{equation}
The gradient of the objective $\mathcal{J}(w)$ is given by:
\begin{equation}
    \nabla_w \mathcal{J}(w) = J^\top \left[ \frac{1}{\tau N}(\mathbf{u} - \mathbf{u}^n) + \frac{1}{N}(\mathbf{u} - \mathbf{y}) \right].
\end{equation}
Combining these, the Gauss--Newton flow simplifies to:
\begin{equation}\label{GN_for_u_final}
    \frac{\mathrm{d}w}{\mathrm{d}t}
    = - \left( \left(1 + \frac{1}{\tau} \right) J^\top J \right)^{-1} J^\top \left( \frac{1}{\tau}(\mathbf{u} - \mathbf{u}^n) + (\mathbf{u} - \mathbf{y}) \right).
\end{equation}
This flow is used in all numerical experiments below.

\begin{remark}
The theoretical analysis in Sections~2--4 is based on the
preconditioned flow
\begin{equation*}
    \frac{dw}{dt}
    =
    -
    \Bigl(
        \mathrm{D}\Phi^n(w)^{\!\top}
        \mathrm{D}\Phi^n(w)
    \Bigr)^{-1} \nabla_{w} g^n\bigl(\Phi^n(w)\bigr),
    \qquad
    w(0) = w^n,
\end{equation*}
which uses $(D\Phi^\top D\Phi)^{-1}$ as preconditioner rather than
incorporating $\nabla^2_z g^n$ explicitly.
Since $\mathrm{D}\Phi^n(w) = J (w)$ and
$\nabla_{w} g^n(\Phi(w)) =  (D \Phi^n)^{\!\top} \nabla g^n$,
the flow can be written as
\begin{equation}\label{GN_for_u}
\begin{aligned}
\frac{\dd w}{\dd t} & = - (J^{\!\top} J)^{-1}  J^{\!\top} \left( \frac{1}{\tau  N} \Phi(w) + \frac{1}{N}  \left( \Phi(w) + {\bm u}^n - {\bm y} \right) \right) \\
& = - ( J^{\!\top} J)^{-1}  J^{\!\top} \left( \frac{1}{\tau N} ({\bm u}(w) - {\bm u}^n) + \frac{1}{N}  \left( {\bm u}(w) - {\bm y} \right) \right)\ .
\end{aligned}
\end{equation}
Comparing \eqref{GN_for_u} with \eqref{GN_for_u_final}, the two
flows differ only by the scalar factor
$\frac{1}{1+1/\tau}$ in the preconditioner, which amounts to a reparametrization of time and leaves the parameter-space trajectory unchanged.
\end{remark}

Although the theoretical framework is based on the continuous Riemannian gradient flow, obtaining the accurate solution for Eq. \eqref{GN_for_u_final} is computationally prohibitive for deep neural networks due to the high dimensionality of the Jacobian. To balance computational efficiency with the benefits of curvature information, we discretize the flow using an explicit Euler method. Rather than fixing $\delta t$, we adaptively select the step size via a monotone backtracking line search combined with a trust-region-style norm clip on the search direction (contraction factor $0.5$, at most $8$ backtracking steps, and maximum step norm $5.0$ unless stated otherwise). The underlying linear system is solved using the conjugate gradient (CG) method, limited to a maximum of 30 iterations. This combination balances the efficiency of an inexact linear solve with the robustness afforded by adaptive step sizes.

The theoretical framework relies on the non-degeneracy of the Jacobian at the initial condition along the MMS trajectory. This assumption guarantees that the pullback metric $G(w)=D u_{\rm NN}(w)^\ast D u_{\rm NN}(w)$ remains strictly positive definite, allowing the Gauss--Newton dynamics to be interpreted as a Riemannian gradient flow. However, as noted by \cite{cayci2025riemannian}, in the over-parameterized regime $(p>N)$. even if the non-degeneracy condition $\lambda_{w_0}>0$ holds generically, it is often very small, leading to severe ill-conditioning of $J^{\rm T} J$. This practical reality necessitates the use of LM regularization to stabilize the GN updates. Because LM damping modifies the effective metric (replacing $J^\top J$ with $J^\top J+\rho I$), the resulting regularized dynamics diverge from the exact Riemannian gradient flow of the original pullback metric. To systematically address this gap between our idealized theoretical analysis and practical over-parameterized landscapes, we consider both the underparameterized and overparameterized regimes. In the latter regime, LM regularization may be
applied to stabilize the GN updates and the goal is to demonstrate stability and performance in practice.

To evaluate the practical performance of Gauss--Newton as a solver for the MMS subproblems, we compare it with several widely used optimization algorithms, namely Adam \citep{kingma2014adam}, AdamW \citep{loshchilov2017decoupled}, L-BFGS \citep{nocedal2006numerical}, Muon \citep{jordan2024muon}, Shampoo \citep{gupta2018shampoo}, and SOAP \citep{vyas2024soap}. In all experiments, these methods are employed solely as inner solvers within the same MMS time-marching framework, so that the outer discretization is kept fixed and the comparison isolates the effect of the subproblem solver itself. For Adam, AdamW, Muon, Shampoo , and SOAP, we perform $500$ inner iterations for each MMS subproblem unless state differently. In this experiment, the learning rates for Adam, AdamW, Muon, Shampoo, and SOAP are all set to $10^{-3}$ unless state differently; AdamW additionally uses weight decay \(10^{-2}\). L-BFGS is implemented with strong Wolfe line search, with maximum inner iterations set to $100$ and history size set to $50$. For the adaptive second-order variants, Shampoo uses momentum $0.9$, numerical regularization parameter $\epsilon=10^{-6}$, and preconditioner update frequency $1$, while SOAP uses Adam-type momentum parameters $(0.9,0.999)$, $\epsilon=10^{-8}$, Shampoo decay parameter $0.95$, and eigenspace update frequency $10$. Since the effective numerical scale and conditioning vary across experiments, the learning rates are adjusted slightly from one experiment to another; however, the overall comparison protocol is kept unchanged throughout.

\subsection{Error accumulation in MMS steps}

In this subsection, we numerically track the error at each MMS step at the trajectory level, which corresponds to the error analysis in Theorem~\ref{thm:global_error}. A solver that resolves the neural proximal subproblem more
accurately produces a smaller per-step error, which in turn leads to tighter tracking
under the contractive outer dynamics.  We compare GN against several first-order and
quasi-Newton baselines on this \emph{trajectory-tracking} diagnostic, which is distinct
from the standard measure of final training energy.

\paragraph{Exact MMS reference for nonlinear regression.}
For the quadratic energy
\[
    F[u] = \frac12 \|u-f^\ast\|^2,
\]
the exact MMS update in the ambient Hilbert space is available in closed form:
\begin{equation}
    u^{n+1}_{\mathrm{exact}}
    =
    \frac{u^n_{\mathrm{exact}}+\tau f^\ast}{1+\tau},
    \qquad
    u^0_{\mathrm{exact}}=u^0_{\mathrm{prescribed}} .
    \label{eq:mms-exact}
\end{equation}
Equivalently,
\begin{equation}
    u^n_{\mathrm{exact}}
    =
    (1+\tau)^{-n} u^0_{\mathrm{prescribed}}
    +
    \Big(1-(1+\tau)^{-n}\Big) f^\ast .
    \label{eq:mms-exact-closed}
\end{equation}
This closed-form trajectory provides a direct functional-space reference against which the neural MMS trajectory can be compared. This setup directly instantiates the scenario depicted in Figure~\ref{fig:function-space}: the exact MMS iterates $\{u^n_{\mathrm{exact}}\}$ converge to $f^*$ in $\mathcal{H}$, while Theorem~\ref{thm:global_error}(a) predicts that the neural iterates $\{u^n_{\mathrm{NN}}\}$ remain within a uniform $\delta$-band of the exact trajectory at every step.  The key quantity $\delta$ is controlled by how accurately each inner solver resolves the proximal subproblem: a solver that minimizes the subproblem more precisely produces a smaller effective $\delta$, and hence tighter tracking of the exact MMS trajectory.

Let \(u_{\mathrm{NN}}^n\) denote the neural iterate obtained by approximately solving the neural proximal subproblem at step \(n\), and define the tracking error
\begin{equation}
    e^n := u_{\mathrm{NN}}^n - u_{\mathrm{exact}}^n .
\end{equation}
In the experiments, this error is evaluated on the same empirical grid used to define the discrete \(L^2\) inner product. We consider a one-dimensional deterministic regression problem on \([-1,1]\) for which the exact MMS trajectory is available in closed form. The target function is
\[
    f^\ast(x)
    =
    x^2
    +0.30\sin(2\pi x)
    +0.20\cos(3\pi x),
\]
and the prescribed initial condition is chosen as
\[
    u^0_{\rm prescribed}(x)=x^2 .
\]
The training grid consists of \(256\) uniformly spaced points on \([-1,1]\). We first pretrain a neural network to approximate the initial condition.  The primary diagnostic is the training-grid discrete \(L^2\) error
\(
    \|u^n_{\rm NN}-u^n_{\rm exact}\|_N = \sqrt{\tfrac{1}{N} \textstyle \sum_{i=1}^N |u^n_{\rm NN}(x_i)-u^n_{\rm exact}(x_i)|^2},
\)
where \(\|\cdot\|_N\) denotes the discrete norm over the \(256\) training points.

We consider three values of the MMS step size,
\(\tau\in\{0.1,\,0.01,\,0.001\}\); results are shown in Figure~\ref{fig:mms-tracking}. The hyperparameters of all inner solvers are kept fixed across these runs, except for the Muon learning rate, which is set to $10^{-3}$ for $\tau=0.1$, $10^{-4}$ for $\tau=0.01$, and $10^{-5}$ for $\tau=0.001$ to ensure stability.
The top row of each column reports the tracking error
\(\|u^n_{\rm NN}-u^n_{\rm exact}\|_N\), while the bottom row reports the corresponding
training energy.  The two rows together make it possible to assess whether an inner
solver that performs well on energy also tracks the functional-space trajectory
accurately.

\begin{figure}[!ht]
    \centering
    \includegraphics[width=\linewidth]{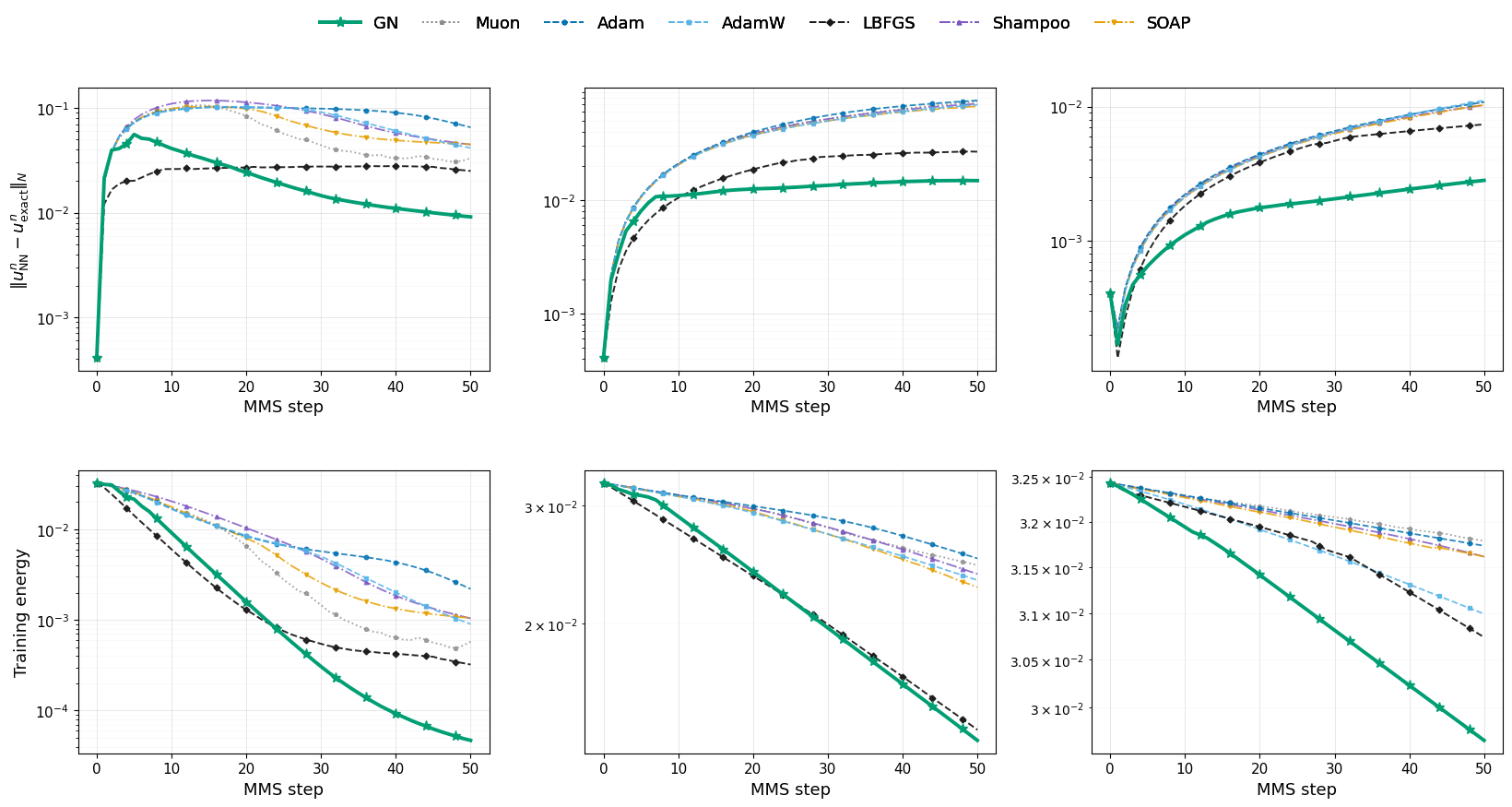}
    \caption{\textbf{Top row:} Stepwise tracking error $\|u^n_{\rm NN}-u^n_{\rm exact}\|_N$ between the neural MMS trajectory and the exact Hilbert-space MMS trajectory, for step sizes $\tau=0.1,\,0.01,\,0.001$ (left to right). GN (green) achieves the smallest tracking error across all~$\tau$, lying uniformly below all first-order and quasi-Newton baselines, particularly for $\tau=0.01$ and $\tau=0.001$. \textbf{Bottom row:} Corresponding training energy. Several baselines decrease energy at rates comparable to GN, yet exhibit substantially larger tracking errors (top row), illustrating that energy decrease alone does not characterize a structure-compatible inner solver. }
    \label{fig:mms-tracking}
\end{figure}

Figure~\ref{fig:mms-tracking} reveals a clear and consistent separation
between inner solvers on the trajectory-tracking diagnostic (top row).
\textbf{GN achieves the smallest, or among the smallest, tracking errors across all
three values of~$\tau$}, particularly after the first few outer iterations. These observations \emph{partially support} Theorem~\ref{thm:global_error}. Theorem~\ref{thm:global_error}(a) guarantees a uniform tracking bound $\|u^n_{\mathrm{NN}}-u^n\|_\calH\le\delta$ for all $n$, where the bound $\delta$ is controlled by the accuracy with which each inner solver resolves the proximal subproblem (Lemma~\ref{lem:induction}).
The separation is most pronounced for $\tau=0.01$ and $\tau=0.001$ (middle and right
columns), where GN's tracking error lies uniformly below that of every other solver
throughout the entire trajectory.
By contrast, several first-order and quasi-Newton baselines
(Adam, AdamW, Muon, L-BFGS, Shampoo, SOAP) decrease the training energy at rates
comparable to GN (bottom row), yet their neural MMS trajectories deviate substantially
from the exact Hilbert-space MMS trajectory (top row).
This dissociation between energy decrease and trajectory tracking is the central empirical
message: a good optimizer for the subproblem energy is \emph{not} the same as a
structure-compatible inner solver for MMS.  It supports the interpretation of GN as
the structure-compatible choice, whose second-order curvature information allows it to
approximate the exact proximal step more faithfully than gradient-based alternatives.

The dependence on $\tau$ provides a consistency check across operating
regimes.  For $\tau=0.01$ and $\tau=0.001$ (middle and right columns), the exact MMS
trajectory evolves slowly per outer step, and GN maintains the tightest tracking from
the first outer iteration onward, with first-order solvers consistently trailing by a
visible margin throughout the trajectory.  For $\tau=0.1$ (left column), the exact
target moves more rapidly per step; GN exhibits a brief initial transient in which the
tracking error rises during the first few MMS steps, reflecting accumulated error from
the pretraining initialization.  After roughly $10$ outer steps, GN recovers and attains
tracking errors below all other solvers for the remainder of the run.  Across all three
step sizes, smaller~$\tau$ yields a more slowly evolving exact trajectory and smaller
absolute tracking errors for all solvers, consistent with the reduced per-step error
amplification predicted by the theory.

In the remaining experiments, we study the practical performance of GN as an inner solver for MMS across a range of benchmarks, including both underparameterized and overparameterized regimes. We emphasize that the purpose of these experiments is to demonstrate that \emph{MMS with GN as the inner solver constitutes a reliable and competitive practical algorithm}.

\begin{remark}
In is worth mentioning that Theorem~\ref{thm:global_error} does \emph{not} assert that GN achieves the lowest final training energy among all inner solvers; its guarantee is that GN's function-space iterates remain uniformly close to the exact MMS trajectory throughout the optimization (as illustrated in Figure~\ref{fig:function-space}).
Other solvers may temporarily deviate from the exact MMS trajectory yet still converge to a neighborhood of~$u^*$, and may attain competitive or lower final energies in certain settings.
The purpose of the experiments below is therefore to demonstrate that MMS with GN as the inner solver constitutes a reliable and competitive practical algorithm.
Accordingly, we report training energy as the primary optimization metric; test-set generalization errors are included in the summary tables for reference, but lie outside the scope of our theoretical guarantees.
\end{remark}

\subsection{Underparameterized regime}
We consider the underparameterized regime for a two-layer neural network $u_w:\mathbb{R}^d \to\mathbb{R}$,
\begin{equation}
u_{w}(x) = a_0 + \sum_{j=1}^m a_j \,\sigma(w_j \cdot x + b_j), \quad w_j \in \mathbb{R}^d,~b_j \in \mathbb{R},~a_j \in \mathbb{R}
\end{equation}
with $\sigma=\tanh$, trained on samples $\{(x_i,y_i)\}_{i=1}^N$.
We quantify Jacobian non-degeneracy via the smallest eigenvalue of the Gram matrix $J(w)^{\top}J(w)$:
\begin{equation}
\lambda_{\min}\!\big(J(w)^{\top}J(w)\big) > \lambda_0
\end{equation}
for some $\lambda_0>0$. In the underparameterized setting ($p\le N$), it is possible for certain initializations $w_{\rm ini}$ to satisfy $\lambda_{\min}\!\big(J(w_{\rm ini})^{\top}J(w_{\rm ini})\big) > \lambda_0.$
In what follows, we examine how $\lambda_{\min}\!\big(J(w)^{\top}J(w)\big)$ evolves over MMS iterations under different optimization methods. In practice, we monitor the minimum singular value of the Jacobian,
\begin{equation}
s_{\min}(w) := \sigma_{\min}\!\big(J(w)\big),
\end{equation}
which is related to the Gram eigenvalue via
\begin{equation}
\lambda_{\min}\!\big(J(w)^{\top}J(w)\big) = s_{\min}(w)^2.
\end{equation}
We emphasize that $\lambda_{\min}$ (and hence $s_{\min}$) depends not only on the parameters $w$ but also on the sampling set $\{x_i\}_{i=1}^N$. Moreover, the initialization must satisfy certain non-degeneracy conditions; otherwise $s_{\min}(w_{\rm ini})$ can be very small (or numerically indistinguishable from zero) even in the underparameterized regime.

\begin{figure}[!htb]
    \centering
    \begin{overpic}[width=\linewidth]{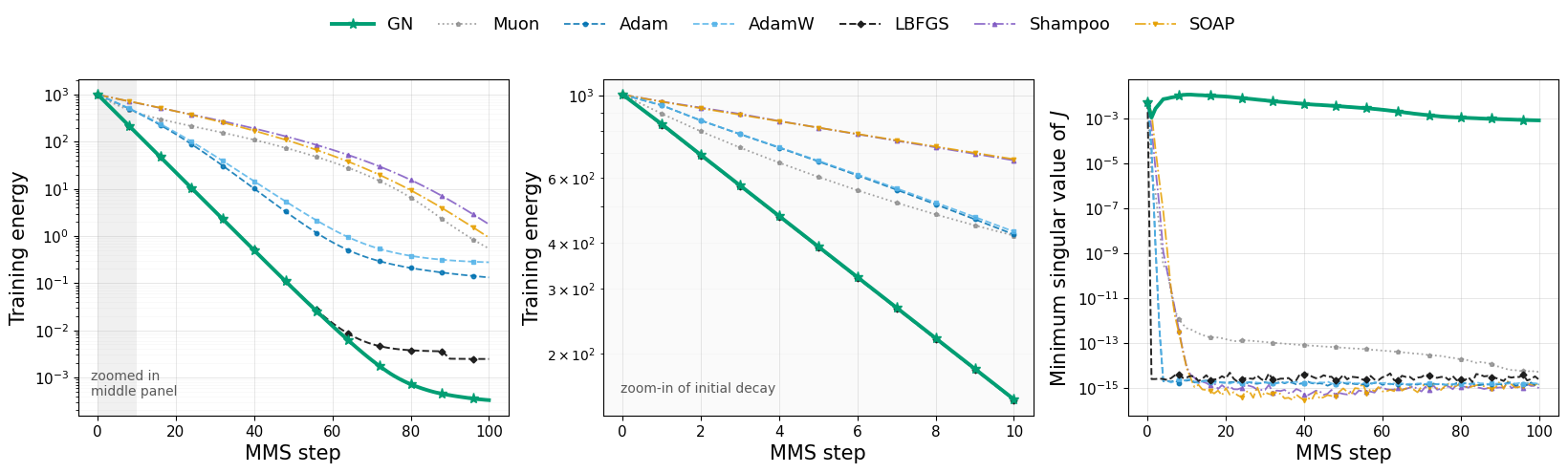}
    \put(-2, 23){{\small (a)}}
    \end{overpic}
    
      \vspace{1em}
    \begin{overpic}[width = \linewidth]{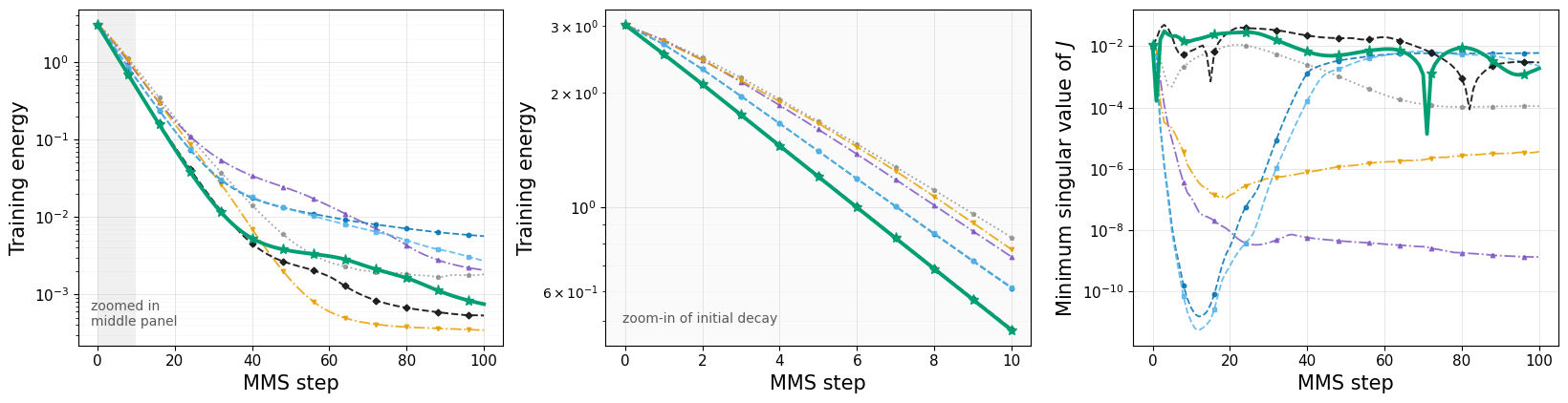}
    \put(-2, 22){{\small (b)}}
    \end{overpic}

    \vspace{1em}
    \begin{overpic}[width = \linewidth]{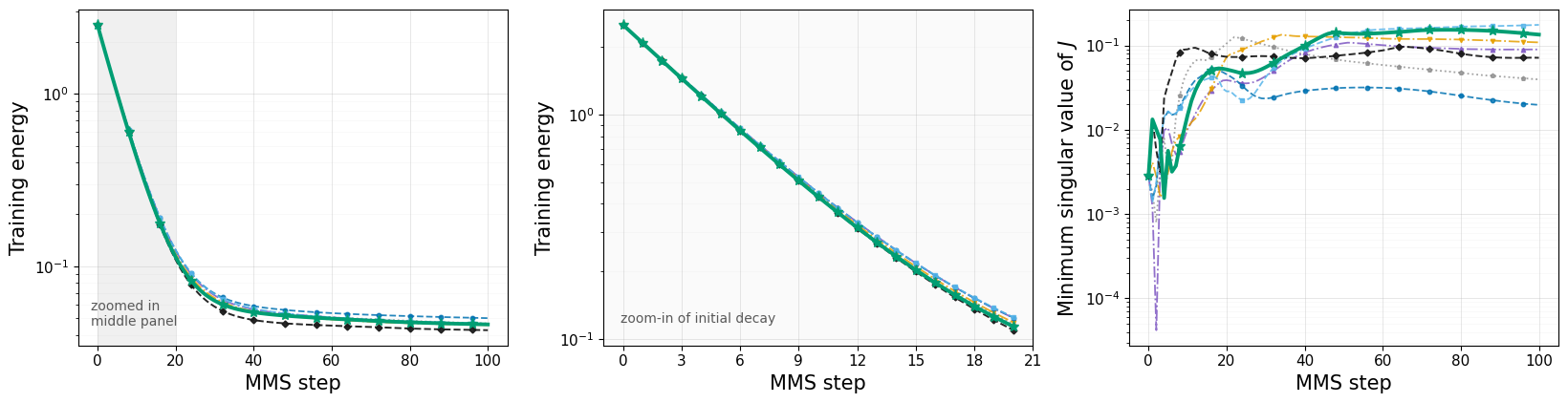}
    \put(-2, 22){{\small (c)}}
    \end{overpic}

    \caption{Numerical results in underparameterized regime with large $s_{\min}(J(w_{\rm ini}))$ with $\tau = 0.1$ for (a) 1D regression with $f^*(x) = x^2$, (b) 10D regression with $f^\ast(\x)  =  \sum_{k=1}^d \cos(\pi x_k)$, and (c) California Housing data. Left: full training-energy decay over MMS steps.  Middle: zoom-in of the early-stage training-energy decay over the first $K$ MMS steps (shaded region in the left panel), highlighting the convergence rate in the initial phase.
Right: minimum singular value of the sampled Jacobian $J$, reported as a diagnostic of Jacobian non-degeneracy along the optimization trajectory.}\label{fig:app_Jacobian}
\end{figure}

We first consider a simple target function $f^*(x)=x^2$ and use a neural network with $m=10$ hidden units, corresponding to a total of $31$ parameters. The training samples $\{x_i\}_{i=1}^N$ are chosen as uniformly spaced grid points on $[-10,10]$ with $N=256$, and a test set of $N_{\mathrm{test}}=1000$ uniformly spaced points on the same interval is used to compute the generalization errors reported in Table~\ref{tab:optimizer-summary}. Since the target function $x^2$ admits many different neural-network representations, different inner solvers may converge to different parameter realizations that produce similarly accurate function approximations. Figure~\ref{fig:app_Jacobian}(a) shows that this is indeed the case numerically. Interestingly, for all methods except GN (i.e., Adam, AdamW, L-BFGS, Muon, Shampoo, and SOAP), $s_{\min}(w)$ rapidly decreases to a near-machine-precision level (around $10^{-14}$) and remains there, indicating that the Jacobian becomes numerically near-degenerate along their optimization trajectories. This collapse of $s_{\min}$ is precisely the failure mode that the non-degeneracy condition of Definition~\ref{def:non-degen-jacobian} guards against. In contrast, the undamped Gauss--Newton (GN) iterates quickly move to a regime in which $s_{\min}(w)$ stays at an $\mathcal{O}(10^{-2})$ level, so that the Jacobian remains well away from numerical singularity. This suggests that all compared methods are able to fit the target function, but they select markedly different parameter representations, with GN systematically favoring a realization whose Jacobian geometry is well-conditioned and consistent with the theoretical assumptions.


Next we consider an example with $d = 10$. In general,  $s_{\min}(w_{\rm ini})$ can be larger for larger $d$ \citep{du2018gradient}. We consider target function 
\begin{equation}\label{HighD_sy}
f^\ast(\x)  =  \textstyle \sum_{k=1}^d \cos(\pi x_k), 
\qquad x=(x_1,\ldots,x_d)\in[-1,1]^d.
\end{equation}
The training inputs are sampled i.i.d.\ uniformly on the hypercube:
\(
x_i^{\mathrm{train}} \overset{\mathrm{i.i.d.}}{\sim}\mathrm{Unif}([-1,1]^d),~i=1,\dots,n_{\mathrm{train}},
\)
with $N_{\mathrm{train}} = 1000$.  The test inputs are independently sampled from the same distribution with size $N_{\mathrm{test}}=1000$. We consider a two layer neural network with $m = 32$, so the total number of parameters is $385$. The results are shown in Figure~\ref{fig:app_Jacobian}(b). Clearly, different optimization methods take different paths in the parameter space to approximate the same function. In particular, the minimum singular value of the Jacobian, $s_{\min}(J)$, evolves very differently across methods. As shown in Table~\ref{tab:optimizer-summary}, GN achieves competitive training energy on this 10D problem, outperforming all first-order gradient-based baselines (Adam, AdamW, Muon, Shampoo), while SOAP and L-BFGS attain slightly lower final energy values. The training-energy curves in Figure~\ref{fig:app_Jacobian}(b) further show that GN reaches low energy levels within the first few MMS steps.

We also consider a real-data regression task using the California Housing dataset \citep{pace1997sparse}. 
The dataset consists of $d=8$ input features and a scalar target (median house value). 
We randomly split the full dataset into training and test sets with an $80\%/20\%$ split using a fixed random seed, and then subsample $N_{\rm train}=1000$ training samples and $N_{\rm test}=1000$ test samples for efficiency and reproducibility. 
Following standard practice, we standardize the input features by fitting a \texttt{StandardScaler} on the training inputs and applying it to both training and test sets. 
We also standardize the targets using a separate scaler fitted on the training targets (and applied to the test targets). 
We use the same two-layer $\tanh$ network with width $m=32$, which gives $p=321$ trainable parameters in this setting. 
The MMS and inner solvers are configured identically to the synthetic example unless otherwise stated. The results are summarized in Figure~\ref{fig:app_Jacobian}(c). As shown in Table~\ref{tab:optimizer-summary}, GN delivers competitive training energy on this real-world tabular dataset, with roughly half the CPU time of Adam. The training-energy zoom-in in Figure~\ref{fig:app_Jacobian}(c) shows that GN makes rapid progress in the early MMS steps, consistent with the fast initial convergence observed in the synthetic experiments.

\begin{table}[t]
\centering
\caption{
Comparison of optimizers across three regression benchmarks.
Each entry reports the final value after the MMS iteration. CPU time is measured in seconds on a MacBook Pro with an Apple M2 chip.}
\label{tab:optimizer-summary}
\resizebox{\linewidth}{!}{%
\begin{tabular}{llccccccc}
\toprule
 & Metric 
& Adam & AdamW & L-BFGS & Muon & Shampoo & SOAP & GN \\
\midrule
\multirow{3}{*}{1D}

&Energy
& $1.33\mathrm{e}{-1}$
& $2.77\mathrm{e}{-1}$
& $2.46\mathrm{e}{-3}$
& $5.45\mathrm{e}{-1}$
& $1.78\mathrm{e}{0}$
& $9.29\mathrm{e}{-1}$
& $3.30\mathrm{e}{-4}$ \\
&Rel. $L^2$
& $1.11\mathrm{e}{-2}$
& $1.57\mathrm{e}{-2}$
& $1.51\mathrm{e}{-3}$
& $2.19\mathrm{e}{-2}$
& $3.99\mathrm{e}{-2}$
& $2.84\mathrm{e}{-2}$
& $5.46\mathrm{e}{-4}$ \\
&Time (s)
& $20.65$
& $21.51$
& $6.55$
& $28.40$
& $25.43$
& $21.90$
& $11.99$ \\
\midrule
\multirow{3}{*}{10D}
& Energy
& $5.64\mathrm{e}{-3}$
& $2.70\mathrm{e}{-3}$
& $5.32\mathrm{e}{-4}$
& $1.81\mathrm{e}{-3}$
& $2.06\mathrm{e}{-3}$
& $3.45\mathrm{e}{-4}$
& $7.48\mathrm{e}{-4}$ \\
& Rel. $L^2$
& $8.95\mathrm{e}{-2}$
& $5.68\mathrm{e}{-2}$
& $2.43\mathrm{e}{-2}$
& $3.46\mathrm{e}{-2}$
& $5.07\mathrm{e}{-2}$
& $1.99\mathrm{e}{-2}$
& $2.90\mathrm{e}{-2}$ \\
& Time (s)
& $47.87$
& $49.28$
& $25.13$
& $65.12$
& $62.56$
& $54.98$
& $25.72$ \\
\midrule
\multirow{3}{*}{Cal. Housing}
& Energy
& $5.01\mathrm{e}{-2}$
& $4.65\mathrm{e}{-2}$
& $4.27\mathrm{e}{-2}$
& $4.68\mathrm{e}{-2}$
& $4.62\mathrm{e}{-2}$
& $4.57\mathrm{e}{-2}$
& $4.61\mathrm{e}{-2}$ \\
& Rel. $L^2$
& $6.10\mathrm{e}{-1}$
& $6.18\mathrm{e}{-1}$
& $6.40\mathrm{e}{-1}$
& $6.48\mathrm{e}{-1}$
& $6.61\mathrm{e}{-1}$
& $6.04\mathrm{e}{-1}$
& $7.03\mathrm{e}{-1}$ \\
& Time (s)
& $46.08$
& $45.22$
& $23.77$
& $54.97$
& $55.51$
& $43.24$
& $24.75$ \\
\bottomrule
\end{tabular}%
}
\end{table}

\subsection{Overparameterized regime}

In this subsection, we test the performance of the Gauss--Newton method for overparameterized two-layer neural networks and deep neural networks. All settings are the same as in the previous 
section. When needed, we apply Levenberg--Marquardt (LM) 
regularization to stabilize the GN updates, replacing the standard 
GN linear system with
\begin{equation}\label{eq:LM-update}
\left( \left(1 + \frac{1}{\tau}\right) J^\top J  + \rho I \right) \eta=J^\top \left( \frac{1}{\tau} (\mathbf{u}^n - \mathbf{u}(w^k)) + (\mathbf{y} - \mathbf{u}(w^k)) \right),
\end{equation}
and update the parameters via $w^{k+1} = w^k + \delta t\, \eta $, where $\delta t > 0$ is the step-size for the explicit Euler discretization and $\rho > 0$ serves as a damping coefficient. 
We choose $\delta t$ using line search as before, unless stated otherwise.
To solve the linear system \eqref{eq:LM-update} efficiently, we utilize the Conjugate Gradient (CG) method. Due to the computational cost, we do not simulate the flow to equilibrium. Instead, we perform only 5 discrete Euler steps.

\subsubsection{Nonlinear Regression}

In overparameterized regime, we first consider a challenging one dimensional regression problem considered in \citet{dahmen2025expansive}, where the target function is defined as
\begin{equation}\label{complex_1d}
f^*(x) = \exp(\sin(k \pi x)) + x^3 - x - 1.0
\end{equation}
with $k = 10$. As discussed in \citet{dahmen2025expansive}, it is difficult to fit this function using a two-layer neural network, even in the over-parameterized regime. So we consider a four-layer fully connected neural network with $\tanh$ activation and 10 neurons per hidden layer, initialized with Xavier normal initialization (gain for $\tanh$) and zero biases. The training and test data are generated in the same way as before. Adam uses learning rate $10^{-3}$ for $500$ iterations

\begin{figure}[!htpb]

     \centering
     \includegraphics[width=0.75\linewidth]{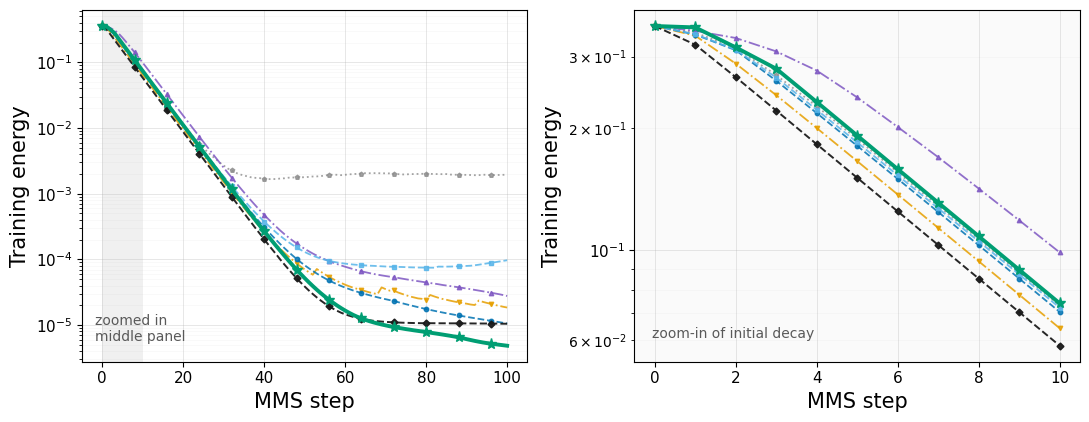}
    \caption{Results for deep network regression on $f^*(x) = \exp(\sin(10\pi x)) + x^3 - x - 1$ in the overparameterized regime ($901$ parameters). Left: full training-energy decay over MMS steps. Right: zoom-in of the training-energy decay during the first $10$ MMS steps (shaded region in the left panel).}
    \label{fig:1d_regression_challenging}
\end{figure}

\begin{table}[t]
\centering
\caption{
Comparison of optimizers on the 1D complex benchmark
(901 parameters, $n_{\text{train}}=256$).
Each entry reports the final value after the MMS iteration. CPU time is measured in seconds on a MacBook Pro with an Apple M2 chip.}
\label{tab:optimizer-1d-complex}
\resizebox{\linewidth}{!}{%
\begin{tabular}{lccccccc}
\toprule
Metric 
& Adam & AdamW & L-BFGS & Muon & Shampoo & SOAP & GN \\
\midrule
Energy
& $1.06\mathrm{e}{-5}$
& $9.70\mathrm{e}{-5}$
& $1.05\mathrm{e}{-5}$
& $1.94\mathrm{e}{-3}$
& $2.77\mathrm{e}{-5}$
& $1.85\mathrm{e}{-5}$
& $4.85\mathrm{e}{-6}$ \\
Rel. $L^2$
& $5.35\mathrm{e}{-3}$
& $1.60\mathrm{e}{-2}$
& $5.33\mathrm{e}{-3}$
& $7.48\mathrm{e}{-2}$
& $8.85\mathrm{e}{-3}$
& $7.33\mathrm{e}{-3}$
& $3.67\mathrm{e}{-3}$ \\
Time (s)
& $44.10$
& $46.05$
& $11.33$
& $75.01$
& $62.96$
& $50.89$
& $23.19$ \\
\bottomrule
\end{tabular}%
}
\end{table}

Figure~\ref{fig:1d_regression_challenging} and Table~\ref{tab:optimizer-1d-complex} show that GN attains the lowest final training energy and relative $L^2$ error among all compared solvers on this challenging benchmark, while remaining competitive in wall-clock time. The zoom-in confirms that GN makes rapid progress in the early MMS steps, demonstrating that it constitutes a highly competitive inner solver even in the overparameterized regime.

Next, we consider the high-dimensional example~\eqref{HighD_sy} with an overparameterized network of width $m=128$ ($1537$ parameters). As shown in Figure~\ref{Res:10D} and Table~\ref{tab:optimizer-10d-wide}, GN again attains the lowest training energy with competitive CPU time. The zoom-in further confirms that GN reaches low energy levels within the first few outer iterations, demonstrating that five GN inner steps can match or surpass hundreds of first-order iterations. These results consistently support the conclusion that MMS with a GN inner solver is a practical and competitive algorithm across both regimes.

\begin{figure}[!ht]
\centering
    \includegraphics[width=0.75\linewidth]{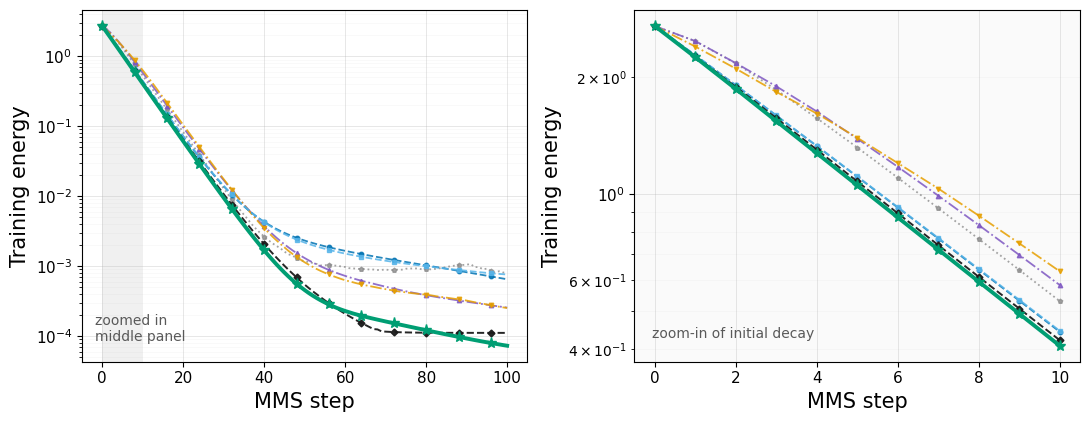}
    \caption{Results for the 10D synthesis data in the overparameterized regime ($m=128$, $1537$ parameters). Left: full training-energy decay over MMS steps. Right: zoom-in of the training-energy decay during the first $10$ MMS steps (shaded region in the left panel).}\label{Res:10D}
\end{figure}

\begin{table}[t]
\centering
\caption{
Comparison of optimizers on the 10D regression benchmark with a wider network
(1537 parameters, $n_{\text{train}}=1000$).
Each entry reports the final value after the MMS iteration. CPU time is measured in seconds on a MacBook Pro with an Apple M2 chip.}
\label{tab:optimizer-10d-wide}
\resizebox{\linewidth}{!}{%
\begin{tabular}{lccccccc}
\toprule
Metric 
& Adam & AdamW & L-BFGS & Muon & Shampoo & SOAP & GN \\
\midrule
Energy
& $6.49\mathrm{e}{-4}$
& $7.60\mathrm{e}{-4}$
& $1.11\mathrm{e}{-4}$
& $8.19\mathrm{e}{-4}$
& $2.56\mathrm{e}{-4}$
& $2.52\mathrm{e}{-4}$
& $7.28\mathrm{e}{-5}$ \\
Rel. $L^2$
& $6.85\mathrm{e}{-2}$
& $5.48\mathrm{e}{-2}$
& $2.90\mathrm{e}{-2}$
& $7.87\mathrm{e}{-1}$
& $4.10\mathrm{e}{-2}$
& $3.30\mathrm{e}{-2}$
& $3.30\mathrm{e}{-2}$ \\
Time (s)
& $98.11$
& $93.62$
& $51.56$
& $124.88$
& $218.27$
& $130.70$
& $70.80$ \\
\bottomrule
\end{tabular}%
}
\end{table}

\subsubsection{MNIST Classification with One-Hot Labels}

To demonstrate applicability beyond scalar regression, we consider 
a vector-valued MMS problem on the MNIST dataset. The energy 
functional is the one-hot mean-squared-error loss,
\begin{equation}
    F[u_w] = \frac{1}{2N}\sum_{i=1}^{N} 
    \|u_w(x_i) - e_{y_i}\|^2,
\end{equation}
where $e_{y_i} \in \mathbb{R}^{10}$ is the one-hot encoding of the 
label $y_i \in \{0,\ldots,9\}$. This preserves the least-squares 
structure of~\eqref{J_Reg} while extending the output dimension 
from $\mathbb{R}$ to $\mathbb{R}^C$ with $C = 10$, so that the Gauss--Newton framework of~\eqref{GN_for_u_final}, with $\mathbf{u}(w)\in\mathbb{R}^{NC}$ interpreted as the stacked output vector and $J(w)\in\mathbb{R}^{NC\times p}$ the corresponding Jacobian, applies without modification.
Classification accuracy is reported as a secondary metric to
assess practical utility.

We train a fully connected neural network with input dimension 
$d = 784$, depth $L = 3$, hidden width $m = 64$, and $\tanh$ 
activation, giving $p = 55{,}050$ trainable parameters. All weights 
are initialized with Xavier normal initialization (gain for $\tanh$) 
and all biases are set to zero. The training set consists of 
$N = 5000$ samples and the test set of $1000$ samples, with pixel 
values normalized to $[0, 1]$.

At each MMS step, we compare three inner solvers: Adam (learning rate 
$10^{-3}$, $100$ inner iterations), L-BFGS (strong Wolfe line search, 
maximum $20$ iterations per step), and Gauss--Newton with 
Levenberg--Marquardt regularization ($\rho = 10^{-5}$, $\delta t =1$, $5$ inner 
steps, CG limited to $20$ iterations). We perform $80$ MMS steps in total
with proximal parameter $\tau = 0.1$. Fig.~\ref{fig:MNIST} summarizes the results.
\begin{figure}[!ht]
    \centering
    \includegraphics[width= \linewidth]{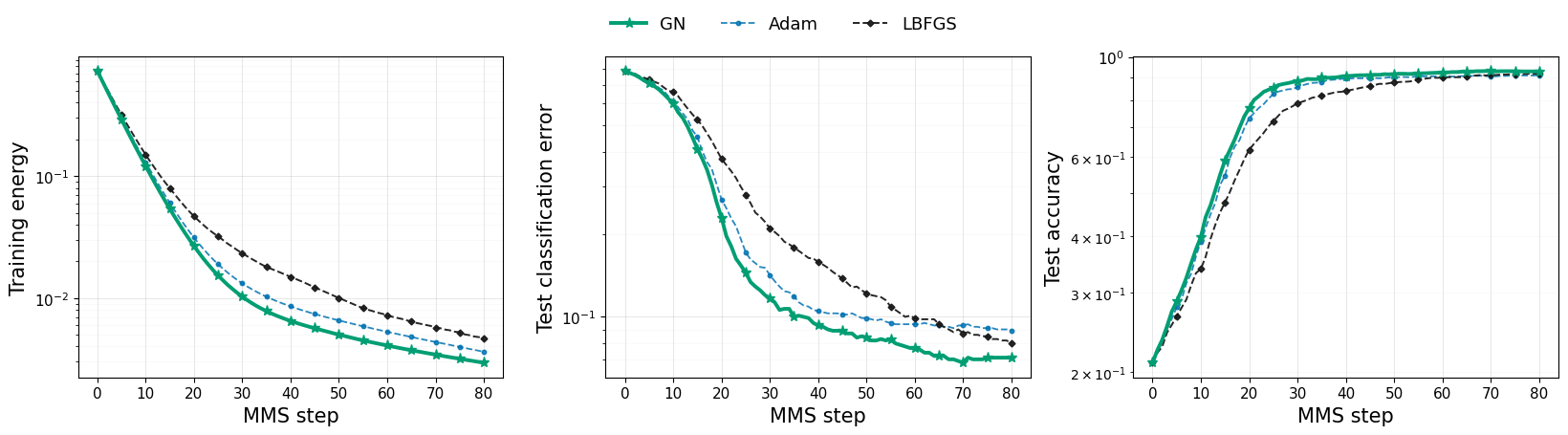}
    \caption{Results for MNIST one-hot regression with $\tau = 0.1$
    and $80$ MMS steps. }
    \label{fig:MNIST}
\end{figure}

\begin{table}[!h]
\centering
\caption{Summary of inner-solver performance for MNIST one-hot regression with $\tau=0.1$ and $80$ MMS steps. CPU time is measured in seconds on a MacBook Pro with an Apple M2 chip.}
\label{tab:mms_summary_mnist}
\begin{tabular}{lcccc}
\hline
Method & Final energy & Final rel.\ $L^2$ & Test accuracy & Time (s) \\
\hline
Adam   & $3.625\times 10^{-3}$ & $8.900\times 10^{-2}$ & $91.1\%$ & $405.08$ \\
L-BFGS & $4.621\times 10^{-3}$ & $8.000\times 10^{-2}$ & $92.0\%$ & $328.68$ \\
GN     & $2.945\times 10^{-3}$ & $7.100\times 10^{-2}$ & $92.9\%$ & $338.67$ \\
\hline
\end{tabular}
\end{table}

The MNIST one-hot experiment is intended as an applicability test beyond scalar regression,
rather than a benchmark optimized for classification accuracy. The least-squares structure is
preserved by using one-hot mean-squared error, so the same Gauss--Newton formulation
applies. As reported in Table~\ref{tab:mms_summary_mnist}, GN works well in practice
on this task, with competitive optimization quality and runtime behavior among the
three solvers. These results suggest that the
Gauss--Newton inner solver remains competitive in a higher-dimensional, vector-valued
setting. 
state-of-the-art classification performance.

\subsubsection{Latent diffusion model on MNIST}

We further test the MMS inner solvers in a simple latent diffusion \citep{rombach2022high} experiment
on MNIST \citep{lecun1998gradient}, since score matching is itself a nonlinear regression problem: the score network $s_\theta$ is trained to regress the noise $\varepsilon$ from noisy latent codes $(z_t, t)$, so the score-matching energy $E(\theta)$ has the same least-squares structure as the regression objectives considered earlier in this section. The Gauss--Newton framework therefore applies directly, with the Jacobian now taken with respect to $\theta$ and the residual vector given by the stacked noise-prediction errors. The purpose of this experiment is not to obtain state-of-the-art
generative performance, but to examine whether the Gauss--Newton-type inner
solver remains competitive with AdamW in a stochastic, higher-dimensional
neural MMS subproblem. 

We first train a convolutional variational autoencoder (VAE) to obtain a
low-dimensional latent representation of the MNIST images \citep{kingma2013auto}.  The encoder follows a standard convolutional autoencoder design \citep{radford2016unsupervised}: two
stride-two convolutional layers are used to progressively downsample the
\(28\times 28\) image to a \(7\times 7\) spatial feature map, while increasing
the number of channels from \(1\) to \(w/2\) and then to \(w\). A third
convolutional layer with stride one further processes the \(w\)-channel feature
map without changing its spatial resolution. In our experiments \(w=64\), so
the encoder has the channel progression
\(
    1 \longrightarrow 32 \longrightarrow 64 \longrightarrow 64 .
\)
All convolutional layers use SiLU nonlinearities. The final feature map of size
\(w\times 7\times 7\) is then flattened and mapped by two fully connected layers
to the latent mean and log-variance,
\(
    \mu_\phi(x),\ \log\sigma_\phi^2(x)\in\mathbb R^{d_z},
    \qquad d_z=16.
\)
The decoder first maps \(z\in\mathbb R^{d_z}\) back to a \(w\times 7\times 7\)
feature map and then applies two transposed convolutional layers and a final
convolutional layer to produce the image logits. The VAE is trained with the
standard reconstruction--KL objective \citep{kingma2013auto}:
\[
    \mathcal L_{\rm VAE}(\phi,\psi)
    =
    \frac{1}{B}\sum_{i=1}^B
    \Bigl[
    {\rm BCE}\bigl(D_\psi(z_i),x_i\bigr)
    +
    \beta_{\rm VAE}\,
    D_{\rm KL}
    \bigl(q_\phi(z_i|x_i)\,\|\,\mathcal N(0,I)\bigr)
    \Bigr],
\]
where \(z_i=\mu_\phi(x_i)+\sigma_\phi(x_i)\xi_i\), \(\xi_i\sim\mathcal N(0,I)\),
and we use \(\beta_{\rm VAE}=0.1\). After training the VAE, we use the encoder
mean \(\mu_\phi(x)\) as the latent code and normalize the latent variables by
their empirical mean and standard deviation:
\( \tilde z_0   = (\mu_\phi(x)-\bar z)/s_z. \)
This produces a small-scale latent-diffusion testbed in the spirit of latent
diffusion models \citep{rombach2022high}, while keeping the experiment simple
enough for controlled optimizer comparisons.

On this normalized latent space, we use the standard forward diffusion process
of denoising diffusion probabilistic models \citep{ho2020denoising}
\[
    z_t
    =
    \sqrt{\bar\alpha_t}\,\tilde z_0
    +
    \sqrt{1-\bar\alpha_t}\,\varepsilon,
    \qquad
    \varepsilon\sim\mathcal N(0,I),
\]
where \(\bar\alpha_t=\prod_{k=1}^t(1-\beta_k)\), and \(\{\beta_k\}_{k=1}^{T}\)
is a linear noise schedule with \(T=1000\). The score network is trained in the
usual noise-prediction form
\(
    s_\theta(z_t,t)\approx \varepsilon .
\)
In our implementation, \(s_\theta\) is a residual MLP with sinusoidal time
embedding. More precisely, the time index \(t\) is first mapped to a sinusoidal
embedding \(e(t)\in\mathbb R^{d_t}\), with \(d_t=128\). The input
\((z_t,e(t))\in\mathbb R^{d_z+d_t}\) is projected to a hidden representation of
width \(64\). We then apply two residual blocks of the form
\(
    h \mapsto h +
    W_2\,\sigma\!\left(W_1\,{\rm LN}(h)\right),
\)
where \({\rm LN}\) denotes layer normalization and \(\sigma\) is the SiLU
activation. A final layer normalization and linear output layer map the hidden
state back to \(\mathbb R^{d_z}\). The last linear layer is initialized to zero,
so that the initial score prediction is close to zero. For a mini-batch \(\{(z_t^i,t_i,\varepsilon_i)\}_{i=1}^B\), the basic
score-matching energy is
\[
    E(\theta)
    =
    \frac{1}{2B d_z}
    \sum_{i=1}^B
    w(t_i)\left\|
    s_\theta(z_t^i,t_i)-\varepsilon_i
    \right\|^2 .
\]
where $w(t)$ is a weighting function. The precise choice of \(w(t)\) is not essential for our optimizer-comparison
purpose. We use a mild decreasing weight \(w(t)=3-2t/(T-1)\), which assigns
larger weights to lower-noise denoising tasks while keeping the overall loss
scale comparable across time levels.
The MMS version uses the previous score network \(s_{\theta^n}\) as the proximal
reference and defines the inner objective at outer step \(n\) by
\[
    \mathcal L_{\rm MMS}^{n}(\theta)
    =
    \frac{1}{2\tau B d_z}
    \sum_{i=1}^B
    \left\|
    s_\theta(z_t^i,t_i)-s_{\theta^n}(z_t^i,t_i)
    \right\|^2
    + E(\theta) .
\]

\begin{figure}[!ht]
    \centering
    \begin{minipage}[c]{0.3 \linewidth}
        \centering

        \vspace{1em}
        \includegraphics[width=\linewidth]{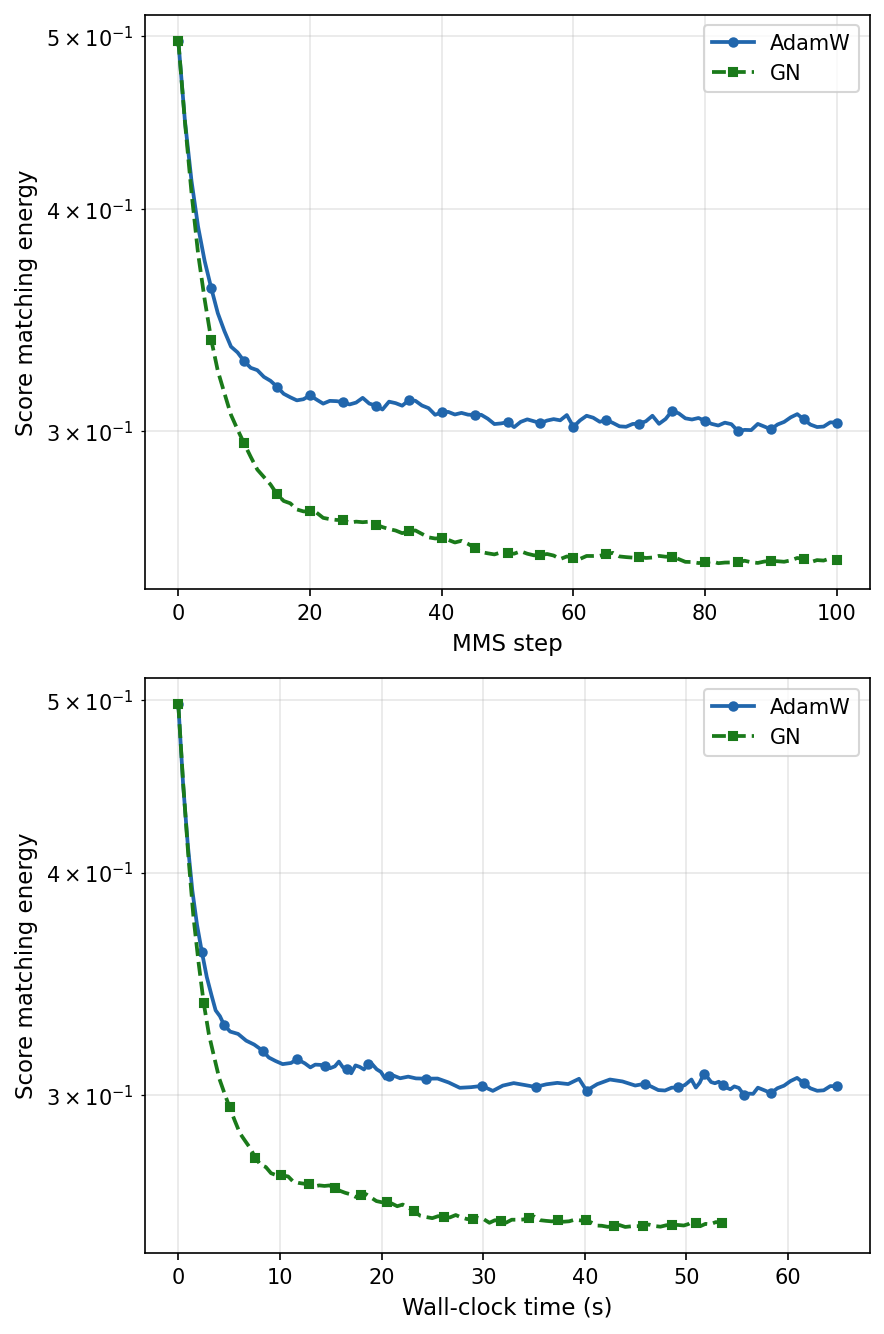}
    \end{minipage}
    \hfill
    \begin{minipage}[c]{0.65\linewidth}
        \centering
        \includegraphics[width=\linewidth]{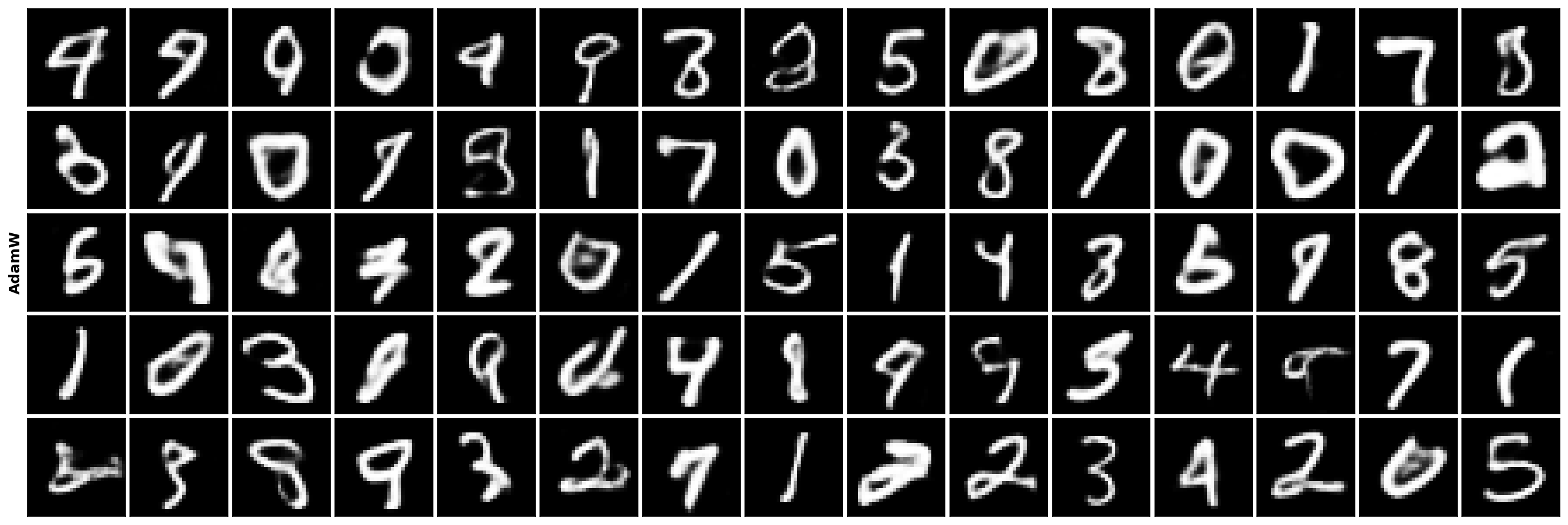}

        \vspace{0.8em}

        \includegraphics[width=\linewidth]{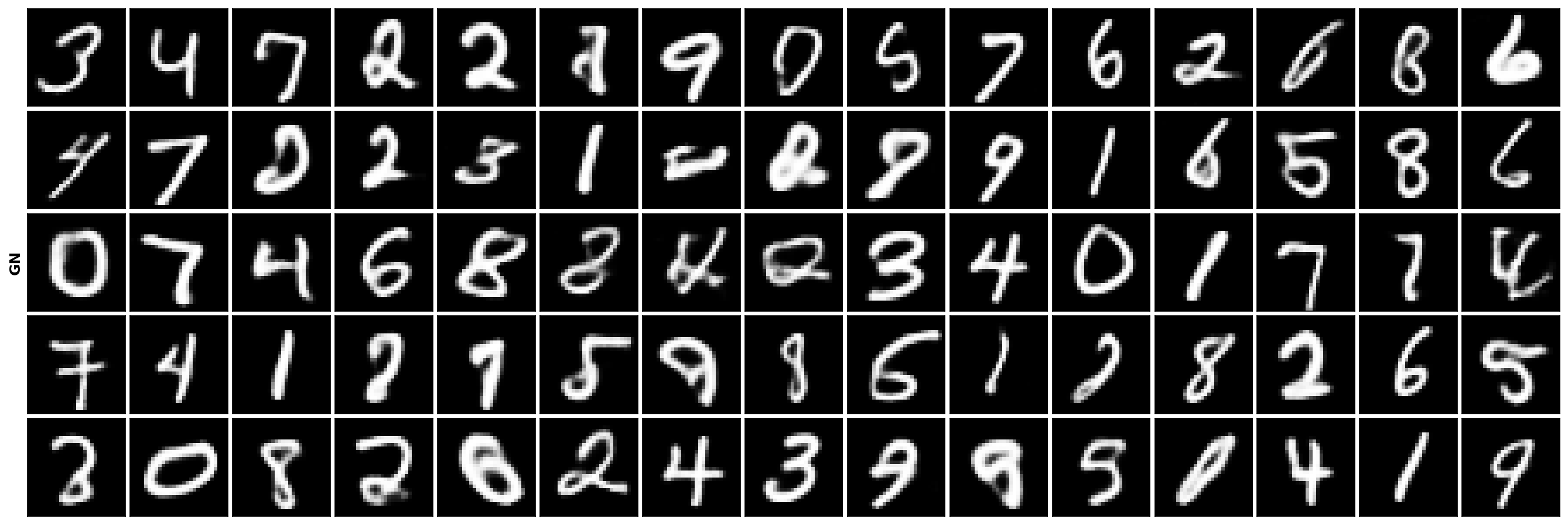}
    \end{minipage}
    \caption{Latent diffusion experiment on MNIST under the MMS formulation. Left: score-matching energy versus MMS steps and wall-clock time (measured in seconds on ASUS GX10) for Adam and Gauss--Newton (GN). Right: representative generated samples obtained after DDPM sampling in latent space and decoding through the VAE decoder.}\label{fig:ddpm}
\end{figure}

We compare AdamW and a damped Gauss--Newton-type method for solving this inner
problem. In the Gauss--Newton update, we use the Levenberg--Marquardt linear system~\eqref{eq:LM-update} with damping $\rho=10^{-3}$, solved by CG with at most $20$ iterations, and a line search as described above; we perform $5$ inner GN steps per MMS step. Figure~\ref{fig:ddpm} shows that GN attains a substantially lower final score-matching energy than AdamW (approximately $2.4\times10^{-1}$ vs.\ $3.1\times10^{-1}$) while converging faster both in terms of MMS steps and wall-clock time.
In particular, GN reaches its plateau within roughly $20$ outer MMS steps and $25$ seconds, whereas AdamW requires the full $100$ steps and over $65$ seconds to stabilize at a noticeably higher energy level.
This advantage is consistent with the findings in the regression benchmarks: the second-order curvature information exploited by GN allows each proximal subproblem to be resolved more accurately with only $5$ inner steps, whereas AdamW requires many more first-order iterations to achieve a comparable (but here still inferior) reduction in the subproblem objective.

Since this experiment is intended primarily as an optimization diagnostic, we
report the score-matching energy along MMS steps and wall-clock time. The
generated samples obtained by DDPM sampling from the learned latent score model
are included only as a qualitative sanity check. Thus, this experiment should be
interpreted as a comparison of inner solvers for a latent-space neural MMS
problem, rather than as a full benchmark for generative modeling performance.

In summary, across the numerical experiments considered here, solving the MMS subproblems
with the Gauss--Newton-type method generally yields results that are comparable to, and in
most cases better than, those obtained with Adam, AdamW, L-BFGS, Muon, Shampoo, and SOAP. From the viewpoint of computational
cost, Gauss--Newton is often competitive with or faster than Adam in our implementation. Each MMS
subproblem is solved only approximately, using $5$ Gauss--Newton steps in the regression benchmarks. By contrast, gradient-based baselines (Adam, AdamW, Muon, Shampoo, SOAP) are run for hundreds of inner iterations.
Thus, although each Gauss--Newton step is more expensive per iteration, the substantially smaller number of inner
iterations can lead to a lower overall CPU time.

\section{Conclusions}\label{sec:conclusions}

We have developed a geometric convergence theory for neural network optimization
within the minimizing movement scheme framework.
Our contributions are threefold.

First, we showed that by reformulating each neural MMS step as an increment
optimization problem, the feasible set naturally carries the structure of a
boundaryless smooth embedded Riemannian submanifold of the ambient Hilbert space (using an open parameter ball as the chart domain), together with a compact inner localization image used to control the gradient-flow trajectory.
Under a locally non-degenerate Jacobian condition together with a strict interior-localization condition, we established that the reached sublevel set of the increment objective lies in a strongly geodesically convex normal neighborhood of the origin, and we identified the manifold's tangent spaces and orthogonal projectors.

Second, we proved that a preconditioned (Gauss--Newton-type) gradient flow in
parameter space induces exactly the Riemannian gradient flow on the increment
manifold. By establishing geodesic strong convexity and smoothness of the
subproblem objective on an appropriate sublevel set, we showed that this
Riemannian gradient flow converges exponentially to the unique subproblem minimizer.

Third, we quantified how finite-time inexactness and neural approximation error
propagate through MMS iterations, proving that the inexact neural iterates
track the exact MMS iterates uniformly and converge to an $O(\delta)$-neighborhood
of the global minimizer when the energy functional is strongly convex.

Several limitations of the current framework suggest directions for future work.
The analysis assumes that the objective functional depends on the function values
in the $L^2$-sense; extending to functionals involving spatial derivatives (e.g., 
$H^1$-norms arising in PDE problems) would require controlling the Jacobian
Lipschitz constant in stronger norms, where it may scale unfavorably with spatial
discretization.
The locality assumption (Assumption~\ref{ass:intro:locality}), while shown to hold
for sufficiently small step sizes, may be restrictive for large-step or adaptive
methods in practice.
Finally, the convergence theory is formulated for the continuous-time Riemannian
gradient flow; a rigorous analysis of the discretized Gauss--Newton updates
(including Levenberg--Marquardt regularization and inexact linear solves) remains
an important open problem.

\acks{H.\ Yang and S. \ Zheng was partially funded by NSF under awards DMS-2244988, IIS-2520978, GEO/RISE-5239902, the Office of Naval Research Award N00014-23-1-2007, DOE (ASCR) Award DE-SC0026052, and the DARPA D24AP00325-00. Y.~Wang is partially supported by NSF DMS-2153029 and DMS-2410740.
}

\newpage
\appendix
\section{Deferred Proofs from the Main Text}\label{app:deferred-proofs}
\subsection{Proof of Proposition~\ref{prop:manifold}}\label{app:proof-manifold}
\begin{proof}
Let $B := B(\tilde w, r_{\tilde w})$, $\overline B := \{w:\|w-\tilde w\|_2\le r_{\tilde w}\}$, and
\[
\Phi:\overline B \to \mathcal{H}, \qquad \Phi(w) := u_\NN(w) - u_\NN(\tilde w),
\]
so that $\calM(u_\NN(\tilde w)) = \Phi(B)$ and $\calK(u_\NN(\tilde w)) = \{\Phi(w) : \|w-\tilde w\|_2 \le r^{\rm in}_{\tilde w}\}$ by~\eqref{eq:Mn}--\eqref{eq:Kn}.

Because $u_\NN$ is smooth, $\Phi$ is smooth, and $\D\Phi(w) = \D u_\NN(w)$ has full rank $p$ on $\overline B$ by Lemma~\ref{lem:gram-lower-bound} (the bound extends to the closure by continuity). Hence $\Phi$ is a smooth immersion on $\overline B$.

For any $w,w'\in\overline B$ and $\gamma(t) := tw + (1-t)w'$, the fundamental theorem of calculus and the $L$-Lipschitz continuity of $\D u_\NN$ give
\begin{equation}\label{eq:bi-lip-lower}
\|u_\NN(w)-u_\NN(w')\|_\calH \;\ge\; (2\lambda_{\tilde w} - L r_{\tilde w})\|w-w'\|_2 \;\ge\; \lambda_{\tilde w}\|w-w'\|_2,
\end{equation}
where the last step uses $r_{\tilde w}\le \lambda_{\tilde w}/L$ from~\eqref{eq:rwdef}. So $\Phi$ is injective on $\overline B$, and its inverse is Lipschitz with constant $1/\lambda_{\tilde w}$. Therefore $\Phi|_B$ is a homeomorphism from $B$ onto $\Phi(B) = \calM(u_\NN(\tilde w))$ with the subspace topology, and combined with the immersion property is a smooth embedding of the open ball $B$ \citep[Prop.~5.30]{Lee2003}. This makes $\calM(u_\NN(\tilde w))$ a $p$-dimensional boundaryless smooth embedded submanifold of $\calH$.

Finally, $\calK(u_\NN(\tilde w))$ is the continuous image of the compact set $\{w : \|w-\tilde w\|_2 \le r^{\rm in}_{\tilde w}\}\subset B$, hence compact in $\calM(u_\NN(\tilde w))$.
\end{proof}

\subsection{Proof of Proposition~\ref{prop:tangent_metric}}\label{app:proof-tangent}
\begin{proof}
The result follows directly from the definitions of the tangent space of an embedded submanifold and the induced Riemannian metric; see, e.g., \citep[Chapter~3]{absil2008optimization}. For completeness, we include a short proof below.

Let $\Phi:B(\tilde w,r_{\tilde w})\to\mathcal H$ be the embedding
\[
\Phi(w):=u_\NN(w)-u_\NN(\tilde w),
\qquad \mathcal M(u_\NN(\tilde w))=\Phi(B(\tilde w,r_{\tilde w})).
\]

By the definition of the tangent space of an embedded submanifold,
for $h=\Phi(w)\in\mathcal M(u_\NN(\tilde w))$ we have
\[
T_h\mathcal M(u_\NN(\tilde w))
=\Big\{\gamma'(0) : \gamma:(-\varepsilon,\varepsilon)\to \mathcal M(u_\NN(\tilde w))
\text{ smooth with }\gamma(0)=h\Big\}.
\]
Since $\Phi$ is a local diffeomorphism onto its image, any such curve can be written as
$\gamma(t)=\Phi(\eta(t))$ for a smooth $\eta(t)$ with $\eta(0)=w$.
Therefore, by the chain rule,
\[
\gamma'(0)=\D \Phi(w) \eta'(0)\in \mathrm{Im}(\D \Phi(w)).
\]
Conversely, for any $v\in\mathbb R^p$, taking $\eta(t)=w+tv$ yields a smooth curve
$\gamma(t)=\Phi(w+tv)$ in $\mathcal M(u_\NN(\tilde w))$ with $\gamma(0)=h$ and
$\gamma'(0)=\D\Phi(w)v$. Hence
\[
T_h\mathcal M(u_\NN(\tilde w))=\mathrm{Im}(\D\Phi(w)) =\mathrm{Im}(\D u_\NN (w)).
\]

Finally, since $\mathcal M(u_\NN(\tilde w))$ is an embedded submanifold of the Hilbert space
$\mathcal H$, the restriction of the ambient inner product to each tangent space defines a
Riemannian metric, i.e.,
\[
\langle \xi,\zeta\rangle_{\mathcal M(u_\NN(\tilde w))}:=\langle \xi,\zeta\rangle_{\mathcal H},
\qquad \forall \xi,\zeta\in T_h\mathcal M(u_\NN(\tilde w)).
\]
\end{proof}

\subsection{Proof of Proposition~\ref{prop:projection}}\label{app:proof-projection}
\begin{proof}
Let $A:=\D u_\NN(w):\mathbb R^p\to\mathcal H$. By Proposition~\ref{prop:tangent_metric},
\[
T_h\mathcal M(u_\NN(\tilde w))=\mathrm{Im}(A).
\]
Fix $z\in\mathcal H$. Any $y\in \mathrm{Im}(A)$ can be written as $y=A\xi$ for some $\xi\in\mathbb R^p$.
Hence, the projection problem is equivalent to the least-squares problem
\[
\min_{y\in \mathrm{Im}(A)}\|y-z\|_{\mathcal H}^2
\quad\Longleftrightarrow\quad
\min_{\xi\in\mathbb R^p}\|A\xi-z\|_{\mathcal H}^2.
\]
Since $A$ has full column rank (by the nondegeneracy assumption in Definition~\ref{def:non-degen-jacobian}),
the normal equation has the unique solution
\[
\xi^*=(A^*A)^{-1}A^*z.
\]
Therefore, the unique minimizer in $\mathrm{Im}(A)$ is
\[
y^*=A\xi^*=A(A^*A)^{-1}A^*z =: \P_h z,
\]
which proves the stated formula.

Moreover, $\P_h$ is a projection onto $\mathrm{Im}(A)$: for any $y=A\xi\in \mathrm{Im}(A)$,
\[
\P_h y = A(A^*A)^{-1}A^*A\xi = A\xi = y,
\]
hence $\mathrm{Im}(\P_h)=\mathrm{Im}(A)$ and $\P_h^2=\P_h$. This completes the proof of part (a).

For part (b), the Riemannian gradient of $g(\cdot; v)$ at $h \in \mathcal{M}( u_\NN(\tilde w))$
is the unique vector $\operatorname{grad}_{\mathcal{M}( u_\NN(\tilde w))} g(h; v)\in T_h\mathcal{M}( u_\NN(\tilde w))$
satisfying (see \citep[Section~3.6.1]{ambrosio2005gradient})
\[
\langle\operatorname{grad}_{\mathcal{M}( u_\NN(\tilde w))} g(h; v),\xi\rangle_{\mathcal{M}( u_\NN(\tilde w))}
 = \langle\nabla g(h; v),\xi\rangle_{\mathcal H},
 \qquad \forall \xi\in T_h\mathcal{M}( u_\NN(\tilde w)),
\]
and therefore
\[
\operatorname{grad}_{\mathcal{M}( u_\NN(\tilde w))} g(h; v)
   =\P_h \left( \nabla g(h; v) \right).
\]
\end{proof}

\subsection{Proof of Lemma~\ref{lem:DP_bound}}\label{app:proof-of-lemmas}

\begin{proof}
Set $A:=\D u_\NN(w):\R^p\to\mathcal H$ and denote by
\[
\P := \P_h = A(A^*A)^{-1}A^*
\]
the orthogonal projector onto $\mathrm{Im}(A)=T_h\mathcal M(u_\NN(\tilde w))$ (Proposition~\ref{prop:projection}).
Let $\xi\in T_h\mathcal M(u_\NN(\tilde w))$. Since $T_h\mathcal M=\mathrm{Im}(A)$ and $A$ has full
column rank, there exists a \emph{unique} $v\in\R^p$ such that
\begin{equation}\label{eq:xi_rep}
\xi=A v.
\end{equation}
Consider the perturbation $w(t)=w+t v$ and set $A(t):=\D u_\NN(w(t))$ and
$\P(t):=A(t)\big(A(t)^*A(t)\big)^{-1}A(t)^*$.
Then $\P(0)=\P$ and $\D \P_{(h)}[\xi]$ coincides with $\P'(0)$ under the identification
$\xi=\gamma'(0)$ with $\gamma(t)=\Phi(w(t))$.

Since $\P(t)$ is the orthogonal projector onto $\mathrm{Im}(A(t))$, we have
\begin{equation}\label{eq:proj_id1}
(I-\P(t))A(t)=0
\qquad\text{and}\qquad
A(t)^*(I-\P(t))=0
\quad\text{for all }t.
\end{equation}
Differentiating \eqref{eq:proj_id1} at $t=0$ gives
\[
-\P'(0)A + (I-\P)A'(0)=0,
\qquad
A'(0)^*(I-\P) - A^*\P'(0)=0.
\]
Let $A^+=(A^*A)^{-1}A^*$ be the (left) pseudoinverse of $A$ (well-defined since $A$ has full
column rank). Right-multiplying the first identity by $A^+$ and left-multiplying the second
identity by $(A^+)^*$ yield
\[
\P'(0)\P = (I-\P)A'(0)A^+,
\qquad
\P \P'(0) = (A^+)^*A'(0)^*(I-\P).
\]
Adding these two identities and using $\P'(0)=\P'(0)\P + \P \P'(0)$ (because $\P$ is a projector)
gives the standard formula
\begin{equation*}
\P'(0) = (I-\P) A'(0) A^+ + (A^+)^* A'(0)^* (I-\P).
\end{equation*}
Taking operator norms and using $\|I-\P\|=1$ we obtain
\begin{equation}\label{eq:dP_basic_bound}
\|\P'(0)\|
\le 2 \|A^+\| \|A'(0)\|.
\end{equation}

By the $\lambda_{\tilde w}$-non-degeneracy Jacobian of $u_\NN$  (see Definition~\ref{def:non-degen-jacobian}) on $B(\tilde w,r_{\tilde w})$,
\[
A^*A \succeq \lambda_{\tilde w}^2 I
\quad\Longrightarrow\quad
\|A^+\|=\|(A^*A)^{-1}A^*\|=\frac{1}{\sigma_{\min}(A)}\le \frac{1}{\lambda_{\tilde w}}.
\]
Moreover, since $\D u_\NN$ is $L$-Lipschitz, we have for all $t$,
\[
\|A(t)-A(0)\| = \|\D u_\NN(w+tv)-\D u_\NN(w)\| \le L|t| \|v\|_2,
\]
hence $\|A'(0)\|\le L\|v\|_2$.

Combining with \eqref{eq:dP_basic_bound} yields
\begin{equation}\label{eq:dP_v_bound}
\|\P'(0)\|\le \frac{2L}{\lambda_{\tilde w}} \|v\|_2.
\end{equation}

Now \eqref{eq:xi_rep} and $\sigma_{\min}(A)\ge \lambda_{\tilde w}$ gives
\[
\|\xi\|_{\mathcal H}=\|A v\|_{\mathcal H}\ge \lambda_{\tilde w} \|v\|_2,
\qquad\text{so that}\qquad
\|v\|_2\le \frac{1}{\lambda_{\tilde w}}\|\xi\|_{\mathcal H}.
\]
Substituting this into \eqref{eq:dP_v_bound} gives
\[
\|\P'(0)\|
\le \frac{2L}{\lambda_{\tilde w}}\cdot \frac{1}{\lambda_{\tilde w}}\|\xi\|_{\mathcal H}
= \frac{2L}{\lambda_{\tilde w}^2} \|\xi\|_{\mathcal H}.
\]
This proves the claimed differential bound.
\end{proof}

\subsection{Proof of Lemma~\ref{lem:convex-radius}}\label{app:proof-convex-radius}
\begin{proof}
Fix $\tilde w \in \R^p$, recall the open ball $B(\tilde w,r_{\tilde w})$ defined in~\eqref{eq:B}--\eqref{eq:rwdef} and the inner radius $r^{\rm in}_{\tilde w} = r_{\tilde w}/2$, and let $\Phi(w) := u_\NN(w)-u_\NN(\tilde w)$. By Proposition~\ref{prop:manifold}, $\calM(u_\NN(\tilde w)) = \Phi(B(\tilde w,r_{\tilde w}))$ is a $p$-dimensional boundaryless smooth embedded Riemannian submanifold of $\calH$, and $\calK(u_\NN(\tilde w))$ is compact.

\emph{Step 1: Sectional curvature upper bound via the second fundamental form.}
For $h=\Phi(w)\in \mathcal M(u_\NN(\tilde w))$, let $\P_h$ be the orthogonal projector onto $T_h\mathcal M(u_\NN(\tilde w))$. For a nonzero tangent direction $\xi\in T_h\mathcal M(u_\NN(\tilde w))$, the quantity
\[
\mathcal L(h,\xi) := \left\|\left.\tfrac{\ud}{\ud t}\P_{\beta(t)}\right|_{t=0}\right\|,
\]
where $\beta:(-\epsilon,\epsilon)\to \mathcal M(u_\NN(\tilde w))$ is any smooth curve with $\beta(0)=h$ and $\beta'(0)=\xi$, equals the operator norm $\|\mathrm{II}_h(\xi,\cdot)\|_{\mathrm{op}}$ of the second fundamental form in direction $\xi$ \citep{docarmo1992riemannian}. Lemma~\ref{lem:DP_bound} gives the uniform bound
\begin{equation}\label{eq:LambdaBound}
\sup_{h\in\mathcal M(u_\NN(\tilde w))}\,
\sup_{\substack{\xi \in T_h\mathcal M(u_\NN(\tilde w))\\\|\xi\|_\calH=1}}\!
\|\mathrm{II}_h(\xi,\cdot)\|_{\mathrm{op}} \;\le\; \Lambda := \frac{2L}{\lambda_{\tilde w}^2}.
\end{equation}
Because the ambient Hilbert space $\mathcal H$ is flat (sectional curvature $0$), the Gauss equation \citep[Theorem~1]{alexander2006gauss} implies that the intrinsic sectional curvature of $\calM(u_\NN(\tilde w))$ is bounded above by $0 + \Lambda^2 = \Lambda^2$ in the Alexandrov sense.

\emph{Step 2: Strongly geodesically convex normal neighborhood.}
By the convexity-radius theorem \citep[Theorem~5.14]{cheeger2008comparison}: for a smooth Riemannian manifold with sectional curvature $\le \Lambda^2$, the geodesic ball $B_\calM(p,r)$ is strongly geodesically convex provided
\[
r \;\le\; \min\Bigl\{\tfrac{\pi}{2\Lambda},\ \tfrac{1}{2}\mathrm{inj}_\calM(p)\Bigr\}.
\]
By the hypothesis~\eqref{eq:inj-hypothesis}, $\mathrm{inj}_{\calM(u_\NN(\tilde w))}(0)\ge \pi/\Lambda$, so the minimum equals $\pi/(2\Lambda)$. Therefore the geodesic ball
\[
    \calG(u_\NN(\tilde w)) := B_{\calM(u_\NN(\tilde w))}\bigl(0,\,\pi/(2\Lambda)\bigr)
\]
defined in~\eqref{eq:Gdef} is a strongly geodesically convex normal neighborhood of $0$, proving part~(a).

\emph{Step 3: $\calK(u_\NN(\tilde w)) \subset \calG(u_\NN(\tilde w))$.}
Bound the geodesic distance from $0$ to any $h = \Phi(w) \in \calK(u_\NN(\tilde w))$, i.e., for $w \in B(\tilde w, r^{\rm in}_{\tilde w})$. Consider the parameter-space segment $w(t) := (1-t)\tilde w + tw$, $t \in [0,1]$, and its $\calM$-image $\gamma(t) := \Phi(w(t))$. Its length in $\calM(u_\NN(\tilde w))$ is
\begin{align*}
    \mathrm{Length}_{\calM}(\gamma)
    &= \int_0^1 \|\D u_\NN(w(t))(w-\tilde w)\|_\calH \, dt \\
    &\le \int_0^1 \bigl(\|\D u_\NN(\tilde w)\|_{\mathrm{op}} + L\,t\|w-\tilde w\|_2\bigr) \|w-\tilde w\|_2 \, dt \\
    &\le \bigl(\|\D u_\NN(\tilde w)\|_{\mathrm{op}} + L\,r_{\tilde w}\bigr) \|w-\tilde w\|_2,
\end{align*}
using $\|w(t)-\tilde w\| \le \|w-\tilde w\| \le r^{\rm in}_{\tilde w} \le r_{\tilde w}$ and $L$-Lipschitz $\D u_\NN$ on $B(\tilde w, r_{\tilde w})$. With $\|w-\tilde w\|_2 \le r^{\rm in}_{\tilde w} = r_{\tilde w}/2$,
\[
    d_{\calM}(h, 0) \le \mathrm{Length}_\calM(\gamma) \le \bigl(\|\D u_\NN(\tilde w)\|_{\mathrm{op}} + L\,r_{\tilde w}\bigr)\,\tfrac{r_{\tilde w}}{2}.
\]
We now use the choice of $r_{\tilde w}$ in~\eqref{eq:rwdef}: in particular, its second entry gives
$r_{\tilde w} \le \lambda_{\tilde w}^2/\bigl[2L\bigl(\sqrt{\|\D u_\NN(\tilde w)\|_{\mathrm{op}}^2 + \lambda_{\tilde w}^2} + \|\D u_\NN(\tilde w)\|_{\mathrm{op}}\bigr)\bigr]$, so
\[
    \bigl(\|\D u_\NN(\tilde w)\|_{\mathrm{op}} + L\,r_{\tilde w}\bigr)\,r_{\tilde w}
    \;\le\; \bigl(\|\D u_\NN(\tilde w)\|_{\mathrm{op}} + \lambda_{\tilde w}\bigr) \cdot \frac{\lambda_{\tilde w}^2}{2L(\sqrt{\|\D u_\NN(\tilde w)\|_{\mathrm{op}}^2+\lambda_{\tilde w}^2}+\|\D u_\NN(\tilde w)\|_{\mathrm{op}})}
    \;\le\; \frac{\lambda_{\tilde w}^2}{2L},
\]
where the first inequality uses $L r_{\tilde w} \le \lambda_{\tilde w}$ (from the first entry of~\eqref{eq:rwdef}) and the second uses $\|\D u_\NN(\tilde w)\|_{\mathrm{op}} + \lambda_{\tilde w} \le \sqrt{\|\D u_\NN(\tilde w)\|_{\mathrm{op}}^2 + \lambda_{\tilde w}^2} + \|\D u_\NN(\tilde w)\|_{\mathrm{op}}$ (since $\lambda_{\tilde w} \le \sqrt{\|\D u_\NN(\tilde w)\|_{\mathrm{op}}^2 + \lambda_{\tilde w}^2}$). Therefore
\[
    d_{\calM}(h, 0) \;\le\; \frac{\lambda_{\tilde w}^2}{4L} \;=\; \frac{1}{\pi}\cdot\frac{\pi}{2\Lambda} \;<\; \frac{\pi}{2\Lambda},
\]
which equals the radius of $\calG(u_\NN(\tilde w))$ from~\eqref{eq:Gdef} up to a factor of $1/\pi$. Hence $h \in \calG(u_\NN(\tilde w))$, proving part~(b).
\end{proof}

\subsection{Proof of Lemma~\ref{lem:smooth-strongconv}}\label{app:proof-smooth-strongconv}
\begin{proof}
By Fr\'echet differentiation and chain rule,
\[
\nabla g(h;v)=\frac{1}{\tau} h + \nabla \mathcal F\!\bigl[h+v\bigr].
\]
For any $h,k\in\mathcal H$, set $x=h+v$, $y=k+v$. Then
\[
\|\nabla g(h;v)-\nabla g(k;v)\|
\le \frac{1}{\tau}\|h-k\|+\|\nabla\mathcal F[x]-\nabla\mathcal F[y]\|
\le\Big(\frac{1}{\tau}+L_{\mathcal F}\Big)\|h-k\|.
\]
Thus, $\nabla g(\cdot;v)$ is $\mu$-Lipschitz. Moreover,
\[
\begin{aligned}
\langle \nabla g(h;v)-\nabla g(k;v), h-k\rangle
&=\frac{1}{\tau}\|h-k\|^2
+\langle \nabla\mathcal F[x]-\nabla\mathcal F[y], h-k\rangle\\
&=\frac{1}{\tau}\|h-k\|^2
+\langle \nabla\mathcal F[x]-\nabla\mathcal F[y], x-y\rangle\\
&\ge \Big(\frac{1}{\tau}+m_{\calF}\Big)\|h-k\|^2,
\end{aligned}
\]
which yields $\nu$-strong convexity.
\end{proof}

\subsection{Riemannian connection and Hessian on Hilbert-embedded manifolds}\label{app:riem-hess}

The proofs of Theorems~\ref{thm:g} and~\ref{thm:RGF-conv} use the following standard decomposition of the Riemannian Hessian for an embedded submanifold of a flat ambient Hilbert space. The result is classical (see, e.g., \citealt[Chapter~3]{absil2008optimization}, \citealt[Chapter~5]{boumal2023intromanifolds}); we record it here for completeness.

\begin{lemma}[Riemannian connection and Hessian on Hilbert-embedded manifolds]\label{lem:connection}
Let $(\mathcal H,\langle\cdot,\cdot\rangle_{\mathcal H})$ be a real Hilbert space endowed with its flat metric. Let $u_\NN$ be a neural network with $L$-Lipschitz Jacobian and $\lambda_{\tilde w}$-nondegenerate Jacobian at $\tilde w$, so that $\mathcal{M}( u_\NN(\tilde w))$ as defined in \eqref{eq:Mn} is an embedded Riemannian submanifold of $\mathcal{H}$ (Proposition~\ref{prop:manifold}). Denote by $\nabla^{\mathcal H}$ the flat (ambient) connection on $\mathcal H$ and by $\nabla^{\mathcal M}$ the induced Levi--Civita connection on $\mathcal{M}( u_\NN(\tilde w))$. Then:
\begin{enumerate}
\item \textbf{Flat connection on $\mathcal H$.}
For any smooth vector field $Y:\mathcal H\to\mathcal H$ and any $\xi\in\mathcal H$,
\[
\nabla^{\mathcal H}_{\xi_{u}}Y = \mathrm D Y(u)[\xi],
\]
that is, the Levi--Civita connection on $\mathcal H$ is simply the Fr\'echet differential of~$Y$.

\item \textbf{Induced connection on $\mathcal{M}( u_\NN(\tilde w))$.}
For smooth vector fields $X,Y$ tangent to $\mathcal{M}( u_\NN(\tilde w))$, the induced connection is
\begin{equation}\label{eq:induced-connection}
\nabla^{\mathcal M}_{X_h} Y = \P_h (  \mathrm D Y(h)[X]),
\end{equation}
where $h\in \mathcal{M}( u_\NN(\tilde w))$.

\item \textbf{Riemannian Hessian.}
For the smooth function $g(\cdot;v):\mathcal{M}( u_\NN(\tilde w))\to\R$ defined in \eqref{eq:intro:g_def}, its Riemannian Hessian at $h \in \mathcal{M}( u_\NN(\tilde w))$ applied to $\xi_h \in T_h\mathcal{M}( u_\NN(\tilde w))$ is
\begin{equation}\label{eq:RiemHess}
\Hess g(h; v)[\xi_h]
 = \P_h \left(  \mathrm D \grad g(h; v)[\xi_h] \right)
 = \P_h\left(\nabla^2 g(h; v)[\xi_h] +\left.\frac{\ud}{\ud t}\right\vert_{t=0} \P_{h(t)} \left(\nabla g(h; v)\right)\right),
\end{equation}
where $h(0) = h$ and $h'(0) = \xi_h$.
Consequently, the quadratic form of the Riemannian Hessian satisfies
\begin{equation}\label{eq:RiemHessForm}
\langle \Hess g(h; v)[\xi_h],\xi_h\rangle_{\mathcal H}
 = \langle\nabla^2 g(h; v)[\xi_h],\xi_h\rangle_{\mathcal H}
   + \langle\mathrm D \P_{(h)}[\xi_h] \nabla g(h; v) , \xi_h \rangle_{\mathcal H}.
\end{equation}
\end{enumerate}
\end{lemma}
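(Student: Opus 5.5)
The plan is to verify the three items in order, working throughout in the flat ambient space $\calH$ with $\calM:=\calM(u_\NN(\tilde w))$ realized as the embedded image $\Phi(B(\tilde w,r_{\tilde w}))$ (Proposition~\ref{prop:manifold}).

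\textbf{Item 1.} This is the standard fact that in a Hilbert space with constant inner product the Christoffel terms vanish. Take the Koszul formula, or directly note that $\xi\mapsto \D Y(u)[\xi]$ is torsion-free (symmetry of second Fréchet derivatives) and metric-compatible. Metric compatibility follows because differentiating $\langle Y,Z\rangle_\calH$ along $\xi$ gives $\langle \D Y[\xi],Z\rangle+\langle Y,\D Z[\xi]\rangle$. Uniqueness of the Levi--Civita connection then closes item 1.

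\textbf{Item 2.} Extend tangent fields $X,Y$ on $\calM$ smoothly to a neighborhood in $\calH$; locally one can push forward through the chart $\Phi$ and extend. Then define $\nabla^{\calM}_X Y:=\P_h(\D\bar Y(h)[X])$.

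First I check that this is independent of the extension. Since $X$ is tangent, $\D\bar Y(h)[X]$ only involves values of $\bar Y$ along curves in $\calM$. Next I show it is torsion-free: $\nabla_XY-\nabla_YX=\P_h[X,Y]=[X,Y]$, because the bracket of tangent fields is tangent. Finally I show metric compatibility: differentiate $\langle Y,Z\rangle_\calH$ along $X$, and use $\P_h Z=Z$ and the self-adjointness of $\P_h$ (Proposition~\ref{prop:projection}) to drop the projector. Uniqueness again identifies this as the Levi--Civita connection of the induced metric (Proposition~\ref{prop:tangent_metric}).

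\textbf{Item 3.} By definition, $\Hess g[\xi]=\nabla^{\calM}_\xi\grad g$. With $\grad g(h)=\P_h\nabla g(h;v)$ (Proposition~\ref{prop:projection}(b)), item 2 gives the first equality in \eqref{eq:RiemHess}. For the second equality, differentiate the product $t\mapsto \P_{h(t)}\nabla g(h(t);v)$ along a curve $h(t)$ in $\calM$ with $h'(0)=\xi_h$. The product rule yields $\P_h\nabla^2 g[\xi_h]+\D\P_{(h)}[\xi_h]\nabla g$, and applying $\P_h$ gives \eqref{eq:RiemHess}; the outer $\P_h$ and the $\P_h$ inside merge by idempotence.

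For \eqref{eq:RiemHessForm}, pair with $\xi_h$ and use $\P_h\xi_h=\xi_h$ together with self-adjointness to remove the outer projector.

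The only delicate point is regularity. $\P_h$ is $C^1$ along $\calM$ because $u_\NN\in C^2$, so $A(t)=\D u_\NN(w(t))$ is $C^1$, and $(A^*A)^{-1}$ is $C^1$ by the Gram lower bound of Lemma~\ref{lem:gram-lower-bound}. This makes the derivative $\D\P_{(h)}[\xi]$ exist; it is the same object bounded in Lemma~\ref{lem:DP_bound}, which is where that bound enters downstream.
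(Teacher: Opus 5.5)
Your proposal is correct and follows the same three-step route as the paper's sketched proof. The flat connection is the Fr\'echet derivative. The Levi--Civita connection of $\calM$ is the tangential projection of the ambient derivative. The Hessian formula follows from differentiating $\grad g=\P_h\nabla g$ by the product rule and projecting, and $\P_h\xi_h=\xi_h$ together with self-adjointness of $\P_h$ gives the quadratic form. Your version simply carries out the checks that the paper delegates to Absil et al.: torsion-freeness, metric compatibility, independence of the extension, and $C^1$ regularity of $h\mapsto\P_h$.
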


\begin{proof}
These are standard results; see, e.g., \citet[Chapter~3]{absil2008optimization}. We outline the key steps. (i) Since the metric on $\mathcal H$ is constant, the Christoffel symbols vanish, so $\nabla^{\mathcal H}$ coincides with the Fr\'echet derivative. (ii) For an embedded Riemannian submanifold, projecting the ambient connection onto the tangent space yields~\eqref{eq:induced-connection}. (iii) Differentiating $\grad g(h; v)= \P_h\nabla g(h; v)$ and projecting again gives~\eqref{eq:RiemHess}--\eqref{eq:RiemHessForm}.
\end{proof}

\subsection{Proof of Theorem~\ref{thm:g}}\label{app:proof-thm-g}
\begin{proof}
Since $g(\cdot;v)$ has $\mu$-Lipschitz continuous
gradient and is $\nu$-strongly convex, for any $q\in\mathcal H$,
\begin{equation}\label{eq:ambient-Hess}
\nu \|q\|_{\mathcal H}^2 \le \langle\nabla^2 g(h;v)[q],q\rangle_{\mathcal H}
 \le \mu \|q\|_{\mathcal H}^2 .
\end{equation}

Let $h\in\mathcal{M}:=\mathcal{M}(u_\NN(\tilde w))$ and $\Hess g(h;v)$ denote the Riemannian Hessian
of $g(\cdot;v)|_{\mathcal M}$ at $h$.
For $\xi_h \in T_h\mathcal M$, \eqref{eq:RiemHessForm} yields
\begin{equation}\label{eq:gHess-decomp}
\langle \Hess g(h;v)[\xi_h],\xi_h\rangle_{\mathcal H}
 = \langle\nabla^2 g(h;v)[\xi_h],\xi_h\rangle_{\mathcal H}
   + \langle\D \P_{(h)}[\xi_h] \nabla g(h;v) , \xi_h \rangle_{\mathcal H}.
\end{equation}
By \eqref{eq:ambient-Hess}, the first term in \eqref{eq:gHess-decomp} is bounded as
\begin{equation}\label{eq:term1_bounds}
\nu\|\xi_h\|_{\mathcal H}^2
\le \langle\nabla^2 g(h;v)[\xi_h],\xi_h\rangle_{\mathcal H}
\le \mu\|\xi_h\|_{\mathcal H}^2.
\end{equation}

By Lemma~\ref{lem:DP_bound}, we have for all $\xi_h\in T_h\mathcal M$, the differential bound in the second term is
\begin{equation}\label{eq:DP_bound_recall}
\|\D \P_{(h)}[\xi_h]\|
 \le  \frac{2L}{\lambda_{\tilde w}^2} \|\xi_h\|_{\mathcal H}.
\end{equation}

Let $h^*:=\arg\min_{q\in\mathcal H} g(q;v)$ (unique by $\nu$-strong convexity). Then
$\nabla g(h^*;v)=0$, and the $\mu$-Lipschitz continuity of $\nabla g(\cdot;v)$ implies
\begin{equation}\label{eq:grad_Lip_mu}
\|\nabla g(h;v)\|
=\|\nabla g(h;v)-\nabla g(h^*;v)\|
\le \mu \|h-h^*\|_{\mathcal H}.
\end{equation}

For any $h\in\mathcal M$, we have $\|h-h^*\|\le \|h\|+\|h^*\|$, hence
\begin{equation}\label{eq:grad_bound_on_M}
\|\nabla g(h;v)\|\le \mu(\|h\|_{\mathcal H}+\|h^*\|_{\mathcal H}).
\end{equation}
Moreover, for $h=\Phi(w)=u_\NN(w)-u_\NN(\tilde w)$ with $w\in B(\tilde w,r_{\tilde w})$, the Lipschitz continuity of $u_\NN$ yields
\[
\|u_\NN(w)-u_\NN(\tilde w)\|_{\mathcal H}
\le \Lip_{u_\NN}\nm{w - \tilde w} \le \Lip_{u_\NN} r_{\tilde w}.
\]
Hence
\begin{equation}\label{eq:h_norm_bound}
\|h\|_{\mathcal H}\le \Lip_{u_\NN} r_{\tilde w},
\end{equation}
Combining \eqref{eq:grad_bound_on_M}--\eqref{eq:h_norm_bound} gives, for all $h\in\mathcal M$,
\begin{equation}\label{eq:grad_uniform_M}
\|\nabla g(h;v)\|\le \mu\big( \Lip_{u_\NN} r_{\tilde w}  +\|h^*\|_{\mathcal H}\big).
\end{equation}

By Cauchy--Schwarz, \eqref{eq:DP_bound_recall}, and \eqref{eq:grad_uniform_M}, we obtain for any $\xi_h\in T_h\mathcal M$,
\begin{equation}\label{eq:term2_bound_general}
\big|\langle\D \P_{(h)}[\xi_h] \nabla g(h;v) , \xi_h \rangle_{\mathcal H}\big|
\le \frac{2L}{\lambda_{\tilde w}^2} \mu\big( \Lip_{u_\NN}  r_{\tilde w} +\|h^*\|_{\mathcal H}\big)\|\xi_h\|_{\mathcal H}^2.
\end{equation}

\medskip
\noindent\textbf{(a) Geodesic convexity of $g$ on $\calG(u_\NN(\tilde w))$.}
By Lemma~\ref{lem:convex-radius}(a), $\calG(u_\NN(\tilde w))$ is a strongly geodesically convex normal neighborhood of $0$ in $\calM(u_\NN(\tilde w))$, so any two of its points are joined by a unique minimizing geodesic lying inside $\calG(u_\NN(\tilde w))$. It therefore suffices to show $\langle \Hess g(h;v)[\xi_h], \xi_h\rangle_\calH \ge 0$ for all $h \in \calG(u_\NN(\tilde w))$ and $\xi_h \in T_h\calM(u_\NN(\tilde w))$. Combining \eqref{eq:gHess-decomp}, \eqref{eq:term1_bounds},
\eqref{eq:term2_bound_general}, and \eqref{eq:grad_uniform_M} yields, for any $h \in \calM(u_\NN(\tilde w)) \supset \calG(u_\NN(\tilde w))$,
\[
\langle \Hess g(h;v)[\xi_h],\xi_h\rangle_{\mathcal H}
\ge \Big(\nu-\frac{2L}{\lambda_{\tilde w}^2}\mu\big( \Lip_{u_\NN}  r_{\tilde w} +\|h^*\|_{\mathcal H}\big)\Big)\|\xi_h\|^2.
\]
The radius $r_{\tilde w}$ is chosen in \eqref{eq:rwdef} such that
\begin{equation}\label{eq:R_choice}
\frac{2L\mu}{\lambda_{\tilde w}^2} \Lip _{u_\NN} r _{\tilde w}  \le  \frac{\nu}{2}.
\end{equation}
Moreover, by Lemma~\ref{lem:g-properties}(a), $h^*=h^*(v)$, hence
\begin{equation}\label{eq:hstar-cond}
\frac{2L\mu}{\lambda_{\tilde w}^{2}}\|h^*\|_\calH
\;=\; \frac{\nu}{2}\cdot\frac{4L\kappa}{\lambda_{\tilde w}^{2}}\|h^*(v)\|_\calH .
\end{equation}
Under the data condition~\eqref{eq:data-condition}, $\frac{4L\kappa}{\lambda_{\tilde w}^{2}}\|h^*(v)\|_\calH\le 1$, and therefore $\frac{2L\mu}{\lambda_{\tilde w}^{2}}\|h^*\|_\calH\le \nu/2$.
Therefore, under \eqref{eq:R_choice} and \eqref{eq:hstar-cond},
\[
\langle \Hess g(h;v)[\xi_h],\xi_h\rangle_{\mathcal H}\ge 0,
\qquad \forall h\in\calG(u_\NN(\tilde w)),  \forall \xi_h\in T_h\mathcal M(u_\NN(\tilde w)).
\]
This non-negativity of the Riemannian Hessian on $\calG(u_\NN(\tilde w))$, together with the unique geodesics provided by Lemma~\ref{lem:convex-radius}(a), implies that $g(\cdot;v)|_{\calG(u_\NN(\tilde w))}$ is geodesically convex.

\medskip
\noindent\textbf{(b) Geodesic convexity of the sublevel set $S$.}
By hypothesis~(H2) and Lemma~\ref{lem:convex-radius}(b), $S \subset \calK(u_\NN(\tilde w)) \subset \calG(u_\NN(\tilde w))$. Since $g(\cdot;v)|_{\calG(u_\NN(\tilde w))}$ is geodesically convex from~(a), sublevel sets of $g$ inside $\calG(u_\NN(\tilde w))$ are themselves geodesically convex (see \citep[Proposition~11.8]{boumal2023intromanifolds}). In particular, $S=\{h\in\calG(u_\NN(\tilde w)) : g(h;v)\le g(0;v)\}$ is geodesically convex.

\medskip
\noindent\textbf{(c) Geodesic strong convexity of $g$ on $S$.}
Since $g(\cdot;v)$ is $\nu$-strongly convex on $\mathcal H$, we have
\[
g(h;v)-g(h^*;v)\ge \frac{\nu}{2}\|h-h^*\|_{\mathcal H}^2,
\]
and since $0 \leq g(h^*;v)\le g(h;v)\le g(0;v)$ for $h\in S$, it follows that
\begin{equation}\label{eq:dist_to_star_on_S}
\|h-h^*\|_{\mathcal H}\le \sqrt{\frac{2g(0;v)}{\nu}},
\qquad \forall h\in S.
\end{equation}
Combining \eqref{eq:grad_Lip_mu} and \eqref{eq:dist_to_star_on_S} yields the uniform bound
\begin{equation}\label{eq:grad_uniform_S}
\|\nabla g(h;v)\|\le \mu\sqrt{\frac{2g(0;v)}{\nu}},
\qquad \forall h\in S.
\end{equation}

For $h\in S$, using \eqref{eq:gHess-decomp}, \eqref{eq:term1_bounds}, and the projector-derivative bound~\eqref{eq:term2_bound_general} with the $S$-restricted gradient bound~\eqref{eq:grad_uniform_S} (i.e., $\|\nabla g(h;v)\|\le\mu\sqrt{2g(0;v)/\nu}$ for $h\in S$) substituted for the $\calM$-wide bound, gives
\[
\langle \Hess g(h;v)[\xi_h],\xi_h\rangle_{\mathcal H}
\ge \Big(\nu-\frac{2L}{\lambda_{\tilde w}^2}\mu\sqrt{\frac{2g(0;v)}{\nu}}\Big)\|\xi_h\|^2.
\]
By Lemma~\ref{lem:g-properties}(c), $\sqrt{2g(0;v)/\nu}=K(v)$, hence
\begin{equation}\label{eq:K-cond}
\frac{2L\mu}{\lambda_{\tilde w}^{2}}\sqrt{\frac{2g(0;v)}{\nu}}
\;=\; \frac{2L\mu}{\lambda_{\tilde w}^{2}}K(v)
\;=\; \frac{\nu}{2}\cdot\frac{4L\kappa}{\lambda_{\tilde w}^{2}}K(v) .
\end{equation}
Under the data condition~\eqref{eq:data-condition}, $\frac{4L\kappa}{\lambda_{\tilde w}^{2}}K(v)\le 1$, and therefore $\frac{2L\mu}{\lambda_{\tilde w}^{2}}\sqrt{2g(0;v)/\nu}\le \nu/2$, hence
\[
\langle \Hess g(h;v)[\xi_h],\xi_h\rangle_{\mathcal H}
\ge \frac{\nu}{2} \|\xi_h\|_{\mathcal H}^2,
\qquad \forall h\in S,  \forall \xi_h\in T_h\mathcal M.
\]
Since $S$ is geodesically convex by (b), this shows that $g(\cdot;v)|_S$ is geodesically
$\nu/2$-strongly convex.

$S=\{h\in\mathcal M:  g(h;v)\le g(0;v)\}$ is nonempty since $0 \in S$, and by hypothesis~(H2) and Lemma~\ref{lem:convex-radius}(b), $S\subset\calK(u_\NN(\tilde w))\subset\calG(u_\NN(\tilde w))$, where $\calK(u_\NN(\tilde w))$ is compact by Proposition~\ref{prop:manifold}. Since $g(\cdot;v)$ is continuous on $\calH$, the restricted minimization $\min_{h\in S} g(h;v)$ admits a global minimizer $\hat h\in S$, and $g(\hat h;v)\le g(0;v)\le g(h;v)$ for any $h\in\calM\setminus S$ (by the definition of $S$), so $\hat h$ is also a global minimizer of $g(\cdot;v)$ on all of $\calM$.

Finally, by part (b), $S$ is geodesically convex. By part (c), $g(\cdot;v)|_S$ is geodesically
$\nu/2$-strongly convex; we claim that it can have at most one global minimizer on $S$. Indeed,
if $\hat h_1,\hat h_2\in\mathcal M$ are two global minimizers of $g(\cdot;v)$ over $\mathcal M$,
then both satisfy $g(\hat h_i;v)\le g(0;v)$ and thus belong to $S$.
Let $\gamma:[0,1]\to S$ be the (unique) minimizing geodesic joining $\hat h_1$ and $\hat h_2$.
Geodesic strong convexity implies that for any $t\in(0,1)$,
\[
g(\gamma(t);v)
\le (1-t)g(\hat h_1;v)+t g(\hat h_2;v)-\frac{\nu}{4}t(1-t)d_{\mathcal M}(\hat h_1,\hat h_2)^2.
\]
Since $g(\hat h_1;v)=g(\hat h_2;v)=\min_{\mathcal M} g(\cdot;v)$, the right-hand side is strictly
smaller unless $d_{\mathcal M}(\hat h_1,\hat h_2)=0$. Hence $\hat h_1=\hat h_2$.
Therefore, $g(\cdot;v)|_{\mathcal M}$ has \emph{exactly one} global minimizer, and it belongs to $S$.

\medskip
\noindent\textbf{(d) Geodesic smoothness of $g$ on $S$.}
Similarly, for $h\in S$, by \eqref{eq:gHess-decomp}, \eqref{eq:term1_bounds},
\eqref{eq:term2_bound_general}, and \eqref{eq:grad_uniform_S},
\[
\langle \Hess g(h;v)[\xi_h],\xi_h\rangle_{\mathcal H}
\le \Big(\mu+\frac{2L}{\lambda_{\tilde w}^2}\mu\sqrt{\frac{2g(0;v)}{\nu}}\Big)\|\xi_h\|^2.
\]
By the chain of equalities in~\eqref{eq:K-cond} and the data condition~\eqref{eq:data-condition},
$\frac{2L\mu}{\lambda_{\tilde w}^2}\sqrt{\frac{2g(0;v)}{\nu}}\le \nu/2\le \mu/2$ (since $\mu\ge\nu$),
and therefore
\[
\langle \Hess g(h;v)[\xi_h],\xi_h\rangle_{\mathcal H}
\le \frac{3\mu}{2} \|\xi_h\|_{\mathcal H}^2,
\qquad \forall h\in S,  \forall \xi_h\in T_h\mathcal M.
\]
This proves that $g(\cdot;v)|_S$ is geodesically $3\mu/2$-smooth.
\end{proof}

\subsection{Proof of Theorem~\ref{thm:RGF-conv}}\label{app:proof-RGF-conv}
\begin{proof}
\textbf{(a) Convergence of Riemannian Gradient Flow.}
By Theorem~\ref{thm:g}(b), $S=\{h\in\mathcal M(u_\NN(\tilde w)):  g(h;v)\le g(0;v)\}$ is geodesically convex, and by Theorem~\ref{thm:g}(c),
$g(\cdot;v)|_{S}$ is geodesically $\nu/2$-strongly convex.
Moreover, by Theorem~\ref{thm:g}(c), $g(\cdot;v)|_{\mathcal M(u_\NN(\tilde w))}$ admits a unique
global minimizer $h_\infty\in S$.

Along the Riemannian gradient flow $\dot h_t=-\grad g(h_t;v)$, the energy dissipation identity
holds:
\begin{equation}\label{eq:energy_dissipation}
\frac{\ud}{\ud t}g(h_t;v)
=\langle \grad g(h_t;v),\dot h_t\rangle
=-\|\grad g(h_t;v)\|^2 \le 0.
\end{equation}
Hence $t\mapsto g(h_t;v)$ is nonincreasing. Since $h_0\in S$ and $S$ is a sublevel set,
\eqref{eq:energy_dissipation} implies $h_t\in S$ for all $t\ge 0$.

\textbf{(b) Exponential Decay of Energy Gap.}
Define $R_t:=g(h_t;v)-g(h_\infty;v)\ge 0$.
Since $h_t\in S$ for all $t\ge 0$, we may use geodesic strong convexity on $S$.
A standard consequence of geodesic $\nu/2$-strong convexity is the Polyak--\L{}ojasiewicz (PL) inequality (see \citep[Lemma~11.28]{boumal2023intromanifolds}):
\begin{equation}\label{eq:PL}
\|\grad g(h;v)\|^2 \ge 2 \cdot \nu/2 \cdot \big(g(h;v)-g(h_\infty;v)\big) \qquad \forall h\in S.
\end{equation}
Combining \eqref{eq:energy_dissipation} and \eqref{eq:PL} yields the differential inequality
\[
\frac{d}{dt}R_t
= -\|\grad g(h_t;v)\|^2
\le -\nu R_t.
\]
By Gr\"onwall's inequality,
\begin{equation*}
R_t\le R_0 e^{-\nu t},\qquad t\ge 0.
\end{equation*}

\textbf{(c) Exponential Decay of Flow Gap.}
Let $h_\infty=\arg\min_{h\in S} g(h;v)$ and set $D_t:=d_{\mathcal M}(h_t,h_\infty)$.
Let $\eta_t:=\Log_{h_t}(h_\infty)\in T_{h_t}\mathcal M$ (see \citep[Definition~10.2]{boumal2023intromanifolds}), so that $\|\eta_t\|=D_t$.
Define $\phi(t):=\frac12 D_t^2$. By the first variation formula (see \citep[Theorem~6.3]{lee2006riemannian}),
\begin{equation}\label{eqn:first-variation}
\phi'(t)=\Big\langle -\Log_{h_t}(h_\infty),\dot h_t\Big\rangle
=\langle \eta_t,\grad g(h_t;v)\rangle,
\end{equation}
since $\dot h_t=-\grad g(h_t;v)$.
Because $g(\cdot;v)|_S$ is geodesically $\nu/2$-strongly convex, we have the following two inequalities:
\begin{equation}\label{eqn:geo-con-1}
g(h_\infty;v) \geq g(h_t;v) + \ip{\grad g(h_t;v),\Log_{h_t}(h_\infty)} + \frac{\nu/2}{2}d^2_{\mathcal{M}}(h_t,h_\infty)
\end{equation}
and
\begin{equation}\label{eqn:geo-con-2}
g(h_t;v) \geq g(h_\infty;v) + \ip{\grad g(h_\infty;v), \Log_{h_\infty}(h_t)} + \frac{\nu/2}{2}d^2_{\mathcal{M}}(h_t,h_\infty).
\end{equation}

Summing up \eqref{eqn:geo-con-1} and \eqref{eqn:geo-con-2}, and noticing the first-order optimality condition $\grad g(h_\infty;v) = 0$ as well as the previously defined $\eta_t = \Log_{h_t}(h_\infty)$, we have that, for all
$h_t \in S$,
\begin{equation}\label{eqn:quadratic}
\langle \grad g(h_t;v),\eta_t\rangle \le -\nu/2 \cdot d_{\mathcal M}^2(h_t,h_\infty).
\end{equation}
\eqref{eqn:first-variation} and \eqref{eqn:quadratic} yield $\phi'(t)\le -\nu \phi(t)$; hence
\begin{equation}\label{eq:Dt_contract}
D_t \le D_0 e^{-\nu t/2},\qquad t\ge 0.
\end{equation}

In particular, $D_t\to 0$ as $t\to\infty$, hence $h_t\to h_\infty$.

Finally, since $\mathcal M$ is an embedded submanifold of $\mathcal H$ with the induced metric,
the intrinsic distance dominates the ambient chord length, i.e.,
$\|h_t-h_\infty\|_{\mathcal H}\le d_{\mathcal M}(h_t,h_\infty)=D_t$.
Combining with \eqref{eq:Dt_contract} gives
\[
\|h_t-h_\infty\|_{\mathcal H}\le D_t \le D_0 e^{-\nu t/2},
\]
which completes the proof.
\end{proof}

\section{Convergence of the Minimizing Movement Scheme in Hilbert Space}\label{sec:conv-mms}
For completeness, we include the standard result on the convergence of the minimizing movement scheme in Hilbert space \citep{ambrosio2005gradient, mielke2023introduction}.

\begin{theorem}\label{thm:mms-convergence}
Let $\calH$ be a Hilbert space, and let $\calF:\calH\to\mathbb{R}$ be continuously Fr\'echet differentiable, coercive, and $m_{\calF}$-strongly convex with $m_\calF>0$. Assume moreover that, for every $E\in\mathbb{R}$, the sublevel set
\[
S_E^{\calF}:=\{u\in\calH:\calF[u]\le E\}
\]
is compact in $\calH$. Given $u^0\in \mathrm{dom}(\calF)$ and $\tau>0$, define the minimizing movement scheme by
\begin{equation}\label{eq:appendix-mms}
u^{n+1}=J_\tau(u^n),
\qquad
J_\tau(x):=\prox_{\tau\calF}(x)
:=\argmin_{u\in\calH}
\left\{
\frac{1}{2\tau}\|u-x\|_{\calH}^2+\calF[u]
\right\}.
\end{equation}
If $1+\tau m_{\calF}>0$, then the minimization problem in \eqref{eq:appendix-mms} admits a unique solution, making $J_\tau$ well defined and single-valued.

Let $\hat u_\tau:[0,T]\to\calH$ denote the piecewise linear interpolation associated with the sequence $\{u^n\}_{n\ge0}$. As $\tau\downarrow0$, the interpolants $\hat u_\tau$ converge in $\calH$, uniformly on $[0,T]$, to the unique solution
\(
u\in W^{1,2}([0,T];\calH) 
\)
of the gradient flow equation $u_t = - \nabla \calF[u]$ associated with $\calF$ and the initial datum $u^0$. Moreover, for any two gradient flow solutions $u_0,u_1$, one has the $m_{\calF}$-contractivity estimate
\begin{equation*}
\|u_1(t)-u_0(t)\|_{\calH}
\le
e^{-m_{\calF}(t-s)}
\|u_1(s)-u_0(s)\|_{\calH},
\qquad 0\le s<t.
\end{equation*}
\end{theorem}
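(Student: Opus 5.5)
We treat the three assertions of Theorem~\ref{thm:mms-convergence} separately: well-posedness of $J_\tau$, convergence of the interpolants $\hat u_\tau$, and $m_\calF$-contractivity of the limiting flow. For well-posedness, we fix $x\in\calH$ and note that
\[
\Psi_x(u):=\tfrac{1}{2\tau}\|u-x\|_\calH^2+\calF[u]
\]
is $(1/\tau+m_\calF)$-strongly convex; this is the computation of Lemma~\ref{lem:smooth-strongconv} with $h=u-x$. It is also continuous and coercive. Existence of a minimizer then follows by the direct method, either from weak lower semicontinuity of convex continuous functionals on $\calH$ or directly from the assumed compactness of the sublevel sets $S_E^\calF$. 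Strict convexity gives uniqueness. The minimizer is characterized by the Euler--Lagrange equation
\[
\frac{u^{n+1}-u^n}{\tau}=-\nabla\calF[u^{n+1}],
\]
which is the implicit Euler step. Lemma~\ref{lem:lip_J_tau} records that $J_\tau$ is $(1+\tau m_\calF)^{-1}$-Lipschitz.

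\textbf{A priori bounds.} Comparing $u^{n+1}$ with the competitor $u^n$ in $\Psi_{u^n}$ gives the discrete energy inequality
\[
\tfrac{1}{2\tau}\|u^{n+1}-u^n\|^2+\calF[u^{n+1}]\le\calF[u^n].
\]
Summing in $n$, with $\calF$ bounded below, yields two bounds uniform in $\tau$: $\sup_n\calF[u^n]\le\calF[u^0]$, and $\sum_n\|u^{n+1}-u^n\|^2/\tau\le 2(\calF[u^0]-\inf\calF)$. The second bound says $\int_0^T\|\hat u_\tau'\|^2\,dt$ is bounded uniformly in $\tau$, so $\{\hat u_\tau\}$ is equi-$\tfrac12$-H\"older. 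The first bound places every $\hat u_\tau(t)$ in the convex hull of the compact sublevel set $S^\calF_{\calF[u^0]}$, which is relatively compact. Arzel\`a--Ascoli then gives a subsequence converging uniformly on $[0,T]$ to some $u$. Weak compactness in $L^2(0,T;\calH)$ upgrades this to $u\in W^{1,2}([0,T];\calH)$ with $\hat u_\tau'\rightharpoonup u'$.

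\textbf{Identifying the limit.} This is the main obstacle. We pass to the limit in $\hat u_\tau'(t)=-\nabla\calF[\bar u_\tau(t)]$, where $\bar u_\tau$ is the piecewise-constant right interpolant. The same H\"older bound gives $\|\bar u_\tau-\hat u_\tau\|_\infty\le C\sqrt\tau$, so $\bar u_\tau\to u$ uniformly. Because the orbit is confined to a compact set, continuity of $\nabla\calF$ gives uniform convergence of $\nabla\calF[\bar u_\tau]$ to $\nabla\calF[u]$ on that set. The weak limit of $\hat u_\tau'$ is therefore $-\nabla\calF[u]$, so $u$ solves the gradient flow with datum $u^0$. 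If compactness is not available, we would fall back on monotonicity of $\nabla\calF$: use Minty's trick with the evolution variational inequality
\[
\tfrac{d}{dt}\tfrac12\|u-z\|^2+\tfrac{m_\calF}{2}\|u-z\|^2\le\calF[z]-\calF[u],
\]
derived first at the discrete level for fixed $z\in\calH$.

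\textbf{Uniqueness, full convergence and contractivity.} For two solutions $u_0,u_1$, we differentiate $\tfrac12\|u_1-u_0\|^2$ and use $m_\calF$-monotonicity of $\nabla\calF$:
\[
\frac{d}{dt}\tfrac12\|u_1-u_0\|^2=-\langle\nabla\calF[u_1]-\nabla\calF[u_0],u_1-u_0\rangle\le-m_\calF\|u_1-u_0\|^2.
\]
Gr\"onwall then gives the stated contractivity estimate, and in particular uniqueness of the solution. Since every subsequence of $\hat u_\tau$ has the same limit, the whole family converges uniformly on $[0,T]$ as $\tau\downarrow0$.
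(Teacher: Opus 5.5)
Your proposal is correct and follows essentially the same route as the paper. Both arguments use the energy inequality obtained by testing against the competitor $u^n$, the resulting $W^{1,2}$ and H\"older bounds on $\hat u_\tau$, and Arzel\`a--Ascoli via the compact sublevel set $S^{\calF}_{\calF[u^0]}$. Both then pass to the limit in $\dot{\hat u}_\tau=-\nabla\calF[\bar u_\tau]$ and conclude with the Gr\"onwall contractivity argument from $m_\calF$-monotonicity.

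Two of your steps state explicitly what the paper leaves implicit. You invoke uniform continuity of $\nabla\calF$ on the compact set, whereas the paper appeals only to continuity on bounded sets. You also use uniqueness to upgrade subsequential convergence to convergence of the whole family.
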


\begin{proof} 

\noindent \textbf{Step 1: Well-posedness and Energy Estimate.} The objective function in \eqref{eq:appendix-mms} is given by $\Phi(u) = \frac{1}{2\tau}\|u-u^n\|_{\calH}^2+\calF[u]$. Since $\calF$ is continuously Fr\'echet differentiable, coercive, and has compact sublevel sets, $\Phi(u)$ is coercive and bounded from below, guaranteeing the existence of a minimizer. Furthermore, because $\calF$ is $m_{\calF}$-convex, $\Phi(u)$ is strictly convex provided $1 + \tau m_{\calF} > 0$. Thus, $u^{n+1}$ exists and is unique. Moreover, $\Phi(u^{n+1}) \le \Phi(u^n)$ indicates
\begin{equation}
 \frac{1}{2 \tau} \| u^{n+1} - u^n \|^2 \leq \mathcal{F}[u^{n}] - \mathcal{F}[u^{n+1}]
\end{equation}
Summing this inequality over $n=0, \dots, N-1$, we have
\begin{equation}\label{sum_incre}
 \sum_{n=0}^{N-1}\| u^{n+1} - u^n \|^2 \leq 2 \tau (\mathcal{F}(u^0) - \mathcal{F}(u^*) ) \leq \tau C  ,
\end{equation}
where $u^*$ is the global minimizer of $\mathcal{F}$. Hence,
\begin{equation}
  \| u^{n+1} - u^n \|^2 \rightarrow 0, \quad n \rightarrow \infty
\end{equation}

\noindent \textbf{Step 2: Interpolations and Compactness.} Define the piecewise affine interpolation $\hat u_\tau:[0,T]\to\mathcal H$ and the piecewise constant interpolation $\bar u_\tau:[0,T]\to\mathcal H$ by
\[
\hat u_\tau((n+\theta)\tau)
=(1-\theta)u^n+\theta u^{n+1},
\qquad \theta\in[0,1],\quad n=0,\dots,N-1,
\]
and
\[
\bar u_\tau(t)=u^{n+1},
\qquad t\in(n\tau,(n+1)\tau],\quad n=0,\dots,N-1.
\]
Then, (\ref{sum_incre}) indicates that
\begin{equation}
\int_{0}^T  \| \dot{\hat{u}}_{\tau} \|^2 \dd t = \sum_{n=1}^N \tau \left \| \frac{u^{n+1} - u^n}{\tau} \right \|^2 \leq 2 (\calF[u^0] - \calF[u^*])
\end{equation}
Hence, $\hat{u}_{\tau}$ is bounded in $W^{1, 2} ([0, T], \calH)$ and we can extract a subsequence (not relabeled) such that
\begin{equation}
\hat{u}_{\tau} \rightharpoonup u ~\text{in}~ L^2([0, T]; \calH), \quad \dot{\hat{u}}_{\tau} \rightharpoonup \dot{u} ~\text{in}~ L^2([0, T]; \calH)
\end{equation}
for a limit $u \in W^{1, 2} ([0, T]; \calH)$.

Moreover, for any $0\le s<t\le T$, since $\hat u_\tau$ is absolutely continuous on $[0,T]$, we have
\(
\hat u_\tau(t)-\hat u_\tau(s)=\int_s^t \dot{\hat u}_\tau(r)\,dr.
\)
Therefore, by the Cauchy--Schwarz inequality, we have
\[
\|\hat u_\tau(t)-\hat u_\tau(s)\|_{\mathcal H}
\le \int_s^t \|\dot{\hat u}_\tau(r)\|_{\mathcal H}\,dr
\le |t-s|^{1/2}\|\dot{\hat u}_\tau\|_{L^2([0,T];\mathcal H)} \le |t-s|^{1/2} \bigl(2(\mathcal F[u^0]-\calF[u^*])\bigr)^{1/2}
\]
Hence $\{\hat u_\tau\}_\tau$ is uniformly H\"older continuous on every $[0,T]$. Since $\hat{u}_{\tau}$ lie in the compact sublevel set $S_{\mathcal{F}(u^0)}^\calF$, by the Arzel\`a--Ascoli theorem, by extracting a further subsequence (not relabeled), we have the uniform convergence
\begin{equation}
 \hat{u}_{\tau} \rightarrow u ~\text{in}~ C^0([0, T]; \mathcal{H}) 
\end{equation}
Notice $\hat{u} (n \tau) = \bar{u} (n \tau)$, and $\bar{u}_{\tau}$ is piecewise constant, we have
\begin{equation}
 \| \hat{u}_{\tau} - \bar{u}_{\tau} \|_{L^{\infty} ([0, T]; \calH)} \leq \sqrt{\tau} \bigl(2(\mathcal F[u^0]-\calF[u^*])\bigr)^{1/2} \rightarrow 0, \quad \tau \rightarrow 0^+.
\end{equation}
Hence, for the piecewise constant interpolation, we have
\begin{equation}
  \bar{u}_{\tau} \rightarrow u ~\text{in}~ L^{\infty} ([0, T]; \mathcal{H})
\end{equation}

\noindent \textbf{Step 3: Convergence to the Gradient Flow.} Next, we show that the limit $u$ satisfies the gradient flow equation
\begin{equation}
u_t = - \nabla \mathcal{F}(u)
\end{equation}
Since $\mathcal F$ is Fr\'echet continuously differentiable, the Euler--Lagrange equation for the minimizing movement scheme reads
\[
\frac{u^{n+1}-u^n}{\tau}=-\nabla\mathcal F[u^{n+1}],
\qquad n=0,1,\dots,N-1.
\]
Recalling the piecewise affine interpolation $\hat u_\tau$ and the piecewise constant interpolation $\bar u_\tau$, we have
\begin{equation}\label{eq:gf-discrete-interpolant}
\dot{\hat u}_\tau(t)=-\nabla\mathcal F[\bar u_\tau(t)]
\qquad \text{for a.e. } t\in(0,T).
\end{equation}
We have proved that up to a subsequence,
\[
\hat u_\tau \to u \qquad \text{in } C^0([0,T];\mathcal H), \dot{\hat u}_\tau \rightharpoonup \dot u
\qquad \text{weakly in } L^2([0,T];\mathcal H), \bar u_\tau \to u
\qquad \text{in } L^\infty([0,T];\mathcal H).
\]
Since the trajectories $\bar u_\tau(t)$ remain in the compact sublevel set
\(
S_{\mathcal F[u^0]}^{\mathcal F} \)
and since $\nabla\mathcal F$ is continuous on bounded subsets of $\mathcal H$, it follows that
\[
\nabla\mathcal F[\bar u_\tau]\to \nabla\mathcal F[u]
\qquad \text{in } L^\infty([0,T];\mathcal H),
\]
and therefore also in $L^2([0,T];\mathcal H)$. We now pass to the limit in \eqref{eq:gf-discrete-interpolant}. For any $\phi\in L^2([0,T];\mathcal H)$,
\[
\int_0^T \langle \dot{\hat u}_\tau(t),\phi(t)\rangle_{\mathcal H}\,dt
=
-\int_0^T \langle \nabla\mathcal F[\bar u_\tau(t)],\phi(t)\rangle_{\mathcal H}\,dt.
\]
Letting $\tau\to0$, the weak convergence of $\dot{\hat u}_\tau$ and the strong convergence of $\nabla\mathcal F[\bar u_\tau]$ yield
\[
\int_0^T \langle \dot u(t),\phi(t)\rangle_{\mathcal H}\,dt
=
-\int_0^T \langle \nabla\mathcal F[u(t)],\phi(t)\rangle_{\mathcal H}\,dt.
\]
Since $\phi$ is arbitrary, we conclude that
\[
\dot u=-\nabla\mathcal F[u]
\qquad \text{in } L^2([0,T];\mathcal H).
\]
That is, $u$ satisfies the gradient flow equation on $[0,T]$. Finally, since $\hat u_\tau(0)=u^0$ for all $\tau$ and $\hat u_\tau\to u$ in $C([0,T];\mathcal H)$, we also have
\(
u(0)=u^0.
\)
Therefore $u$ is a solution of the gradient flow equation with initial datum $u^0$.

\noindent \textbf{Step 4: Contractivity and Uniqueness.} Since $\mathcal F:\mathcal H\to\mathbb R$ is $m_{\mathcal F}$-convex, i.e.,
\[
\mathcal F[v]\ge \mathcal F[u]
+\langle \nabla\mathcal F[u],\,v-u\rangle_{\mathcal H}
+\frac{m_{\mathcal F}}{2}\|v-u\|_{\mathcal H}^2,
\qquad \forall u,v\in\mathcal H.
\]
Then the gradient $\nabla\mathcal F$ is $m_{\mathcal F}$-monotone:
\begin{equation}\label{eq:monotone-gradient}
\langle \nabla\mathcal F[u]-\nabla\mathcal F[v],\,u-v\rangle_{\mathcal H}
\ge m_{\mathcal F}\|u-v\|_{\mathcal H}^2,
\qquad \forall u,v\in\mathcal H.
\end{equation}
Let $u_0,u_1\in W^{1,2}([0,T];\mathcal H)$ be two solutions of the gradient flow equation
\[
\dot u_j(t)=-\nabla\mathcal F[u_j(t)],
\qquad j=0,1.
\]
Since $u_0,u_1\in W^{1,2}([0,T];\mathcal H)$, using the gradient flow equation and \eqref{eq:monotone-gradient}, we obtain
\[
\begin{aligned}
\frac12\frac{d}{dt}\|u_1(t)-u_0(t)\|_{\mathcal H}^2
&=
-\langle \nabla\mathcal F[u_1(t)]-\nabla\mathcal F[u_0(t)],\,u_1(t)-u_0(t)\rangle_{\mathcal H} \\
&\le
-\,m_{\mathcal F}\|u_1(t)-u_0(t)\|_{\mathcal H}^2 .
\end{aligned}
\]
By Gr\"onwall's inequality, for all $0\le s<t\le T$,
\begin{equation*}
\|u_1(t)-u_0(t)\|_{\mathcal H}
\le e^{-m_{\mathcal F}(t-s)}\|u_1(s)-u_0(s)\|_{\mathcal H}.
\end{equation*}
In particular, if $u_1(0)=u_0(0)=u^0$, then $u_1\equiv u_0$ on $[0,T]$. Hence the gradient flow solution with initial datum $u^0$ is unique.
\end{proof}

The previous theorem establishes that the minimizing movement scheme converges, as $\tau\downarrow0$, to the solution of the gradient flow of $\calF$ on every finite time interval. To relate this evolution to the minimization problem for $\calF$, one further needs the long-time behavior of the gradient flow. Under the assumption $m_{\calF}>0$, the latter converges exponentially fast to the unique global minimizer $u^\ast$ of $\calF$. 

\begin{remark}
The contractivity estimate above is the continuous-time counterpart of the
contraction property of the proximal map associated with the minimizing movement
scheme. Indeed, let
\[
    J_\tau(x)
    :=
    \arg\min_{u\in\mathcal H}
    \left\{
        \frac{1}{2\tau}\|u-x\|_{\mathcal H}^2
        +
        \mathcal F[u]
    \right\}.
\]
If \(u=J_\tau(x)\) and \(v=J_\tau(y)\), then the optimality conditions give
\[
    \frac{u-x}{\tau}+\nabla\mathcal F[u]=0,
    \qquad
    \frac{v-y}{\tau}+\nabla\mathcal F[v]=0 .
\]
Subtracting the two identities and testing with \(u-v\), we obtain
\[
    \frac{1}{\tau}\|u-v\|_{\mathcal H}^2
    +
    \left\langle
    \nabla\mathcal F[u]-\nabla\mathcal F[v],
    u-v
    \right\rangle_{\mathcal H}
    =
    \frac{1}{\tau}\langle x-y,u-v\rangle_{\mathcal H}.
\]
Using the \(m_{\mathcal F}\)-monotonicity of \(\nabla\mathcal F\), it follows that
\[
    \left(\frac{1}{\tau}+m_{\mathcal F}\right)
    \|u-v\|_{\mathcal H}^2
    \le
    \frac{1}{\tau}
    \|x-y\|_{\mathcal H}\|u-v\|_{\mathcal H}.
\]
Therefore,
\[
    \|J_\tau(x)-J_\tau(y)\|_{\mathcal H}
    \le
    \frac{1}{1+\tau m_{\mathcal F}}
    \|x-y\|_{\mathcal H}.
\]
Thus the continuous gradient flow is \(e^{-m_{\mathcal F}t}\)-contractive,
whereas one step of the minimizing movement scheme is
\((1+\tau m_{\mathcal F})^{-1}\)-contractive. This discrete contraction is the
stability mechanism used in the error-propagation analysis for the neural MMS
iterations.
\end{remark}

\begin{theorem}[Exponential convergence to the minimizer]\label{thm:gf-exp-conv}
Let $\calH$ be a Hilbert space, and let $\calF:\calH\to\mathbb R$ be Fr\'echet differentiable and $m_{\calF}$-convex with $m_{\calF}>0$, i.e.,
\[
\calF[v]\ge \calF[u]
+\langle \nabla\calF[u],\,v-u\rangle_{\calH}
+\frac{m_{\calF}}{2}\|v-u\|_{\calH}^2,
\qquad \forall u,v\in\calH.
\]
Assume that $\calF$ admits a global minimizer $u^\ast\in\calH$. Then $u^\ast$ is unique. Moreover, for every initial datum $u^0\in\calH$, the corresponding gradient flow solution
\[
\dot u(t)=-\nabla\calF[u(t)],
\qquad u(0)=u^0,
\]
satisfies
\[
\|u(t)-u^\ast\|_{\calH}
\le e^{-m_{\calF}t}\|u^0-u^\ast\|_{\calH},
\qquad t\ge0.
\]
In particular,
\[
u(t)\to u^\ast
\qquad \text{in }\calH \quad \text{as } t\to\infty.
\]
\end{theorem}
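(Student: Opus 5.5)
The plan is to prove Theorem~\ref{thm:gf-exp-conv} in three steps, all using only the strong-convexity inequality in the statement and the gradient-flow equation. We take for granted that the gradient flow solution exists for all $t\ge0$ (for instance via Theorem~\ref{thm:mms-convergence} on every $[0,T]$) and is absolutely continuous.

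\emph{Step 1: uniqueness of the minimizer.} Any global minimizer $u^\ast$ of the Fr\'echet differentiable $\calF$ satisfies $\nabla\calF[u^\ast]=0$. Suppose $u^\ast_1,u^\ast_2$ are two global minimizers. Apply the $m_\calF$-convexity inequality with $u=u^\ast_1$ and $v=u^\ast_2$ to get
\[
\calF[u^\ast_2]\ge\calF[u^\ast_1]+\tfrac{m_\calF}{2}\|u^\ast_2-u^\ast_1\|^2_\calH.
\]
Since $\calF[u^\ast_1]=\calF[u^\ast_2]$, this forces $u^\ast_1=u^\ast_2$.

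\emph{Step 2: monotonicity of the gradient.} Write the $m_\calF$-convexity inequality once with $(u,v)$ and once with the roles swapped, then add the two. This gives the $m_\calF$-monotonicity
\[
\langle\nabla\calF[u]-\nabla\calF[v],u-v\rangle_\calH\ge m_\calF\|u-v\|^2_\calH,
\]
which is the same as \eqref{eq:monotone-gradient} in Step~4 of Theorem~\ref{thm:mms-convergence}.

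\emph{Step 3: energy estimate and Gr\"onwall.} Set $\phi(t):=\tfrac12\|u(t)-u^\ast\|^2_\calH$. Because $u\in W^{1,2}_{\rm loc}([0,\infty);\calH)$, $\phi$ is absolutely continuous and, for a.e.\ $t$,
\[
\phi'(t)=\langle\dot u(t),u(t)-u^\ast\rangle_\calH=-\langle\nabla\calF[u(t)]-\nabla\calF[u^\ast],u(t)-u^\ast\rangle_\calH\le-2m_\calF\,\phi(t),
\]
where we subtracted $\nabla\calF[u^\ast]=0$ and used the monotonicity from Step~2. Gr\"onwall's inequality then gives $\phi(t)\le e^{-2m_\calF t}\phi(0)$. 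Taking square roots yields
\[
\|u(t)-u^\ast\|_\calH\le e^{-m_\calF t}\|u^0-u^\ast\|_\calH,
\]
and hence $u(t)\to u^\ast$ as $t\to\infty$.

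An alternative route is to note that $u^\ast$ is itself a stationary solution of the flow. Then $\|u(t)-u^\ast\|_\calH$ decays exponentially directly by the contractivity estimate of Theorem~\ref{thm:mms-convergence} applied with $u_0\equiv u^\ast$ and $s=0$.

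The only mildly delicate point is justifying the chain rule for $\phi$ when $u$ is merely $W^{1,2}$ in time rather than $C^1$. This is standard for Hilbert-valued absolutely continuous curves, so I do not expect a genuine obstacle.
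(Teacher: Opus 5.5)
Your proposal is correct and is essentially the paper's argument. The paper observes that $u^\ast$ is a stationary solution and invokes the contractivity estimate from Step~4 of the proof of Theorem~\ref{thm:mms-convergence}, which is exactly your monotonicity-plus-Gr\"onwall computation in Step~3 (your stated alternative route); your Step~1 additionally proves the uniqueness of $u^\ast$, which the paper asserts without argument, and your direct computation avoids leaning on a theorem whose extra hypotheses (coercivity, compact sublevel sets) are not assumed here.
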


\begin{proof}
Since $u^\ast$ is a global minimizer and $\calF$ is Fr\'echet differentiable, we have
\[
\nabla\calF[u^\ast]=0,
\]
so $u^\ast$ is a stationary solution of the gradient flow equation. By the $m_{\calF}$-monotonicity of $\nabla\calF$,
\[
\langle \nabla\calF[u]-\nabla\calF[v],\,u-v\rangle_{\calH}
\ge m_{\calF}\|u-v\|_{\calH}^2,
\qquad \forall u,v\in\calH.
\]
As proved in Theorem (\ref{thm:mms-convergence}), we have
\[
\|u(t)-u^\ast\|_{\calH}
\le e^{-m_{\calF}t}\|u^0-u^\ast\|_{\calH}.
\]
This proves the claim.
\end{proof}

In conclusion, combining the previous two theorems, the discrete iterate $u^N$ first approximates the gradient flow solution at time $T=N\tau$, and for large times this state in turn approaches $u^\ast$. In this sense, $u^N$ provides an approximation of the global minimizer. Formally, the total approximation may be viewed as consisting of two parts: a discretization error between $u^N$ and the gradient flow state $u(N\tau)$, and a long-time relaxation error between $u(N\tau)$ and the minimizer $u^\ast$.

\section{Discussion about Assumptions}

\subsection{Locality (Assumption~\ref{ass:intro:locality})}\label{app:suff-locality}

Assumption~\ref{ass:intro:locality} requires that, at each step $n$, the increment problem~\eqref{eq:intro:evnn_increment} admits at least one global minimizer in the local neighborhood $\calM(u_\NN(w^n))$. In this appendix we give a verifiable sufficient condition. The condition has two ingredients:
\begin{enumerate}
\item[(I)] \emph{Output isolation:} parameters $w$ outside $B(w^n,r_{w^n})$ cannot produce network outputs arbitrarily close to $v=u_\NN(w^n)$; equivalently, the \emph{output isolation radius}
\begin{equation}\label{eq:rho-isolation}
\rho_{w^n}\;:=\;\dist_{\mathcal H}\Big(v,\ u_\NN\big(\R^p\setminus B(w^n,r_{w^n})\big)\Big)\;>\;0.
\end{equation}
\item[(II)] A step size $\tau$ small enough relative to $\rho_{w^n}$ and the gradient $\|\nabla\calF[v]\|_\calH$ at the current iterate.
\end{enumerate}
The intuition is that the proximal term penalizes large increments, so any global minimizer of $g(\cdot;v)$ must produce a network output close to $v$ in $\calH$. If outputs near $v$ can only come from parameters near $w^n$, then a global minimizer must lie in $\calM(v)$.

\begin{theorem}\label{thm:small-tau-locality}
Fix step $n$ and let $v:=u_\NN(w^n)\in\calH$. Assume $u_\NN$ has an $L$-Lipschitz Jacobian and a $\lambda_{w^n}$-non-degenerate Jacobian at $w^n$, and let $r_{w^n}>0$ be the radius from Lemma~\ref{lem:gram-lower-bound}, so that
\[
\calM(v):=\{u_\NN(w)-v:\,\|w-w^n\|_2 < r_{w^n}\}\subset\calH
\]
is the local increment set. Assume the output isolation radius $\rho_{w^n}>0$.
If $\tau>0$ satisfies
\begin{equation}\label{eq:tau-threshold-locality}
\rho_{w^n}\ \ge\ \frac{2\tau\,\|\nabla\calF[v]\|_\calH}{1+\tau m_\calF},
\end{equation}
then $\inf_{h\in\mathcal{S}_n}g(h;v)=\inf_{h\in\calM(v)}g(h;v)$, where $\mathcal{S}_n:=\{u_\NN(w)-v:w\in\R^p\}$. In particular, every global minimizer of $g(\cdot;v)$ on $\calM(v)$ (which exists and is unique under the conditions of Theorem~\ref{thm:g}) is also a global minimizer over $\mathcal{S}_n$, and Assumption~\ref{ass:intro:locality} holds at step $n$.

\smallskip
\noindent
Moreover, if \eqref{eq:tau-threshold-locality} holds with strict inequality, every global minimizer $h^*\in\argmin_{\mathcal{S}_n}g(\cdot;v)$ is realized by some parameter $w^*\in B(w^n,r_{w^n})$ with the quantitative bounds
\begin{equation}\label{eq:k-bound-thm}
\|u_\NN(w^*)-v\|_\calH\;\le\;\frac{2\tau\,\|\nabla\calF[v]\|_\calH}{1+\tau m_\calF},
\end{equation}
\begin{equation}\label{eq:w-bound-thm}
\|w^*-w^n\|_2\;\le\;\frac{1}{\lambda_{w^n}}\cdot\frac{2\tau\,\|\nabla\calF[v]\|_\calH}{1+\tau m_\calF}.
\end{equation}
\end{theorem}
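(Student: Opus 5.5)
The plan is to compare every candidate increment against the trivial increment $h=0$, which lies in $\calM(v)$ and has $g(0;v)=\calF[v]$ by Lemma~\ref{lem:g-properties}(b). The only analytic input is the ambient $\nu$-strong convexity of $g(\cdot;v)$ on $\calH$ from Lemma~\ref{lem:smooth-strongconv}, with $\nu=(1+\tau m_\calF)/\tau$, together with $\nabla g(0;v)=\nabla\calF[v]$. No Riemannian structure is needed except to quote existence of a minimizer on $\calM(v)$.

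\emph{Step 1 (a priori radius).} For any $h\in\calH$, strong convexity gives
\[
g(h;v)\ \ge\ g(0;v)+\langle\nabla\calF[v],h\rangle_\calH+\tfrac{\nu}{2}\|h\|_\calH^2 .
\]
Hence, by Cauchy--Schwarz, $g(h;v)<g(0;v)$ forces $\tfrac{\nu}{2}\|h\|^2<\|\nabla\calF[v]\|\,\|h\|$, that is,
\[
\|h\|_\calH<\frac{2\|\nabla\calF[v]\|_\calH}{\nu}=\frac{2\tau\|\nabla\calF[v]\|_\calH}{1+\tau m_\calF}=:k_n .
\]
Likewise, $g(h;v)\le g(0;v)$ gives $\|h\|\le k_n$. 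This is exactly the bound~\eqref{eq:k-bound-thm}.

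\emph{Step 2 (equality of infima).} Since $\calM(v)\subset\mathcal S_n$, we have $\inf_{\mathcal S_n}g\le\inf_{\calM(v)}g$. For the reverse inequality, take $h=u_\NN(w)-v$ with $w\notin B(w^n,r_{w^n})$. By~\eqref{eq:rho-isolation}, $\|h\|\ge\rho_{w^n}\ge k_n$ by~\eqref{eq:tau-threshold-locality}. By Step 1 (strict version), this rules out $g(h;v)<g(0;v)$, so $g(h;v)\ge g(0;v)\ge\inf_{\calM(v)}g$. Every element of $\mathcal S_n$ therefore has $g$-value at least $\inf_{\calM(v)}g$, and the infima coincide. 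It is convenient to use the strict inequality from the strong-convexity bound here, because this lets the non-strict threshold~\eqref{eq:tau-threshold-locality} suffice. By Theorem~\ref{thm:g}(c), the minimizer $h_\infty$ on $\calM(v)$ exists and is unique; it attains $\inf_{\mathcal S_n}g$. So it is a global minimizer over $\mathcal S_n$ lying in $\calM(v)$, which is Assumption~\ref{ass:intro:locality} at step $n$.

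\emph{Step 3 (strict case, quantitative localization).} Let $h^*$ be any global minimizer over $\mathcal S_n$. Then $g(h^*;v)\le g(0;v)$, so Step 1 gives $\|h^*\|\le k_n<\rho_{w^n}$. Every parameter $w^*$ with $u_\NN(w^*)=v+h^*$ must lie in $B(w^n,r_{w^n})$, since otherwise $\|h^*\|\ge\rho_{w^n}$. Applying the bi-Lipschitz lower bound~\eqref{eq:bi-lip-lower} from the proof of Proposition~\ref{prop:manifold} with $w=w^*$ and $w'=w^n$ gives $\lambda_{w^n}\|w^*-w^n\|_2\le\|h^*\|\le k_n$, which is~\eqref{eq:w-bound-thm}.

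I expect the only delicate point to be the boundary case in Step 2, where $\|h\|=\rho_{w^n}=k_n$ could in principle occur for $w$ outside the ball. It is handled by the strictness argument above: such an $h$ cannot strictly beat $h=0$, and $0$ already belongs to $\calM(v)$. Everything else is a direct combination of Lemma~\ref{lem:smooth-strongconv}, Theorem~\ref{thm:g}(c) and~\eqref{eq:bi-lip-lower}.
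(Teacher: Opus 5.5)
Your proof is correct and follows the paper's argument essentially step for step. Both compare every candidate against $h=0$ via the strong-convexity lower bound $g(h;v)\ge \calF[v]-\|\nabla\calF[v]\|_\calH\|h\|_\calH+\tfrac{1+\tau m_\calF}{2\tau}\|h\|_\calH^2$, and combine it with the output-isolation radius to get the equality of infima and \eqref{eq:k-bound-thm}. The only cosmetic difference is in the parameter-space bound: you cite the bi-Lipschitz estimate \eqref{eq:bi-lip-lower} from the proof of Proposition~\ref{prop:manifold}, whereas the paper redoes a Taylor expansion at $w^n$ and obtains a slightly sharper factor $\tfrac32\lambda_{w^n}$ that it does not use.
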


\begin{proof}
Since $\calM(v)\subset\mathcal{S}_n$, $\inf_{\mathcal{S}_n}g\le \inf_{\calM(v)}g$. We show the reverse inequality. Note $0\in\calM(v)$ (taking $w=w^n$), so $\inf_{\calM(v)}g\le g(0;v)=\calF[v]$.

It suffices to show $g(h;v)\ge \calF[v]$ for every $h\in\mathcal{S}_n\setminus\calM(v)$. Let $h\in\mathcal{S}_n\setminus\calM(v)$, so $h=u_\NN(w)-v$ for some $w\notin B(w^n,r_{w^n})$. By the definition of $\rho_{w^n}$ in \eqref{eq:rho-isolation},
\[
\|h\|_\calH=\|u_\NN(w)-v\|_\calH\ge \rho_{w^n}.
\]
By $m_\calF$-strong convexity of $\calF$,
\[
\calF[v+h]\ \ge\ \calF[v]+\langle\nabla\calF[v],h\rangle_\calH+\frac{m_\calF}{2}\|h\|_\calH^2.
\]
Substituting into the definition of $g(h;v)=\|h\|_\calH^2/(2\tau)+\calF[v+h]$ and applying Cauchy--Schwarz to $\langle\nabla\calF[v],h\rangle_\calH\ge -\|\nabla\calF[v]\|_\calH\|h\|_\calH$:
\begin{equation}\label{eq:g-quadratic-lower-bound}
g(h;v)\ \ge\ \frac{1+\tau m_\calF}{2\tau}\|h\|_\calH^2-\|\nabla\calF[v]\|_\calH\,\|h\|_\calH+\calF[v].
\end{equation}
The right-hand side, viewed as a quadratic in $t=\|h\|_\calH\ge 0$, is non-negative on top of $\calF[v]$ when $t\ge \frac{2\tau\|\nabla\calF[v]\|_\calH}{1+\tau m_\calF}$. Under \eqref{eq:tau-threshold-locality}, this threshold is at most $\rho_{w^n}\le \|h\|_\calH$, so $g(h;v)\ge \calF[v]$.

Hence $\inf_{\mathcal{S}_n\setminus\calM(v)}g\ge\calF[v]\ge\inf_{\calM(v)}g$, giving $\inf_{\mathcal{S}_n}g=\inf_{\calM(v)}g$. By Theorem~\ref{thm:g}, $g(\cdot;v)|_{\calM(v)}$ admits a unique global minimizer $h_\infty$, which is therefore also a global minimizer over $\mathcal{S}_n$. Assumption~\ref{ass:intro:locality} holds at step $n$.

\medskip
\noindent\emph{Quantitative bounds.}
Now suppose \eqref{eq:tau-threshold-locality} holds with strict inequality, and let $h^*\in\argmin_{\mathcal{S}_n}g(\cdot;v)$ with $h^*=u_\NN(w^*)-v$ for some $w^*\in\R^p$.

\emph{Step 1 (energy bound).}
Since $0\in\mathcal{S}_n$ (take $w=w^n$), $g(h^*;v)\le g(0;v)=\calF[v]$. With $h^*=u_\NN(w^*)-v$ and $\calF$ being $m_\calF$-strongly convex,
\[
g(h^*;v)
=\frac{1}{2\tau}\|h^*\|_\calH^2+\calF[v+h^*]
\;\ge\;\frac{1+\tau m_\calF}{2\tau}\|h^*\|_\calH^2-\|\nabla\calF[v]\|_\calH\|h^*\|_\calH+\calF[v].
\]
Combining with $g(h^*;v)\le\calF[v]$ and dividing through by $\|h^*\|_\calH$ (or noting the inequality holds trivially when $h^*=0$) gives \eqref{eq:k-bound-thm}.

\emph{Step 2 (parameter-space localization).}
By \eqref{eq:k-bound-thm} and the strict version of \eqref{eq:tau-threshold-locality},
\[
\|u_\NN(w^*)-v\|_\calH\;\le\;\frac{2\tau\,\|\nabla\calF[v]\|_\calH}{1+\tau m_\calF}\;<\;\rho_{w^n}.
\]
By definition of $\rho_{w^n}$ in \eqref{eq:rho-isolation}, this rules out $w^*\notin B(w^n,r_{w^n})$, so $w^*\in B(w^n,r_{w^n})$.

\emph{Step 3 (norm bound in $\R^p$).}
Set $e:=w^*-w^n$ and $w_t:=w^n+te$ for $t\in[0,1]$; convexity of $B(w^n,r_{w^n})$ yields $w_t\in B(w^n,r_{w^n})$. The fundamental theorem of calculus gives
\[
u_\NN(w^*)-u_\NN(w^n)
\;=\;\D u_\NN(w^n)[e]+\int_0^1\big(\D u_\NN(w_t)-\D u_\NN(w^n)\big)[e]\,dt.
\]
By Definition~\ref{def:non-degen-jacobian}, $\|\D u_\NN(w^n)[e]\|_\calH\ge 2\lambda_{w^n}\|e\|_2$. The $L$-Lipschitz Jacobian assumption (Definition~\ref{def:NN-Lip-L}) bounds the remainder:
\[
\Big\|\!\int_0^1\!\big(\D u_\NN(w_t)-\D u_\NN(w^n)\big)[e]\,dt\Big\|_\calH
\le\int_0^1\!Lt\|e\|_2^2\,dt=\frac{L}{2}\|e\|_2^2.
\]
Combining,
\[
\|u_\NN(w^*)-v\|_\calH
\;\ge\;2\lambda_{w^n}\|e\|_2-\frac{L}{2}\|e\|_2^2.
\]
Since $\|e\|_2 < r_{w^n}\le \lambda_{w^n}/L$ by \eqref{eq:rwdef}, the quadratic remainder satisfies $\tfrac{L}{2}\|e\|_2^2 < \tfrac{\lambda_{w^n}}{2}\|e\|_2$, so
\[
\|u_\NN(w^*)-v\|_\calH\;\ge\;\frac{3\lambda_{w^n}}{2}\|e\|_2\;\ge\;\lambda_{w^n}\|e\|_2.
\]
Combining with \eqref{eq:k-bound-thm} yields \eqref{eq:w-bound-thm}.
\end{proof}

\begin{remark}[Interpretation and $n$-uniform locality]\label{rem:rho-positive}
Condition~\eqref{eq:tau-threshold-locality} couples the step size $\tau$ to the local gradient $\|\nabla\calF[v]\|_\calH$. Two natural regimes:
\begin{itemize}
\item \emph{Near a minimizer:} as $v\to u^*$, $\|\nabla\calF[v]\|_\calH\to 0$, and the condition becomes vacuous. This matches the intuition that locality is automatic once iterates are close to the optimum.
\item \emph{Far from the minimizer:} for fixed $\rho_{w^n}$ and $\|\nabla\calF[v]\|_\calH$, \eqref{eq:tau-threshold-locality} is equivalent to $\tau\,(2\|\nabla\calF[v]\|_\calH-\rho_{w^n}m_\calF)\le \rho_{w^n}$. If $\rho_{w^n}m_\calF\ge 2\|\nabla\calF[v]\|_\calH$ the condition holds for any $\tau>0$; otherwise it imposes $\tau\le \rho_{w^n}/(2\|\nabla\calF[v]\|_\calH-\rho_{w^n}m_\calF)$.
\end{itemize}

\paragraph{Output isolation $\rho_{w^n}>0$ and global inverse-stability.}
The condition $\rho_{w^n}>0$ rules out ``near-collisions'' of the network map around $v$ from parameters far away from $w^n$. Two sufficient conditions are useful in practice:
\begin{enumerate}[label=\textnormal{(i)},leftmargin=*,nosep]
\item \emph{Global inverse-stability:} if $u_\NN$ satisfies~\eqref{eq:global-inverse-stability} with constant $\sigma>0$, then $\rho_{w^n}\ge\sigma\,r_{w^n}$, and~\eqref{eq:global-inverse-stability} alone supplies all locality bounds.
\item \emph{Local injectivity at $w^n$:} if $u_\NN$ is injective on $\R^p\setminus B(w^n,r_{w^n})$ relative to a neighborhood of $v$, $\rho_{w^n}$ is bounded below by a positive constant depending on the local injectivity radius.
\end{enumerate}
In over-parameterized networks, the parametrization is typically \emph{not} injective on the whole $\R^p$ (e.g., permutations of hidden units and sign flips leave $u_\NN$ unchanged), but injectivity holds modulo these symmetries, and on an open neighborhood of generic initializations the local injectivity (ii) suffices.

\paragraph{$n$-uniform locality (used by the global theorems).}
For Theorem~\ref{thm:small-tau-locality} to underwrite Assumption~\ref{ass:intro:locality} at \emph{every} iterate $w^n$ along the trajectory, the threshold~\eqref{eq:tau-threshold-locality} must hold uniformly in $n$. This requires $\inf_n\rho_{w^n}>0$, $\sup_n\|\nabla\calF[u_\NN(w^n)]\|_\calH<\infty$, and $\inf_n\lambda_{w^n}>0$. The latter two are derived uniformities of Assumption~\ref{assm:traj-unif} (Remark~\ref{rem:traj-unif}): $\inf_n\lambda_{w^n}\ge\lambda_\infty$ is~(U1), and $\sup_n\|\nabla\calF[u_\NN(w^n)]\|_\calH\le L_\calF\sup_n\|u^n_{\NN,T}-u^*\|_\calH<\infty$ follows from $L_\calF$-Lipschitz gradient plus Theorem~\ref{thm:global_error}(b). The first, $\inf_n\rho_{w^n}\ge\rho_\infty>0$, is exactly~(U3) of Assumption~\ref{assm:traj-unif}, and is automatic under global inverse-stability~\eqref{eq:global-inverse-stability} (since then $\rho_\infty\ge\sigma r_\infty$ with $r_\infty>0$ from Remark~\ref{rem:traj-unif}).

Consequently, under Assumption~\ref{assm:traj-unif} (or under the standing assumptions plus global inverse-stability of $u_\NN$), there exists a single $\tau$ for which Theorem~\ref{thm:small-tau-locality} applies at every iterate, and Assumption~\ref{ass:intro:locality} holds along the entire trajectory. The threshold on $\tau$ is
\[
\tau \;\le\; \frac{\rho_\infty^{2}}{2\bigl(\sup_n\calF[u^n_{\NN,T}]-\calF_{\inf}\bigr)},
\]
which is a finite positive number under the trajectory uniformities.
\end{remark}

\begin{remark}[Strength of the inner output-isolation~\eqref{eq:rho-in-def}]\label{rem:rho-in-strength}
The bi-Lipschitz bound proved in Proposition~\ref{prop:manifold} gives $\|u_\NN(w)-v\|_\calH \ge \lambda_{\tilde w}\|w-\tilde w\|_2$ for $w$ in the outer ball, so the shell $r^{\rm in}_{\tilde w}\le \|w-\tilde w\|_2 < r_{\tilde w}$ contributes at least $\lambda_{\tilde w} r^{\rm in}_{\tilde w}$ to $\rho^{\rm in}_{\tilde w}$. Positivity of $\rho^{\rm in}_{\tilde w}$ then reduces to a far-field separation condition $\dist_\calH(v,\,u_\NN(\R^p\setminus B(\tilde w,r_{\tilde w})))>0$, the same form of local injectivity addressed by Remark~\ref{rem:rho-positive} above.
\end{remark}

\subsection{Jacobian non-degeneracy (Definition~\ref{def:non-degen-jacobian})}\label{app:disc-nondegen}

The non-degeneracy condition $\D u_\NN(w)^*\D u_\NN(w)\succeq 4\lambda^2 I_p$ requires the
Gram matrix of the partial derivatives $\partial_{w_j} u_\NN(\cdot;w)$ (with respect to the
$\calH$-inner product) to be uniformly positive definite. We discuss when this holds and
when it can fail.

\paragraph{When it holds.}
The non-degeneracy is essentially a generic property in the underparameterized regime
($p\le N$), provided the architecture is rich enough and the data $\{x_i\}_{i=1}^N$ are in
``general position.'' Concretely:
\begin{itemize}
\item For two-layer tanh or sigmoid networks with $p\le N$ and generic random initializations,
the empirical Gram matrix $J(w)^\top J(w)\in\R^{p\times p}$ is almost surely positive definite.
\item For deep fully-connected networks with smooth activations, results from
\citet{du2018gradient} and subsequent works show that with high probability over random
initialization, $\sigma_{\min}(J(w^0))$ is bounded away from zero by an architecture-dependent
constant when the width is sufficient relative to the input dimension and sample size.
\item Empirically, our experiments (Figure~\ref{fig:app_Jacobian}) confirm that
$s_{\min}(J(w))$ stays at the $O(10^{-2})$ level along the GN trajectory in
underparameterized regimes.
\end{itemize}

\paragraph{When it fails or weakens.}
The non-degeneracy can fail or weaken in several practically important scenarios:
\begin{itemize}
\item \emph{Overparameterization} ($p\gg N$): the Gram matrix
$J(w)^\top J(w)\in\R^{p\times p}$ has rank at most $N$, so it is rank-deficient and
cannot satisfy the strict bound $\succeq 4\lambda^2 I_p$. In this regime, our theory
applies after a damping fix: replacing $J^\top J$ with $J^\top J + \rho I$ for some $\rho>0$
(Levenberg--Marquardt-type regularization), or working with the dual Gram matrix
$JJ^\top\in\R^{N\times N}$ which is full-rank (cf.\ \citet{cai2019gram}).
The numerical experiments in Section~\ref{sec:experiments} demonstrate that the damped
GN flow continues to perform well in this regime.
\item \emph{Symmetries and parameter redundancy}: networks with permutation symmetries of
hidden units have automatically a degenerate Hessian along symmetry directions. The
non-degeneracy condition is then satisfied not on the full $\R^p$, but on a quotient by
the symmetry group; our local analysis tolerates this since it only requires non-degeneracy
in a neighborhood of the current iterate (after fixing a representative).
\item \emph{Activation saturation}: at large weights, common activations such as $\tanh$
and sigmoid saturate, causing $\partial_{w_j} u_\NN$ to become small in magnitude.
The non-degeneracy constant $\lambda_w$ is then small, making the manifold ``thin''
and the convergence radius $r_w$ shrink.
\end{itemize}

\paragraph{Localized version.}
Theorem~\ref{thm:global-PGF} only requires non-degeneracy at the initial point $w^0$,
together with a small-initial-gap condition that ensures the trajectory does not exhaust
the non-degeneracy budget. This is significantly weaker than requiring non-degeneracy along
the entire trajectory and aligns with the practical observation that ``warm-started''
optimization succeeds while cold-started runs in pathological regions can fail.

\subsection{Lipschitz Jacobian (Definition~\ref{def:NN-Lip-L})}\label{app:disc-lip-jacobian}
\label{app:disc-lipjac}

Definition~\ref{def:NN-Lip-L} packages two requirements: $C^2$ regularity of $u_\NN$ on $\R^p$, and an $L$-Lipschitz bound on its first Fr\'echet derivative,
$\|\D u_\NN(w_1)-\D u_\NN(w_2)\|_{\rm op}\le L\|w_1-w_2\|_2$. The $C^2$ assumption is needed for the Riemannian Hessian of $g(\cdot\,;v)$ in Section~\ref{sec:conv} and for the exponential-map analysis underlying the discrete RGD theorem (Theorem~\ref{thm:discrete-RGD}) to be pointwise well defined; the Lipschitz bound then quantifies the modulus of continuity of the Jacobian. Whether $L$ is finite (and how large it is) depends critically on the choice of ambient norm $\|\cdot\|_\calH$.

\paragraph{When $\calH=L^2(\Omega)$.}
For a fully-connected network $u_\NN(\cdot;w):\Omega\to\R$ with $C^2$ activation
$\sigma$ and bounded weights, both $u_\NN\in C^2(\R^p;L^2(\Omega))$ and $L<\infty$ hold; the
Lipschitz constant depends polynomially on the depth, width, and the second derivative bound
$\sup|\sigma''|$. The typical smooth activations
$\tanh$, sigmoid, softplus, GELU, and Swish all satisfy this. ReLU networks are excluded: $\sigma$ is not $C^2$ at the kink, so Definition~\ref{def:NN-Lip-L} fails. They can be brought back into scope by smoothing the activation (e.g., $\mathrm{ReLU}_\varepsilon$, GELU as a $C^\infty$ approximant) or by working with the generalized Jacobian in a non-smooth variant of the theory, which we do not pursue here.

\paragraph{When $\calH=H^1(\Omega)$.}
The Lipschitz constant depends on the second derivatives of the network with respect to
the spatial variable $x\in\Omega$, in addition to the parameter $w$. For PDE-motivated
problems where the Sobolev $H^1$-norm is the natural ambient norm, the Lipschitz constant
can be very large. In a discrete setting, $L$ can scale as $O(dx^{-2})$ where $dx$ is the
spatial mesh size, making the assumption effectively unusable for high-resolution PDEs.
This limitation is discussed in Remark in Section~\ref{sec:RGF}, and motivates restricting
the present theory to $L^2$-type cost functionals.

\subsection{Smoothness and strong convexity of \texorpdfstring{$\calF$}{F} (Lemma~\ref{lem:smooth-strongconv})}\label{app:disc-F-smooth}
\label{app:disc-F}

The convergence theory requires $\calF:\calH\to\R$ to be continuously Fr\'echet differentiable,
$L_\calF$-smooth, and $m_\calF$-strongly convex.

\paragraph{When the assumptions hold.}
Many natural cost functionals in scientific computing and machine learning satisfy these
conditions on suitable function spaces:
\begin{itemize}
\item \emph{Linear regression / least squares.} If
$\calF[u]=\frac12\|u-f^*\|^2_{L^2(\mu)}$ for a target $f^*\in L^2(\mu)$, then $\calF$ is
$1$-strongly convex and $1$-smooth on $L^2(\mu)$.
\item \emph{Quadratic energies of elliptic PDEs.}
$\calF[u]=\frac12 a(u,u)-\langle f,u\rangle$ where $a$ is a coercive symmetric bilinear form
yields a strongly convex quadratic functional, with $m_\calF$ and $L_\calF$ given by the
coercivity and continuity constants of $a$.
\item \emph{Strongly convex perturbations.} Adding a Tikhonov regularizer $\frac{\lambda}{2}\|u\|^2$
to a convex but non-strongly-convex functional makes it $\lambda$-strongly convex.
\end{itemize}

\paragraph{When they fail.}
The strong convexity assumption fails for many problems of interest:
\begin{itemize}
\item Cross-entropy losses (logistic / softmax classification) are convex but not strongly
convex on the entire space; strong convexity holds only on bounded sublevel sets.
\item Variational forms of nonlinear PDEs (e.g., $p$-Laplacian, mean curvature flow) are
typically convex but not uniformly strongly convex.
\end{itemize}
Extending the present theory to the merely-convex setting ($m_\calF=0$), where the proximal map is non-expansive but not contractive, is an interesting direction for future work.

\subsection{Data condition of Theorem~\ref{thm:g} (eq.~\eqref{eq:data-condition})}\label{app:disc-data-condition}

\begin{remark}[Satisfiability of the data condition]\label{rem:data-condition}
The hypothesis~\eqref{eq:data-condition} is a condition on the data $(v,\tau,\calF,L,\lambda_{\tilde w})$, equivalent to
\[
\max\!\bigl\{\,\|h^*(v)\|_\calH,\ K(v)\,\bigr\}\;\le\;\frac{\lambda_{\tilde w}^{2}}{4L\kappa},
\]
which requires the Jacobian non-degeneracy $\lambda_{\tilde w}$ to dominate both the proximal increment $h^*(v)=J_\tau(v)-v$ and the sublevel-set radius $K(v)$. Three regimes in which the condition is automatic:
\begin{enumerate}[label=(\roman*),leftmargin=*,nosep]
\item \emph{Near a minimizer.} As $v\to u^*$, $\nabla\calF[v]\to 0$ and the first-order condition $h^*/\tau+\nabla\calF[v+h^*]=0$ forces $h^*(v)\to 0$. If $\calF\ge 0$ is normalized so that $\calF[u^*]=0$, then $K(v)\to 0$ as well, and $C(v)\to 0$.
\item \emph{Small-$\tau$ regime.} For fixed $v$, $\|h^*(v)\|=O(\tau)$ (from $h^*=-\tau\nabla\calF[v+h^*]$) and $K(v)=O(\sqrt{\tau})$, so $C(v)\to 0$ as $\tau\downarrow 0$. The data condition is therefore automatic for sufficiently small step size.
\item \emph{Sufficient non-degeneracy.} For fixed $\tau$ and $v$, the condition holds whenever $\lambda_{\tilde w}\ge 2\sqrt{L\kappa\,\max\{\|h^*(v)\|,K(v)\}}$, i.e., when the local Jacobian is well-conditioned relative to the proximal data.
\end{enumerate}
\end{remark}

\subsection{Injectivity-radius hypothesis of Lemma~\ref{lem:convex-radius} (eq.~\eqref{eq:inj-hypothesis})}\label{app:disc-inj-radius}

\begin{remark}[On the hypothesis~\eqref{eq:inj-hypothesis}]\label{rem:inj-hypothesis}
The Rauch comparison theorem \citep[Theorem~1.34]{cheeger2008comparison} bounds the conjugate radius of $\calM(u_\NN(\tilde w))$ at $0$ below by $\pi/\Lambda$. Since $\calM(u_\NN(\tilde w))$ is diffeomorphic to the open ball $B(\tilde w,r_{\tilde w})$ and hence simply connected, the conjugate-radius bound transfers to the injectivity radius in regimes where the chart is small enough that no short geodesic loops form; a quantitative verification of~\eqref{eq:inj-hypothesis} from $\lambda_{\tilde w}, L, \|\D u_\NN(\tilde w)\|_{\mathrm{op}}$ alone is left to future work.
\end{remark}

\subsection{Neural proximal approximation precision (Assumption~\ref{assm:approx-precision})}\label{app:disc-approx-precision}

\begin{remark}[Interpretation of the approximation precision]\label{rem:approx-precision}
The scalar $\varepsilon$ represents the one-step accuracy of the computed neural proximal map relative to the exact proximal map. It packages the two non-optimization errors in the four-way decomposition~\eqref{eq:intro:full-error-decomp}: the trial-space error from restricting the proximal problem to the neural class $\calN$, and the sampling or quadrature error from replacing population quantities by empirical approximations. Introducing the ideal population neural proximal map
\[
  \bar J_{\tau,\NN}(x) := \argmin_{u\in\calN}\bigl\{\tfrac{1}{2\tau}\|u-x\|_\calH^2+\calF[u]\bigr\},
\]
which uses the exact functional $\calF$ but restricts the minimization to $\calN$, the triangle inequality gives
\[
  \|J_{\tau,\NN}(x)-J_\tau(x)\|_\calH \;\le\; \underbrace{\|\bar J_{\tau,\NN}(x)-J_\tau(x)\|_\calH}_{\text{trial-space error}} \;+\; \underbrace{\|J_{\tau,\NN}(x)-\bar J_{\tau,\NN}(x)\|_\calH}_{\text{sampling/quadrature error}}.
\]
Assumption~\ref{assm:approx-precision} packages these two contributions through the single scalar $\varepsilon$: universal approximation theory \citep{hornik1989multilayer} bounds the trial-space error as the architecture is enriched, while Monte Carlo or quadrature estimates bound the sampling error as the number of samples increases. Condition~\eqref{eqn:approximate-error} is equivalent to $\delta > (1+1/(\tau m_\calF))\varepsilon$, so the global error bound obtained by the stability argument can only resolve the exact MMS trajectory up to a neighborhood whose size is proportional to $\varepsilon$.
\end{remark}

\subsection{Trajectory uniformity (Assumption~\ref{assm:traj-unif})}\label{app:disc-traj-unif}

The two primitive uniformities~(U1)--(U2) of Assumption~\ref{assm:traj-unif} together with the output-isolation~(U3) are the genuine residual $n$-dependencies. Three further uniformities follow from (U1)--(U2) together with the standing assumptions and the function-space tracking bound of Theorem~\ref{thm:global_error}(b):
\begin{itemize}[leftmargin=*,nosep]
\item \emph{Uniform energy:} $\sup_n\calF[u^n_{\NN,T}]<\infty$, since $\calF[u^n]\le(L_\calF/2)\|u^n-u^*\|_\calH^{2}$ and $\|u^n-u^*\|_\calH$ is uniformly bounded; consequently $\sup_n\|\nabla\calF[u^n_{\NN,T}]\|_\calH\le L_\calF\sup_n\|u^n_{\NN,T}-u^*\|_\calH<\infty$.
\item \emph{Uniform operator norm:} $\sup_n\|\D u_\NN(w^n_{\NN,T})\|_{\rm op}<\infty$, from the $L$-Lipschitz Jacobian and the finite trajectory length $\sum_n\Delta_n<\infty$ established in Theorem~\ref{thm:global-PGF}(b), via $\|\D u_\NN(w^n)\|_{\rm op}\le\|\D u_\NN(w^0)\|_{\rm op}+L\sum_{k<n}\Delta_k$.
\item \emph{Uniform local radius:} $\inf_n r_{w^n_{\NN,T}}\ge r_\infty>0$, with $r_\infty$ depending only on $\lambda_\infty$, $\sup_n\|\D u_\NN(w^n)\|_{\rm op}$, $L$, $\Lip_{u_\NN}$, and the $\calF$-condition ratio $\kappa$ via~\eqref{eq:rwdef}.
\end{itemize}

\paragraph{Verifiability of (U1)--(U2).}
Under the initial-gap condition of Theorem~\ref{thm:global-PGF}(iii), the inductive argument in the proof of Theorem~\ref{thm:global-PGF}(b) establishes (U1) for any user-chosen $\lambda_\infty\in(0,\lambda_{w^0})$ satisfying~\eqref{eq:initial-gap-condition}. For (U2), the proximal estimate $\|k^*(u^n)\|_\calH=\|J_\tau(u^n)-u^n\|_\calH\le(1+\rho)\|u^n-u^*\|_\calH$ (with $\rho=1/(1+\tau m_\calF)$) and $K(u^n)\le\sqrt{\tau L_\calF/(1+\tau m_\calF)}\,\|u^n-u^*\|_\calH$ both shrink along the trajectory, so $\sup_n C(u^n_{\NN,T})\le 1$ holds once the initial gap is sufficiently small relative to $\lambda_\infty^{2}/(4L\kappa)$ (cf.\ Remark~\ref{rem:data-condition}).

\paragraph{Verifiability of (U3): global inverse-stability.}
For~(U3), the cleanest sufficient condition is \emph{global inverse-stability} of the network map: if there exists $\sigma>0$ such that
\begin{equation}\label{eq:global-inverse-stability}
\|u_\NN(w_1)-u_\NN(w_2)\|_\calH\;\ge\;\sigma\,\|w_1-w_2\|_2,\qquad \forall\,w_1,w_2\in\R^p,
\end{equation}
then $\rho_{w^n_{\NN,T}}\ge\sigma\,r_{w^n_{\NN,T}}\ge\sigma\,r_\infty$, so (U3) holds with $\rho_\infty=\sigma\,r_\infty$. Inverse stability~\eqref{eq:global-inverse-stability} rules out non-trivial network parameter symmetries (permutations of hidden units, sign flips) at the level of network outputs; in over-parameterized networks one typically restricts to a quotient or to an open neighborhood of $w^0$ on which $u_\NN$ is injective. See Remark~\ref{rem:rho-positive} and the discussion after Assumption~\ref{ass:intro:locality} for the role of $\rho_{w^n}$.

\paragraph{Summary.}
Together, (U1)--(U3) plus the three derived uniformities reduce Assumption~\ref{assm:traj-unif} to a \emph{conclusion} of Theorem~\ref{thm:global-PGF}: under the initial-gap condition of part~(iii), all six uniformities are preserved along the trajectory. Assumption~\ref{assm:traj-unif} is stated as a self-contained block in Section~\ref{sec:error-propagation} so that the function-space tracking theorem (Theorem~\ref{thm:global_error}) can be invoked with clearly enumerated hypotheses, and so that locality of the increment subproblem (Assumption~\ref{ass:intro:locality}) holds uniformly via Theorem~\ref{thm:small-tau-locality}.

\begin{remark}[Derived uniformities]\label{rem:traj-unif}
The two primitive uniformities (U1)--(U2) plus the output-isolation (U3) imply the three derived uniformities (uniform energy, uniform Jacobian operator norm, uniform local radius) listed above.
\end{remark}

\begin{remark}[Role of Assumption~\ref{assm:traj-unif} in Theorem~\ref{thm:global_error}]\label{rem:tracking-uniformity}
The uniform tracking bound of Theorem~\ref{thm:global_error}(a) is enabled by (U1)--(U2) as follows. (U2) ensures that Theorem~\ref{thm:g} applies at every iterate $w^n_{\NN,T}$, so that the inner-flow horizon $T_n$ from~\eqref{eqn:T} is finite and the per-step error decomposition of Lemma~\ref{lem:induction} closes. (U1) provides the uniform lower bound on the Jacobian non-degeneracy needed for the per-iterate radii $r_{w^n_{\NN,T}}$ to be bounded below. The sufficiency of (U1)--(U2) for the chained bound is what the proof of Theorem~\ref{thm:global-PGF}(b) establishes a posteriori, by showing that (U1)--(U2) are preserved along the trajectory under the initial-gap condition.
\end{remark}

\begin{remark}[Hypothesis (ii) of Theorem~\ref{thm:global-PGF} and Trajectory uniformity]\label{rem:globalPGF-traj-unif}
Hypothesis (ii) of Theorem~\ref{thm:global-PGF} references Theorem~\ref{thm:global_error}, which is stated under Assumption~\ref{assm:traj-unif}. The apparent circularity (Thm.~\ref{thm:global_error} requires (U1)--(U2); the present theorem's conclusion (b) supplies them) is resolved by the proof structure: the induction in part (b) establishes (U1)--(U2) along the trajectory using only the initial-gap condition (iii) and the a-priori telescope bound on the exact-MMS trajectory~\eqref{eq:exact-MMS-telescope}. Concretely, (U2) follows from the function-space tracking bound of part (a) together with $\|k^*(u^n)\|\le(1+\rho)\|u^n-u^*\|$ and $K(u^n)=O(\|u^n-u^*\|)$ once $\|u^n-u^*\|$ is uniformly small; (U1) follows from the budget hypothesis in~\eqref{eq:initial-gap-condition}.
\end{remark}

\subsection{Initial gap and warm start (Theorem~\ref{thm:global-PGF}(iii))}\label{app:disc-initial-gap}

The initial gap condition \eqref{eq:initial-gap-condition} of Theorem~\ref{thm:global-PGF},
\[
\Big(1+\frac{1}{\tau m_\calF}\Big)\|u^0-u^*\|_\calH + 2\delta < \frac{2\lambda_{w^0}\,\lambda_\infty}{L},
\]
formalizes the requirement that the initial parameter $w^0$ should produce a network output
$u_\NN(w^0)$ already close enough to the target $u^*$. In practice this can be ensured by:
\begin{itemize}
\item \emph{Pretraining / warm starting.} Initializing from a previously trained model on
a related task or from a coarser discretization.
\item \emph{Multi-grid / continuation.} Solving a sequence of problems at increasing
resolution, using the solution from the previous level as the warm start.
\item \emph{Sufficient over-parameterization.} For wide enough networks at standard
random initializations, $\|u^0-u^*\|_\calH$ is provably small with high probability under
mild assumptions on $u^*$ \citep{jacot2018neural,du2018gradient}.
\end{itemize}

The condition is also informative in the negative direction: if $\|u^0-u^*\|_\calH$ is too
large relative to $\lambda_{w^0}/L$, the parameter trajectory may exhaust the non-degeneracy
budget before convergence is achieved. This is consistent with the empirical wisdom that
``cold start'' optimization on highly nonconvex landscapes can fail catastrophically,
while warm-started training succeeds reliably.

\begin{remark}[Interpretation of the initial-gap condition]\label{rem:initial-gap}
The condition~\eqref{eq:initial-gap-condition} requires the initial function-space gap $\|u^0-u^*\|_\calH$ to be sufficiently small relative to $\lambda_{w^0}/L$. Geometrically, it enforces two constraints: (i) the total parameter trajectory length must not exceed the non-degeneracy budget $2\lambda_{w^0}/L$, and (ii) each individual per-step displacement must fit within the uniform local non-degeneracy ball of radius $r_\infty=\inf_n r_{w^n_{\NN,T}}$. The choice of $\lambda_\infty$ is a free parameter: smaller $\lambda_\infty$ relaxes (i) but tightens (ii) (since $r_\infty$ depends on $\lambda_\infty$ via~\eqref{eq:rwdef}), and the optimal choice balances the two.
\end{remark}

\begin{remark}[Validity regimes for the parameter-space conclusions]\label{rem:phase4-regime}
The proof of Theorem~\ref{thm:global-PGF}(b)--(d) is stated in the \emph{perfect-approximation regime} ($\varepsilon=0$) of Corollary~\ref{cor:geometric-tracking}(I), in which the summable tracking error~\eqref{eq:sum-e-bound} yields a finite trajectory-length bound and hence Cauchy convergence in $\R^p$. In the \emph{uniform-approximation regime} ($\varepsilon>0$) of Corollary~\ref{cor:geometric-tracking}(II):
\begin{itemize}[leftmargin=*,nosep]
\item \emph{Function-space convergence (a)} continues to hold under~\eqref{eq:geometric-tracking}: $\|u^n_{\NN,T}-u^*\|_\calH\le\delta+\rho^n\|u^0-u^*\|_\calH$, with $\delta$ understood as the asymptotic-floor accuracy.
\item \emph{Trajectory length (b), Cauchy property (c), and parameter-space neighborhood convergence (d)} hold only on the finite horizon $N^*$ on which the partial sum~\eqref{eq:sum-e-partial} is bounded by the right-hand side of~\eqref{eq:initial-gap-condition}; explicitly, $N^*$ scales as $1/\varepsilon$. Beyond $N^*$, the iterates may drift through the parameter space at a rate $O(\varepsilon)$ per step (a ``noise floor'' inherited from the NN approximation), and are only \emph{approximately Cauchy} up to an $O(\varepsilon)$ residual.
\end{itemize}
Closing this gap rigorously in the uniform-approximation regime requires either $\varepsilon_n$ to decay or a separate argument showing that the parameter-space dynamics stay confined despite the persistent floor; both are beyond the scope of the present revision.
\end{remark}

\begin{remark}[Relation to ``warm start'' practice]\label{rem:warm-start}
Hypothesis~(iii) of Theorem~\ref{thm:global-PGF} formalizes a familiar empirical observation: nonconvex training succeeds reliably when starting close to a good basin. Our theory makes this precise: as long as the initial function-space gap is within the non-degeneracy budget set by $\lambda_{w^0}$, the entire trajectory remains in the favorable region, and convergence to an $O(\delta)$-neighborhood of $u^*$ is guaranteed.
\end{remark}

\bibliography{reference}

\end{document}